\DeclareMathOperator{\id}{id}
\DeclareMathOperator{\op}{op}
\DeclareMathOperator{\Lan}{Lan}
\DeclareMathOperator{\Mod}{\mathbf{Mod}}
\DeclareMathOperator{\Rep}{Rep}
\DeclareMathOperator{\Vect}{\mathbf{Vect}}
\DeclareMathOperator{\fp}{fp}
\DeclareMathOperator{\Hom}{Hom}
\DeclareMathOperator{\End}{End}
\DeclareMathOperator{\Spec}{Spec}
\DeclareMathOperator{\rk}{rk}
\DeclareMathOperator{\Sym}{Sym}
\DeclareMathOperator{\sgn}{sgn}
\DeclareMathOperator{\ev}{ev}
\DeclareMathOperator{\iso}{iso}
\DeclareMathOperator{\Sh}{Sh}
\DeclareMathOperator{\Rex}{\mathbf{Rex}}
\DeclareMathOperator{\Ind}{Ind}
\DeclareMathOperator{\Aff}{\mathbf{Aff}}
\DeclareMathOperator{\QCoh}{\mathbf{QCoh}}
\DeclareMathOperator{\VB}{\mathbf{VB}}
\DeclareMathOperator{\BGL}{\mathrm{BGL}}
\DeclareMathOperator{\fpqc}{\mathit{fpqc}}
\DeclareMathOperator{\Aut}{Aut}
\DeclareMathOperator{\lax}{lax}
\DeclareMathOperator{\Tors}{Tors}
\DeclareMathOperator{\colim}{colim}
\DeclareMathOperator{\Lex}{\mathbf{Lex}}
\DeclareMathOperator{\Set}{\mathbf{Set}}
\DeclareMathOperator{\Comod}{\mathbf{Comod}}
\DeclareMathOperator{\Nat}{\mathrm{Nat}}
\DeclareMathOperator{\Fun}{\mathrm{Fun}}
\DeclareMathOperator{\LF}{\mathrm{LF}}
\DeclareMathOperator{\CAlg}{\mathrm{CAlg}}
\newcommand{\ca}[1]{\mathscr{#1}}
\newcommand{\Prs}[1]{\mathcal{P}\ca{#1}}
\DeclareMathOperator{\U}{O}
\newcommand{\ubar}[1]{\underline{#1\mkern-4mu}\mkern4mu }
\newcommand{\ten}[1]{\mathop{{\otimes}_{#1}}}
\newcommand{\pb}[1]{\mathop{{\times}_{#1}}}
\newcommand{\po}[1]{\mathop{{+}_{#1}}}
\newcommand{\defl}{\mathrel{\mathop:}=}
\theoremstyle{plain}
\newtheorem{thm}{Theorem}[subsection]
\newtheorem{prop}[thm]{Proposition}
\newtheorem{lemma}[thm]{Lemma}
\newtheorem{cor}[thm]{Corollary}
\theoremstyle{definition}
\newtheorem{example}[thm]{Example}
\newtheorem{rmk}[thm]{Remark}
\newtheorem{dfn}[thm]{Definition}
\newtheorem{notation}[thm]{Notation}
\newtheoremstyle{citing}{}{}{\itshape}{}{\bfseries}{.}{ }{\thmnote{#3}}
\theoremstyle{citing}
\newtheoremstyle{citingdfn}{}{}{}{}{\bfseries}{.}{ }{\thmnote{#3}}
\theoremstyle{citingdfn}
\numberwithin{equation}{section}
\keywords{Adams stacks, colimits, weakly Tannakian categories}
\subjclass[2000]{14A20, 16T05, 18D10}
\author{Daniel Sch\"appi}
\thanks{The author gratefully acknowledges support through the Swiss National Foundation Fellowship P2SKP2\_148479}
\address{School of Mathematics and Statistics, University of
  Sheffield, Sheffield, S3 7RH, UK}
\email{d.schaeppi@sheffield.ac.uk}
\title{Constructing colimits by gluing vector bundles}
\begin{document}

\begin{abstract}
 As already observed by Gabriel, coherent sheaves on schemes obtained by gluing affine open subsets can be described by a simple gluing construction. An example due to Ferrand shows that this fails in general for pushouts along closed immersions, though the gluing construction still works for flat coherent sheaves.

 We show that by further restricting this gluing construction to vector bundles, we can construct pushouts along arbitrary morphisms (and more general colimits) of certain algebraic stacks called Adams stacks. The proof of this fact uses generalized Tannaka duality and a variant of Deligne's argument for the existence of fiber functors which works in arbitrary characteristic.

 We use this version of Deligne's existence theorem for fiber functors as a novel way of recognizing stacks which have atlases. It differs considerably from Artin's algebraicity results and their generalizations: rather than studying conditions on the functor of points which ensure the existence of an atlas, our theorem identifies conditions on the category of quasi-coherent sheaves of the stack which imply that an atlas exists.
\end{abstract}

\maketitle

\tableofcontents

\section{Introduction}\label{section:introduction}

\subsection{Overview}
 Gluing constructions are central to modern algebraic geometry: general schemes are obtained by gluing open affine subschemes along their intersections. This construction is a special case of a colimit. However, in most categories arising in algebraic geometry general colimits rarely exist. Results in the literature on pushouts of schemes \cite{FERRAND} and of algebraic stacks \cite{RYDH_PUSHOUTS} have restrictive hypotheses on the morphisms involved. More importantly, there are examples of diagrams which do not have a colimit in the category of schemes (see for example MathOverflow \url{http://mathoverflow.net/questions/9961}).

 From a categorical perspective, it is often convenient to work in a category where \emph{arbitrary} colimits of small diagrams exist, that is, in a \emph{cocomplete} category. Examples of such arising in algebraic geometry are the category of \emph{all} sheaves (respectively the 2-category of \emph{all} stacks) for the various Grothendieck topologies on the category of (affine) schemes. The drawback of these categories is that they also contain objects that are quite ``far removed'' from geometry. From this perspective it would be preferable to have a category of \emph{algebraic} spaces or \emph{algebraic} stacks which is cocomplete.

 Moreover, colimits of general schemes often exhibit rather pathological behaviour (for example, the affine plane with double origin, or the scheme without closed points constructed in \cite{SCHWEDE}). For practical purposes it would be desireable to have a category of ``nice'' algebraic spaces or stacks (for example stacks satisfying some separation axioms).

 The main result of our paper is that the 2-category of Adams stacks has all (bicategorical) colimits. Recall that an \emph{Adams stack} over a commutative ring $R$ is a stack on the $\fpqc$-site of affine schemes over $R$ which is quasi-compact, has an affine diagonal, and which has the \emph{resolution property}: all finitely presentable quasi-coherent sheaves are quotients of vector bundles. These are precisely the stacks associated to Adams Hopf algebroids (see \cite[Theorem~1.3.1]{SCHAEPPI_STACKS}). 

 In general, Adams stacks need not be Artin stacks. The reason is simply that, for Adams stacks, there are no finiteness conditions imposed on the source and target morphisms of the corresponding affine groupoids. Thus various examples arising in algebraic topology, such as the moduli stack of formal groups, are Adams stacks but not Artin stacks. Using the language of Adams Hopf algebroids and their comodules (instead of stacks and their categories of quasi-coherent sheaves), these have been studied extensively by algebraic topologists, see for example \cite{ADAMS_STABLE, LANDWEBER, GOERSS_HOPKINS, HOVEY}.

 On the other hand, certainly not every Artin stack is an Adams stack. Firstly, all Adams stacks are assumed to be quasi-compact and to have an affine diagonal. Moreover, as Burt Totaro pointed out, there is a quasi-compact Artin stack with affine diagonal which does \emph{not} have the resolution property. Namely, the stack constructed in \cite[Example~3.12]{EDIDIN_HASSETT_KRESCH_VISTOLI} does not have the resolution property by \cite[Theorem~1.1]{TOTARO}. 

 The main result of our paper is the following. We prove it in \S \ref{section:colimits_of_adams_stacks}.

 \begin{thm}\label{thm:complete_cocomplete}
 Let $R$ be a commutative ring. The 2-category $\ca{AS}$ of Adams stacks over $R$ has all bicategorical limits and colimits.
\end{thm}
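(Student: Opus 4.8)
I would prove Theorem~\ref{thm:complete_cocomplete} by combining generalized Tannaka duality with an ``existence of fiber functors'' input, reducing the statement about stacks to a statement about their categories of quasi-coherent sheaves. The key idea is that the category $\QCoh(\ca{X})$ of a stack $\ca{X}$ is a symmetric monoidal category with a good supply of dualizable objects, and that Adams stacks are recovered from such categories via a Tannakian reconstruction. More precisely, passing to $\QCoh(-)$ should be a (contravariant) biequivalence between $\ca{AS}$ and a suitable 2-category of \emph{weakly Tannakian categories} (in the sense recalled in the excerpt's keywords), so that bicategorical colimits in $\ca{AS}$ correspond to bicategorical \emph{limits} in that 2-category of tensor categories, and vice versa. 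Thus the whole theorem is equivalent to showing that the 2-category of weakly Tannakian categories over $R$ has all bicategorical limits and colimits, and that these are preserved/reflected appropriately.

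\smallskip

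The steps, in order. First I would recall the generalized Tannaka duality result identifying $\ca{AS}$ (up to biequivalence) with a 2-category $\ca{T}$ of weakly Tannakian $R$-linear symmetric monoidal categories, with $\QCoh$ and $\Rep$ (the ``reconstruction'' functor) implementing the biequivalence. Second, I would construct bicategorical limits in $\ca{T}$: these are built on the underlying abelian tensor categories by an $\Ind$-completion / essentially algebraic argument, taking the relevant 2-limit of abelian categories (products, inserters, equalizers, and cotensors) and then checking that the resulting tensor category is again weakly Tannakian --- the resolution property and the existence of enough dualizables being the delicate conditions to verify. Third, and this is the crux, I would construct bicategorical \emph{colimits} in $\ca{T}$: here the naive colimit of abelian tensor categories need \emph{not} be weakly Tannakian (this is the phenomenon behind Ferrand's example mentioned in the abstract), so one must instead take a colimit computed using the gluing construction restricted to vector bundles, and then prove that this tensor category admits a fiber functor. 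This last point is exactly where the ``variant of Deligne's argument for the existence of fiber functors which works in arbitrary characteristic'' enters: one shows the glued tensor category has enough structure (finitely many dualizable generators of bounded ``rank'', plus the combinatorial estimates replacing Deligne's characteristic-zero dimension argument) to produce a fiber functor to $\QCoh$ of an affine scheme, hence is weakly Tannakian. Finally, I would transport these limits and colimits back across the biequivalence to conclude that $\ca{AS}$ has all bicategorical limits and colimits.

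\smallskip

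The main obstacle. The genuinely hard step is the third one: showing that the glued tensor category built from vector bundles is weakly Tannakian, equivalently that it has a fiber functor. Deligne's classical criterion relies on characteristic zero (via Schur functors and the vanishing of certain antisymmetric powers for objects of the ``wrong'' dimension), and here one must replace that with an argument valid in arbitrary characteristic --- presumably by tracking ranks of vector bundles under the gluing and using a more robust flatness/approximation argument rather than a symmetric-function identity. Controlling the abelian structure of the glued category (exactness of the gluing functors, that finitely presentable objects are quotients of vector bundles) and verifying the Tannakian axioms after gluing is where essentially all the work lies; the translation between stacks and tensor categories at the two ends is comparatively formal once the Tannakian biequivalence is in hand.
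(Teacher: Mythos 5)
Your overall strategy --- pass through the contravariant biequivalence between $\ca{AS}$ and weakly Tannakian categories, and supply the missing fiber functors via a characteristic-free variant of Deligne's argument applied to a gluing construction on vector bundles --- is the strategy of the paper. But your division of labour between steps 2 and 3 has the variance backwards, and step 2 as written would fail. Colimits of Adams stacks (Ferrand's pushout) correspond to \emph{limits} of weakly Tannakian categories, and the naive 2-limit in the ambient 2-category $\ca{RM}$ of right exact symmetric monoidal categories is in general \emph{not} weakly Tannakian --- that is exactly Ferrand's phenomenon --- so there is nothing to ``check''; one must \emph{replace} that limit by something else. Conversely, your step 3 (colimits of weakly Tannakian categories, i.e.\ limits of Adams stacks) needs no Deligne-type input at all. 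The device the paper uses to organise both cases at once is a coreflection: the inclusion of weakly Tannakian categories into $\ca{RM}$ has a right biadjoint $T$ (Theorem~\ref{thm:weakly_tannakian_coreflective}). The limit of weakly Tannakian categories is then $T(\lim \ca{A}_i)$ with the limit taken in the complete and cocomplete 2-category $\ca{RM}$, and closure under colimits is a formal retract argument. All the analytic work goes into constructing $T$ and proving its universal property.

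Two further ingredients you would need to make your sketch close. First, $T(\ca{A})$ is not obtained by gluing over the diagram; it is the category $G(\LF^c_{\ca{A}})_{\fp}=\Lex_{\Sigma}[(\LF^c_{\ca{A}})^{\op},\Mod_R]_{\fp}$ of presheaves on the locally free objects of constant rank sending locally split right exact sequences to left exact ones, and proving this is geometric requires not only the generation-by-locally-free-objects and fixed-point conditions of Theorem~\ref{thm:description} but also the torsor-theoretic input of \S\ref{section:torsors} (that $\Comod(H_d)$ classifies locally free objects of rank $d$ in arbitrary tensor categories, which is what shows the representables in $G(\LF^c_{\ca{A}})$ are again locally free). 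Second, the universal property of $T$ rests on Tonini's generators-and-relations presentation of $\QCoh(X)$ in terms of vector bundles (Theorem~\ref{thm:adams_vb_universal}); without it one has no control over tensor functors \emph{out of} a geometric category and cannot verify that the candidate coreflection actually coreflects. Your sketch correctly names the two hardest analytic ingredients but omits the coreflection packaging and the Tonini input that turn them into a proof of the theorem.
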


 This shows that the class of all Adams stacks is more convenient to work with than the class of all Artin stacks.

 One immediate consequence of the above result is that for almost any reasonable stack $X$ on the $\fpqc$-site $\Aff_R$, there exists an Adams stack $X^{\prime}$ together with a morphism $X \rightarrow X^{\prime}$ which is universal among morphisms to Adams stacks (see Theorem~\ref{thm:universal_adams_stack} for details). The only assumption on $X$ we need to make is a set-theoretical size restriction: one has to be able to describe $X$ using a set of data (rather than a proper class).

 We illustrate with a few examples the type of construction made possible by Theorem~\ref{thm:complete_cocomplete}. One way to construct a new Adams stack from an existing one is by pinching substacks: given a closed substack $Z \subseteq X$, we can form the pushout
\[
 \xymatrix{Z \ar[d] \ar[r] & X \ar[d] \\ \ast \ar[r] & X \po{Z} \ast}
\]
 in the 2-category of Adams stacks, which we denote by $X \slash Z$ since it is obtained by formally identifying all the points of $Z$.

 Another construction we can perform is that of formally adding automorphisms to a given point of an Adams stack $X$ (over $R$). If $G$ is a flat affine group scheme over $R$ such that the classifying stack $BG$ of principal $G$-bundles is an Adams stack (for example, $G=\mathrm{GL}_n$, or $G$ is flat and $R$ is a Dedekind ring), then the pushout
\[
 \xymatrix{\ast \ar[r]^-{x} \ar[d] & X \ar[d] \\ BG \ar[r] & X \po{\ast} BG }
\]
 formally adds the affine group scheme $G$ to the automorphism group of the point $x$ of $X$. We can also form pushouts among arbitrary morphisms, for example, we can form the pushout of the fold map $\nabla \colon X + X \rightarrow X$ along itself, thereby obtaining an Adams stack $X \odot \Aut$ with a universal property dual to that of the inertia stack: giving a morphism from $X \odot \Aut$ to $Y$ amounts to giving a morphism $f \colon X \rightarrow Y$ together with an automorphsim $\varphi \colon f \Rightarrow f$. 

 Note that even though all Adams stack are assumed to be quasi-compact, Theorem~\ref{thm:complete_cocomplete} implies that the 2-category of Adams stacks has infinite coproducts.

 Our existence proof for colimits of Adams stacks is rather indirect. In a sequel we will give a more detailed analysis of the construction, which will allow us to identify a number of colimit diagrams in the 2-category of Adams stacks. This will extend some recent results due to Bhatt \cite{BHATT} about colimits of algebraic spaces to Adams stacks.

\subsection{Method of proof}
 To prove Theorem~\ref{thm:complete_cocomplete}, we use the Tannaka duality results for Adams stacks established in \cite{SCHAEPPI_STACKS, SCHAEPPI_INDABELIAN, SCHAEPPI_GEOMETRIC}. One of the main results of these papers is that the 2-category of \emph{weakly Tannakian categories} (see Definition~\ref{dfn:weakly_tannakian}) and right exact symmetric strong monoidal functors between them is contravariantly equivalent to the 2-category of Adams stacks. Thus, in order to prove Theorem~\ref{thm:complete_cocomplete}, it suffices to show that the category of weakly Tannakian categories is bicategorically complete and cocomplete. In fact, we will prove the following slightly more general theorem. Recall that we call a symmetric monoidal $R$-linear category with finite colimits \emph{right exact symmetric monoidal} if the functors given by tensoring with a fixed object are right exact. We denote the category of right exact symmetric monoidal categories and right exact symmetric strong monoidal functors between them by $\ca{RM}$. The following theorem is a consequence of Theorem~\ref{thm:weakly_tannakian_coreflective} (where the construction of the coreflection mentioned below is made explicit).

\begin{thm}\label{thm:coreflective}
 The inclusion of the full sub-2-category of $\ca{RM}$ consisting of weakly Tannakian categories has a right biadjoint. In particular, the 2-category of weakly Tannakian categories has all bicategorical limits, and it is closed under all bicategorical colimits in $\ca{RM}$.
\end{thm}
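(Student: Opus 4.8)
The plan is to reduce the theorem to two ingredients: the existence of the right biadjoint $G$ to the inclusion $i\colon\ca{W}\hookrightarrow\ca{RM}$ of the full sub-2-category $\ca{W}$ of weakly Tannakian categories, and the statement that $\ca{RM}$ itself has all bicategorical limits and colimits. Granting these, the ``in particular'' clause is the usual yoga of coreflective subcategories, transported to the bicategorical setting. Limits in $\ca{W}$ exist and are computed as $G$ applied to the limit in $\ca{RM}$: for a diagram $D\colon\ca{I}\to\ca{W}$ one has $\ca{W}(T,G(\lim_{\ca{RM}}iD))\simeq\ca{RM}(iT,\lim_{\ca{RM}}iD)\simeq\lim\ca{RM}(iT,iD_{(-)})\simeq\lim\ca{W}(T,D_{(-)})$, using $i\dashv G$ and full faithfulness of $i$. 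For colimits one argues, as in the one-dimensional case, that a coreflective subcategory is closed under colimits: if $C$ denotes the bicategorical colimit of $iD$ formed in $\ca{RM}$, then transposing the colimit cocone under $i\dashv G$ and applying $i$ produces a map $C\to iGC$, and using that the counit $\varepsilon$ is an equivalence on the replete image of $i$ (equivalently, that $G$ restricted there is an equivalence) one checks that this map is inverse to $\varepsilon_C$; hence $C$ lies in the image of $i$ and is therefore also the bicategorical colimit in $\ca{W}$. Since $i$, being a left biadjoint, preserves bicategorical colimits, this is exactly the assertion that $\ca{W}$ is closed under bicategorical colimits in $\ca{RM}$.

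That $\ca{RM}$ is bicategorically complete and cocomplete is two-dimensional monad theory. The 2-category $\ca{RM}$ is (equivalent to) the 2-category of pseudoalgebras and pseudomorphisms for an accessible 2-monad on a suitable locally presentable 2-category of $R$-linear categories, namely the 2-monad freely adjoining finite colimits and a symmetric monoidal structure whose tensor product is right exact. For such 2-categories the theory of Blackwell--Kelly--Power supplies all bicategorical limits and colimits; alternatively one constructs them by hand, lifting the underlying limit of categories for limits and using a flexible (free-algebra) replacement for colimits.

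The substance of the theorem is therefore the construction of $G$, i.e.\ the weakly Tannakian coreflection of an arbitrary $\ca{A}\in\ca{RM}$: a weakly Tannakian category $G\ca{A}$ equipped with a right exact symmetric strong monoidal functor $G\ca{A}\to\ca{A}$ that is terminal among such functors out of weakly Tannakian categories. Here the title of the paper is the guiding principle. A right exact symmetric strong monoidal functor preserves dualizable (``vector bundle'') objects and quotients, so, because weakly Tannakian categories have the resolution property, every such functor $\ca{T}\to\ca{A}$ from a weakly Tannakian $\ca{T}$ factors through the full subcategory of $\ca{A}$ generated by the dualizable objects under finite colimits --- and this dualizable data, unlike the category of coherent sheaves (Ferrand's example), is well behaved under the gluing/limit constructions of $\ca{RM}$. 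I would therefore take $\ca{V}$ to be the rigid symmetric monoidal category of dualizable objects of $\ca{A}$, let $G\ca{A}$ be a category of finitely presented objects reconstructed from $\ca{V}$ by the gluing construction of the abstract (passing to a suitable cocompletion of $\ca{V}$ and restricting to finitely presented objects), with comparison functor $G\ca{A}\to\ca{A}$ induced by $\ca{V}\hookrightarrow\ca{A}$. The universal property of $G\ca{A}\to\ca{A}$ is then essentially formal, again from the resolution property.

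The genuinely hard step --- and the main obstacle --- is to prove that the category $G\ca{A}$ so constructed is in fact weakly Tannakian. This amounts to producing a faithful, flat fibre functor from $G\ca{A}$ to the category of vector bundles over some faithfully flat $R$-algebra; equivalently, to exhibiting an affine atlas for the associated stack. The classical Tannakian recognition theorems do not apply in this generality, and one must instead use the characteristic-free variant of Deligne's existence argument for fibre functors advertised in the introduction. Verifying the hypotheses of that argument for the categories produced by the gluing construction --- in particular, controlling the dualizable generators and the requisite flatness and faithfulness --- is where essentially all the work of the proof lies; this is carried out in Theorem~\ref{thm:weakly_tannakian_coreflective}, of which the present theorem is a formal consequence.
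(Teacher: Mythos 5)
Your formal reduction is sound and matches the paper: the ``in particular'' clause is indeed the standard coreflection yoga (limits computed as $G$ of the limit in $\ca{RM}$, closure under colimits via the retract argument with the counit), $\ca{RM}$ is bicategorically complete and cocomplete because it is the 2-category of symmetric pseudomonoids in $\Rex$, and the substance is the construction of the coreflection $T(\ca{A})$ by a ``generators and relations'' cocompletion of a class of bundle-like objects, with the existence of a fiber functor as the hard step. All of that is the paper's architecture.

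The genuine gap is in your choice of generating class. You propose to glue from \emph{all} dualizable objects of $\ca{A}$. With that choice the hard step is not merely hard, it is false: a right exact symmetric monoidal category generated by duals need not admit a fiber functor, even in characteristic zero --- this is exactly why Theorem~\ref{thm:existence_of_fiber_functor} has to impose the extra conditions (iii) and (iv) on ranks and exterior powers, and no variant of Deligne's argument removes them. The paper instead takes the full subcategory $\LF^c_{\ca{A}}$ of objects that are \emph{locally free of constant finite rank} in $\Ind(\ca{A})$, i.e.\ trivialized by some faithfully flat algebra. This is what makes the fiber-functor criterion of Theorem~\ref{thm:description} checkable: each generator of the glued category is the image of the standard representation under a tensor functor from $\Rep(\mathrm{GL}_d)$ (Theorem~\ref{thm:universal_property_rep_gld}), which supplies the Adams algebras needed for local splitness and the symmetric-power/fixed-point condition. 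Crucially, nothing is lost on the universal-property side, because any tensor functor out of a weakly Tannakian category automatically carries its vector bundles of constant rank to locally free objects of constant rank. A second, related omission: you leave the relations of the cocompletion as ``a suitable cocompletion,'' but the specific choice --- the cokernel diagrams that are \emph{locally split} right exact in $\Ind(\ca{A})$ --- is what makes the glued category abelian (via the ind-class criterion) and what allows epimorphisms between representables to be identified with locally split epimorphisms (the sheaf-theoretic argument of the appendix). Without pinning down both the generators and these relations, the claim that the construction yields a weakly Tannakian category cannot be established.
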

 
 This theorem also provides a more conceptual explanation for previous results of the author, namely \cite[Theorem~6.6]{SCHAEPPI_INDABELIAN} (which states that weakly Tannakian categories are closed under finite coproducts in $\ca{RM}$), \cite[Theorem~4.2]{SCHAEPPI_GEOMETRIC} (closure of weakly Tannakian categories under bicategorical pushouts), and \cite[Theorem~6.11]{SCHAEPPI_GEOMETRIC} (closure under arbitrary bicategorical colimits if $R$ is a $\mathbb{Q}$-algebra).

 Similar techniques can be used to generalize a well-known construction for Tannakian categories to the weakly Tannakian case. Given a weakly Tannakian category $\ca{A}$ and a set $S$ of objects with duals in $\ca{A}$, we can form the weakly Tannakian category ``generated'' by $S$. The key difference to the Tannakian case is that this might not be a subcategory of $\ca{A}$. Instead it is merely equipped with a universal morphism to $\ca{A}$ (see Proposition~\ref{prop:stack_generated_by_vector_bundles} for details).

 The main difficulty in the construction of the universal weakly Tannakian category of Theorem~\ref{thm:coreflective} lies in the construction of a \emph{fiber functor}, that is, a faithful and exact symmetric strong monoidal functor to a category of modules over a commutative $R$-algebra. The existence of such a functor is part of the definition of a weakly Tannakian category. Note that fiber functors correspond to $\fpqc$-atlases of the stack associated to the weakly Tannakian category in question. To establish the existence of such a functor in the case of our universal construction, we use a variant of Deligne's argument for the existence of fiber functors for Tannakian categories in characteristic zero. Our variant works independently of the characteristic of the ground field (or ring), though the characterization is less tractable (see Theorem~\ref{thm:description}). Nevertheless, it suffices to prove Theorem~\ref{thm:complete_cocomplete}. 

 Using this new result we can also generalize one of the main results of \cite{SCHAEPPI_GEOMETRIC} which characterizes weakly Tannakian categories intrinsically in the case where $R$ is a $\mathbb{Q}$-algebra. More precisely, we can show that one of the conditions of \cite[Theorem~1.4]{SCHAEPPI_GEOMETRIC} is implied by the others. Recall that the \emph{rank} of an object $V$ with a dual in a symmetric monoidal $R$-linear category $\ca{A}$ is by definition the trace of the identity of $V$, that is, the composite
\[
 \xymatrix{\U \ar[r] & V \otimes V^{\vee} \cong V^{\vee} \otimes V \ar[r] & \U}
\]
 of the evaluation and the coevaluation in the endomorphism ring of the unit object $\U$ of $\ca{A}$. We write $\rk V$ for the rank of $V$. The \emph{$k$-th exterior power} $\Lambda^k V$ of $V$ is given by the splitting of the idempotent
\[
 \frac{1}{k!} \sum_{\sigma \in \Sigma_k} \sgn(\sigma) \sigma \colon V^{\otimes k} \rightarrow V^{\otimes k}
\]
 (where $\sigma$ denotes the automorphism of $V^{\otimes k}$ given by permuting the factors of the $k$-fold tensor product of $V$). 

 Finally, we call a finitely cocomplete $R$-linear category \emph{ind-abelian} if its category of ind-objects is abelian. All abelian categories are ind-abelian. An example of an ind-abelian category which is not abelian is given by the category of finitely presentable modules over a non-coherent ring. There is also an intrinsic characterization of ind-abelian categories, see \cite[\S 1]{SCHAEPPI_INDABELIAN}.

\begin{thm}\label{thm:existence_of_fiber_functor}
 Let $R$ be a $\mathbb{Q}$-algebra, and let $\ca{A}$ be a right exact symmetric monoidal $R$-linear category. Then $\ca{A}$ is weakly Tannakian (that is, it is equivalent to the category of finitely presentable quasi-coherent sheaves on an Adams stack over $R$) if and only if the following conditions hold:
\begin{enumerate}
 \item[(i)] The category $\ca{A}$ is ind-abelian;
\item[(ii)] For every object $A \in \ca{A}$ there exists an object $V \in \ca{A}$ with a dual and an epimorphism $V \rightarrow A$;
\item[(iii)] If $V \in \ca{A}$ has a dual and $\rk(V)=0$, then $V \cong 0$;
\item[(iv)] For all objects $V \in \ca{A}$ with a dual there exists a number $k \in \mathbb{N}$ such that $\Lambda^k(V) \cong 0$.
\end{enumerate}
\end{thm}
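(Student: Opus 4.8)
The plan is to reduce the statement to the characterization of weakly Tannakian categories in \cite[Theorem~1.4]{SCHAEPPI_GEOMETRIC}. That theorem characterizes the weakly Tannakian $R$-linear categories, for $R$ a $\mathbb{Q}$-algebra, by conditions (i)--(iv) above together with one additional condition: that the rank of every object with a dual is, Zariski-locally on $\Spec\End_{\ca{A}}(\U)$, a non-negative integer --- equivalently, that for every $V\in\ca{A}$ with a dual there are pairwise orthogonal idempotents $e_0,\dots,e_N\in\End_{\ca{A}}(\U)$ with $\sum_j e_j=\id_{\U}$ and $\rk(V)=\sum_j j\,e_j$. Granting this, the ``only if'' direction of the theorem is already contained in \cite[Theorem~1.4]{SCHAEPPI_GEOMETRIC} (there condition (iv) above encodes the fact that a vector bundle on a quasi-compact stack has bounded rank), so the entire content is to prove that (i)--(iv) imply this additional condition.

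The key computation is the identity
\[
 \operatorname{tr}\Bigl(\tfrac{1}{k!}\sum_{\sigma\in\Sigma_k}\sgn(\sigma)\,\sigma \;\Big|\; V^{\otimes k}\Bigr) \;=\; \binom{\rk(V)}{k} \;\defl\; \tfrac{1}{k!}\prod_{j=0}^{k-1}\bigl(\rk(V)-j\bigr) \;\in\;\End_{\ca{A}}(\U),
\]
valid for any object $V$ with a dual in a symmetric monoidal $\mathbb{Q}$-linear category, where $\operatorname{tr}$ denotes the trace of an endomorphism of a dualizable object (with values in $\End_{\ca{A}}(\U)$, so that $\rk(V)=\operatorname{tr}(\id_V)$). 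I would prove it from two standard facts: the trace is $\End_{\ca{A}}(\U)$-linear, and the trace of the automorphism of $V^{\otimes k}$ induced by a permutation $\sigma\in\Sigma_k$ equals $\rk(V)^{c(\sigma)}$, where $c(\sigma)$ is the number of cycles of $\sigma$ (a string-diagram computation). The identity then follows from the combinatorial fact that $\sum_{\sigma\in\Sigma_k}\sgn(\sigma)x^{c(\sigma)}$ equals the falling factorial $\prod_{j=0}^{k-1}(x-j)$.

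To finish, fix $V\in\ca{A}$ with a dual and use (iv) to pick $k$ with $\Lambda^k(V)\cong 0$. The idempotent appearing on the left-hand side above has image $\Lambda^k(V)=0$, so it is the zero endomorphism of $V^{\otimes k}$, and the identity gives $\prod_{j=0}^{k-1}(\rk(V)-j)=0$ in the commutative $\mathbb{Q}$-algebra $\End_{\ca{A}}(\U)$. The elements $\rk(V)-j$ are pairwise comaximal because their differences are non-zero integers, hence units; the Chinese remainder theorem therefore yields orthogonal idempotents $e_0,\dots,e_{k-1}$ summing to $\id_{\U}$ with $e_j\cdot(\rk(V)-j)=0$, whence $\rk(V)=\sum_j j\,e_j$, which is exactly the additional condition above. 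If its precise formulation in \cite[Theorem~1.4]{SCHAEPPI_GEOMETRIC} also requires a matching exterior power to vanish on each idempotent summand, this comes for free: on $e_j\ca{A}$ the object $e_jV$ has rank $j$, so $\Lambda^{j+1}(e_jV)$ has rank $\binom{j}{j+1}=0$ and vanishes by (iii).

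The main obstacle is not this computation but matching it precisely to the formulation of \cite[Theorem~1.4]{SCHAEPPI_GEOMETRIC}: over a general base the rank of an object with a dual is only \emph{locally} constant on $\Spec\End_{\ca{A}}(\U)$, so one cannot conclude that $\rk(V)$ is literally a non-negative integer, and the Chinese remainder decomposition --- together with the resulting decomposition of $\ca{A}$ into the summands $e_j\ca{A}$, on each of which conditions (i)--(iv) may be invoked --- is exactly what bridges this gap.
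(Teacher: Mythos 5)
Your overall strategy --- reduce to \cite[Theorem~1.4]{SCHAEPPI_GEOMETRIC} and show that its one extra hypothesis is redundant --- is exactly the paper's strategy, but you have misidentified which hypothesis is the extra one, and as a result the genuinely new step is missing. The condition you prove (that $\rk(V)$ decomposes, via orthogonal idempotents of the commutative ring $\End_{\ca{A}}(\U)$, into non-negative integer pieces) is not a separate hypothesis of \cite[Theorem~1.4]{SCHAEPPI_GEOMETRIC}: it is derived there from conditions (iii) and (iv) inside the proof of \cite[Proposition~5.12]{SCHAEPPI_GEOMETRIC}, by precisely the trace identity $\rk(\Lambda^k V)=\binom{\rk V}{k}$ and the Chinese-remainder argument you describe (this part goes back to Deligne). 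One can see this from the present paper itself: the proof of Theorem~\ref{thm:qalg_geometric_characterization} states that ``generated by duals, rank zero implies zero, and vanishing exterior powers'' already yield a generator consisting of locally free objects of constant rank by the proof of \cite[Proposition~5.12]{SCHAEPPI_GEOMETRIC}, with no rank-positivity hypothesis listed. The hypothesis that actually becomes redundant is the one tied to constructing the fiber functor: that every epimorphism $p\colon V\to\U$ with dualizable domain is locally split, in the form that the duals of the morphisms $\Sym^k(p^\vee)$ are epimorphisms. Your argument says nothing about this, so the reduction to \cite[Theorem~1.4]{SCHAEPPI_GEOMETRIC} is incomplete.

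The missing ingredient is supplied in the paper by Proposition~\ref{prop:locally_split_criteria} together with Corollary~\ref{cor:qalg_epis_locally_split}: over a $\mathbb{Q}$-algebra, $\Sym^i(V^\vee)$ is a direct summand of $(V^\vee)^{\otimes i}$ and hence dualizable, the dual-of-symmetric-power condition is equivalent to the condition that the morphisms $(p^{\otimes i})^{\Sigma_i}$ on $\Sigma_i$-fixed points are epimorphisms, and in characteristic zero fixed points agree with coinvariants up to the unit $i!$, so these morphisms are automatically epimorphisms whenever $p$ is. With that in hand the paper's proof is a two-line reduction: Proposition~\ref{prop:geometric_weakly_tannakian} translates the statement into one about geometric tensor categories, and Theorem~\ref{thm:qalg_geometric_characterization} (whose proof combines \cite[Proposition~5.12]{SCHAEPPI_GEOMETRIC} with Corollaries~\ref{cor:qalg_epis_locally_split} and \ref{cor:qalg_free_generators}) finishes the job. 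Your trace computation is correct as far as it goes, but it reproves the part of the argument that was already available rather than the part that is new.
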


 One of the main difficulties we encounter in proving Theorem~\ref{thm:coreflective} is that we need to generalize various concepts from algebraic geometry (for example, locally free objects and locally split epimorphisms) to arbitrary right exact symmetric monoidal categories. In other words, we need to find definitions of these concepts which only use the structure present in a right exact symmetric monoidal category, and which coincide with the usual definition in the case where the symmetric monoidal category is the category of finitely presentable quasi-coherent sheaves of an Adams stack. Then we need to check that the weakly Tannakian categories corresponding to certain classifying stacks have a (dual) universal property among \emph{all} right exact symmetric monoidal categories. For example, in \S \ref{section:torsors} we will develop the theory of torsors for general symmetric monoidal categories in order to establish that $\QCoh(\BGL_d)$ classifies ``locally free objects'' in very general symmetric monoidal categories. 

 Finally, to check that our construction has the desired universal property, we use a recent result of Tonini which---suitably interpreted, see \S \ref{section:universal}---gives a ``generators and relations'' presentation of the category of quasi-coherent sheaves of an Adams stacks purely in terms of vector bundles. We will prove a slightly different version of Tonini's result which closely follows the proof of a similar result in \cite[\S 3.1]{BHATT} for categories of quasi-coherent sheaves on algebraic spaces. This gives us sufficient control over symmetric monoidal functors \emph{out} of such symmetric monoidal categories to establish the universal property of the coreflection of Theorem~\ref{thm:coreflective}.

\section*{Acknowledgments}
 Some of the results in \S \ref{section:torsors} grew out of discussions with Martin Brandenburg during a visit at the University of Western Ontario. I am grateful to both Martin and to Rick Jardine, who made this visit possible. I would also like to thank Fabio Tonini for sending me an early preprint of his recent paper \cite{TONINI}. His results are used to prove that our constructions actually satisfy the desired universal properties.

 I thank Bhargav Bhatt and Martin for pointing out some corrections and suggesting improvements.

 This paper was written at the University of Sheffield during a stay made possible by the Swiss National Foundation Fellowship P2SKP2\_148479.
\section{Categorical background}\label{section:background} 

\subsection{Tensor categories}
 Throughout we fix a commutative ring $R$. Unless stated otherwise, the categories and functors we consider are \emph{$R$-linear}. This means that the hom-sets between two objects $C$, $C^{\prime}$ of a category $\ca{C}$---which we denote by $\ca{C}(C,C^{\prime})$---are endowed with the structure of an $R$-module and that composition of morphisms is $R$-bilinear. Functors between $R$-linear categories are called $R$-linear if they preserve the $R$-module structure.

 An $R$-linear \emph{symmetric monoidal category} is a category $\ca{C}$ equipped with an $R$-bilinear functor
\[
 \otimes \colon \ca{C} \times \ca{C} \rightarrow \ca{C} \smash{\rlap{,}}
\]
 a \emph{unit object} $\U \in \ca{C}$ (which we also denote by $\U_{\ca{C}}$ if there is a potential for confusion), and natural isomorphisms
\[
 \alpha \colon (C \otimes C^{\prime}) \otimes C^{\prime\prime} \cong C \otimes (C^{\prime}\otimes C^{\prime\prime}) \text{,} \quad \sigma \colon C \otimes C^{\prime} \cong C^{\prime} \otimes C \quad \text{and} \quad 
C \otimes \U \cong C \cong \U \otimes C
\]
 for all objects $C, C^{\prime}, C^{\prime\prime} \in \ca{C}$. These are subject to axioms which ensure that any two morphisms built from these isomorphisms which induce the same permutation on objects coincide (see \cite[Theorem~4.2]{MACLANE_COHERENCE}). In particular, we often get a non-trivial action of the symmetric group $\Sigma_n$ on $n$ letters on the $n$-fold tensor product $X^{\otimes n}$ of an object $X \in \ca{C}$. The main example to keep in mind is the category of quasi-coherent sheaves on a scheme or stack $X$ over $R$, where $\U$ is the sheaf of rings $\U_X$, considered as a sheaf of modules over itself.

 A \emph{symmetric monoidal functor} $F \colon \ca{C} \rightarrow \ca{D}$ is an $R$-linear functor $F \colon \ca{C} \rightarrow \ca{D}$ equipped with natural transformations
\[
 \varphi^F_{C,C^{\prime}} \colon FC \otimes FC^{\prime} \rightarrow F(C\otimes C^{\prime}) \quad\text{and}\quad
 \varphi_0^F \colon \U_{\ca{D}} \rightarrow F\U_{\ca{C}}
\]
 subject to compatibility conditions. The functor $F$ is called symmetric \emph{strong} monoidal if $\varphi^F_{C,C^{\prime}}$ and $\varphi^F_0$ are isomorphisms. If $f \colon X \rightarrow Y$ is a morphism of quasi-compact quasi-separated schemes, then $f^{\ast} \colon \QCoh(Y) \rightarrow \QCoh(X)$ is symmetric strong monoidal and the direct image functor $f_{\ast} \colon \QCoh(X) \rightarrow \QCoh(Y)$ is always symmetric monoidal, but rarely strong. A natural transformation between symmetric monoidal functors $F,G$  is called \emph{symmetric monoidal} if it is compatible with the structure morphisms $\varphi^F$, $\varphi^F_0$ and $\varphi^G$, $\varphi^G_0$.

 A symmetric monoidal category is called \emph{closed} if the functors $C \otimes - \colon \ca{C} \rightarrow \ca{C}$ have a right adjoint for all $C \in \ca{C}$. In this case we usually write $[C,-]$ or $[C,-]_{\ca{C}}$ for the right adjoint, and we call $[C,C^{\prime}]$ the \emph{internal hom} of $\ca{C}$.

 An $R$-linear category is called \emph{cocomplete} if it has colimits of all small diagrams (that is, diagrams whose indexing category has a set of objects rather than a proper class). A \emph{tensor category} is a symmetric monoidal closed category which is also cocomplete. A \emph{tensor functor} is a symmetric strong monoidal functor which also has a right adjoint. Any such right adjoint inherits a unique structure of a symmetric monoidal functor in such a way that the unit and counit become symmetric monoidal natural transformations (this is a very general fact about categories equipped with algebraic structure, see \cite{KELLY_DOCTRINAL}). Note that this definition of tensor category differs slightly from the one used in \cite{SCHAEPPI_GEOMETRIC}, where we also assumed that the tensor category is locally finitely presentable (see below). The reason for this discrepancy is that the results of \S \ref{section:torsors} hold at this level of generality. Examples of tensor categories which are \emph{not} locally finitely presentable are given by categories of quasi-coherent sheaves on schemes and stacks which are not quasi-compact. We do not consider such objects in this article, but \S \ref{section:torsors} suggests that some of our results could be extended to this context.

 A \emph{commutative algebra} $A$ in a symmetric monoidal category $\ca{C}$ over $R$ consists of an object $A \in \ca{C}$, a \emph{unit} $\eta \colon \U \rightarrow A$ and a multiplication $\mu \colon A \otimes A \rightarrow A$ such that various diagrams expressing the fact that $\mu$ is a commutative multiplication and $\eta$ is a unit commute. An \emph{$A$-module} in $\ca{C}$ is an object $M \in \ca{C}$ with an \emph{action} $\alpha \colon A \otimes M \rightarrow M$ which is compatible with $\eta$ and $\mu$. The category of $A$-modules in $\ca{C}$ is denoted by $\ca{C}_A$. If $\ca{C}$ has coequalizers and tensoring with an object in $\ca{C}$ preserves them, then $\ca{C}_A$ is again a symmetric monoidal category over $R$, with tensor product given by the usual coequalizer involving the two $A$-actions.

 The ``free $A$-module functor'' $A \otimes - \colon \ca{C} \rightarrow \ca{C}_A$ is a symmetric strong monoidal functor: it is left adjoint to the forgetful functor $U \colon \ca{C}_A \rightarrow \ca{C}$. We often write this as functor as $(-)_A \colon \ca{C} \rightarrow \ca{C}_A$ and we call $M_A$ the \emph{base change} of $M \in \ca{C}$. Given a morphism $f\colon M \rightarrow N$ in $\ca{C}$ where $N$ is the underlying object of an $A$-module $(N,\alpha)$, the corresponding morphism of $A$-modules $M_A \rightarrow N$ is given by the composite
\[
 \xymatrix{A \otimes M \ar[r]^-{A \otimes f} & A \otimes N \ar[r]^-{\alpha} & N}
\]
 in $\ca{C}$.

 \subsection{Locally finitely presentable categories}
 An object $C$ of an $R$-linear category $\ca{C}$ is called \emph{finitely presentable} if the $R$-linear functor
\[
 \ca{C}(C,-) \colon \ca{C} \rightarrow \Mod_R
\]
 preserves filtered colimits. For example, the finitely presentable objects in $\Mod_R$ are precisely the finitely presentable $R$-modules. We denote the full subcategory of $\ca{C}$ consisting of the finitely presentable objects by $\ca{C}_{\fp}$.

 An $R$-linear category is called \emph{locally finitely presentable} (lfp for short) if it is cocomplete and there is a set $\ca{G} \subseteq \ca{C}_{\fp}$ of finitely presentable objects which is a strong generator (that is, $\ca{C}(G,f)$ is an isomorphism for all $G \in \ca{G}$ if and only if $f$ is an isomorphism). An important fact about lfp categories that we will frequently use is that filtered colimits in an lfp category are exact (that is, they commute with finite limits, see \cite[Korollar~7.12]{GABRIEL_ULMER}). In particular, an lfp abelian category is also a Grothendieck abelian category. There are many other technical conveniences of lfp categories. For example, a functor between lfp categories has a right adjoint if and only if it preserves all small colimits (such functors are called \emph{cocontinuous}).

 An \emph{lfp tensor category} is a tensor category which is also locally finitely presentable in a compatible way. More precisely, we ask that the unit object of an lfp tensor category is finitely presentable, and that $C \otimes C^{\prime}$ is locally presentable if both $C$ and $C^{\prime}$ are. Note that if $\ca{C}$ is an lfp tensor category and $A \in \ca{C}$ is a commutative algebra, then the category $\ca{C}_A$ of $A$-modules in $\ca{C}$ is an lfp tensor category as well. The images of the finitely presentable objects in $\ca{C}$ under the base change functor $(-)_A \colon \ca{C} \rightarrow \ca{C}_A$ give the desired generator of $\ca{C}_A$ consisting of finitely presentable objects. Moreover, $\ca{C}_A$ is abelian if $\ca{C}$ is since limits and colimits in $\ca{C}_A$ are computed as in $\ca{C}$.

 From the fact that filtered colimits commute with finite limits in the category $\Mod_R$ of $R$-modules it follows that $\ca{C}_{\fp}$ is always closed under finite colimits in $\ca{C}$. In particular, if $\ca{C}$ is an lfp tensor category, then $\ca{C}_{\fp}$ is a right exact symmetric monoidal category as defined in the introduction, that is, the functor $C \otimes - \colon \ca{C}_{\fp} \rightarrow \ca{C}_{\fp}$ is right exact. Conversely, if $\ca{A}$ is right exact symmetric monoidal, then its category $\Ind(\ca{A})$ of ind-objects (equivalently, the category $\Lex[\ca{A}^{\op},\Mod_R]$ of $R$-linear functors $\ca{A}^{\op} \rightarrow \Mod_R$ which send finite colimits to finite limits) is an lfp tensor category with the Day convolution tensor product (see \S \ref{section:day_convolution} below). However, not every tensor functor between lfp tensor categories arises from a right exact symmetric monoidal functors between the respective subcategories of finitely presentable objects. The reason is that a general tensor functor need not preserve finitely presentable objects.
\section{Quasi-coherent sheaves as a universal cocompletion}\label{section:universal}

\subsection{Overview}\label{section:universal:overview}

 In this section we prove a slight generalization of a recent result of Tonini about the category of quasi-coherent sheaves on an Adams stack. In order to state it, we need to introduce some notation. Given a full subcategory $\ca{A}$ of an $R$-linear category $\ca{C}$ and a set $\Sigma$ of colimit diagrams in $\ca{A}$, we write $\Lex_\Sigma[\ca{A}^{\op},\Mod_R]$ for the category of $R$-linear presheaves which send the colimit diagrams in $\Sigma$ to limit diagrams. If the colimits in $\Sigma$ are preserved by the inclusion $K \colon \ca{A} \rightarrow \ca{C}$ (that is, they consist of colimits in $\ca{C}$ which happen to lie in $\ca{A}$), then we can define a functor
\[
 \Hom_{\ca{A}}(K,-) \colon \ca{C} \rightarrow \Lex_{\Sigma}[\ca{A}^{\op},\Mod_R]
\]
 which sends an object $C \in \ca{C}$ to the presheaf $\ca{C}(K-,C)$.

 Now let $X$ be an Adams stack over $R$. We are mainly interested in the case where $\ca{C}=\QCoh(X)$ is the category of quasi-coherent sheaves on $X$ and $\ca{A}$ is a generator of $\ca{C}$. In \cite{TONINI}, Tonini proved several variants of the following theorem.

\begin{thm}[Tonini]\label{thm:tonini}
 Let $X$ be an Adams over $R$, and let $\ca{A} \subseteq \QCoh_{\fp}(X)$ be a subcategory of finitely presentable quasi-coherent sheaves on $X$ which generates $\QCoh(X)$. Let $\Sigma$ be the set of right exact sequences in $\QCoh(X)$ whose entries all lie in $\ca{A}$ and write $K \colon \ca{A} \rightarrow \QCoh(X)$ for the inclusion. Then
\[
 \Hom_{\ca{A}}(K,-) \colon \QCoh(X) \rightarrow \Lex_{\Sigma}[\ca{A}^{\op},\Mod_R]
\]
 is an equivalence of categories.
\end{thm}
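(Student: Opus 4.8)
The plan is to realise $\Hom_{\ca{A}}(K,-)$ as the right adjoint of an adjunction and to check that the unit and counit are isomorphisms. The functor is well defined because $\QCoh(X)(-,C)$ sends colimits to limits, so it carries each right exact sequence in $\Sigma$ to a left exact one. Since $[\ca{A}^{\op},\Mod_R]$ is the free $R$-linear cocompletion of $\ca{A}$, the inclusion $K$ extends to a cocontinuous $R$-linear functor $\hat K\colon[\ca{A}^{\op},\Mod_R]\rightarrow\QCoh(X)$, and its right adjoint $C\mapsto\QCoh(X)(K-,C)$ factors through $\Lex_{\Sigma}[\ca{A}^{\op},\Mod_R]$ — it is precisely $\Hom_{\ca{A}}(K,-)$. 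As each $\sigma=(A'\to A\to A''\to 0)$ in $\Sigma$ is by definition a cokernel diagram in $\QCoh(X)$, the functor $\hat K$ inverts the canonical maps $\operatorname{coker}\bigl(\ca{A}(-,A')\to\ca{A}(-,A)\bigr)\to\ca{A}(-,A'')$, and these generate the reflective localization $[\ca{A}^{\op},\Mod_R]\rightarrow\Lex_{\Sigma}[\ca{A}^{\op},\Mod_R]$ (the target is a small-orthogonality class in a locally presentable category, hence reflective and cocomplete; it is closed under kernels and products in the presheaf category, hence complete, and the representables form a generating set). Therefore $\hat K=L\circ(\text{reflection})$ with $L\colon\Lex_{\Sigma}[\ca{A}^{\op},\Mod_R]\rightarrow\QCoh(X)$ cocontinuous and $L\dashv\Hom_{\ca{A}}(K,-)$. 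It now suffices to prove that $\Hom_{\ca{A}}(K,-)$ is fully faithful — equivalently, that the counit $L\,\Hom_{\ca{A}}(K,-)\Rightarrow\id_{\QCoh(X)}$ is an isomorphism — and, granting this, that it is essentially surjective — equivalently, that the unit $\id\Rightarrow\Hom_{\ca{A}}\bigl(K,L(-)\bigr)$ is an isomorphism.

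Full faithfulness uses only that $\QCoh(X)$ is a Grothendieck abelian category with $\ca{A}$ a generating set of finitely presentable objects. Faithfulness is immediate, since a morphism inducing the zero map on $\QCoh(X)(A,-)$ for all $A\in\ca{A}$ is zero (precompose with an epimorphism onto the source from a coproduct of objects of $\ca{A}$). For fullness, fix $C,D$ and an $R$-linear natural transformation $\theta$ from $\QCoh(X)(K-,C)$ to $\QCoh(X)(K-,D)$, and choose a presentation $\bigoplus_j A_j\xrightarrow{d}\bigoplus_i A_i\xrightarrow{p}C\to 0$ with $A_i,A_j\in\ca{A}$ (cover $C$, then cover the kernel). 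As everything is $R$-linear, $\theta$ extends canonically to the additive closure $\ca{A}^{\oplus}$ of $\ca{A}$ in $\QCoh(X)$. Define $\bar p\colon\bigoplus_i A_i\to D$ componentwise by $\bar p|_{A_i}=\theta_{A_i}(p|_{A_i})$; using finite presentability of each $A_j$ to factor $d|_{A_j}$ through a finite subsum, together with naturality of $\theta$ on $\ca{A}^{\oplus}$, one gets $\bar p\circ d=0$, so $\bar p=h\circ p$ for a unique $h\colon C\to D$. That $h$ induces $\theta$ follows from the same bookkeeping applied to an arbitrary $a\colon A\to C$ ($A\in\ca{A}$) after covering the fibre product $A\times_C\bigoplus_i A_i$ by objects of $\ca{A}$.

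Essential surjectivity — that the unit $\eta_F\colon F\to\Hom_{\ca{A}}(K,LF)$ is an isomorphism for every $F\in\Lex_{\Sigma}[\ca{A}^{\op},\Mod_R]$ — is the main obstacle, and the place where the hypotheses on $X$ are genuinely used. Because $\Lex_{\Sigma}[\ca{A}^{\op},\Mod_R]$ is closed under kernels in the presheaf category, one can present $F$ by a \emph{pointwise} exact sequence $\bigoplus_j\ca{A}(-,A_j)\to\bigoplus_i\ca{A}(-,A_i)\to F\to 0$ (cover $F$ by representables, then cover the kernel, which is again $\Sigma$-left-exact). Applying the cocontinuous functor $L$ yields $LF\cong\operatorname{coker}\bigl(\bigoplus_j A_j\to\bigoplus_i A_i\bigr)$, and $\eta_F$ is a pointwise isomorphism exactly when $\QCoh(X)(B,-)$ is right exact on this sequence for every $B\in\ca{A}$, i.e. when suitable maps out of objects of $\ca{A}$ lift along epimorphisms of $\QCoh(X)$. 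Such liftings need not exist in a general Grothendieck abelian category (the objects of $\ca{A}$ need not be projective), and this is precisely where the resolution property of the Adams stack enters: one resolves the sheaves in question by vector bundles, uses that $X$ admits an $\fpqc$-atlas $\Spec A\to X$ along which a vector bundle becomes a finitely generated projective (hence projective) module, and descends. Concretely, I expect to first establish the theorem when $\ca{A}$ consists of vector bundles, following closely the argument of \cite[\S 3.1]{BHATT} for algebraic spaces (with the resolution property and the atlas of the Adams stack playing the role of Bhatt's Zariski-local trivializations), and then to deduce the case of an arbitrary generating $\ca{A}$ by comparing presentations of $F$ by objects of $\ca{A}$ with presentations by vector bundles, using that by the resolution property every object of $\ca{A}$ is a quotient of a vector bundle. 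This descent step is where the real work lies.
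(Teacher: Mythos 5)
Your setup (realising $\Hom_{\ca{A}}(K,-)$ as the right adjoint of a cocontinuous $L$ on the reflective localization $\Lex_{\Sigma}$) and your full-faithfulness argument are sound; the latter is essentially the Day--Street density argument and does only use that $\ca{A}$ is a generating set of finitely presentable objects (note that, like the paper's Theorem~\ref{thm:lfp_abelian}, you implicitly need $\ca{A}$ closed under finite direct sums, or at least you need to pass to its additive closure as you do). The genuine gap is the last third: essential surjectivity is exactly the content of the theorem, and you do not prove it --- you announce a plan (``I expect to first establish the theorem when $\ca{A}$ consists of vector bundles\dots this descent step is where the real work lies''). A plan is not a proof, and as written the argument stops before the main point.

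Moreover the plan misdiagnoses where the difficulty sits. You assert that the required liftings ``need not exist in a general Grothendieck abelian category'' and that ``this is precisely where the resolution property of the Adams stack enters,'' proposing a vector-bundle/atlas descent \`a la Bhatt. In fact the statement is a purely categorical fact about lfp abelian categories: the paper deduces Theorem~\ref{thm:tonini} from Theorem~\ref{thm:lfp_abelian}, which holds for \emph{any} lfp abelian $\ca{C}$ and \emph{any} generator $\ca{A}\subseteq\ca{C}_{\fp}$ closed under finite direct sums; the Adams/resolution hypotheses are used only to know that $\QCoh(X)$ is lfp abelian. The lifting you actually need is not projectivity of the objects of $\ca{A}$ but the weaker Lemma~\ref{lemma:epis_in_lfp_abelian}: given an epimorphism $C'\twoheadrightarrow C$ and $f\colon A\to C$ with $A\in\ca{A}$, there is an epimorphism $A'\twoheadrightarrow A$ with $A'\in\ca{A}$ and a lift $A'\to C'$. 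This holds in any lfp abelian category because filtered colimits are exact and $A$ is finitely presentable (factor a covering $\bigoplus_{i\in I}A_i\twoheadrightarrow A\times_C C'$ through a finite subsum). That lemma is precisely what shows $\Hom_{\ca{A}}(K,-)$ preserves epimorphisms, hence is cocontinuous (using that $\Lex_{\Sigma}$ is abelian, which the paper gets from the ind-class machinery of Proposition~\ref{prop:ind_class_implies_abelian} --- another structural input your sketch does not supply); once both adjoints are cocontinuous, the unit and counit need only be checked on representables and on objects of $\ca{A}$, where they are isomorphisms by Yoneda and the triangle identities. Replacing your descent plan by this purely categorical argument both closes the gap and removes the unnecessary dependence on an atlas.
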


 Up to a minor additional assumption on the generator $\ca{A}$ (closure under certain kernels, for example), this follows from \cite[Theorem~3.18 and Proposition~3.26]{TONINI}.

 The main case of interest for us is where the category $\ca{A}$ is the category $\VB^c(X)$ of vector bundles of constant rank. Note that Tonini proved much more general results, for stacks that need not be quasi-compact and for generators that need not consist of finitely presentable quasi-coherent sheaves. As we will see in \S \ref{section:tonini_new_proof} below, in the case of Adams stacks, his arguments can be slightly generalized. 

 If $\ca{A}$ contains the unit and is closed under taking tensor products, it is itself a symmetric monoidal category. For any such category $\ca{A}$, the category $\Prs{A}$ of enriched presheaves can be endowed with the Day convolution symmetric monoidal structure. If the monoidal structure interacts well with a given set of colimits $\Sigma$ in $\ca{A}$, then the monoidal structure on $\Prs{A}$ induces a symmetric monoidal structure on $\Lex_\Sigma[\ca{A}^{\op},\Mod_R]$ via Day reflection. Our goal in this section is to show that the equivalence of Theorem~\ref{thm:tonini} is compatible with this symmetric monoidal structure, and that $\Lex_{\Sigma}[\ca{A}^{\op},\Mod_R]$ has a universal property among all tensor categories over $R$.

\subsection{Day convolution and its universal property}\label{section:day_convolution}
 Let $(\ca{A},\otimes,\U)$ be a small symmetric monoidal $R$-linear category, and let $\Prs{A}$ denote the category of $R$-linear presheaves on $\ca{A}$. The \emph{Day convolution} symmetric monoidal structure
\[
 \bigl(\Prs{A},\ast,\ca{A}(-,\U)\bigr)
\]
 on $\Prs{A}$ is given by
\[
 (F \ast G)(A) \defl \int^{B,C} FB \otimes GC \otimes \ca{A}(A,B\otimes C)
\]
 and unit $\ca{A}(-,\U)$. The presheaf
\[
 [F,G](A) \defl \int_B [FB,G(A\otimes B)]
\]
 on $\ca{A}$ gives an internal hom for $\bigr(\Prs{A},\ast,\ca{A}(-,\U)\bigl)$, that is, we have isomorphisms $\Prs{A}(F\ast G,H) \cong \Prs{A}(F,[G,H])$, natural in $F$, $G$, and $H$.

 Now let $\Sigma$ be a set of colimit diagrams in $\ca{A}$. If there is no confusion possible, we will use the abbreviation $\Lex_\Sigma$ for $\Lex_\Sigma[\ca{A}^{\op},\Mod_R]$. The inclusion $\Lex_\Sigma \rightarrow \Prs{A}$ has a left adjoint $R \colon \Prs{A} \rightarrow \Lex_{\Sigma}$ by \cite[Theorem~6.11]{KELLY_BASIC}, that is, $\Lex_{\Sigma}$ is a reflective subcategory of $\Prs{A}$.

 Day's reflection theorem gives necessary and sufficient conditions such that a reflective subcategory of a symmetric monoidal category can be endowed with a compatible symmetric monoidal structre.

\begin{prop}\label{prop:day_reflection}
 Let $(\ca{A}, \otimes, \U )$ be a symmetric monoidal $R$-linear category, and let $\Sigma$ be a set of colimit diagrams in $\ca{A}$ which are preserved by tensoring with any fixed object in $\ca{A}$. Then $\Lex_\Sigma[\ca{A}^{\op},\Mod_R]$ has a unique symmetric monoidal structure such that the reflection $R \colon \Prs{A} \rightarrow \Lex_{\Sigma}$ is symmetric strong monoidal.
\end{prop}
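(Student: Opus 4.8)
The plan is to verify the hypotheses of Day's reflection theorem (in the enriched, $R$-linear setting) and then invoke it. Recall that Day's reflection theorem states: if $R \colon \ca{V} \rightarrow \ca{V}$ is the idempotent monad associated to a reflective subcategory $\ca{L}$ of a symmetric monoidal closed category $\ca{V}$, then $\ca{L}$ admits a (necessarily unique) symmetric monoidal structure making the reflection strong monoidal if and only if one of the following equivalent conditions holds: for every $F \in \Prs{A}$ and every $G \in \Lex_\Sigma$, the internal hom $[F,G]$ already lies in $\Lex_\Sigma$; equivalently, whenever $\eta_F \colon F \rightarrow RF$ is a reflection unit, the morphism $[\eta_F, G] \colon [RF,G] \rightarrow [F,G]$ is an isomorphism for all $G \in \Lex_\Sigma$. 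So the task reduces to checking this closure condition for the internal hom of the Day convolution structure on $\Prs{A}$.

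First I would make precise the reflective subcategory in play: by \cite[Theorem~6.11]{KELLY_BASIC}, $\Lex_\Sigma \defl \Lex_\Sigma[\ca{A}^{\op},\Mod_R]$ is a reflective (in fact, an orthogonality-defined, hence accessible-reflective) subcategory of $\Prs{A}$, with reflector $R$. An object $G \in \Prs{A}$ lies in $\Lex_\Sigma$ precisely when, for each colimit cocone $(A_i \rightarrow A)$ in $\Sigma$, the induced map $G(A) \rightarrow \lim_i G(A_i)$ is an isomorphism in $\Mod_R$; equivalently, $G$ is right-orthogonal to the set of maps $\colim_i \ca{A}(-,A_i) \rightarrow \ca{A}(-,A)$ in $\Prs{A}$ determined by $\Sigma$. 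Call this set of maps $\Sigma^{\sharp}$. Thus $\Lex_\Sigma = (\Sigma^{\sharp})^{\perp}$.

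The heart of the argument is the following: I claim that if $G \in \Lex_\Sigma$ and $F \in \Prs{A}$ is arbitrary, then $[F,G] \in \Lex_\Sigma$, where $[F,G](A) = \int_B [FB, G(A \otimes B)]$ is the Day internal hom. To see this, fix a colimit cocone $(A_i \rightarrow A)$ in $\Sigma$. By hypothesis this cocone is preserved by $- \otimes B$ for every $B \in \ca{A}$, so $(A_i \otimes B \rightarrow A \otimes B)$ is again a colimit cocone, and since $G \in \Lex_\Sigma$ we get $G(A \otimes B) \cong \lim_i G(A_i \otimes B)$ naturally in $B$. The functor $[F-, -]$ (as an end over $B$, and as a hom into a limit) preserves limits in its second variable, so
\[
 [F,G](A) = \int_B [FB, G(A\otimes B)] \cong \int_B [FB, \lim_i G(A_i \otimes B)] \cong \lim_i \int_B [FB, G(A_i \otimes B)] = \lim_i [F,G](A_i),
\]
 using that ends commute with limits and $[FB,-]$ is a right adjoint. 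Hence $[F,G] \in \Lex_\Sigma$, which is exactly the hypothesis of Day's reflection theorem. (One must also note the closure of $\Lex_\Sigma$ under the relevant limits and the standard fact that $\Sigma$ consisting of \emph{colimit} diagrams, together with $R$-linearity, causes no set-theoretic trouble since $\ca{A}$ is small.) Applying Day's reflection theorem then yields a unique symmetric monoidal structure on $\Lex_\Sigma$ making $R$ strong monoidal, and $\Lex_\Sigma$ is moreover closed (with internal hom the restriction of $[-,-]$), completing the proof.

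The main obstacle is bookkeeping rather than conceptual: one must be careful that the "preserved by tensoring" hypothesis is applied correctly — it gives preservation of each diagram in $\Sigma$ by $-\otimes B$ for a \emph{fixed} $B$, which is precisely what licenses pulling the limit $\lim_i$ through the end $\int_B$ in the displayed computation — and one must confirm that Day's reflection theorem is available in the $\Mod_R$-enriched context (it is; the proof in \cite{DAY_REFLECTION} is enriched, and the symmetric monoidal closed base $\Mod_R$ is complete and cocomplete). A secondary point to address is the \emph{uniqueness} clause, which is automatic from Day's theorem: any symmetric monoidal structure on $\Lex_\Sigma$ for which $R$ is strong monoidal must have $F' \ast_{\Lex_\Sigma} G' \cong R(F' \ast G')$ for $F', G' \in \Lex_\Sigma$ (regarding them as objects of $\Prs{A}$ via the fully faithful inclusion), pinning down the tensor product up to coherent isomorphism.
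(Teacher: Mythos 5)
Your proof is correct and follows essentially the same route as the paper: both verify the closure condition of Day's reflection theorem --- that $[F,G]$ lies in $\Lex_\Sigma$ whenever $G$ does --- using the hypothesis that tensoring with a fixed object preserves the colimits in $\Sigma$. The only difference is cosmetic: the paper reduces to representable $F$ and applies Yoneda to identify $[\ca{A}(-,C),G]\cong G(-\otimes C)$, whereas you run the end computation for general $F$; the two are equivalent since $[-,G]$ turns colimits into limits and $\Lex_\Sigma$ is closed under limits in $\Prs{A}$.
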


\begin{proof}
 One of the equivalent conditions of Day's reflection theorem \cite{DAY_REFLECTION} is that, for all $F \in \Lex_\Sigma$ and all $G$ in a generating set of $\Prs{A}$, the presheaf $[F,G]\in \Prs{A}$ lies in $\Lex_\Sigma$. Thus we can take $F$ to be a representable presheaf $\ca{A}(-,C)$. The Yoneda lemma
\[
 [F,G]=\int_B [\ca{A}(B,C),G(- \otimes B)]\cong G(-\otimes C)
\]
 reduces the problem to cheking that $-\otimes C$ preserves colimits in $\Sigma$ for all $C \in \ca{A}$, which is precisely the assumption.
\end{proof}

 The Day convolution monoidal structure satisfies a universal property. In order to give its full statement, we need to recall the concept of left Kan extensions. This is a construction which allows one to extend a given $R$-linear functor $\ca{A} \rightarrow \ca{C}$ along another $R$-linear functor $K \colon \ca{A} \rightarrow \ca{B}$ (usually some kind of inclusion). It always exists as long as $\ca{A}$ is small and $\ca{C}$ has small colimits.

 The left Kan extension of $F$ along $K$ is denoted by $\Lan_K F \colon \ca{B}\rightarrow \ca{C}$, and it is characterized by the existence of bijections
\[
 \Nat(\Lan_K F,G) \cong \Nat(F,G\circ K)
\]
 which are natural in $G$.

\begin{example}\label{example:kan_ext_along_yoneda}
 We will frequently need to consider left Kan extensions long a Yoneda embedding $Y \colon \ca{A} \rightarrow \Prs{A}$. In this case, $\Lan_Y F$ is given by the ``functor tensor product''
\[
 \Lan_Y F(G)=F\ten{\ca{A}} G =\int^A FA \otimes GA \smash{\rlap{.}}
\]
 It has a right adjoint $\Hom_\ca{A}(F,-) \colon \ca{C} \rightarrow \Prs{A}$, which sends an object $C \in \ca{C}$ to the presheaf $\ca{C}(F-,C)$ on $\ca{A}$. In \cite{KELLY_BASIC}, the functor tensor product is denoted by $F \star G$, and its right adjoint is denoted by $\widetilde{F}$. In order to avoid confusion with the Day convolution monoidal structure we have decided to use the above notation instead.
\end{example}

 In order to state the universal property of the Day convolution monoidal structure, we introduce the following notation. Given symmetric monoidal $R$-linear categories $\ca{A}$ and $\ca{B}$, we write $\Fun_{\otimes} (\ca{A},\ca{B})$ for the category of symmetric strong monoidal $R$-linear functors and symmetric monoidal natural transformations between them. If $\Sigma$ is a class of colimit diagrams in $\ca{A}$, then we write $\Fun_{\Sigma,\otimes}(\ca{A},\ca{B})$ for the full subcategory of $\Fun_{\otimes}(\ca{A},\ca{B})$ consisting of symmetric strong monoidal functors which also preserve all colimits in $\Sigma$. We write $\Fun_{c,\otimes}(\ca{A},\ca{B})$ for the full subcategory of $\Fun_{\otimes}(\ca{A},\ca{B})$ consisting of symmetric strong monoidal functors whose underlying functor is a left adjoint.

\begin{thm}[Universal property of Day convolution]\label{thm:day_convolution_universal}
 Let $(\ca{A}, \otimes, \U)$ be a symmetric monoidal $R$-linear category. Then the Yoneda embedding gives a symmetric strong monoidal functor $(\ca{A},\otimes,\U) \rightarrow \bigl(\Prs{A},\ast,\ca{A}(-,\U)\bigr)$. If $\ca{C}$ is a tensor category over $R$ and $F \in \Fun_\otimes(\ca{A},\ca{C})$, then $\Lan_Y F$ is a tensor functor. Moreover, the functors
\[
 \Lan_Y \colon \Fun_{\otimes}(\ca{A},\ca{C}) \rightarrow \Fun_{c,\otimes}(\Prs{A},\ca{C})
\]
 and
\[ 
 (-)\circ Y \colon \Fun_{c,\otimes}(\Prs{A},\ca{C}) \rightarrow \Fun_{\otimes}(\ca{A},\ca{C})
\]
 (restriction along the Yoneda embedding) are mutually inverse equivalences.
\end{thm}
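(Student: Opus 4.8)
The plan is to prove the theorem in two stages: first establish the claim that $\Lan_Y$ takes symmetric strong monoidal functors to tensor functors, and then establish the adjoint equivalence. For the first stage, I would start from the universal property of $\Lan_Y$ as described in Example~\ref{example:kan_ext_along_yoneda}: since $\ca{C}$ has small colimits, $\Lan_Y F$ exists and has a right adjoint $\Hom_{\ca{A}}(F,-)$, so its underlying functor is automatically cocontinuous. It remains to equip $\Lan_Y F$ with a symmetric strong monoidal structure and check the coherence. The key computational input is that $\Lan_Y F$ commutes with the Day convolution coend: using the coend formula $(P \ast Q)(A) = \int^{B,C} PB \otimes QC \otimes \ca{A}(A, B\otimes C)$ together with the fact that $\Lan_Y F(-) = F \ten{\ca{A}} (-)$ preserves colimits in the presheaf variable (in particular coends) and that tensoring in $\ca{C}$ is cocontinuous in each variable (because $\ca{C}$ is a tensor category, hence closed), one computes
\[
 \Lan_Y F(P \ast Q) \cong \int^{B,C} PB \otimes QC \otimes F(B \otimes C) \cong \left(\int^B PB \otimes FB\right) \otimes \left(\int^C QC \otimes FC\right) = \Lan_Y F(P) \otimes \Lan_Y F(Q),
\]
where the middle isomorphism uses that $F$ is strong monoidal, i.e. $F(B\otimes C) \cong FB \otimes FC$, and the co-Yoneda lemma. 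A parallel (easier) computation handles the unit: $\Lan_Y F(\ca{A}(-,\U)) \cong F\U \cong \U_{\ca{C}}$. One then has to verify that these isomorphisms satisfy the hexagon and unit coherence axioms; this is a diagram chase that reduces, via naturality of the coend isomorphisms, to the corresponding axioms for $F$ and for $\ca{C}$.

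For the second stage, I would use the standard adjoint-functor-along-Yoneda machinery. Restriction along $Y$, written $(-)\circ Y$, clearly lands in $\Fun_\otimes(\ca{A},\ca{C})$ since $Y$ is symmetric strong monoidal (the first sentence of the theorem, which is itself a short check using the universal property of Day convolution applied to representables). The content is that the two functors $\Lan_Y$ and $(-)\circ Y$ are mutually inverse. In one direction, $(\Lan_Y F)\circ Y \cong F$ as symmetric monoidal functors: the underlying natural isomorphism is the standard one for Kan extension along a fully faithful functor (the Yoneda embedding is fully faithful), and one checks it is monoidal by unwinding the monoidal structure built in stage one on representables, where it collapses to identities by co-Yoneda. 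In the other direction, given $G \in \Fun_{c,\otimes}(\Prs{A},\ca{C})$, I want $\Lan_Y(G\circ Y) \cong G$; this follows because any cocontinuous functor out of $\Prs{A}$ is the left Kan extension of its restriction along $Y$ (this is the ordinary, non-monoidal density/Kan extension statement, already implicit in \cite{KELLY_BASIC}), and one checks the comparison is symmetric monoidal again by testing on representables and invoking cocontinuity of both sides together with the coherence established in stage one. Finally I would note that both $\Lan_Y$ and $(-)\circ Y$ are functorial in symmetric monoidal natural transformations and that the two unit/counit isomorphisms just produced are themselves monoidal and natural, so they assemble into an equivalence of categories.

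I expect the main obstacle to be stage one — specifically, the verification that the monoidal structure coherence axioms for $\Lan_Y F$ hold, and more subtly, checking that the isomorphism $\Lan_Y F(P \ast Q) \cong \Lan_Y F(P) \otimes \Lan_Y F(Q)$ is \emph{natural} in $P$ and $Q$ and compatible with the associativity and symmetry constraints of Day convolution on the one side and of $\ca{C}$ on the other. The coend manipulations are "routine" in the sense that each step is a standard (co-)Yoneda reduction or an application of cocontinuity, but assembling them into a coherent symmetric monoidal functor requires care with the interchange of iterated coends and with the symmetry isomorphism, and it is here that one genuinely uses that $F$ is symmetric (not merely) strong monoidal. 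Everything in stage two, by contrast, is formal once stage one is in place: it is the familiar statement that $\Prs{A}$ with Day convolution is the free tensor category on the symmetric monoidal category $\ca{A}$, restricted to the appropriate hom-categories of strong monoidal functors.
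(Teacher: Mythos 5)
Your outline is correct, and it is essentially the standard proof of the universal property of Day convolution. The paper itself does not reprove this statement: its proof is the single line ``This is part of \cite[Theorem~5.1]{IM_KELLY},'' so the entire content you sketch --- the coend computation $\Lan_Y F(P\ast Q)\cong \Lan_Y F(P)\otimes \Lan_Y F(Q)$ via Fubini, co-Yoneda and cocontinuity of $\otimes$ in each variable, the unit identification, the coherence checks, and the density/Kan-extension argument identifying $\Lan_Y$ and $(-)\circ Y$ as mutually inverse --- is exactly what the cited reference carries out. Your identification of the genuinely nontrivial part (naturality and coherence of the structure isomorphisms on $\Lan_Y F$, and the point at which the symmetry of $F$ is used) matches where the real work lies in Im--Kelly's proof; the rest is, as you say, formal. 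The only thing to add is that a complete write-up would have to verify the coherence diagrams you defer, but deferring them to the literature is precisely what the paper does.
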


\begin{proof}
 This is part of \cite[Theorem~5.1]{IM_KELLY}.
\end{proof}

 The induced symmetric monoidal structure on $\Lex_\Sigma$ has a similar universal property. Note that the Yoneda embedding factors through $\Lex_\Sigma$ (since any representable functor sends colimits to limits). We denote the resulting embedding by $Z \colon \ca{A} \rightarrow \Lex_\Sigma$.

\begin{thm}[Universal property of {$\Lex_{\Sigma}[\ca{A}^{\op},\Mod_R]$}] \label{thm:lex_sigma_universal}
 Let $\ca{A}$ be a small symmetric monoidal $R$-linear category and let $\Sigma$ be a set of colimit diagrams in $\ca{A}$ which are all preserved by tensoring with objects in $\ca{A}$. If $\Lex_\Sigma$ is endowed with the symmetric monoidal closed structure of Proposition~\ref{prop:day_reflection}, then $Z \colon \ca{A} \rightarrow \Lex_\Sigma$ is a symmetric strong monoidal functor. If $\ca{C}$ is a tensor category over $R$ and $F \in \Fun_{\Sigma,\otimes}(\ca{A},\ca{C})$, then $\Lan_Z F \colon \Lex_\Sigma \rightarrow \ca{C}$ is a tensor functor. Moreover, the functors
\[
 \Lan_Z \colon \Fun_{\Sigma,\otimes}(\ca{A},\ca{C}) \rightarrow \Fun_{c,\otimes}(\Lex_{\Sigma},\ca{C})
\]
 and
\[ 
 (-)\circ Z \colon \Fun_{c,\otimes}(\Lex_\Sigma,\ca{C}) \rightarrow \Fun_{\Sigma,\otimes}(\ca{A},\ca{C})
\]
 are mutually inverse equivalences.
\end{thm}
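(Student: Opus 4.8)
The plan is to reduce everything to the universal property of the Day convolution structure (Theorem~\ref{thm:day_convolution_universal}) by descending along the reflection $R \colon \Prs{A} \rightarrow \Lex_\Sigma$, which is symmetric strong monoidal by Proposition~\ref{prop:day_reflection}. Writing $Y \colon \ca{A} \rightarrow \Prs{A}$ for the Yoneda embedding, we have $Z = R \circ Y$, so $Z$ is symmetric strong monoidal as a composite of such functors. First I would record three elementary properties of $Z$: it is fully faithful (since $\Lex_\Sigma(RYa, Yb) \cong \Prs{A}(Ya, Yb) \cong \ca{A}(a,b)$, using that $Yb \in \Lex_\Sigma$); it preserves the colimits in $\Sigma$ (because $R$ is cocontinuous and the comparison morphism $\colim_i Yd_i \rightarrow Y(\colim_i d_i)$ attached to a diagram in $\Sigma$ is by construction inverted by $R$); and its essential image is dense in $\Lex_\Sigma$ (density of the Yoneda embedding passes to reflective localizations, applying the cocontinuous $R$ to the density colimit of representables and using $R\circ i \cong \id$).

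Next, given a tensor category $\ca{C}$ over $R$ and $F \in \Fun_{\Sigma,\otimes}(\ca{A},\ca{C})$, I would apply Theorem~\ref{thm:day_convolution_universal} to obtain that $\Lan_Y F \colon \Prs{A} \rightarrow \ca{C}$ is a tensor functor with $\Lan_Y F \circ Y \cong F$. The key observation is that, since $\Lan_Y F$ is cocontinuous and $F$ preserves the colimits in $\Sigma$, the functor $\Lan_Y F$ sends each comparison morphism $\colim_i Yd_i \rightarrow Y(\colim_i d_i)$ to an isomorphism; as these generate (under the operations preserved by cocontinuous functors) the class of morphisms that the reflective localization $R$ inverts, $\Lan_Y F$ factors---uniquely up to isomorphism, among cocontinuous functors---as $\Lan_Y F \cong G \circ R$. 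Restricting along $Y$ gives $G \circ Z \cong F$, and density of $Z$ identifies $G$ with $\Lan_Z F$; in particular $\Lan_Z F$ exists and is cocontinuous, hence a left adjoint since $\Lex_\Sigma$ is locally presentable. For the strong monoidal structure I would exploit that every object of $\Lex_\Sigma$ is of the form $RP$ and that $R$ is symmetric strong monoidal, so that $RP \otimes RQ \cong R(P \ast Q)$ and $R(\ca{A}(-,\U)) \cong \U_{\ca{C}}$ after applying $\Lan_Z F$; combined with $\Lan_Z F \circ R \cong \Lan_Y F$ and the fact that $\Lan_Y F$ is strong monoidal, this transports the structure constraints to $\Lan_Z F$, with the coherence axioms inherited from those of $\Lan_Y F$ and $R$.

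Finally I would verify that $\Lan_Z$ and $(-)\circ Z$ are mutually inverse equivalences. Restriction along $Z$ lands in $\Fun_{\Sigma,\otimes}(\ca{A},\ca{C})$ because, for a tensor functor $H \colon \Lex_\Sigma \rightarrow \ca{C}$, the composite $H \circ Z$ is symmetric strong monoidal and preserves $\Sigma$-colimits (as $Z$ does and $H$ is cocontinuous). The isomorphism $\Lan_Z F \circ Z \cong F$ holds because $Z$ is fully faithful, so the pointwise left Kan extension restricts back to $F$; the isomorphism $H \cong \Lan_Z(H \circ Z)$ holds because $Z$ is dense and $H$ is cocontinuous. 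Both isomorphisms are symmetric monoidal by construction and (pseudo)natural in the relevant variable, which gives the asserted equivalence of categories.

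I expect the descent step of the second paragraph to be the main obstacle: identifying precisely the class of morphisms inverted by $R$, checking that $\Lan_Y F$ inverts it, and carrying the monoidal structure together with its coherence across $R$. Everything else should be formal manipulation of the universal properties of $\Prs{A}$ (Theorem~\ref{thm:day_convolution_universal}) and of reflective localizations; indeed, one could instead isolate an abstract ``monoidal reflective localization'' principle and deduce Theorem~\ref{thm:lex_sigma_universal} from Theorem~\ref{thm:day_convolution_universal} with essentially no extra work.
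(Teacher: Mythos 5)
Your proposal is correct and follows the same overall strategy as the paper: reduce everything to the universal property of Day convolution (Theorem~\ref{thm:day_convolution_universal}) by passing through the reflection $R \dashv I$. The difference lies in how the technical core is organized. You descend along $R$: you show that $\Lan_Y F$ inverts the comparison maps $\colim Y d_i \rightarrow Y(\colim d_i)$ and invoke the principle that a cocontinuous functor inverting the generating maps of a reflective localization inverts everything $R$ inverts, hence factors as $\Lan_Z F \circ R$; the strong monoidal structure is then transported along the strong monoidal, essentially surjective $R$. The paper instead restricts along $I$: Part~(iii) of Lemma~\ref{lemma:kan_extensions} gives $\Lan_Z F \cong \Lan_Y F \circ I$, the lax monoidal structure on the inclusion $I$ makes this composite lax monoidal for free, and strong monoidality follows once one knows that $\Lan_Y F$ inverts the unit $\eta \colon \id \rightarrow IR$, which is Part~(v) of that lemma. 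The step you single out as the main obstacle (identifying the class of maps inverted by $R$ and checking $\Lan_Y F$ inverts it) is exactly what Part~(v) discharges, and it does so without any saturation or transfinite argument: the right adjoint $\Hom_{\ca{A}}(F,-)$ of $\Lan_Y F$ sends $C$ to the presheaf $\ca{C}(F-,C)$, which lies in $\Lex_\Sigma$ precisely because $F$ preserves the colimits in $\Sigma$; by adjointness this forces $\Lan_Y F$ to invert $\eta$, equivalently to factor through $R$. Substituting this adjoint-functor observation for your ``generated class'' argument turns your proof into essentially a reorganization of the paper's. Your closing remark that one could isolate an abstract monoidal-reflective-localization principle is exactly Day's reflection theorem, which the paper already invokes in Proposition~\ref{prop:day_reflection}.
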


 The proof of this can be reduced to some basic properties of Kan extensions. The key observation we need is that for any $R$-linear functor $F \colon \ca{A} \rightarrow \ca{C}$ which preserves colimits in $\Sigma$, the left Kan extension $\Lan_Z F \colon \Lex_{\Sigma} \rightarrow \ca{C}$ is a left adjoint, see Part~(v) of the lemma below. Since any object in $\Lex_{\Sigma}$ can be written as a colimit of representable presheaves in a canonical way, and colimits in $\ca{C}$ commute with tensor products, we get induced comparison morphisms between the objects obtained by first taking a tensor product in $\Lex_\Sigma$ and then applying the left adjoint $\Lan_Z$, and the object obtained by taking the tensor product in $\ca{C}$ of the images under $\Lan_Z$. So, the above theorem could in some sense be proved ``manually,'' that is, by explicitly constructing symmetric strong monoidal structures, and then checking that a plethora of diagrams actually commute. It is much more efficient to reduce the proof of Theorem~\ref{thm:lex_sigma_universal}---using various basic properties of left Kan extensions---to the case considered in Theorem~\ref{thm:day_convolution_universal}.

 The first fact which we use is that left Kan extensions are preserved by left adjoints, in the sense that $L \circ \Lan_Z F\cong \Lan_Z LF$ for any left adjoint $L$. This follows easily from the defining property of Kan extensions. We collect the necessary additional facts in the following lemma. In order to prove Theorem~\ref{thm:lex_sigma_universal}, we will apply it in the case where $\ca{E}=\Lex_{\Sigma}$.

\begin{lemma}\label{lemma:kan_extensions}
 Let $\ca{A}$ be a small $R$-linear category and let $\ca{E}$ be a reflective subcategory of $\Prs{A}$ such that the Yoneda embedding factors through $\ca{E}$. Write $I \colon \ca{E} \rightarrow \Prs{A}$ for the inclusion, $R \colon \Prs{A} \rightarrow \ca{E}$ for its reflection, and $Z \colon \ca{A} \rightarrow \ca{E}$ for the corestriction of the Yoneda embedding. Let $F \colon \ca{A} \rightarrow \ca{C}$ be an $R$-linear functor whose codomain is cocomplete. Then the following hold.
\begin{enumerate}
 \item[(i)] There is an isomorphism $\Lan_Z F \circ Z \cong F$ natural in $F$;
 \item[(ii)] There is an isomorphism $R \cong \Lan_Y Z$;
 \item[(iii)] There is an isomorphism $\Lan_Z F \cong \Lan_Y F \circ I$ natural in $F$;
 \item[(iv)] If $L \colon \ca{E} \rightarrow \ca{C}$ is a left adjoint, then $LR \cong \Lan_Y LZ$.
\item[(v)] Let $\ca{E}=\Lex_{\Sigma}$ for some set $\Sigma$ of colimit diagrams in $\ca{A}$. If $F$ preserves the colimits in $\Sigma$, then $\Lan_Z F \colon \Lex_\Sigma \rightarrow \ca{C}$ has a right adjoint. Moreover, $\Lan_Y F$ sends  the unit $\eta \colon \id \rightarrow IR$ of the adjunction $R \dashv I$ to an isomorphism. 
\end{enumerate}
\end{lemma}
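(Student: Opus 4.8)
The plan is to establish the five parts in turn, after two preliminary observations that are used throughout. Write $Y \colon \ca{A} \to \Prs{A}$ for the Yoneda embedding, so that $I Z = Y$. First, $Z$ is fully faithful: $Y$ is, and $I$ is (being the inclusion of a \emph{full} reflective subcategory), so full faithfulness of $Z$ follows from $I Z = Y$. Second, $R \circ Y \cong Z$ naturally: since the Yoneda embedding factors through $\ca{E}$, every representable presheaf already lies in $\ca{E}$, and the counit $R I \to \id_{\ca{E}}$ of the reflection is an isomorphism, so $R(Y A) = R I(Z A) \cong Z A$. Part~(i) is then the standard fact (in the $R$-linear, i.e.\ $\Mod_R$-enriched, setting) that a pointwise left Kan extension along a fully faithful functor restricts back to the functor being extended; the extension $\Lan_Z F$ exists and is pointwise because $\ca{A}$ is small and $\ca{C}$ is cocomplete.

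For part~(ii), recall that $R$, being a left adjoint, is cocontinuous; by the universal property of the presheaf category (every cocontinuous functor out of $\Prs{A}$ is the left Kan extension along $Y$ of its restriction to representables) we get $R \cong \Lan_Y (R \circ Y) \cong \Lan_Y Z$. For part~(iii), I would combine the adjunction $(-) \circ I \dashv (-) \circ R$ on functor categories (induced by $R \dashv I$) with the universal property of $\Lan_Y$ and the identity $R \circ Y \cong Z$, to obtain, naturally in $G \colon \ca{E} \to \ca{C}$,
\[
 \Nat\bigl(\Lan_Y F \circ I, G\bigr) \cong \Nat\bigl(\Lan_Y F, G \circ R\bigr) \cong \Nat\bigl(F, G \circ R \circ Y\bigr) \cong \Nat\bigl(F, G \circ Z\bigr) \text{;}
\]
this is precisely the defining universal property of $\Lan_Z F$, and naturality in $F$ is inherited from that of each isomorphism. (Equivalently, $\Lan_Z F \cong \Lan_{R Y} F \cong \Lan_R(\Lan_Y F)$ and left Kan extension along the left adjoint $R$ is precomposition with its right adjoint $I$.) Part~(iv) is then immediate from part~(ii): left adjoints commute with left Kan extensions, so $L R \cong L \circ \Lan_Y Z \cong \Lan_Y(L Z)$.

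The substance of the lemma, and the step I expect to be the main obstacle, is part~(v): the subtlety is that $I \colon \Lex_\Sigma \to \Prs{A}$ is \emph{not} cocontinuous, so cocontinuity of $\Lan_Z F \cong \Lan_Y F \circ I$ cannot simply be read off from part~(iii), and the hypothesis that $F$ preserves the colimits in $\Sigma$ must genuinely be used. I would argue thus. By Example~\ref{example:kan_ext_along_yoneda}, $\Lan_Y F$ has a right adjoint $\Hom_{\ca{A}}(F,-) \colon \ca{C} \to \Prs{A}$, sending $C$ to the presheaf $\ca{C}(F-,C)$. Since $F$ preserves every colimit diagram in $\Sigma$, for such a diagram with colimit $A$ and any object $C$ the presheaf $\ca{C}(F-,C)$ carries it to a limit cone $\ca{C}(F A, C) \cong \lim \ca{C}(F A_i, C)$; hence $\Hom_{\ca{A}}(F,C) \in \Lex_\Sigma$ for all $C$, and $\Hom_{\ca{A}}(F,-)$ corestricts to a functor $\Phi \colon \ca{C} \to \Lex_\Sigma$ with $I \Phi \cong \Hom_{\ca{A}}(F,-)$. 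Then, using part~(iii) and the full faithfulness of $I$,
\[
 \ca{C}\bigl(\Lan_Z F(W), C\bigr) \cong \ca{C}\bigl(\Lan_Y F(I W), C\bigr) \cong \Prs{A}\bigl(I W, \Hom_{\ca{A}}(F,C)\bigr) \cong \Lex_\Sigma\bigl(W, \Phi C\bigr) \text{,}
\]
naturally in $W$ and $C$, so that $\Lan_Z F \dashv \Phi$. Finally, that $\Lan_Y F$ inverts the unit $\eta \colon \id \to I R$ follows, by the Yoneda lemma in $\ca{C}$, from the natural isomorphism $\ca{C}\bigl(\Lan_Y F(\eta_G), C\bigr) \cong \Prs{A}\bigl(\eta_G, \Hom_{\ca{A}}(F,C)\bigr)$ together with the fact that $\Hom_{\ca{A}}(F,C)$ is $\eta$-local (equivalently, lies in $\Lex_\Sigma$), which is what was just established. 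The remaining points are routine bookkeeping: the naturality of all comparison maps, the verification that the functor-category adjunction used in part~(iii) is the expected one, and the check that the pointwise colimit formula for $\Lan_Z F$ is legitimate (the comma categories $Z \downarrow e$ have only a set of objects since $\ca{A}$ is small).
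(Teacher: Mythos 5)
Your proof is correct and follows essentially the same route as the paper: the same reduction of (iii) and (v) to the adjunction $\Lan_Y F \dashv \Hom_{\ca{A}}(F,-)$ and the observation that $\ca{C}(F-,C)$ lies in $\Lex_\Sigma$ because $F$ preserves the colimits in $\Sigma$. The only (harmless) deviations are that you establish (iii) directly from the functor-category adjunction induced by $R \dashv I$ rather than citing the standard result on Kan extensions along fully faithful functors, and you deduce that $\Lan_Y F$ inverts $\eta$ by representability against $\Hom_{\ca{A}}(F,C)$ rather than via the chain $\Lan_Z F \circ R \cong \Lan_Y F$ and a triangle identity.
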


\begin{proof}
 Part~(i) is true for left Kan extensions along any fully faithful functor such as $Z$ (see \cite[Proposition~4.23]{KELLY_BASIC}). To get the isomorphism in Part~(ii), note that the left adjoint $R$ preserves Kan extensions, so we have $R \circ \Lan_Y Y \cong \Lan_Y RY \cong \Lan_Y Z$. Thus it suffices to prove that $\Lan_Y Y$ is isomorphic to the identity. This is a consequence of the Yoneda lemma, see \cite[Formula~(4.30) and Theorem~5.1]{KELLY_BASIC}.

 Part~(iii) is an immediate consequence of the last part of \cite[Theorem~4.47]{KELLY_BASIC} since $I$ is fully faithful. To see that Part~(iv) holds, note that $R\cong \Lan_Y Z$ by Part~(ii). Since left adjoints preserve left Kan extensions, we get the desired isomorphism $LR\cong L \Lan_Y Z \cong \Lan_Y LZ$.

 It remains to check that Part~(v) holds. To see that $\Lan_Z F$ has a right adjoint, note that $\Lan_Z F \cong \Lan_Y F \circ I$ by Part~(iii), that is, $\Lan_Z F$ is simply the restriction of $\Lan_Y F$ to $\Lex_\Sigma$. But $\Lan_Y F =F\ten{\ca{A}}-$ has a right adjoint $\Hom_{\ca{A}}(F,-)$, which sends $C \in \ca{C}$ to the presheaf $\ca{C}(F-,C)$. To see that its restriction to $\Lex_\Sigma$ has a right adjoint, it suffices to check that the $\Hom_{\ca{A}}(F,-)$ factors through $\Lex_{\Sigma}$. In other words, we want to show that the presheaf $\ca{C}(F-,C)$ sends colimits in $\Sigma$ to limits for ever $C \in \ca{C}$. This follows immediately from the fact that $F$ preserves colimits in $\Sigma$.

 To see that $\Lan_Y F$ sends the unit $\eta \colon \id \rightarrow IR$ to an isomorphism, note that we have isomorphisms
\[
 \Lan_Z F \circ R \cong \Lan_Z F \circ \Lan_Y Z \cong \Lan_Y (\Lan_Z F \circ Z) \cong \Lan_Y F
\]
 by (ii), by (iv) applied to the left adjoint $\Lan_Z F$, and by (i) respectively. Thus it suffices to check that $R\eta$ is an isomorphism. But this follows from one of the triangle identities since the counit of the adjunction $R \dashv I$ is an isomorphism.
\end{proof}

\begin{proof}[Proof of Theorem~\ref{thm:lex_sigma_universal}]
 We first show that $\Lan_Z F$ is a symmetric strong monoidal left adjoint for any $F \in \Fun_{\Sigma, \otimes}(\ca{A},\ca{C})$. From Part~(v) of Lemma~\ref{lemma:kan_extensions} we know that it is a left adjoint. Moreover, by Part~(iii) of Lemma~\ref{lemma:kan_extensions}, it is isomorphic to $\Lan_Y F \circ I$. The functor $\Lan_Y F$ is symmetric strong monoidal by Theorem~\ref{thm:day_convolution_universal}, and the inclusion 
\[
I \colon \Lex_\Sigma \rightarrow \Prs{A} 
\]
 is symmetric \emph{lax} monoidal by construction of the Day reflection tensor product. The structure morphism in question is given by $\eta_{F\ast G} \colon IF \ast IG \rightarrow IR(IF\ast IG)$. Here we use that the tensor product on $\Lex_\Sigma$ is given by $F \mathop{\widehat{\ast}} G=R(IF\ast IG)$ (see \cite{DAY_REFLECTION}). It follows that the composite $\Lan_Y F \circ I$ is certainly lax monoidal again, and to check that it is strong monoidal it suffices to check that $\Lan_Y F$ sends the unit $\eta \colon \id \rightarrow IR$ to an isomorphism. This is proved in Part~(v) of Lemma~\ref{lemma:kan_extensions}. Thus we can endow $\Lan_Z F$ with a unique symmetric strong monoidal structure such that the isomorphism $\Lan_Z F \cong \Lan_Y F \circ I$ is symmetric monoidal.

 To get the desired equivalence between $\Fun_{\Sigma,\otimes}(\ca{A},\ca{C})$ and $\Fun_{c,\otimes}(\Lex_\Sigma,\ca{C})$, it remains to check that the two composites are naturally isomorphic to the identity functors. We have isomorphisms
\[
 \Lan_Z F \circ Z \cong \Lan_Y F \circ I \circ Z =\Lan_Y F \circ Y \cong F \smash{\rlap{,}}
\]
 where the last isomorphism is implicit in Theorem~\ref{thm:day_convolution_universal}. It is in particular symmetric monoidal. The first is symmetric monoidal by definition of the symmetric strong monoidal structure on $\Lan_Z F$ (see above). The equality $IZ=Y$ is symmetric monoidal since the unit of the reflection of a representable functor can be taken to be an equality. Thus we do indeed get an isomorphism $\Lan_Z F \circ Z \cong F$ in $\Fun_{\Sigma,\otimes}(\ca{A},\ca{C})$. It is easy to check that this isomorphism is natural in $F$.

 To see the other isomorphism, let $L \in \Fun_{c,\otimes}(\Lex_{\Sigma},\ca{C})$ be a symmetric strong monoidal left adjoint. Then we have the chain of isomorphisms
\begin{align*}
 \Lan_Z LZ & \cong \Lan_Y LZ \circ I \\ 
&\cong L \circ \Lan_Y Z \circ I \\
&\cong L \circ R \circ I \\
&\cong L \smash{\rlap{,}}
\end{align*}
 where the last isomorphism is given by the counit of the adjunction $R \dashv I$. It is straightforward to see that these are all natural in $L$. The first is symmetric monoidal by definition of the symmetric strong monoidal structure on $\Lan_Z F$ (see above), and the last is symmetric monoidal since the counit of the adjunction $R \dashv I$ is symmetric monoidal by definition. That the remaining isomorphisms are symmetric monoidal follows from Theorem~\ref{thm:day_convolution_universal}.
\end{proof}

\begin{rmk}
 Both Proposition~\ref{prop:day_reflection} and Theorem~\ref{thm:lex_sigma_universal} are true for categories enriched in an arbitrary cosmos\footnote{A \emph{cosmos} is a complete and cocomplete symmetric monoidal closed category.} $\ca{V}$. In fact, the proof of Theorem~\ref{thm:lex_sigma_universal} works verbatim at this level of generality (using the fact that \cite[Theorem~5.1]{IM_KELLY} is proved at the same level of generality).
\end{rmk}

\subsection{Locally finitely presentable abelian categories} \label{section:tonini_new_proof}

 In this section we will prove the version of Tonini's result stated in Theorem~\ref{thm:tonini}. In the special case we are interested in (that is, in the case were the stacks are quasi-compact and the generators consist of finitely presentable objects), our version of Tonini's theorem is a special case of the following result about arbitrary lfp abelian categories.

\begin{thm}\label{thm:lfp_abelian}
 Let $\ca{C}$ be a locally finitely presentable $R$-linear abelian category and let $\ca{A}$ be a generator of $\ca{C}$ consisting of finitely presentable objects that is closed under finite direct sums. Let $\Sigma$ be the set of sequences
\[
 \xymatrix{A_0 \ar[r] & A_1 \ar[r]& A_2 \ar[r] & 0}
\]
 in $\ca{A}$ which are right exact in $\ca{C}$. Let $K \colon \ca{A} \rightarrow \ca{C}$ denote the inclusion. Then
\[
 \Hom_{\ca{A}}(K,-) \colon \ca{C} \rightarrow \Lex_{\Sigma}[\ca{A}^{\op},\Mod_R] 
\]
and
\[
 \Lan_Y K \vert_{\Lex_{\Sigma}} \colon \Lex_{\Sigma}[\ca{A}^{\op},\Mod_R] \rightarrow \ca{C} 
\]
 are mutually inverse equivalences of categories.
\end{thm}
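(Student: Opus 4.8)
The plan is to reduce the statement to a form of Gabriel--Ulmer duality. To begin with, the inclusion $K$ preserves the colimit diagrams in $\Sigma$ by the very definition of $\Sigma$, so Lemma~\ref{lemma:kan_extensions} (applied with $\ca{E}=\Lex_\Sigma$ and $F=K$) tells us that $\Hom_\ca{A}(K,-)\colon\ca{C}\to\Prs{A}$ factors through $\Lex_\Sigma$ and that the resulting functor $\Hom_\ca{A}(K,-)\colon\ca{C}\to\Lex_\Sigma$ is right adjoint to $\Lan_Y K\vert_{\Lex_\Sigma}=\Lan_Z K$. This functor moreover preserves filtered colimits (because $\ca{A}\subseteq\ca{C}_{\fp}$) and is conservative --- the latter is precisely the hypothesis that $\ca{A}$ is a strong generator of $\ca{C}$. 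Since the left adjoint of an equivalence is automatically a quasi-inverse of it, it suffices to show that $\Hom_\ca{A}(K,-)\colon\ca{C}\to\Lex_\Sigma[\ca{A}^{\op},\Mod_R]$ is an equivalence.

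I would first treat the case $\ca{A}=\ca{C}_{\fp}$. Here $\ca{C}_{\fp}$ is essentially small and, as recalled in \S\ref{section:background}, closed under finite colimits in $\ca{C}$; hence $\ca{C}_{\fp}^{\op}$ is an essentially small finitely complete category, and the $R$-linear form of Gabriel--Ulmer duality \cite{GABRIEL_ULMER} identifies $\ca{C}$ with the category of $R$-linear, finite-limit-preserving functors $\ca{C}_{\fp}^{\op}\to\Mod_R$ via the nerve $C\mapsto\ca{C}(-,C)$. As any $R$-linear functor preserves the zero object and finite biproducts, the only remaining finite-limit condition is the preservation of cokernel diagrams, so this category is exactly $\Lex_{\Sigma_0}[\ca{C}_{\fp}^{\op},\Mod_R]$ for $\Sigma_0$ the set of all right exact sequences in $\ca{C}_{\fp}$; and the nerve is nothing but $\Hom_{\ca{C}_{\fp}}(K',-)$ for the inclusion $K'\colon\ca{C}_{\fp}\hookrightarrow\ca{C}$. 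This settles the case $\ca{A}=\ca{C}_{\fp}$.

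For the general case, write $K=K'J$ with $J\colon\ca{A}\hookrightarrow\ca{C}_{\fp}$, so that $\Hom_\ca{A}(K,-)=J^{*}\circ\Hom_{\ca{C}_{\fp}}(K',-)$, where $J^{*}$ is restriction along $J$; note that $J^{*}$ maps $\Lex_{\Sigma_0}[\ca{C}_{\fp}^{\op},\Mod_R]$ into $\Lex_\Sigma[\ca{A}^{\op},\Mod_R]$ since each sequence in $\Sigma$ has all entries in $\ca{C}_{\fp}$ and is therefore right exact already in $\ca{C}_{\fp}$. By the previous step it now suffices to prove that
\[
 J^{*}\colon\Lex_{\Sigma_0}[\ca{C}_{\fp}^{\op},\Mod_R]\longrightarrow\Lex_\Sigma[\ca{A}^{\op},\Mod_R]
\]
is an equivalence. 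The crucial input is: \emph{every finitely presentable object $C$ of $\ca{C}$ is a retract of $\operatorname{coker}(g)$ for some morphism $g\colon A_0\to A_1$ of $\ca{A}$}. Indeed, using that every object of the Grothendieck category $\ca{C}$ is a quotient of a direct sum of objects of $\ca{A}$ and that $C$ is finitely presentable while $\ca{A}$ is closed under finite direct sums, we may pick an epimorphism $A_1\twoheadrightarrow C$ with $A_1\in\ca{A}$; choosing an epimorphism onto $\ker(A_1\to C)$ from a direct sum of objects of $\ca{A}$, writing that direct sum as the filtered colimit of its finite partial sums (which lie in $\ca{A}$), and using that cokernels commute with colimits, we exhibit $C$ as a filtered colimit of cokernels $\operatorname{coker}(A_0\to A_1)$ with $A_0\in\ca{A}$, whence $C$ --- being finitely presentable --- is a retract of one of them. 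Consequently $\ca{C}_{\fp}$ is the closure of $\ca{A}$ under cokernels and retracts inside $\ca{C}$; equivalently, $\ca{C}_{\fp}^{\op}$ is generated from $\ca{A}^{\op}$ under finite limits and splitting of idempotents. For such a generating inclusion of essentially small finitely complete categories the restriction functor between the associated categories of left exact $\Mod_R$-valued functors is an equivalence: a quasi-inverse sends $F\in\Lex_\Sigma[\ca{A}^{\op},\Mod_R]$ to the functor on $\ca{C}_{\fp}^{\op}$ which on a cokernel is forced to be $\operatorname{coker}(A_0\to A_1)\mapsto\ker(FA_1\to FA_0)$ (as $F$ lies in $\Lex_\Sigma$) and is extended to retracts by splitting idempotents. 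Composing this equivalence with the one of the previous paragraph finishes the proof.

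The step I expect to be the main obstacle is the last one: verifying that the assignment $\operatorname{coker}(A_0\to A_1)\mapsto\ker(FA_1\to FA_0)$ is well defined (independent of the chosen presentation of a finitely presentable object as a cokernel, and of its chosen realisation as a retract), is $R$-linear and functorial, and lands in $\Lex_{\Sigma_0}$ --- that is, carries \emph{every} right exact sequence of $\ca{C}_{\fp}$, not only those with entries in $\ca{A}$, to a left exact sequence. This is where it matters that $\Sigma$ comprises all right exact sequences with entries in $\ca{A}$ rather than a smaller set: it makes left exactness on $\ca{A}$ rigid enough to determine the extension along the finite-colimit-and-retract closure $\ca{C}_{\fp}\supseteq\ca{A}$. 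Structurally this is a Morita-invariance statement for finite-limit sketches, a standard consequence of Gabriel--Ulmer duality; alternatively, one checks the full faithfulness and essential surjectivity of $J^{*}$ by hand, using the displayed claim together with the explicit description of $\Lex_\Sigma$ as the presheaves sending the sequences of $\Sigma$ to left exact sequences.
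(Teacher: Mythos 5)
Your reduction to the case $\ca{A}=\ca{C}_{\fp}$ via ($R$-linear) Gabriel--Ulmer duality is fine, as is the observation that every finitely presentable object of $\ca{C}$ is a retract of $\operatorname{coker}(A_0\to A_1)$ with $A_0,A_1\in\ca{A}$. The genuine gap is the last step, which you yourself flag as the main obstacle and then do not carry out: the claim that restriction along $J\colon\ca{A}\hookrightarrow\ca{C}_{\fp}$ induces an equivalence $\Lex_{\Sigma_0}[\ca{C}_{\fp}^{\op},\Mod_R]\to\Lex_{\Sigma}[\ca{A}^{\op},\Mod_R]$. This is not a formal consequence of the density statement you prove. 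Knowing that $\ca{C}_{\fp}^{\op}$ is generated from $\ca{A}^{\op}$ under finite limits and retracts gives full faithfulness of $J^{*}$ (its value on a cokernel is forced to be the corresponding kernel), but essential surjectivity is exactly where the difficulty of the theorem is concentrated: one must show that the forced assignment $\operatorname{coker}(A_0\to A_1)\mapsto\ker(FA_1\to FA_0)$ is independent of the presentation, functorial, and left exact on \emph{all} right exact sequences of $\ca{C}_{\fp}$, not only those with entries in $\ca{A}$. The appeal to ``Morita invariance for finite-limit sketches'' is circular here: that principle identifies the models of the sketch $(\ca{A}^{\op},\Sigma)$ with $\Lex$ of its \emph{theory}, and the theory is by construction the opposite of the finitely presentable objects of $\Lex_{\Sigma}[\ca{A}^{\op},\Mod_R]$; identifying this with $\ca{C}_{\fp}^{\op}$ is essentially the statement being proved.

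The concrete issue any completion of this step must confront---and the reason the paper explicitly declines the coherent-case shortcut it mentions just before the theorem---is that the kernel of an epimorphism between finitely presentable objects is only finitely \emph{generated}: a relation $f\colon D\to A_1$ with $qf=0$ need not factor through an object of $\ca{A}$ covering the kernel, but only does so after replacing $D$ by a cover from $\ca{A}$. This is precisely condition~(ii) of Definition~\ref{dfn:ind_class}, verified for $\Sigma$ in Lemma~\ref{lemma:lex_abelian} using Lemma~\ref{lemma:epis_in_lfp_abelian}, and it is what makes the extension of $F$ to $\ca{C}_{\fp}$ well defined. The paper's route avoids constructing that extension explicitly: it shows $\Lex_{\Sigma}$ is abelian via the ind-class machinery, shows the right adjoint $\Hom_{\ca{A}}(K,-)$ is cocontinuous (Lemma~\ref{lemma:hom_cocontinuous}), and then checks the unit and counit on generators. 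If you want to keep your architecture, you would need to import the ind-class argument (or an equivalent calculus of presentations) to justify the final equivalence; as written, the proof is incomplete at its decisive point.
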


 Before giving its proof, we show how it implies Tonini's theorem.

\begin{proof}[Proof of Theorem~\ref{thm:tonini}]
 The category $\QCoh(X)$ is lfp and abelian: this follows for example from the fact that it is equivalent to the category of comodules of an Adams Hopf algebroid and \cite[Proposition~1.4.1]{HOVEY}.
\end{proof}

 In the case where the subcategory of finitely presentable objects happens to be abelian, we could give a proof of Theorem~\ref{thm:lfp_abelian} using \cite{BRANDENBURG_THESIS}[Propositions~2.5.1 and 3.1.21] using some well-known facts about categories of ind-objects. However, since we will need this in the context of Adams stacks which might not be coherent, we will need the full generality as stated above.

 Our proof of Theorem~\ref{thm:lfp_abelian} follows a similar proof given in \cite[\S 3.1]{BHATT}. In order to state the first lemma, we need to recall the following definition and result from \cite{SCHAEPPI_INDABELIAN}. Given a class of right exact sequences
\[
 \xymatrix{A \ar[r]^{p} & B \ar@{->>}[r]^{q} & C \ar[r] & 0 }
\]
 in an $R$-linear category $\ca{A}$, we write $R(\Sigma)$ for the set of all morphisms in $\ca{A}$ which are isomorphic to a morphism $q$ in one of the right exact sequences in $\Sigma$.

\begin{dfn}\label{dfn:ind_class}
 Let $\Sigma$ be a set of right exact sequences in an $R$-linear category $\ca{A}$. Then $\Sigma$ is called an \emph{ind-class} if the following two properties hold:
\begin{enumerate}
 \item[(i)] For any $q \colon B \rightarrow C$ in $R(\Sigma)$, the sequence
\[
 \xymatrix{B \ar@{->>}[r]^q & C \ar[r] & 0 \ar[r] & 0}
\]
 lies in $\Sigma$.

\item[(ii)]
 If 
\[
  \xymatrix{A \ar[r]^{p} & B \ar@{->>}[r]^{q} & C \ar[r] & 0 }
\]
 is a sequence in $\Sigma$ and $f \colon D \rightarrow B$ is a morphism with $qf=0$, then there exists a morphism $f^{\prime} \colon E \rightarrow A$ in $\ca{A}$ and a morphism $p^{\prime} \colon E \rightarrow D$ in $R(\Sigma)$ such that the diagram
\[
 \xymatrix{ E \ar@{->>}[r]^{p^{\prime}} \ar[d]_{f^{\prime}} & D \ar[d]^f \\ A \ar[r]_{p} & B }
\]
 is commutative.
\end{enumerate}
\end{dfn}

\begin{prop}[{\cite[Proposition~2.9]{SCHAEPPI_INDABELIAN}}]\label{prop:ind_class_implies_abelian}
 Let $\ca{A}$ be an $R$-linear category with finite direct sums, and let $\Sigma$ be an ind-class in $\ca{A}$. Then $\Lex_{\Sigma}$ is a locally finitely presentable abelian category. Moreover, for any finitely presentable object $F \in \Lex_{\Sigma}$ there exists a morphism $f \colon A \rightarrow A^{\prime}$ in $\ca{A}$ and a right exact sequence
\[
 \xymatrix{\ca{A}(-,A) \ar[r]^-{\ca{A}(-,f)} & \ca{A}(-,A^{\prime}) \ar[r] & F \ar[r] & 0 }
\]
 in $\Lex_{\Sigma}$.
\end{prop}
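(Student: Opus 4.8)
The plan is to derive everything except abelianness from general facts about locally finitely presentable categories, and to use the ind-class axioms only to identify the finitely presentable objects of $\Lex_\Sigma$ and to verify that the coimage--image comparison is invertible. The presheaf category $\Prs{A}=[\ca{A}^{\op},\Mod_R]$ is a locally finitely presentable (indeed Grothendieck) abelian category, with a strong generator of finitely presentable objects given by the representables $\ca{A}(-,A)$. Since $\Sigma$ is a set, $\Lex_\Sigma$ is the reflective subcategory of $\Prs{A}$ cut out by orthogonality to the canonical cocones of the diagrams in $\Sigma$; write $R\dashv I$ for the reflection and the inclusion. As those cocones are required to be sent to finite limits and filtered colimits commute with finite limits in $\Mod_R$, the subcategory $\Lex_\Sigma$ is closed under filtered colimits in $\Prs{A}$, hence is itself lfp, with a strong generator of finitely presentable objects given by the corestricted representables $ZA=\ca{A}(-,A)$; these are closed under finite direct sums because $\ca{A}$ is. Being closed under all limits in $\Prs{A}$ and reflective, $\Lex_\Sigma$ has all limits and colimits, and since it is $R$-linear it is in particular a preabelian $R$-linear category (additive, with kernels and cokernels) in which kernels are computed as in $\Prs{A}$ while cokernels are reflections of cokernels of $\Prs{A}$.

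\emph{Finitely presentable objects.} In any lfp category every finitely presentable object is a retract of a finite colimit of the chosen generators; since $Z$ is additive and a finite direct sum of representables is representable, every finitely presentable $F\in\Lex_\Sigma$ is therefore a retract of an object $R\operatorname{coker}_{\Prs{A}}(\ca{A}(-,f))$ for some morphism $f$ of $\ca{A}$. To replace ``retract of'' by ``isomorphic to'' I would invoke the ind-class axioms: an idempotent on such an object that splits off $F$ can, after enlarging the presenting pair by means of axiom~(i), be lifted to the presenting data, and axiom~(ii) then supplies the morphisms needed to recognise the resulting splitting as the cokernel in $\Lex_\Sigma$ of a new map between representables. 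This produces the asserted right exact sequence $\ca{A}(-,A)\to\ca{A}(-,A')\to F\to 0$ in $\Lex_\Sigma$.

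\emph{Abelianness.} It then remains to show that $\operatorname{coim}(g)\to\operatorname{im}(g)$ is an isomorphism for every morphism $g$ of $\Lex_\Sigma$. Every morphism of $\Lex_\Sigma$ is a filtered colimit of morphisms between finitely presentable objects, and filtered colimits in $\Lex_\Sigma$ are computed as in $\Prs{A}$, where they are exact; moreover $R$ and the formation of kernels and cokernels in $\Prs{A}$ all commute with filtered colimits, so the comparison $\operatorname{coim}(g)\to\operatorname{im}(g)$ is a filtered colimit of the corresponding comparisons between finitely presentable objects. Hence it suffices to treat a morphism $g\colon F\to G$ between objects equipped with presentations $\ca{A}(-,A_1)\to\ca{A}(-,A_2)\to F\to 0$ and $\ca{A}(-,A_1')\to\ca{A}(-,A_2')\to G\to 0$ as above. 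Lifting $g$ to a morphism of presentations --- where axiom~(ii) is genuinely used, to replace a map whose composite into $G$ vanishes by a map out of a representable --- one computes the kernel of $g$ as a subpresheaf of $\ca{A}(-,A_2)$, its reflection, and the image of $g$ inside $G$; axiom~(ii) is applied once more to produce, for an arbitrary test map into these (co)limits, the factorisation showing that $\operatorname{coim}(g)\to G$ is a monomorphism, while axiom~(i) keeps all the auxiliary epimorphisms arising in the bookkeeping inside the class $R(\Sigma)$. Assembling these comparisons gives $\operatorname{coim}(g)\cong\operatorname{im}(g)$, so $\Lex_\Sigma$ is abelian.

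The main obstacle is the diagram chase of the third paragraph (together with the idempotent-lifting of the second): since $\ca{A}$ carries no exactness hypotheses, one cannot form kernels or pullbacks inside it, and the ind-class axioms are the only substitute --- each time the argument would call for a ``kernel object'' or a ``pullback'' in $\ca{A}$, axiom~(ii) instead supplies an object of $\ca{A}$ mapping \emph{onto} it through a morphism of $R(\Sigma)$, at the cost of one further composition. Organising this relative calculus of fractions carefully enough that it produces honest comparison isomorphisms, rather than mere approximations, is the heart of the proof; once that is done, local finite presentability and the additive structure are formal.
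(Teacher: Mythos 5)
Your formal scaffolding is correct but carries none of the weight of the proposition. That $\Lex_\Sigma$ is reflective in $\Prs{A}$, closed under filtered colimits because the diagrams in $\Sigma$ are finite, hence lfp and preabelian with kernels computed in $\Prs{A}$ and cokernels obtained by reflecting, and that the coimage--image comparison commutes with filtered colimits so that one may restrict to morphisms between finitely presentable objects --- all of this holds for an \emph{arbitrary} set of cokernel diagrams $\Sigma$, and for such a $\Sigma$ the category $\Lex_\Sigma$ is in general \emph{not} abelian (take $\ca{A}$ finitely cocomplete but not ind-abelian and $\Sigma$ all cokernel diagrams: then $\Lex_\Sigma=\Ind(\ca{A})$). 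So the entire content lies in the two steps you describe rather than execute, and both founder on the same missing ingredient: you have no handle on the reflection $R\colon\Prs{A}\rightarrow\Lex_\Sigma$, hence no way to compute $G(A)$ when $G=R\,\mathrm{coker}_{\Prs{A}}\bigl(\ca{A}(-,f)\bigr)$. Concretely, to upgrade ``retract of a cokernel of representables'' to ``cokernel of representables'' you must lift the idempotent against $\ca{A}(-,A')\rightarrow G$, i.e.\ you need (local) surjectivity of $\ca{A}(A',A')\rightarrow G(A')$; the same issue blocks your lift of $g$ to a morphism of presentations. Axiom~(i) of Definition~\ref{dfn:ind_class}, which merely places covers into degenerate sequences of $\Sigma$, gives nothing in this direction, and axiom~(ii) is a lifting property for maps into the \emph{middle} term of a sequence already in $\Sigma$, not for maps into a reflected cokernel.

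What is missing is the explicit description of $\Lex_\Sigma$ as the category of $R$-linear sheaves for the singleton coverage $R(\Sigma)$ --- this is where the ind-class axioms are actually consumed (see Proposition~\ref{prop:lex_sigma_equals_sheaves}) --- combined with the Borceux--Quinteiro theorem that the enriched sheafification is \emph{exact} (Theorem~\ref{thm:borceux_quinteiro}). Exactness of the reflector proves abelianness in one stroke, since kernels and cokernels in $\Lex_\Sigma$ are then both obtained by applying an exact functor to the corresponding constructions in the abelian category $\Prs{A}$; and the sheaf description is also what makes sections of a reflected cokernel locally liftable along covers in $R(\Sigma)$, which is exactly the statement your idempotent-splitting and presentation-lifting steps silently presuppose. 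I would abandon the direct coimage--image diagram chase from the raw axioms and identify $\Lex_\Sigma$ with the sheaf category first; this is the route the cited reference takes.
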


\begin{proof}
 The first claim is proved in Part~(i) of \cite[Proposition~2.9]{SCHAEPPI_INDABELIAN}. The second claim follows from Part~(iii) of that proposition and the fact that $\ca{A}$ has finite direct sums.
\end{proof}

\begin{lemma}\label{lemma:lex_abelian}
 In the situation of Theorem~\ref{thm:lfp_abelian}, $\Sigma$ is an ind-class. In particular, $\Lex_{\Sigma}$ is abelian.
\end{lemma}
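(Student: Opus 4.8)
The plan is to verify directly the two defining properties of an ind-class (Definition~\ref{dfn:ind_class}) for the set $\Sigma$ of sequences in $\ca{A}$ that are right exact in $\ca{C}$; once this is done, the final assertion is immediate from Proposition~\ref{prop:ind_class_implies_abelian}, since $\ca{A}$ has finite direct sums by hypothesis (in particular $0\in\ca{A}$, being the empty direct sum).

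Property (i) is straightforward: if $q\colon B\to C$ lies in $R(\Sigma)$ then, up to isomorphism, $q$ is the rightmost map of a sequence in $\Sigma$, hence an epimorphism in $\ca{C}$; therefore $C\to 0$ is the cokernel of $q$ in $\ca{C}$, so the sequence $B\xrightarrow{q}C\to 0\to 0$ is right exact in $\ca{C}$ with all entries in $\ca{A}$, and thus lies in $\Sigma$.

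For property (ii) I would proceed as follows. Suppose $A\xrightarrow{p}B\xrightarrow{q}C\to 0$ lies in $\Sigma$ (so $q$ is the cokernel of $p$ in $\ca{C}$) and $f\colon D\to B$ in $\ca{A}$ satisfies $qf=0$. First form the pullback $P\defl A\pb{B}D$ in $\ca{C}$, with projections $\pi_A\colon P\to A$ and $\pi_D\colon P\to D$, so that $p\pi_A=f\pi_D$. The crucial observation is that $\pi_D$ is an epimorphism in $\ca{C}$: in an abelian category the cokernel of the pullback projection $\pi_D$ is naturally isomorphic to the image of the composite $D\xrightarrow{f}B\xrightarrow{q}C$, and this image vanishes because $qf=0$. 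Now $\ca{C}$ is locally finitely presentable abelian, hence a Grothendieck category with generating set $\ca{A}$, so there is an epimorphism $\bigoplus_{i\in I}A_i\to P$ with all $A_i\in\ca{A}$; composing with $\pi_D$ gives an epimorphism $\bigoplus_{i\in I}A_i\to D$, and since $D$ is finitely presentable this already restricts to an epimorphism on some finite subsum $E\defl\bigoplus_{i\in S}A_i$. Note $E\in\ca{A}$ since $\ca{A}$ is closed under finite direct sums. Writing $g\colon E\to P$ for the induced map, set $p'\defl\pi_D g\colon E\to D$ (an epimorphism) and $f'\defl\pi_A g\colon E\to A$; then $pf'=fp'$, as required.

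It remains to show $p'\in R(\Sigma)$, i.e.\ to produce $X\in\ca{A}$ and a morphism $X\to E$ with $X\to E\xrightarrow{p'}D\to 0$ right exact in $\ca{C}$; since $p'$ is already epi, this amounts to covering $\ker(p')$ by the image of a map out of an object of $\ca{A}$. The one delicate point — and the only place where the possible non-coherence of $\ca{C}$ matters — is that $\ker(p')$ is \emph{finitely generated}. This is the general fact that the kernel of a morphism between finitely presentable objects in a locally finitely presentable abelian category is finitely generated (both $E$ and $D$ are finitely presentable); I expect this to be the main obstacle, and it can be handled by a short diagram chase (pick a finite presentation of $D$ by objects of $\ca{A}$, available since $D$ is finitely presentable, and transport finite generation of its module of relations along the epimorphism $E\twoheadrightarrow D$), or deduced from \cite{GABRIEL_ULMER}. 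Granting this, $\ker(p')$ is a finitely generated subobject of $E$, and since $\ca{A}$ generates the Grothendieck category $\ca{C}$ and is closed under finite direct sums, a single $X\in\ca{A}$ admits a morphism $X\to E$ with image exactly $\ker(p')$. Then $X\to E\xrightarrow{p'}D\to 0$ lies in $\Sigma$, so $p'\in R(\Sigma)$. This verifies that $\Sigma$ is an ind-class, and $\Lex_\Sigma$ is abelian by Proposition~\ref{prop:ind_class_implies_abelian}. Everything apart from the finite-generation claim is routine manipulation inside the abelian category $\ca{C}$.
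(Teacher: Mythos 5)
Your proof is correct and follows essentially the same route as the paper's: part (i) is immediate, and for part (ii) you pull back along $f$, cover the pullback by an object of $\ca{A}$ using finite presentability of $D$ and exactness of filtered colimits, and then invoke the fact that the kernel of an epimorphism between finitely presentable objects is finitely generated (a fact the paper likewise only cites rather than reproves). The only cosmetic difference is that you cover the pullback $P$ directly by a finite subsum of a generating coproduct, whereas the paper first produces an intermediate finitely presentable object $E_0$ and then covers that by an object of $\ca{A}$.
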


\begin{proof}
 Recall that $\Sigma$ is the set of left exact sequences in $\ca{C}$ which happen to lie in the subcategory $\ca{A}$. Part~(i) of the definition of ind-class is therefore immediate.

 To see Part~(ii), note that we can always find a finitely presentable object $E_0 \in \ca{C}$, together with a morphism $f_0 \colon E_0 \rightarrow A$ and an epimorphism $p_0 \colon E_0 \rightarrow D$ such that the diagram
\[
 \xymatrix{E_0 \ar[d]_{f_0} \ar@{->>}[r]^{p_0} & D \ar[d]^{f} \\
A \ar[r]_{p} & B}
\]
 is commutative (this follows for example from the proof of \cite[Proposition~2.2]{SCHAEPPI_INDABELIAN}). Since $\ca{A}$ is a generator and $E_0$ is finitely presentable, we can write $E_0$ as a finite colimit of objects of $\ca{A}$. Closure of $\ca{A}$ under finite direct sums therefore implies that we can find an object $E \in \ca{A}$ together with an epimorphism $p_1 \colon E \rightarrow E_0$. We claim that the composite $p=p_0 p_1 \colon E \rightarrow D$ actually lies in $R(\Sigma)$, that is, that there exists a right exact sequence
\[
 \xymatrix{A \ar[r] & E \ar@{->>}[r]^{p} & D \ar[r] & 0 }
\]
 in $\ca{C}$ such that $A \in \ca{A}$.

 To see this, simply note that the kernel of an epimorphism between finitely presentable objects is finitely generated (that is, an epimorphic quotient of a finitely presentable object), see for example \cite[Lemma~2.1]{SCHAEPPI_INDABELIAN}. Since the kernel is a quotient of a finitely presentable object, which in turn is a quotient of an object $A \in \ca{A}$ by the above argument, we do get the desired right exact sequence in $\Sigma$ exhibiting $p$ as an element of $R(\Sigma)$.
\end{proof}

\begin{lemma}\label{lemma:epis_in_lfp_abelian}
 Let $\ca{A} \subseteq \ca{C}$ be as in Theorem~\ref{thm:lfp_abelian} and let $p \colon C^{\prime} \rightarrow C$ be an epimorphism in $\ca{C}$. Then for any morphism $f \colon A \rightarrow C$ with $A \in \ca{A}$ there exists an epimorphism $p^{\prime} \colon A^{\prime} \rightarrow A$ in $\ca{C}$ with $A^{\prime} \in \ca{A}$ and a morphism $A^{\prime} \rightarrow C^{\prime}$ such that the diagram
\[
 \xymatrix{A^{\prime} \ar[r] \ar@{->>}[d]_{p^{\prime}} & C^{\prime} \ar@{->>}[d]^{p} \\
A \ar[r]^-{f} & C}
\]
 is commutative.
\end{lemma}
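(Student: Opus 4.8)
The plan is to mimic, for the pullback of $p$ along $f$, the covering device already used in the proof of Lemma~\ref{lemma:lex_abelian}. First I would form the pullback $P \defl A \pb{C} C^{\prime}$ in $\ca{C}$, with projections $q \colon P \rightarrow A$ and $\pi \colon P \rightarrow C^{\prime}$ satisfying $p\pi = fq$. Since $\ca{C}$ is abelian and $p$ is an epimorphism, its pullback $q$ is an epimorphism as well. It therefore suffices to produce an object $A^{\prime} \in \ca{A}$ together with a morphism $A^{\prime} \rightarrow P$ whose composite with $q$ is an epimorphism: one then takes $p^{\prime}$ to be that composite, takes $A^{\prime} \rightarrow C^{\prime}$ to be the composite with $\pi$, and commutativity of the required square is immediate from $p\pi = fq$.

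To construct $A^{\prime}$, I would use that $\ca{A}$ is a (strong) generator of the cocomplete abelian category $\ca{C}$, so that the canonical morphism $e \colon \bigoplus_{i \in I} G_i \rightarrow P$ --- where $i$ ranges over all morphisms $G_i \rightarrow P$ with $G_i \in \ca{A}$ --- is an epimorphism. Composing with $q$ gives an epimorphism $\bigoplus_{i \in I} G_i \rightarrow A$.

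Finally I would invoke that $A$ is finitely generated, being an object of $\ca{A} \subseteq \ca{C}_{\fp}$: writing $A$ as the directed union of the images of the finite sub-sums $\bigoplus_{i \in J} G_i \rightarrow A$ over finite $J \subseteq I$ (which is a filtered colimit of subobjects, and filtered colimits are exact in the Grothendieck category $\ca{C}$), the identity of $A$ factors through one of them, so some finite $J$ already yields an epimorphism $\bigoplus_{i \in J} G_i \twoheadrightarrow A$. Setting $A^{\prime} \defl \bigoplus_{i \in J} G_i$ --- which lies in $\ca{A}$ because $\ca{A}$ is closed under finite direct sums --- and letting $A^{\prime} \rightarrow P$ be the restriction of $e$ completes the construction. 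I do not expect a genuine obstacle here: the only points needing (standard) care are that a strong generator of finitely presentable objects in a cocomplete abelian category admits epimorphisms from coproducts onto every object, and that finitely presentable objects are finitely generated --- exactly the ingredients already used in the proof of Lemma~\ref{lemma:lex_abelian}.
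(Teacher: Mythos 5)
Your proposal is correct and follows essentially the same route as the paper's own proof: pull back $p$ along $f$, use that $\ca{A}$ generates $\ca{C}$ to cover the pullback by a coproduct of objects of $\ca{A}$, and then use exactness of filtered colimits together with finite presentability of $A$ to cut down to a finite direct sum, which lies in $\ca{A}$ by the closure hypothesis. No gaps.
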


\begin{proof}
 This fact was used in the proof of \cite[Proposition~5.8]{SCHAEPPI_GEOMETRIC}. The following argument is taken from the proof given there. 

 The pullback $P \rightarrow A$ is an epimorphism since $\ca{C}$ is abelian. The assumption that $\ca{A}$ is a generator implies that we can find an epimorphism
\[
 \textstyle\bigoplus\nolimits_{i \in I} A_i \rightarrow P
\]
 with $A_i \in \ca{A}$ (and $I$ a possibly infinite set). We can write $\bigoplus_{i \in I} A_i$ as filtered colimit of the objects $\bigoplus_{j \in J} A_j \in \ca{A}$ where $J \subseteq I$ is finite. It suffices to check that the composite
\[
 \textstyle\bigoplus\nolimits_{j \in J} A_j \rightarrow P \rightarrow A
\]
 is still an epimorphism for some finite $J \subseteq I$. But since filtered colimits in $\ca{C}$ are exact, $A$ is the directed union of the images of these morphisms. The assumption that $A$ is finitely presentable therefore implies that the identity of $A$ factors through one of these inclusions, hence that the corresponding morphism $\bigoplus_{j \in J} A_j \rightarrow P$ is an epimorphism.
\end{proof}

\begin{lemma}\label{lemma:hom_cocontinuous}
 In the situation of Theorem~\ref{thm:lfp_abelian}, the functor
\[
 \Hom_{\ca{A}}(K,-) \colon \ca{C} \rightarrow \Lex_{\Sigma}[\ca{A}^{\op},\Mod_R]
\]
 is cocontinuous.
\end{lemma}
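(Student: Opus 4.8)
The plan is to deduce cocontinuity of $\Hom_{\ca{A}}(K,-)$ from three separate facts: that it preserves filtered colimits, finite direct sums, and cokernels. Since $\ca{C}$ is locally finitely presentable, any functor out of it which preserves filtered colimits and finite colimits is automatically cocontinuous (an arbitrary coproduct is a filtered colimit of its finite subcoproducts, and in the abelian category $\ca{C}$ every coequalizer is the cokernel of a difference). Two of the three facts are essentially formal. First, by Parts~(iii) and~(v) of Lemma~\ref{lemma:kan_extensions}, applied with $\ca{E}=\Lex_{\Sigma}$ and $F=K$, the functor $\Hom_{\ca{A}}(K,-) \colon \ca{C} \rightarrow \Lex_{\Sigma}$ is the right adjoint of $\Lan_Y K\vert_{\Lex_{\Sigma}}=\Lan_Z K$; being a right adjoint it is left exact, and being $R$-linear it is additive, hence it preserves finite direct sums. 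Second, since the diagrams in $\Sigma$ are finite and filtered colimits commute with finite limits in $\Mod_R$, the subcategory $\Lex_{\Sigma}$ is closed under filtered colimits in $\Prs{A}$, so filtered colimits in $\Lex_{\Sigma}$ are computed pointwise; because every object of $\ca{A}$ is finitely presentable in $\ca{C}$ we obtain
\[
 \Hom_{\ca{A}}\bigl(K,\colim_{i} C_i\bigr)(A) \cong \colim_{i}\ca{C}(A,C_i)=\bigl(\colim_{i}\Hom_{\ca{A}}(K,C_i)\bigr)(A)
\]
for every $A \in \ca{A}$ and every filtered diagram $(C_i)$ in $\ca{C}$, the right-hand colimit being the one in $\Lex_{\Sigma}$.

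It remains to show that $\Hom_{\ca{A}}(K,-)$ preserves cokernels. Since it is already left exact and the categories involved are abelian (the target by Lemma~\ref{lemma:lex_abelian}), it suffices to show that it preserves epimorphisms: a left exact additive functor between abelian categories that preserves epimorphisms is exact, hence preserves cokernels. So let $p \colon C^{\prime} \rightarrow C$ be an epimorphism in $\ca{C}$. The representable objects $Z(A)=\ca{A}(-,A)$ with $A \in \ca{A}$ form a generating set of $\Lex_{\Sigma}$, so in order to see that $\Hom_{\ca{A}}(K,p)$ is an epimorphism it suffices to prove that its image contains the image of every morphism $g \colon Z(A) \rightarrow \Hom_{\ca{A}}(K,C)$. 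By the Yoneda lemma such a $g$ corresponds to a morphism $f \colon A \rightarrow C$ of $\ca{C}$, and Lemma~\ref{lemma:epis_in_lfp_abelian} supplies an epimorphism $p^{\prime} \colon A^{\prime} \rightarrow A$ in $\ca{C}$ with $A^{\prime} \in \ca{A}$ together with a morphism $h \colon A^{\prime} \rightarrow C^{\prime}$ satisfying $ph=fp^{\prime}$. Passing to representables, $Z(h) \colon Z(A^{\prime}) \rightarrow \Hom_{\ca{A}}(K,C^{\prime})$ and $Z(p^{\prime}) \colon Z(A^{\prime}) \rightarrow Z(A)$ satisfy $\Hom_{\ca{A}}(K,p)\circ Z(h)=g\circ Z(p^{\prime})$. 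Provided $Z(p^{\prime})$ is an epimorphism in $\Lex_{\Sigma}$, the image of $g$ coincides with the image of $g\circ Z(p^{\prime})$ and is therefore contained in the image of $\Hom_{\ca{A}}(K,p)$, as required.

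Finally I must check that $Z(p^{\prime})$ is an epimorphism in $\Lex_{\Sigma}$ whenever $p^{\prime} \colon A^{\prime} \rightarrow A$ is an epimorphism in $\ca{C}$ with $A,A^{\prime} \in \ca{A}$. The kernel $N$ of $p^{\prime}$ in $\ca{C}$ is finitely generated, being the kernel of an epimorphism between finitely presentable objects (\cite[Lemma~2.1]{SCHAEPPI_INDABELIAN}); since $\ca{A}$ is a generator closed under finite direct sums, $N$ is an epimorphic quotient of some object $B \in \ca{A}$. The composite $B \rightarrow N \hookrightarrow A^{\prime}$ then has image $N=\ker(p^{\prime})$, so $B \rightarrow A^{\prime} \rightarrow A \rightarrow 0$ is a right exact sequence in $\ca{C}$ all of whose entries lie in $\ca{A}$, i.e.\ a sequence in $\Sigma$. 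By the very definition of $\Lex_{\Sigma}$ the image $Z(B) \rightarrow Z(A^{\prime}) \rightarrow Z(A) \rightarrow 0$ of this sequence is right exact in $\Lex_{\Sigma}$, so $Z(p^{\prime})$ is a cokernel and in particular an epimorphism.

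I expect the core of the argument (the last two paragraphs) to be the main obstacle: one has to relate epimorphisms in the ``geometric'' category $\ca{C}$ to epimorphisms in the presheaf category $\Lex_{\Sigma}$, and the only available bridge is the weak lifting statement of Lemma~\ref{lemma:epis_in_lfp_abelian}, in which a lift of a morphism out of $\ca{A}$ is produced only after precomposition with a cover. This is what forces the argument to be phrased in terms of images of morphisms out of the generators $Z(A)$ rather than in terms of honest lifts, and it is also why the closure of $\ca{A}$ under finite direct sums --- used to realise the finitely generated kernel $N$ as a quotient of an object of $\ca{A}$, and hence to exhibit the relevant sequence as a member of $\Sigma$ --- is indispensable.
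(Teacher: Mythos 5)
Your proof is correct and follows essentially the same route as the paper's: reduce cocontinuity to preservation of filtered colimits (pointwise, since $\Sigma$ consists of finite diagrams and $\ca{A}\subseteq\ca{C}_{\fp}$) and of epimorphisms (using left exactness from the adjunction plus the lifting of Lemma~\ref{lemma:epis_in_lfp_abelian} against the generating representables). The only cosmetic difference is that you re-derive inline the fact that an epimorphism of $\ca{C}$ between objects of $\ca{A}$ is sent by $Z$ to a (regular) epimorphism of $\Lex_{\Sigma}$, whereas the paper extracts this from the definition of $\Sigma$ via the argument already given in Lemma~\ref{lemma:lex_abelian}.
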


\begin{proof}
 It suffices to show that $\Hom_{\ca{A}}(K,-)$ preserves filtered colimits and finite colimits. Since the class $\Sigma$ consists of finite diagrams, filtered colimits in $\Lex_{\Sigma}$ are computed as in the presheaf category $\Prs{A}$. To show the first claim it therefore suffices to check that $\ca{C}(A,-)$ preserves filtered colimits for each $A \in \ca{A}$, which follows from the assumption that $\ca{A} \subseteq \ca{C}_{\fp}$.

 It remains to check that $\Hom_{\ca{A}}(K,-)$ is right exact. Since it is a right adjoint, we know that it is left exact. Both the domain and the codomain $\Lex_{\Sigma}$ of $\Hom_{\ca{A}}(K,-)$ are abelian (see Lemma~\ref{lemma:lex_abelian}), so it suffices to check that $\Hom_{\ca{A}}(K,-)$ preserves epimorphisms.

 Fix an epimorphism $p \colon C^{\prime} \rightarrow C$ in $\ca{C}$. From the Yoneda Lemma we know that
\[
 \textstyle\bigoplus\nolimits_{f \colon A \rightarrow C} \ca{A}(-,A) \rightarrow \ca{C}(K-,C)=\Hom_{\ca{A}}(K,C)
\]
 is an epimorphism in $\Prs{A}$ (and therefore also in $\Lex_{\Sigma}$). For each such morphism $f$ we can find an object $A_f \in \ca{A}$, an epimorphism $A_f \rightarrow A$ in $\ca{C}$, and a morphism $A_f \rightarrow C^{\prime}$ such that the diagram
\[
 \xymatrix{A_f \ar[r] \ar@{->>}[d] & C^{\prime} \ar@{->>}[d]^{p} \\ A \ar[r]^{f} & C}
\]
 is commutative (see Lemma~\ref{lemma:epis_in_lfp_abelian}). From the definition of $\Sigma$ it follows that the morphism $\ca{A}(-,A_f) \rightarrow \ca{A}(-,A)$ is an epimorphism in $\Lex_{\Sigma}$ for each $f \colon A \rightarrow C$. Since we just observed that the morphisms
\[
\ca{C}(K-, f) \colon \ca{A}(-,A) \rightarrow \ca{C}(K-,C) 
\]
 are jointly epimorphic in $\Lex_{\Sigma}$ it follows that
\[
 \ca{C}(K-, p) \colon \ca{C}(K-,C^{\prime}) \rightarrow \ca{C}(K-,C)
\]
 is an epimorphism in $\Lex_{\Sigma}$, as claimed.
\end{proof}

 With these ingredients in place we can now prove Theorem~\ref{thm:lfp_abelian} (following the outline given above).

\begin{proof}[Proof of Theorem~\ref{thm:lfp_abelian}]
 By Lemma~\ref{lemma:hom_cocontinuous}, the right adjoint
\[
 \Hom_{\ca{A}}(K,-) \colon \ca{C} \rightarrow \Lex_{\Sigma}
\]
 of $\Lan_Y K \vert_{\Lex_{\Sigma}}$ is cocontinuous. To check that the unit and counit of this adjunction are isomorphisms, it therefore suffices to check that their components at a generating set of the respective categories are isomorphisms. From the Yoneda lemma it follows that the natural transformation
\[
 K_{-,A} \colon \ca{A}(-,A) \rightarrow \Hom_{\ca{A}}(K-,KA)
\]
 exhibits $KA$ as $\Lan_Y K\bigl(\ca{A}(-,A)\bigr)$. Thus the component of the unit of the above adjunction at each element of the generating set $\ca{A}(-,A)$, $A \in \ca{A}$ of $\Lex_{\Sigma}$ is indeed an isomorphism.

 To conclude the proof it suffices to check that the counit
\[
\varepsilon_A \colon \Lan_Y K \bigl( \Hom_{\ca{A}}(K-,A) \bigr) \rightarrow A 
\]
 is an isomorphism for all $A \in \ca{A}$. Using the triangle identities, this will follow if the objects $A$ lie in the essential image of $\Lan_Y K \vert_{\Lex_{\Sigma}}$. This, in turn, follows from the isomorphism $\Lan_Y K Y \cong K$ (see Part~(i) of Lemma~\ref{lemma:kan_extensions}).
\end{proof}

 Note that Theorem~\ref{thm:lfp_abelian} also recovers a special case of a result of Day and Street: in \cite[Theorem~2 and Example~(3)]{DAY_STREET_GENERATORS}, they showed that every generator $\ca{A}$ of a Grothendieck abelian category $\ca{C}$ is dense; meaning that the canonical functor identifies $\ca{C}$ with a reflective subcategory of the presheaf category $\Prs{A}$. Our result only shows that many generators of the (smaller) class of lfp abelian tensor categories are dense. The advantage of Theorem~\ref{thm:lfp_abelian} is that it explicitly identifies the reflective subcategory of $\Prs{A}$ that is equivalent to $\ca{C}$.

 Combining the two main results of this section we obtain the following corollary.

\begin{cor}\label{cor:C_universal_tensor}
 Let $\ca{C}$ be an lfp abelian tensor category over $R$. Let $\ca{A}$ be a generator of $\ca{C}$ which consists of finitely presentable objects and which is closed under finite direct sums and finite tensor products. Let $\Sigma$ be the set of right exact sequences in $\ca{C}$ which lie in $\ca{A}$, and write $K \colon \ca{A} \rightarrow \ca{C}$ for the inclusion. 

 Then the restriction functor
\[
 (-)\circ K \colon \Fun_{c,\otimes}(\ca{C},\ca{D}) \rightarrow \Fun_{\Sigma,\otimes}(\ca{A},\ca{D})
\]
 is an equivalence for any tensor category $\ca{D}$ over $R$.
\end{cor}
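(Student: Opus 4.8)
The plan is to exhibit $\ca{C}$, equipped with its given tensor structure, as a model for the tensor category $\Lex_{\Sigma}[\ca{A}^{\op},\Mod_R]$, and then to transport the universal property of the latter (Theorem~\ref{thm:lex_sigma_universal}) along that identification. Since $\ca{A}$ is a generator consisting of finitely presentable objects it is essentially small, so I would first replace it by a small category without loss of generality.

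Next I would verify the hypotheses needed to feed $\Sigma$ into Proposition~\ref{prop:day_reflection} and Theorem~\ref{thm:lex_sigma_universal}. Since $\ca{A}$ contains the unit and is closed under tensor products it is a symmetric monoidal subcategory of $\ca{C}$, so the inclusion $K$ is (strictly) symmetric strong monoidal; and $K$ preserves the colimits in $\Sigma$ by the very definition of $\Sigma$, so $K \in \Fun_{\Sigma,\otimes}(\ca{A},\ca{C})$. Moreover, because $\ca{C}$ is a tensor category the functor $C \otimes -$ is a left adjoint, hence right exact, for every $C$; combined with closure of $\ca{A}$ under tensor products this shows that tensoring with any object of $\ca{A}$ carries a right exact sequence in $\ca{A}$ to one in $\ca{A}$, i.e.\ preserves the colimits of $\Sigma$. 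Hence $\Lex_{\Sigma}$ carries the Day reflection tensor structure and Theorem~\ref{thm:lex_sigma_universal} is available.

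The crucial step --- and the one I expect to require the most care --- is upgrading the equivalence of Theorem~\ref{thm:lfp_abelian} from an equivalence of categories to an equivalence of \emph{tensor} categories. The point is that the functor $\Lan_Y K \vert_{\Lex_{\Sigma}}$ occurring there is, by Part~(iii) of Lemma~\ref{lemma:kan_extensions}, naturally isomorphic to $\Lan_Z K$, and Theorem~\ref{thm:lex_sigma_universal} (applied with target $\ca{C}$ and $F=K$) tells us that $\Lan_Z K$ is a tensor functor, that is, a symmetric strong monoidal left adjoint. Since Theorem~\ref{thm:lfp_abelian} tells us its underlying functor is an equivalence of categories, a quasi-inverse inherits a canonical symmetric strong monoidal structure; hence $E \defl \Lan_Z K \colon \Lex_{\Sigma} \to \ca{C}$ is an equivalence of tensor categories.

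Finally I would assemble the statement. For any tensor category $\ca{D}$, precomposition with $E$ yields an equivalence $\Fun_{c,\otimes}(\ca{C},\ca{D}) \to \Fun_{c,\otimes}(\Lex_{\Sigma},\ca{D})$: composites of symmetric strong monoidal left adjoints are again such, and the same holds for the effect of precomposing with a quasi-inverse of $E$, which is symmetric strong monoidal and, being an equivalence, a left adjoint. Composing with the equivalence $(-)\circ Z \colon \Fun_{c,\otimes}(\Lex_{\Sigma},\ca{D}) \to \Fun_{\Sigma,\otimes}(\ca{A},\ca{D})$ of Theorem~\ref{thm:lex_sigma_universal} produces an equivalence that carries a symmetric strong monoidal left adjoint $H$ to $H \circ E \circ Z$; and since $E \circ Z = \Lan_Z K \circ Z \cong K$ by Part~(i) of Lemma~\ref{lemma:kan_extensions}, this composite is naturally isomorphic to the restriction functor $(-)\circ K$, which is therefore an equivalence. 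Apart from the monoidal upgrade of Theorem~\ref{thm:lfp_abelian}, everything here is a formal manipulation of Kan extensions and of quasi-inverses of monoidal equivalences.
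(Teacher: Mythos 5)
Your proposal is correct and follows essentially the same route as the paper: identify $\ca{C}$ with $\Lex_{\Sigma}[\ca{A}^{\op},\Mod_R]$ as a symmetric monoidal category by noting that the equivalence $\Lan_Y K\vert_{\Lex_{\Sigma}}$ of Theorem~\ref{thm:lfp_abelian} is $\Lan_Z K$ (Lemma~\ref{lemma:kan_extensions}(iii)), hence a tensor functor by Theorem~\ref{thm:lex_sigma_universal}, and then transport that theorem's universal property along this equivalence using $\Lan_Z K \circ Z \cong K$. Your version merely spells out some hypotheses (smallness of $\ca{A}$, stability of $\Sigma$ under tensoring) that the paper leaves implicit.
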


\begin{proof}
 By Theorem~\ref{thm:lex_sigma_universal}, it suffices to show that $\ca{C}$ is equivalent to $\Lex_{\Sigma}$ as a symmetric monoidal category. From Theorem~\ref{thm:lfp_abelian} we know that the restriction of $\Lan_Y K \colon \Prs{A} \rightarrow \ca{C}$ to $\Lex_{\Sigma}$ is an equivalence. But this restriction is precisely the left Kan extension of $K$ along the corestricted Yoneda embedding $Z \colon \ca{A} \rightarrow \Lex_\Sigma$ by Part~(iii) of Lemma~\ref{lemma:kan_extensions}. Since $K$ is symmetric strong monoidal, this Kan extension is symmetric strong monoidal as well by Theorem~\ref{thm:lex_sigma_universal}. Thus we do indeed have an equivalence $\ca{C} \simeq \Lex_{\Sigma}$ of symmetric monoidal $R$-linear categories. By construction, it is compatible with $K$ and the corestricted Yoneda embedding $Z$ (see Part~(i) of Lemma~\ref{lemma:kan_extensions}), hence restriction along $K$ does give the desired equivalence.
\end{proof}

\subsection{Vector bundles on Adams stacks}

 We are particularly interested in applying Corollary~\ref{cor:C_universal_tensor} to the case where $\ca{C}$ is the category $\QCoh(X)$ of quasi-coherent sheaves on an Adams stack $X$, and the objects of $\ca{A}$ are vector bundles (that is, objects with duals). The following result is well-known for Artin stacks. Since we are dealing with Adams stacks, we provide a short proof that works at this level of generality.

\begin{prop}\label{prop:vb_c_generator}
 Let $X$ be an Adams stack over $R$, and let $V \in \QCoh(X)$ be a vector bundle on $X$. Then there exists a vector bundle $W$ on $X$ such that $V \oplus W$ has constant rank $d$ for some $d \in \mathbb{N}$.
\end{prop}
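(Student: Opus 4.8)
The plan is to reduce the statement to the analogous fact for finitely generated projective modules over the affine atlas of $X$. Since $X$ is an Adams stack, it is the stack associated to an Adams Hopf algebroid $(A,\Gamma)$, so that $f \colon \Spec A \rightarrow X$ is an $\fpqc$-atlas and $f^{\ast} \colon \QCoh(X) \rightarrow \Mod_A$ is symmetric strong monoidal. As $V$ has a dual, so does $P \defl f^{\ast}V$, and a dualizable object of $\Mod_A$ is precisely a finitely generated projective $A$-module. The rank of a finitely generated projective module is a locally constant function on $\Spec A$, and $\Spec A$ is quasi-compact, so it assumes only finitely many values $r_1 < \dots < r_n$. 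Letting $U_i \subseteq \Spec A$ denote the (clopen) locus where the rank equals $r_i$, we obtain $\Spec A = U_1 \sqcup \dots \sqcup U_n$ with $P$ of constant rank $r_i$ over $U_i$.

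Next I would descend this decomposition along $f$. The descent datum exhibiting $V$ as a quasi-coherent sheaf on $X$ includes an isomorphism $s^{\ast}P \cong t^{\ast}P$, where $s, t \colon \Spec\Gamma \rightrightarrows \Spec A$ are the source and target morphisms of the groupoid presenting $X$. In particular $s^{\ast}P$ and $t^{\ast}P$ have the same rank function on $\Spec\Gamma$, which forces $s^{-1}(U_i) = t^{-1}(U_i)$ for each $i$. Hence each $U_i$ is the atlas of an open (and closed) substack $j_i \colon X_i \rightarrow X$, we have $X \cong X_1 + \dots + X_n$, and $V$ restricts to a vector bundle of constant rank $r_i$ on $X_i$. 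Equivalently, $\QCoh(X) \simeq \prod_{i=1}^{n}\QCoh(X_i)$ with $j_i^{\ast}$ the $i$-th projection.

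To finish, put $d \defl r_n$ and take $W \in \QCoh(X)$ to be the object corresponding to the family $\bigl(\U_{X_i}^{\oplus(d - r_i)}\bigr)_{i=1}^{n}$ under the equivalence $\QCoh(X) \simeq \prod_i \QCoh(X_i)$; concretely $W = \bigoplus_{i=1}^{n}(j_i)_{\ast}\U_{X_i}^{\oplus(d - r_i)}$. This is a vector bundle on $X$, since it restricts to a vector bundle on each member of the clopen cover $\{X_i\}$, and $(V \oplus W)\vert_{X_i} \cong V\vert_{X_i} \oplus \U_{X_i}^{\oplus(d - r_i)}$ has rank $r_i + (d - r_i) = d$ for every $i$; thus $V \oplus W$ has constant rank $d$, as desired.

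The one point requiring care is the descent step in the second paragraph: that the clopen partition of $\Spec A$ cut out by the rank of $P$ is stable under the groupoid $\Spec\Gamma \rightrightarrows \Spec A$. This follows from the coaction isomorphism $s^{\ast}P \cong t^{\ast}P$, and from there the passage to open-and-closed substacks of $X$ is routine. The remaining ingredients — local constancy of the rank of a finitely generated projective module, quasi-compactness of the affine atlas, and the description of quasi-coherent sheaves on a finite disjoint union of stacks as a product category equipped with ``extension by zero'' functors — are all standard.
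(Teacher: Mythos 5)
Your proof is correct, but it takes a genuinely different route from the paper's. You descend the rank stratification of the atlas: the invariant clopen partition $\Spec A = \bigsqcup U_i$ (invariance coming from $s^{\ast}P \cong t^{\ast}P$, which forces the corresponding idempotents of $A$ to be equalized by $\eta_L$ and $\eta_R$) yields a decomposition $X \cong X_1 + \dots + X_n$ into open-and-closed substacks and hence $\QCoh(X) \simeq \prod_i \QCoh(X_i)$, after which $W$ is assembled by extension by zero of trivial bundles. The paper never decomposes the stack. It instead manufactures the ``indicator'' bundles internally to $\QCoh(X)$: taking $d_1$ to be the top rank, it forms the exterior power $\Lambda^{d_1}V$ (whose pullback is $(A_1,0,\dots,0)$) and then the kernel $W_1$ of the coevaluation $\U_X \rightarrow (\Lambda^{d_1}V)^{\vee}\otimes \Lambda^{d_1}V$, whose pullback is the complementary summand $(0,A_2,\dots,A_k)$; iterating produces the bundles $W_i$ that play the role of your $(j_i)_{\ast}\U_{X_i}$, and a suitable direct sum of them pads $V$ up to constant rank. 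What your approach buys is geometric transparency and a cleaner bookkeeping of the ranks; what the paper's buys is that every construction (exterior powers, duals, kernels of coevaluations) is expressed purely in the symmetric monoidal structure of $\QCoh(X)$ and checked after one application of the exact functor $f^{\ast}$, so no descent of clopen decompositions along the groupoid ever has to be invoked --- which is in keeping with the paper's general strategy of phrasing everything tensor-categorically. Your descent step is the one point that genuinely needs justification, and your justification (equal rank functions force $s^{-1}(U_i)=t^{-1}(U_i)$, hence the idempotents define endomorphisms of $\U_X$) is sound.
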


\begin{proof}
 Fix a faithfully flat affine covering $f \colon \Spec(B) \rightarrow X$. Then $f^{\ast} (V)$ is a finitely generated projective $B$-module. It's rank function is locally constant, so we can cover $\Spec(B)$ by finitely many affine open subsets $U_i=\Spec(A_i)$ such that the pullback of $V$ to $U_i$ is free of rank $d_i \in \mathbb{N}$, and such that $d_1 > d_2 > \ldots d_k\geq 0$. Replacing $B$ with this covering if necessary, we can in fact assume that $f$ is a morphism $\Spec(\prod_{i=1}^k A_i) \rightarrow X$. We thus have $f^{\ast}(V) \cong (A_1^{d_1}, \ldots, A_k^{d_k})$.

 Now consider the exterior power $\Lambda^{d_1} V$, that is, the image of the operator
\[
 \sum_{\sigma \in \Sigma_{d_1}} \sgn(\sigma) \sigma \colon V^{\otimes d_1} \rightarrow V^{\otimes d_1} 
\]
 on the $d_1$-fold tensor product of $V$ with itself. Since each summand of $f^{\ast} (V)$ is free (and $f^{\ast}$ is exact), this pullback of $\Lambda^{d_1} V$ coincides with the usual exterior power of $f^{\ast} (V)$. Here we are using the fact that---for free modules over \emph{any} commutative ring---the exterior power can be computed as the image of the above ``alternator.'' Thus we have isomorphisms
\[
 f^{\ast} (\Lambda^{d_1}V) \cong (\Lambda^{d_1} A_1^{d_1}, \Lambda^{d_1} A_2^{d_2}, \ldots, \Lambda^{d_1} A_k^{d_k})\cong (A_1,0,\ldots,0)
\]
 of $\prod_{i=1}^k A_i$-modules.

 Now consider the kernel $W_1$ of the coevaluation $\U_X \rightarrow (\Lambda^{d_1} V)^{\vee} \otimes \Lambda^{d_1} V$. Its image under the exact functor $f^{\ast}$ is the annihilator of $f^{\ast} \Lambda^{d_1}V \cong (A_1, 0,\ldots, 0)$. Thus $f^{\ast} W_1 \cong (0,A_2,\ldots,A_k)$ is finitely generated and projective, hence $W_1$ is a vector bundle. It also follows that the pullback of $V \oplus W_1^{\oplus d_1-d_2}$ to $\Spec(A_1 \times A_2)$ is free of rank $d_1$, and its pullback to $\Spec(A_i)$, $2 < i \leq k$ is free of rank strictly less than $d_1$. Iterating this procedure, we can construct vector bundles $W_i$ on $X$ with $f^{\ast} W_i \cong (0,\ldots,0, A_{i+1}, \ldots,A_k)$ for $i=2, \ldots k-1$. Using these as direct summands, we can build a vector bundle $W$ such that $f^{\ast}(V \oplus W)$ is free of rank $d_1$.
\end{proof}

 The following result will be crucial in establishing the universal properties of the various Adams stacks we construct in \S \ref{section:limits}. Conceptually, it shows that we can understand tensor functor out of $\QCoh(X)$---and therefore, by Tannaka duality, morphisms from algebraic stacks to $X$---if we have a good understanding of the category of vector bundles on $X$.

\begin{thm}\label{thm:adams_vb_universal}
 Let $X$ be an Adams stack over $R$. Let $\ca{A} \subseteq \VB(X)$ be a generator of the category $\QCoh(X)$ which is closed under finite direct sums and finite tensor products. Let $\Sigma$ be the set of right exact sequences in $\QCoh(X)$ which lie in $\ca{A}$. Then restriction along the inclusion $\ca{A} \rightarrow \QCoh(X)$ induces an equivalence
\[
 \Fun_{c,\otimes}\bigl(\QCoh(X),\ca{D}\bigr) \rightarrow \Fun_{\Sigma,\otimes}(\ca{A},\ca{D})
\]
 of categories for tensor categories $\ca{D}$ over $R$. Moreover, if $\ca{A}$ is closed under taking duals and kernels of locally split epimorphisms, and $\ca{D}$ is abelian, then a functor $F \in \Fun_{\otimes}(\ca{A}, \ca{D})$ preserves right exact sequences in $\Sigma$ if and only if it sends locally split epimorphisms to epimorphisms in $\ca{D}$.
\end{thm}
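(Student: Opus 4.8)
The first assertion is an immediate application of Corollary~\ref{cor:C_universal_tensor}: the category $\QCoh(X)$ is an lfp abelian tensor category over $R$ (as recorded in the proof of Theorem~\ref{thm:tonini}), every vector bundle on the quasi-compact stack $X$ is finitely presentable, and by hypothesis $\ca{A}$ is a generator of $\QCoh(X)$ closed under finite direct sums and finite tensor products, so Corollary~\ref{cor:C_universal_tensor} applies verbatim with $\ca{C}=\QCoh(X)$.

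For the second assertion I would first isolate two elementary facts about $\QCoh(X)$. First, every epimorphism in $\QCoh(X)$ onto a vector bundle is a locally split epimorphism, since after base change along a faithfully flat affine cover $f \colon \Spec B \to X$ it becomes an epimorphism of $B$-modules onto a finitely generated projective module. Second, the kernel of a locally split epimorphism between vector bundles is again a vector bundle, hence lies in $\ca{A}$ by the closure hypothesis. The easy half of the equivalence then follows at once: if $F$ preserves right exact sequences in $\Sigma$ and $q \colon V \to W$ is a locally split epimorphism in $\ca{A}$, then its kernel $K$ lies in $\ca{A}$, so $K \to V \to W \to 0$ is in $\Sigma$ and $F(q)$ is an epimorphism.

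For the converse, assume $F$ sends locally split epimorphisms to epimorphisms and let $A_0 \xrightarrow{p} A_1 \xrightarrow{q} A_2 \to 0$ lie in $\Sigma$. Then $q$ is a locally split epimorphism with kernel $\iota \colon K \hookrightarrow A_1$, $K \in \ca{A}$, and right exactness gives $\operatorname{im}(p) = K$, so $p$ factors as $A_0 \xrightarrow{\pi} K \xrightarrow{\iota} A_1$ with $\pi$ again a locally split epimorphism; hence $F(\pi)$ is an epimorphism, $\operatorname{im}(F(p)) = \operatorname{im}(F(\iota))$, and it suffices to show that $F$ carries the locally split short exact sequence $0 \to K \xrightarrow{\iota} A_1 \xrightarrow{q} A_2 \to 0$ to a short exact sequence. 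Here $F(q)$ is an epimorphism because $q$ is a locally split epimorphism, and $F(\iota)$ is a monomorphism: dualizing the sequence (legitimate since $\ca{A}$ is closed under duals) yields the locally split short exact sequence $0 \to A_2^\vee \to A_1^\vee \xrightarrow{\iota^\vee} K^\vee \to 0$, so $F(\iota^\vee) \cong F(\iota)^\vee$ is an epimorphism; as $\ca{D}$ is closed, the dual of an epimorphism between dualizable objects is a monomorphism (precomposition with an epimorphism is injective on $\Hom$-modules), whence $F(\iota) = F(\iota)^{\vee\vee}$ is a monomorphism.

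The remaining point---exactness at the middle term $F(A_1)$---is the only non-formal step and the one I expect to be the main obstacle; manipulation with duals merely exchanges it for its dual. The plan is to prove it by faithfully flat descent. Choose the cover $f \colon \Spec B \to X$ so that the sequence splits after base change, with associated faithfully flat commutative algebra $\ca{B} = f_\ast \U_{\Spec B}$ in $\QCoh(X)$; then $0 \to \ca{B} \otimes_{\U_X} K \to \ca{B} \otimes_{\U_X} A_1 \to \ca{B} \otimes_{\U_X} A_2 \to 0$ is a split short exact sequence of $\ca{B}$-modules. The objects $\ca{B} \otimes_{\U_X} A_i$ are free $\ca{B}$-modules on objects of $\ca{A}$, and one checks that such objects already lie in $\Lex_\Sigma[\ca{A}^{\op},\Mod_R]$ (so that no Day reflection is needed to compute their tensor products), whence applying the cocontinuous symmetric strong monoidal functor $\Lan_Y F \colon \Prs{A} \to \ca{D}$ of Theorem~\ref{thm:day_convolution_universal}---which is available irrespective of whether $F$ preserves $\Sigma$---produces a split short exact sequence $0 \to \ca{C} \otimes F(K) \to \ca{C} \otimes F(A_1) \to \ca{C} \otimes F(A_2) \to 0$ in $\ca{D}$, with $\ca{C} = \Lan_Y F(\ca{B})$ a commutative algebra in $\ca{D}$. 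A diagram chase with kernels, using that $\ca{C}$ is flat, then forces $\ca{C} \otimes E = 0$ for the middle homology $E$ of $F(K) \to F(A_1) \to F(A_2)$, so the proof concludes provided $\ca{C}$ is faithfully flat in $\ca{D}$. Establishing this last fact is the heart of the matter: it is where the hypothesis that $F$ sends locally split epimorphisms to epimorphisms, the resolution property of the Adams stack $X$, and the theory of torsors and locally free objects (cf.\ \S\ref{section:torsors}) are brought to bear. Throughout, closure of $\ca{A}$ under kernels of locally split epimorphisms is what keeps all auxiliary objects inside $\ca{A}$.
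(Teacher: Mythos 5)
The first assertion, the easy implication, and the duality argument showing that $F$ sends the kernel inclusion $\iota \colon K \rightarrow A_1$ to a monomorphism all match the paper's proof. Where you diverge is at exactness at the middle term: the paper concludes at that point directly from ``$F(\iota)$ is a monomorphism, the remaining maps go to epimorphisms, and $\ca{D}$ is abelian,'' whereas you (correctly, in my view) observe that a complex $F(K) \rightarrow F(A_1) \rightarrow F(A_2) \rightarrow 0$ with first map monic, second map epic and zero composite need not be exact at $F(A_1)$ in a general abelian $\ca{D}$, and that dualizing only trades the problem for its mirror image. So you have isolated a real subtlety. The trouble is that your proposal then stops at precisely this point: you reduce everything to the faithful flatness of $\ca{C}=\Lan_Y F(\ca{B})$ in $\ca{D}$ and declare it ``the heart of the matter'' without proving it. A proof proposal whose crucial step is announced rather than carried out is not a proof; as it stands the argument is incomplete.

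Moreover, the route you sketch for that missing step faces a concrete obstruction you should confront. The algebra $\ca{B}=f_\ast\U_{\Spec B}$ is an Adams algebra in $\QCoh(X)$, and Proposition~\ref{prop:adams_strongly_stable} would give faithful flatness of its image under a \emph{tensor functor defined on a category in which the Adams presentation lives}. But the only tensor functor you have unconditionally is $\Lan_Y F\colon\Prs{A}\rightarrow\ca{C}$ on the full presheaf category with Day convolution; the Adams presentation of $\ca{B}$ as a filtered colimit of dualizable objects $B_j$ with the coequalizer condition lives in $\QCoh(X)\simeq\Lex_\Sigma$, the $B_j$ need not lie in $\ca{A}$, and $\Hom_{\ca{A}}(K,B_j)$ need not be dualizable in $\Prs{A}$. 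Transporting the Adams structure through the reflection $\Prs{A}\rightarrow\Lex_\Sigma$ monoidally requires $\Lan_Y F$ to descend to a \emph{strong} monoidal functor on $\Lex_\Sigma$, which by Theorem~\ref{thm:lex_sigma_universal} is available exactly when $F$ preserves $\Sigma$ --- the statement you are trying to prove. So your plan is not merely unfinished; without a new idea it risks being circular. You should either supply a direct argument for faithful flatness of $\ca{C}$ that avoids this loop, or find the lemma that lets one conclude middle exactness from the mono/epi facts the way the paper does (for instance, the observation that over $\Mod_S$ the epimorphism $F(\iota)^{\vee}$ between dualizable, hence projective, objects is automatically split, so $F(\iota)$ is a split monomorphism --- an argument that does not transfer verbatim to arbitrary abelian $\ca{D}$ and therefore needs replacing, not omitting).
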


\begin{proof}
 The conditions of Corollary~\ref{cor:C_universal_tensor} are satisfied, so the first half of the statement follows from that.

 It remains to check the claim about $F \in \Fun_{\otimes} (\ca{A},\ca{D})$. First note that the kernel $i \colon K \rightarrow A$ of a locally split epimorphism $p \colon A \rightarrow B$ is a locally split monomorphism. Its dual $i^{\vee} \colon A^{\vee} \rightarrow K^{\vee}$ is therefore a locally split epimorphism. The assumption on $F$ implies that $F(i^{\vee})\cong (Fi)^{\vee}$ is an epimorphism. Its dual
\[
 [(Fi)^{\vee},\U] \colon [(FA)^{\vee},\U] \rightarrow [(FK)^{\vee},\U]
\]
  (where $\U$ denotes the unit of $\ca{C}$ and $[-,-]$ the internal hom) is therefore a monomorphism. Indeed, the contravariant hom-functor $[-,\U]$ turns epimorphisms into monomorphisms since $\ca{C}(A,[-,\U]) \cong \ca{C}(-,[A,\U])$. But the dual of $(Fi)^{\vee}$ is isomorphic to $Fi$. Since any epimorphism in $\QCoh(X)$ whose target is a vector bundle is locally split, any right exact sequence in $\Sigma$ factors as a sequence
\[
 \xymatrix{A_0 \ar@{->>}[r] & K \ar[r]^{i} & A_1 \ar@{->>}[r] & A_2 \ar[r] & 0}
\]
 where all the unlabeled arrows are locally split epimorphisms and $i$ is a locally split monomorphism. By assumption, the object $K$ lies in $\ca{A}$, so the above argument shows that $Fi$ is a monomorphism and the images of all the other morphisms in the above sequence are epimorphisms by assumption. Since $\ca{D}$ is abelian it follows that $F$ preserves the right exact sequence in question.

 Conversely, for any locally split epimorphism $p \colon A \rightarrow B$, the sequence
\[
 \xymatrix{K \ar[r] & A \ar@{->>}[r]^{p} & B \ar[r] & 0}
\]
 where $K$ denotes the kernel of $p$ lies in $\Sigma$. Thus, if $F$ preserves right exact sequences in $\Sigma$, it sends locally split epimorphisms to epimorphisms.
\end{proof}

 If we take $\ca{A}$ to be the category of all vector bundles of constant rank we get the following corollary.

\begin{cor}\label{cor:vb_density_presentation}
 Let $X$ be an Adams stack over $R$, and let $\VB^c(X)$ be the full subcategory of $\QCoh(X)$ of vector bundles of constant rank. Let $\ca{C}$ be an abelian tensor category. Let $\Fun_{\ell,\otimes}\bigl(\VB^c(X),\ca{C}\bigr)$ denote the full subcategory of $\Fun_{\otimes}\bigl(\VB^c(X),\ca{C}\bigr)$ which consists of functors which send locally split epimorphisms to epimorphisms. 

 Then restriction along the embedding $\VB^c(X) \rightarrow \QCoh(X)$ induces an equivalence
\[
 \Fun_{c,\otimes}\bigl(\QCoh(X),\ca{C}\bigr) \rightarrow \Fun_{\ell,\otimes}\bigl(\VB^c(X),\ca{C}\bigr)
\]
 of categories.
\end{cor}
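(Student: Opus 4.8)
The plan is to deduce Corollary~\ref{cor:vb_density_presentation} from Theorem~\ref{thm:adams_vb_universal} by taking $\ca{A}=\VB^c(X)$, the category of vector bundles of constant rank. First I would check that $\VB^c(X)$ satisfies all the hypotheses of Theorem~\ref{thm:adams_vb_universal}. Closure under finite direct sums holds since the direct sum of a rank-$d$ and a rank-$e$ bundle has constant rank $d+e$; closure under finite tensor products holds since $V\otimes W$ has rank $de$ when $V$ has rank $d$ and $W$ has rank $e$; the unit $\U_X$ has rank $1$, so it lies in $\VB^c(X)$. The fact that $\VB^c(X)$ generates $\QCoh(X)$ is where Proposition~\ref{prop:vb_c_generator} enters: every vector bundle $V$ on $X$ sits as a direct summand of a constant-rank bundle $V\oplus W$, and since the vector bundles (= objects with duals) generate $\QCoh(X)$ for an Adams stack (this is essentially the resolution property together with the fact that every finitely presentable sheaf is a quotient of a vector bundle), the constant-rank ones generate as well. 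Closure under duals is clear since $V^{\vee}$ has the same rank as $V$.

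The remaining hypothesis in the second half of Theorem~\ref{thm:adams_vb_universal} is that $\ca{A}$ be closed under kernels of locally split epimorphisms. If $p\colon V\to W$ is a locally split epimorphism between constant-rank bundles, then locally it is a split surjection of free modules, so its kernel $K$ is locally free; moreover $\rk K = \rk V - \rk W$ is locally constant, hence constant on the connected components, but more is needed: one must argue that $K$ actually has globally constant rank. This follows because $V\cong K\oplus W$ (the epimorphism is locally split, and since $W$ is a vector bundle the surjection $V\to W$ splits, being a locally split epimorphism onto a dualizable object — indeed any epimorphism in $\QCoh(X)$ onto a vector bundle is locally split and hence split after pulling back, but globally one gets $K$ as a summand from the splitting), so $\rk K = \rk V - \rk W$ is a difference of constants, hence constant. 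Thus $K\in\VB^c(X)$.

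With all hypotheses verified, Theorem~\ref{thm:adams_vb_universal} gives the equivalence
\[
 \Fun_{c,\otimes}\bigl(\QCoh(X),\ca{D}\bigr) \xrightarrow{\ \sim\ } \Fun_{\Sigma,\otimes}(\VB^c(X),\ca{D})
\]
for any tensor category $\ca{D}$, and the second half identifies, when $\ca{D}$ is abelian, the subcategory $\Fun_{\Sigma,\otimes}(\VB^c(X),\ca{D})$ with $\Fun_{\ell,\otimes}\bigl(\VB^c(X),\ca{D}\bigr)$: a symmetric strong monoidal functor out of $\VB^c(X)$ preserves the right exact sequences in $\Sigma$ if and only if it sends locally split epimorphisms to epimorphisms. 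Renaming $\ca{D}$ to $\ca{C}$ gives exactly the statement. The main obstacle I anticipate is the bookkeeping around constant rank in the third paragraph above — specifically confirming that $\VB^c(X)$ is genuinely closed under the relevant kernels and direct sums with the ranks staying globally constant rather than merely locally constant; once that is in hand, the corollary is a direct specialization of the theorem.
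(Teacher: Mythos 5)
Your overall route is the same as the paper's: specialize Theorem~\ref{thm:adams_vb_universal} to $\ca{A}=\VB^c(X)$, using Proposition~\ref{prop:vb_c_generator} to see that $\VB^c(X)$ generates, and then use the second half of the theorem to identify $\Fun_{\Sigma,\otimes}$ with $\Fun_{\ell,\otimes}$.

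One step of your justification is incorrect, though the conclusion you want survives. You argue that the kernel $K$ of a locally split epimorphism $p\colon V\to W$ in $\VB^c(X)$ has constant rank because ``$V\cong K\oplus W$'' globally, claiming that a locally split epimorphism onto a dualizable object splits. That is false on a general Adams stack: a locally split epimorphism need only split after a faithfully flat base change, and non-split extensions of vector bundles abound (e.g.\ the non-split extension $0\to\mathcal{O}(-1)\to\mathcal{O}^{2}\to\mathcal{O}(1)\to 0$ on $\mathbb{P}^1$, which is an Adams stack). The correct and shorter argument, which is the one the paper gives, does not need any global splitting: the rank is an element of $\End(\U_X)$ and is additive in short exact sequences of vector bundles --- this identity can be verified after pulling back along a faithfully flat affine cover, where the sequence does split, and $f^{\ast}$ is faithful on $\End(\U_X)$. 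Hence $\rk K=\rk V-\rk W$ is a difference of constants and $K\in\VB^c(X)$. With that repair, your verification of the hypotheses of Theorem~\ref{thm:adams_vb_universal} is complete and the corollary follows exactly as you say.
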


\begin{proof}
 By Proposition~\ref{prop:vb_c_generator}, the category $\VB^c(X)$ is a generator of $\QCoh(X)$. Since the rank of the kernel of an epimorphism $p \colon A \rightarrow B$ is $\rk A - \rk B$, the rank of the kernel is constant if both $\rk A$ and $\rk B$ are constant. Thus $\VB_c(X)$ is a generator which satisfies the conditions of Theorem~\ref{thm:adams_vb_universal}, and the claim follows from its conclusion.
\end{proof}

\section{Torsors in general tensor categories}\label{section:torsors}

\subsection{Adams algebras}
 One of the difficulties we face in \S \ref{section:limits} is the construction of exact tensor functors. Our main source of such is the class of \emph{free module} or \emph{base change} functors
\[
 A\otimes - \colon \ca{C} \rightarrow \ca{C}_A
\]
 for faitfhfully flat commutative algebras. Under mild assumptions, there are characterizations of such algebras in terms of exact sequences and flat objects. The problem with this characterization is that it does not interact well with tensor functors, since they are in general not left exact. Thus it is not clear if tensor functors preserve faithfully flat algebras in general.

 The notion of an \emph{Adams algebra} which we are about to introduce does not suffer from this drawback. Implicitly, it lies at the heart of the proof of \cite[Theorem~1.3.3]{SCHAEPPI_STACKS} and much of \cite{SCHAEPPI_GEOMETRIC}. Since it also plays a key role in the proofs of Theorems~\ref{thm:universal_torsor} and \ref{thm:description}, we have decided to move this supporting character to center stage.

\begin{dfn}\label{dfn:adams_algebra}
 Let $(\ca{C},\otimes,\U)$ be an abelian tensor category with exact filtered colimits and let $(A,\mu,\eta)$ be a commutative algebra in $\ca{C}$. Let $f_{ij} \colon A_i \rightarrow A_j$ be a filtered diagram of objects with duals, together with morphisms $\eta_i, f_i$ making the triangles
\[
\vcenter{ \xymatrix@C=20pt{& \U \ar[rd]^{\eta} \ar[ld]_{\eta_i} \\ A_i \ar[rr]_{f_i} && A}}
 \quad \text{and} \quad
\vcenter{ \xymatrix@C=20pt{A_i \ar[rd]_{f_i} \ar[rr]^{f_{ij}} & & A_j \ar[ld]^{f_j} \\ & A}}
\]
 commutative. 

 We say that the morphisms $\eta_i \colon \U  \rightarrow A_i$ (and, implicitly, $f_i$ and $f_{ij}$) \emph{exhibit $A$ as Adams algebra} if the cocone $f_i \colon A_i \rightarrow A$ exhibits $A$ as filtered colimit of the $A_i$, and the dual morphism $\eta_i^{\vee} \colon A_i^{\vee} \rightarrow \U^{\vee} \cong \U$ is an epimorphism for all $i$. An \emph{Adams algebra} in $\ca{C}$ is a commutative algebra $A$ such that there exist $(A_i,\eta_i)$ as above which exhibit $A$ as Adams algebra.
\end{dfn}

 If an algebra $A$ is already known to be faithfully flat, then there is a simpler criterion for $A$ to be an Adams algebra, at least under some mild assumptions on the tensor category $\ca{C}$. This allows us to give lots of examples of Adams algebras. The following argument can essentially be found in the proof of \cite[Theorem~1.3.2]{SCHAEPPI_STACKS}.

\begin{prop}\label{prop:faithfully_flat_implies_adams_criterion}
 Let $(\ca{C},\otimes, \U)$ be an abelian tensor category such that $\U$ is finitely presentable. Then a faithfully flat algebra $(A,\mu,\eta)$ is Adams if and only if the underlying object $A \in \ca{C}$ can be written as filtered colimit of objects with duals.
\end{prop}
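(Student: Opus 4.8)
The plan is to dispose of the ``only if'' direction in one line and focus on the converse. If $A$ is an Adams algebra, then Definition~\ref{dfn:adams_algebra} already exhibits $A$ as the filtered colimit of objects with duals $A_i$ via the cocone $f_i\colon A_i\to A$, so there is nothing to prove. For the converse, I would assume $A$ is faithfully flat and fix a filtered colimit presentation $A=\colim_{j\in\ca{J}}B_j$ in which every $B_j$ has a dual, with cocone $g_j\colon B_j\to A$.

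First I would produce a compatible family of ``unit sections''. Since $\U$ is finitely presentable, the unit $\eta\colon\U\to A$ factors through some $g_{j_0}$, say $\eta=g_{j_0}\tilde\eta$. Replacing $\ca{J}$ by the coslice category $\ca{J}'\defl j_0\downarrow\ca{J}$ --- which is again filtered, and whose projection to $\ca{J}$ is final, so that the reindexed diagram $(j_0\xrightarrow{u}j)\mapsto B_j$ still has colimit $A$ --- every object $u\colon j_0\to j$ of $\ca{J}'$ acquires a section $\eta_u\defl B(u)\circ\tilde\eta\colon\U\to B_j$ with $g_j\eta_u=\eta$, compatibly with the transition maps. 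The objects $B_j$, the sections $\eta_u$, and the cocone $g_j$ then satisfy every clause of Definition~\ref{dfn:adams_algebra} except, possibly, the requirement that each $\eta_u^\vee\colon B_j^\vee\to\U$ be an epimorphism; verifying this is the whole point.

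The key idea is to check this epimorphism condition after base change along $\eta$. The free-module functor $A\otimes-\colon\ca{C}\to\ca{C}_A$ is symmetric strong monoidal and cocontinuous (hence it preserves both duals and cokernels), exact (as $A$ is flat), and conservative (as $A$ is faithfully flat), so it reflects epimorphisms. It therefore suffices to show that $A\otimes\eta_u^\vee$ is epic in $\ca{C}_A$, equivalently --- since base change preserves duals --- that its dual $A\otimes\eta_u\colon A\to A\otimes B_j$ is a split monomorphism in $\ca{C}_A$. But the algebra structure supplies a retraction: the $A$-linear map $\mu\circ(A\otimes g_j)\colon A\otimes B_j\to A$, which is precisely the base change of $g_j$ (cf.\ the adjunction formula recalled in \S\ref{section:background}), satisfies $\mu\circ(A\otimes g_j)\circ(A\otimes\eta_u)=\mu\circ(A\otimes\eta)=\id_A$ by the unit axiom. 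Dualizing the split monomorphism $A\otimes\eta_u$ yields a split, hence genuine, epimorphism $A\otimes\eta_u^\vee$, which descends to an epimorphism $\eta_u^\vee$ in $\ca{C}$ by conservativity. This completes the verification that the chosen data exhibits $A$ as an Adams algebra.

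The step I expect to be the genuine obstacle is arriving at that base-change argument. The naive attempt --- replacing each $B_j$ by $B_j\oplus\U$ so that the inclusion of the unit is a section with visibly epic dual --- fails, because the colimit of the modified diagram is $A\oplus\U$ rather than $A$, and a constant direct summand cannot be ``absorbed'' through functorial transition maps. One must instead leave the $B_j$ alone, use the multiplication $\mu$ to split the unit sections after base change, and use faithful flatness to transport the resulting epimorphism back to $\ca{C}$. The remaining ingredients --- that $\mu\circ(A\otimes g_j)$ is $A$-linear, that the coslice reindexing is final, and that split monomorphisms dualize to split epimorphisms --- are routine.
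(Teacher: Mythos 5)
Your proof is correct and follows essentially the same route as the paper's: reindex over the coslice $j_0\downarrow\ca{J}$ (which is final) to get compatible unit sections, then verify that each $\eta_u^\vee$ is an epimorphism after the faithfully flat base change $A\otimes-$, where $A\otimes\eta_u$ is split by $\mu\circ(A\otimes g_j)$. The only difference is that you spell out a few routine verifications the paper leaves implicit.
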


\begin{proof}
 The ``only if'' part is immediate. Thus let $f_i \colon A_i \rightarrow A$ exhibit $A$ as colimit of the filtered diagram $f_{ij} \colon A_i \rightarrow A_j$ of objects with duals. Let $\ca{I}$ be its indexing category. The assumption that $\U$ is finitely presentable implies that the unit $\eta \colon \U \rightarrow A$ factors through some $f_{i_0}$. Since $\ca{I}$ is filtered, the functor $i_0 \slash \ca{I} \rightarrow \ca{I}$ which sends the object $i_0 \rightarrow i$ to $i$ is cofinal. Thus $A$ is also the filtered colimit of the composite diagram $i_0 \slash \ca{I} \rightarrow \ca{I} \rightarrow \ca{C}$. This new diagram has an initial object, and $\eta$ factors through the initial vertex by construction. Thus we have obtained a new filtered diagram $i \mapsto A_i$, together with $\eta_i \colon \U \rightarrow A_i$ such that the diagrams in Definition~\ref{dfn:adams_algebra} commute.

 To conclude the proof, we only need to show that $\eta_i^{\vee}$ is an epimorphism. Since $A$ is faithfully flat, it suffices to check that $A \otimes \eta_i^{\vee}$ is an epimorphism. But $A\otimes \eta_i$ is a split monomorphism, with splitting $\mu \circ A \otimes f_i$. Thus its dual is a split epimorphism, so the $(A_i,\eta_i)$ do indeed exhibit $A$ as Adams algebra.
\end{proof}

 As a consequence we get the following class of examples, which explains our choice of name for these objects.

\begin{example}\label{example:adams_hopf_alg_is_adams_alg}
 If $(A,\Gamma)$ is a flat commutative Hopf algebroid, then $\Gamma$, considered as a comodule over $(A,\Gamma)$, is an Adams algebra if and only if $(A,\Gamma)$ is an Adams Hopf algebroid. 

 Note that if $R=k$ is a field and $A=k$ (that is, $\Gamma$ is a commutative Hopf algebra in $\Vect_k$), then $\Gamma$, considered as a comodule over itself, is always an Adams algebra. This follows from the well-known fact that any comodule is the union of its finite dimensional sub-comodules.
\end{example}

 Combininig this with \cite[Theorem~1.3.1]{SCHAEPPI_STACKS}, we see that examples of Adams algebras abound in algebraic geometry.

\begin{prop}\label{prop:adams_affine_algebra}
 Let $X$ be an Adams stack and let $f\colon X_0 \rightarrow X$ be a faithfully flat morphism with affine domain. Then $f^{\ast} (\U_{X_0})$ is an Adams algebra in $\QCoh(X)$.
\end{prop}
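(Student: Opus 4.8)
The plan is to strip the statement down to Proposition~\ref{prop:faithfully_flat_implies_adams_criterion} and then feed the result into Example~\ref{example:adams_hopf_alg_is_adams_alg} via \cite[Theorem~1.3.1]{SCHAEPPI_STACKS}. First I would record the basic structure of $A \defl f_{\ast}\U_{X_0}$. Since $X$ has affine diagonal and $X_0$ is affine, the morphism $f$ is affine: for affine $U \to X$ the projection $X_0 \pb{X} U \to X_0 \times U$ is a base change of the affine diagonal of $X$, and $X_0 \times U$ is affine. Hence $A$ is a commutative algebra in $\QCoh(X)$ (because $f_{\ast}$ is symmetric monoidal), and the projection formula $A \otimes (-) \cong f_{\ast}f^{\ast}(-)$, combined with the exactness of $f_{\ast}$ (affineness) and of $f^{\ast}$ (flatness) and the conservativity of $f^{\ast}$ and $f_{\ast}$ (faithful flatness, respectively affineness), shows that $A$ is a faithfully flat algebra. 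Moreover, because $X$ is an Adams stack, $\QCoh(X)$ is an lfp abelian tensor category -- in particular it has exact filtered colimits -- and $\U_X$ is finitely presentable since $X$ is quasi-compact. So the hypotheses of Proposition~\ref{prop:faithfully_flat_implies_adams_criterion} are in force, and it remains only to exhibit the underlying object of $A$ as a filtered colimit of objects with duals, i.e.\ of vector bundles on $X$.

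For this I would pass to a Hopf-algebroid presentation attached to $f$ itself. Using the affine diagonal once more, $X_0 \pb{X} X_0$ is affine; write $X_0 = \Spec(C)$ and $X_0 \pb{X} X_0 = \Spec(\Gamma)$, so that $(C,\Gamma)$ is a flat Hopf algebroid and fpqc descent along the atlas $f$ gives an equivalence $\QCoh(X) \simeq \Comod_{(C,\Gamma)}$ implemented by $f^{\ast}$. A short flat-base-change computation $f^{\ast}A = f^{\ast}f_{\ast}\U_{X_0} \cong (\pr_1)_{\ast}\U_{X_0 \pb{X} X_0}$ identifies the comodule corresponding to $A$ with $\Gamma$ equipped with its regular comodule structure and the algebra structure transported from $A$. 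By \cite[Theorem~1.3.1]{SCHAEPPI_STACKS}, $(C,\Gamma)$ is an Adams Hopf algebroid, since $X$ is an Adams stack and the Adams property of a flat-Hopf-algebroid presentation is intrinsic to the associated stack (equivalently, it is preserved under refinement of the presenting atlas). Example~\ref{example:adams_hopf_alg_is_adams_alg} then asserts precisely that $\Gamma$, regarded as a comodule over $(C,\Gamma)$, is an Adams algebra; transporting this back along $\QCoh(X) \simeq \Comod_{(C,\Gamma)}$ shows that $A = f_{\ast}\U_{X_0}$ is an Adams algebra in $\QCoh(X)$.

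The step I expect to need the most care is this middle one: checking that under the descent equivalence the algebra $f_{\ast}\U_{X_0}$ really corresponds to the regular comodule $\Gamma$ with its natural algebra structure, and that \cite[Theorem~1.3.1]{SCHAEPPI_STACKS} may be applied to the presentation coming from the given atlas $f$ rather than to some auxiliary Adams presentation, i.e.\ that the Adams condition is independent of the chosen atlas. If one would rather sidestep this last point, one can instead fix any Adams presentation $g \colon \Spec(\Lambda) \to X$ furnished by \cite[Theorem~1.3.1]{SCHAEPPI_STACKS}; the comodule corresponding to $A$ then has underlying $\Lambda$-module the coordinate ring of $\Spec(\Lambda) \pb{X} X_0$, which is faithfully flat (in particular flat) over $\Lambda$, and since over an Adams Hopf algebroid every flat comodule is a filtered colimit of dualizable comodules, $A$ is again a filtered colimit of objects with duals, so that Proposition~\ref{prop:faithfully_flat_implies_adams_criterion} applies as before.
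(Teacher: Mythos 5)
Your proposal is correct and follows essentially the same route as the paper: present $X$ by the flat affine groupoid $(X_0, X_0\pb{X}X_0)=\bigl(\Spec(C),\Spec(\Gamma)\bigr)$, identify $f_{\ast}\U_{X_0}$ with the regular comodule $\Gamma$ under the descent equivalence, and invoke \cite[Theorem~1.3.1]{SCHAEPPI_STACKS} together with Example~\ref{example:adams_hopf_alg_is_adams_alg} to conclude that $\Gamma$ is a filtered colimit of dualizables. The additional verifications in your first paragraph (affineness of $f$, faithful flatness via the projection formula) and your worry about atlas-independence are already absorbed by the paper's citation, which states directly that $\Gamma$ is a filtered colimit of objects with duals whenever the comodule category is generated by duals.
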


\begin{proof}
 Since $f$ is faithfully flat, $X$ is the stack associated to the flat affine groupoid $(X_0,X_0 \pb{X} X_0)=\bigr(\Spec (A),\Spec(\Gamma)\bigl)$. Under the equivalence between the category of quasi-coherent sheaves and comodules of the corresponding Hopf algebroid, the algebra $f^{\ast}(\U_{X_0})$ corresponds to $\Gamma$, considered as a comodule over $(A,\Gamma)$. The claim follows since $\Gamma$ is a filtered colimit of objects with duals if the category of comodules is generated by objects with duals (see \cite[Theorem~1.3.1]{SCHAEPPI_STACKS}).
\end{proof}

 Adams algebras satisfy the following convenient stability properties.

\begin{prop}\label{prop:adams_stable}
 Let $F \colon \ca{C} \rightarrow \ca{D}$ be a tensor functor between abelian tensor categories with exact filtered colimits. Then $F$ preserves Adams algebras. Moreover, Adams algebras are closed under finite tensor products. 
\end{prop}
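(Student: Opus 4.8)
The plan is to prove the two assertions of Proposition~\ref{prop:adams_stable} separately, in both cases working directly from Definition~\ref{dfn:adams_algebra} by tracking the witnessing filtered diagrams.

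\textbf{Preservation under tensor functors.} Suppose $\eta_i \colon \U \to A_i$ (together with $f_i, f_{ij}$) exhibit $A$ as an Adams algebra in $\ca{C}$. Since $F$ is symmetric strong monoidal, it carries the algebra structure on $A$ to an algebra structure on $FA$; it carries objects with duals to objects with duals; and (being strong monoidal) it commutes up to canonical isomorphism with the formation of duals, so $F(\eta_i^\vee) \cong (F\eta_i)^\vee$ after identifying $F\U \cong \U_{\ca{D}}$. The key point is that $F$ is a tensor functor, hence a \emph{left adjoint}, hence cocontinuous, so the cocone $Ff_i \colon FA_i \to FA$ still exhibits $FA$ as the filtered colimit of the $F A_i$. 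It remains to check that $(F\eta_i)^\vee$ is an epimorphism. For this I would use that $F$, being a left adjoint, preserves epimorphisms (a left adjoint between abelian categories is right exact), so from $\eta_i^\vee$ epi in $\ca{C}$ we get $F(\eta_i^\vee)$ epi in $\ca{D}$, and under the canonical iso this is $(F\eta_i)^\vee$. Thus the data $F\eta_i$ exhibit $FA$ as an Adams algebra; one should also note that the exactness-of-filtered-colimits hypothesis on $\ca{D}$ is what makes ``abelian tensor category with exact filtered colimits'' the right ambient context, but it is not actually needed for this argument beyond ensuring the statement makes sense.

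\textbf{Closure under finite tensor products.} It suffices to treat the binary case (the unit $\U$ is trivially an Adams algebra, witnessed by the identity diagram). Let $A$ be exhibited as an Adams algebra by $\eta_i \colon \U \to A_i$ over a filtered category $\ca{I}$, and $B$ by $\eta'_j \colon \U \to B_j$ over $\ca{J}$. Then $A \otimes B$ is a commutative algebra, and I would use the diagram $i, j \mapsto A_i \otimes B_j$ indexed by $\ca{I} \times \ca{J}$, which is filtered, with transition maps $f_{ii'} \otimes g_{jj'}$, structure maps to $A \otimes B$ given by $f_i \otimes g_j$, and units $\eta_i \otimes \eta'_j \colon \U \cong \U \otimes \U \to A_i \otimes B_j$. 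Each $A_i \otimes B_j$ has a dual (namely $A_i^\vee \otimes B_j^\vee$), the requisite triangles commute by functoriality of $\otimes$, and the cocone exhibits $A \otimes B$ as the filtered colimit $\colim_{i} \colim_j A_i \otimes B_j$ because tensoring preserves colimits in each variable and a double filtered colimit is again the colimit over the product category. Finally $(\eta_i \otimes \eta'_j)^\vee \cong \eta_i^\vee \otimes \eta_j'^\vee$ is a tensor product of epimorphisms, hence an epimorphism (tensoring with any object is right exact by the ``right exact symmetric monoidal'' hypothesis, equivalently preserves epis, and a composite of epis is an epi). Hence $A \otimes B$ is an Adams algebra.

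\textbf{Expected main obstacle.} The genuinely delicate points are the compatibility of $F$ with duals and the identification $F(\eta_i^\vee) \cong (F\eta_i)^\vee$ — one must be careful that the canonical comparison is the \emph{right} one and is natural, so that it matches up the algebra unit $\eta$ with $F\eta$ and the various triangles transport correctly. This is a standard fact about strong monoidal functors and duals (they preserve the evaluation/coevaluation up to the structure isomorphisms), but it is the step where a careless argument could go wrong; everything else is a matter of chasing the definitions and invoking cocontinuity of left adjoints together with exactness of filtered colimits in the target.
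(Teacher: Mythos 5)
Your proof is correct and follows exactly the same route as the paper, which simply observes that the $F\eta_i$ exhibit $FA$ as an Adams algebra because tensor functors preserve filtered colimits, duals, and epimorphisms, and that the $\eta_i \otimes \eta'_j$ exhibit $A \otimes A'$ as an Adams algebra. Your version merely spells out the details (the product indexing category, the identification $F(\eta_i^\vee) \cong (F\eta_i)^\vee$, and why a tensor product of epimorphisms is an epimorphism) that the paper leaves implicit.
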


\begin{proof}
 If the $\eta_i \colon \U \rightarrow A_i$ exhibit $A$ as Adams algebra, then the $F \eta_i \colon F\U \rightarrow FA_i$ exhibit $FA$ as Adams algebra since tensor functors preserve filtered colimits, duals, and epimorphisms.
 
 If, in addition, the $\eta_j^{\prime} \colon \U \rightarrow A_j^{\prime} $ exhibit $A^{\prime}$ as Adams algebra, then the $\eta_i \otimes \eta_j^{\prime}$ exhibit $A\otimes A^{\prime}$ as Adams algebra.
\end{proof}

 It is perhaps less clear that Adams algebras are faithfully flat.

\begin{prop}\label{prop:adams_faithfully_flat}
 Let $\ca{C}$ be an abelian tensor category with exact filtered colimits. Then all Adams algebras in $\ca{C}$ are faithfully flat.
\end{prop}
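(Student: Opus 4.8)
The plan is to prove the two halves of the statement separately. First I would show that $A\otimes-$ is exact (so that $A$ is flat), and then that the unit $\eta\otimes M\colon M\to A\otimes M$ is a monomorphism for every $M$ (so that $A\otimes-$ is faithful, hence, with exactness, $A$ is faithfully flat). The defining epimorphism condition on the $\eta_i^{\vee}$ will enter only in the second, harder, part.

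\emph{Flatness.} Write $A=\colim_i A_i$ with each $A_i$ dualizable. Since $A_i$ has a dual, the functor $A_i\otimes-$ has the left adjoint $A_i^{\vee}\otimes-$, so it preserves all limits, in particular monomorphisms. Because $-\otimes M$ has the right adjoint $[M,-]$ it commutes with the filtered colimit defining $A$, so for any morphism $m$ we have $A\otimes m=\colim_i(A_i\otimes m)$, a filtered colimit of monomorphisms, hence a monomorphism since filtered colimits in $\ca{C}$ are exact. Thus $A\otimes-$ preserves monomorphisms; it preserves cokernels as well (it has the right adjoint $[A,-]$), so it is exact.

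\emph{Faithfulness.} By the standard criterion for the left adjoint $A\otimes-$ of the forgetful functor $\ca{C}_A\to\ca{C}$ to be faithful, it suffices to show that the unit $\eta\otimes M\colon M\to A\otimes M$ is a monomorphism for every $M$; together with exactness this is precisely the assertion that $A$ is faithfully flat. The key point is that each $\eta_i\otimes M\colon M\to A_i\otimes M$ is already a monomorphism. Since $A_i^{\vee}$ is again dualizable, with dual $A_i$, there is a natural isomorphism $A_i\otimes M\cong[A_i^{\vee},M]$, and by naturality of the isomorphism $[X,M]\cong X^{\vee}\otimes M$ in $X$ together with $(\eta_i^{\vee})^{\vee}=\eta_i$, the morphism $\eta_i\otimes M$ corresponds under it to $[\eta_i^{\vee},M]\colon[\U,M]\to[A_i^{\vee},M]$. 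For any object $T$, applying $\ca{C}(T,-)$ to $[\eta_i^{\vee},M]$ yields precomposition with $T\otimes\eta_i^{\vee}\colon T\otimes A_i^{\vee}\to T\otimes\U$; this morphism is an epimorphism because $T\otimes-$ is right exact and $\eta_i^{\vee}$ is an epimorphism by hypothesis, and precomposition with an epimorphism is injective, so $[\eta_i^{\vee},M]$, and hence $\eta_i\otimes M$, is a monomorphism. Finally, the $\eta_i$ are compatible with the transition maps $f_{ij}$ (that is, $f_{ij}\eta_i=\eta_j$) and satisfy $f_i\eta_i=\eta$, so the monomorphisms $\eta_i\otimes M$ assemble into a morphism of filtered diagrams from the constant diagram with value $M$ to $(A_i\otimes M)_i$ whose colimit is $\eta\otimes M$; exactness of filtered colimits then gives $\ker(\eta\otimes M)=\colim_i\ker(\eta_i\otimes M)=0$, so $\eta\otimes M$ is a monomorphism and $A$ is faithfully flat.

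The step I expect to be the main obstacle is the faithfulness half. One cannot argue with $\eta$ directly, since the unit $\U\to A$ of an Adams algebra need not admit a retraction, so $\eta\otimes M$ is not visibly monic; nor can one reduce to a previously treated situation via Proposition~\ref{prop:adams_stable}, because passing to a category of modules replaces $A$ by a larger algebra and moves the problem in the wrong direction. The hypothesis that the $\eta_i^{\vee}$ are epimorphisms has to be fed in at the level of the dualizable approximations $A_i$, through the internal-hom identity above, and that is the only place it is used.
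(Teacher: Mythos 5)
Your proof is correct and follows essentially the same route as the paper: flatness from $A=\colim_i A_i$ being a filtered colimit of dualizable objects, and faithfulness by identifying $\eta_i\otimes M$ with $[\eta_i^{\vee},M]$ via $A_i\otimes M\cong A_i^{\vee\vee}\otimes M\cong[A_i^{\vee},M]$, using that the contravariant internal hom turns the epimorphisms $\eta_i^{\vee}$ into monomorphisms, and then passing to the filtered colimit. The paper phrases faithful flatness as ``$A\otimes X\cong 0$ implies $X\cong 0$'' but proves exactly the same statement you do, namely that $\eta\otimes X$ is a monomorphism.
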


\begin{proof}
 Let $\eta_i \colon \U \rightarrow A_i$ exhibit $A$ as Adams algebra. First note that $A \cong \colim A_i$ is flat as filtered colimit of objects with duals. To see that $A\otimes X \cong 0$ implies $X \cong 0$, note that we have natural isomorphisms in the commutative diagram
\[
 \xymatrix{ \U\otimes X \ar[r]^-{\cong} \ar[d]_{\eta_i \otimes X} & \U^{\vee \vee} \otimes X \ar[r]^-{\cong} \ar[d]^{\eta_i^{\vee\vee}\otimes X} & [\U^{\vee},X] \ar[d]^{[\eta_i^{\vee},X]} \\ A_i \otimes X \ar[r]^-{\cong} &  A_i^{\vee\vee} \otimes X \ar[r]^-{\cong} & [A_i^{\vee},X]}
\]
 where $[-,-]$ denotes the internal hom of $\ca{C}$. Since $\ca{C}(C,[-,X])\cong \ca{C}(-,[C,X])$, the internal hom functor sends the epimorphism $\eta_i^{\vee}$ to a monomorphism. Thus $\eta_i \otimes X$ is a monomorphism, and since filtered colimits are exact, it follows that $\eta \otimes X \colon X \rightarrow A \otimes X$ is a monomorphism. This shows that $A$ is indeed faithfully flat.
\end{proof}

 Using this, we can in fact extend the preservation result of Proposition~\ref{prop:adams_stable} to tensor categories which are not necessarily abelian.

\begin{dfn}\label{dfn:faithfully_flat}
 Let $\ca{C}$ be a (not necessarily abelian) tensor category. Then a commutative algebra $A$ in $\ca{C}$ is called \emph{faithfully flat} if $A\otimes -$ preserves all the finite limits that exist in $\ca{C}$, and $A\otimes -$ is \emph{conservative}: whenever $A \otimes f$ is an isomorphism, so is $f$.
\end{dfn}

 Using the fact that a morphism in an abelian category is an isomorphism if and only if it is both an epimorphism and a monomorphism, it is not hard to see that this coincides with the usual definition for abelian tensor categories. Note that this also coincides with the notion of faithfully flat algebra in \cite{BRANDENBURG_THESIS}[Definition~4.6.4]. One implication follows from \cite{BRANDENBURG_THESIS}[Remark~4.6.5], while the other follows since a conservative functor creates all the limits it preserves.

 In order to extend the preservation result of Proposition~\ref{prop:adams_stable}, we need to recall the concept of a split equalizer. It will also play a crucial role in the next section.
 
 \begin{dfn}\label{dfn:split_equalizer}
  A diagram
\[
\turnradius{5pt}
\xymatrix{
X \ar[r]^{s} & Y \ar@<0.5ex>[r]^u \ar@<-0.5ex>[r]_v \ar `u[l] `[l]+<0.4pt,10pt>_p [l]+<0pt,6pt> &  Z \ar `u[l] `[l]+<2.4pt,10pt>_{q} [l]+<2pt,6pt>\\
}
\]
 is called a \emph{split equalizer} if $us=vs$, $ps=\id_E$, $qu=\id$, and $sp=qv$.
 \end{dfn}

 The equations immediately imply that $s$ is an equalizer of $u$ and $v$. Thus split equalizers are examples of \emph{absolute} equalizers: they are preserved by any functor whatsoever (since the equations above are preserved).

\begin{lemma}\label{lemma:strong_adams}
 Let $\ca{C}$ be an abelian tensor category with exact filtered colimits, and let $\eta_i \colon \U \rightarrow A$ exhibit $A$ as Adams algebra. Then
\[
 \xymatrix{ A_i^{\vee} \otimes A_i^{\vee} \ar@<0.5ex>[r]^-{A_i^{\vee} \otimes \eta_i^{\vee}} \ar@<-0.5ex>[r]_-{\eta_i^{\vee} \otimes A_i^{\vee}} & A_i^{\vee} \ar[r]^-{\eta_i^{\vee}} & \U }
\]
 is a coequalizer diagram for all $i$ in the indexing category.
\end{lemma}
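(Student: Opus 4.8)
The plan is to base change along the faithfully flat algebra $A$ and recognise the resulting diagram as a \emph{split} coequalizer, then descend. Fix an index $i$, write $B \defl A_i$ and $s_0 \defl \eta_i^{\vee} \colon B^{\vee} \to \U$, and recall two ingredients from earlier: by Definition~\ref{dfn:adams_algebra} the morphism $s_0$ is an epimorphism, and by Proposition~\ref{prop:adams_faithfully_flat} the algebra $A$ (which is $\colim A_j$) is faithfully flat. Since $\ca{C}$ is a tensor category, the base change functor $(-)_A \colon \ca{C} \to \ca{C}_A$ is symmetric strong monoidal, $\ca{C}_A$ is again an (abelian) tensor category, and faithful flatness of $A$ means exactly that $(-)_A$ is exact and conservative; being strong monoidal it also preserves dualizable objects and duals. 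Thus $M \defl (B^{\vee})_A$ is a dualizable $A$-module with dual $(B)_A$, and applying $(-)_A$ to the diagram in the statement yields
\[
 \xymatrix{ M \otimes_A M \ar@<0.5ex>[r]^-{M \otimes_A s} \ar@<-0.5ex>[r]_-{s \otimes_A M} & M \ar[r]^-{s} & A }
\]
with $s \defl (s_0)_A \colon M \to A$. So it suffices to prove that this diagram is a coequalizer and that a coequalizer descends along $(-)_A$.

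First I would show $s$ is a \emph{split epimorphism} of $A$-modules. Its transpose $A \otimes \eta_i \colon A \to (B)_A$ is a split monomorphism of $A$-modules with retraction $\mu \circ (A \otimes f_i)$ — this is precisely the computation already used in the proofs of Propositions~\ref{prop:faithfully_flat_implies_adams_criterion} and~\ref{prop:adams_faithfully_flat} — and dualising a split monomorphism between dualizable objects of $\ca{C}_A$ produces a split epimorphism whose dual is $(A\otimes\eta_i)^{\vee} \cong (s_0)_A = s$. Pick a section $p \colon A \to M$ of $s$ in $\ca{C}_A$. Then I would verify directly the standard fact that, for \emph{any} split epimorphism $s \colon M \to A$ of $A$-modules, the pair $(p,\,q)$ with $q \defl M \otimes_A p \colon M \cong M \otimes_A A \to M \otimes_A M$ exhibits the displayed diagram as a split coequalizer, i.e.\ the dual of Definition~\ref{dfn:split_equalizer}. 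Indeed the four identities reduce to $sp = \id_A$, $(M\otimes_A s)\circ q = M \otimes_A(sp) = \id_M$, $(s\otimes_A M)\circ q = p\circ s$, and $s\circ(M\otimes_A s) = s\circ(s\otimes_A M)$, all of which follow from bifunctoriality of $\otimes_A$ together with $A$-linearity of $p$ and $s$. Since split coequalizers are absolute, the diagram is in particular a coequalizer (in $\ca{C}_A$, hence in $\ca{C}$).

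For the descent, let $c \colon B^{\vee} \to Q$ be the coequalizer of $B^{\vee}\otimes\eta_i^{\vee}$ and $\eta_i^{\vee}\otimes B^{\vee}$ in $\ca{C}$. Because $\eta_i^{\vee}\circ(B^{\vee}\otimes\eta_i^{\vee}) = \eta_i^{\vee}\otimes\eta_i^{\vee} = \eta_i^{\vee}\circ(\eta_i^{\vee}\otimes B^{\vee})$, the morphism $\eta_i^{\vee}$ factors uniquely as $\bar\eta\circ c$ with $\bar\eta \colon Q \to \U$, and $\bar\eta$ is an epimorphism since $\eta_i^{\vee}$ is. As $(-)_A$ is exact it sends $c$ to a coequalizer of the base-changed pair; but by the previous paragraph $(\eta_i^{\vee})_A = s$ is also a coequalizer of that pair, so $(\bar\eta)_A$ is an isomorphism, and conservativity of $(-)_A$ forces $\bar\eta$ to be an isomorphism. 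Hence $\eta_i^{\vee}$ is the coequalizer of $B^{\vee}\otimes\eta_i^{\vee}$ and $\eta_i^{\vee}\otimes B^{\vee}$, which is the assertion of the lemma.

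The only genuinely delicate point is the identification of $s$ as a split epimorphism of $A$-modules (via dualising the split monomorphism $A\otimes\eta_i$ inside $\ca{C}_A$); once that is in hand, the rest is the familiar combination of the split-coequalizer trick with faithfully flat descent and is essentially formal.
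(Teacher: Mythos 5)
Your proof is correct and follows essentially the same route as the paper's: base change along the faithfully flat algebra $A$, exhibit the resulting diagram as an absolute (split) coequalizer using the splitting $\mu\circ(A\otimes f_i)$ of $A\otimes\eta_i$, and descend by conservativity. The only difference is presentational — the paper recognizes the base-changed diagram as the image under $[-,A]_{\ca{C}_A}$ of a split equalizer, whereas you dualize directly to get a split epimorphism $s$ and invoke the generic split coequalizer it generates; these are the same argument written on opposite sides of the duality.
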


\begin{proof}
 Since $A$ is faithfully flat (see Proposition~\ref{prop:adams_faithfully_flat}), it suffices to show that the image of the above diagram under $A\otimes -$ is a coequalizer diagram in the category $\ca{C}_A$ of $A$-modules. But this diagram is isomorphic to the image of
\[
 \xymatrix{A \ar[r]^-{A \otimes \eta_i} & A \otimes A_i \ar@<0.5ex>[r]^-{ A \otimes  \eta_i \otimes A_i} \ar@<-0.5ex>[r]_-{A \otimes A_i \otimes \eta_i} & A \otimes A_i \otimes A_i }
\]
 under the internal hom functor $[-,A]_{\ca{C}_A} \colon \ca{C}_A^{\op} \rightarrow \ca{C}_A$. Thus it suffices to check that the above is a split equalizer. Let $\overline{f_i} \colon A\otimes A_i \rightarrow A$ be the morphism of $A$-modules which corresponds to the morphism $f_i \colon A_i \rightarrow A$ in $\ca{C}$, that is, $\overline{f_i}=\mu \circ A \otimes f_i$. Then $\overline{f_i} \colon A\otimes A_i \rightarrow A$ and $\overline{f_i}\otimes A_i \colon A\otimes A_i \otimes A_i \rightarrow A\otimes A_i $ exhibit the above diagram as split equalizer. 
\end{proof}

 The above result suggests a definition of Adams algebras in tensor categories that are not abelian.

 \begin{dfn}\label{dfn:general_adams_algebra}
 Let $(\ca{C},\otimes,\U)$ be a (not necessarily abelian) tensor category and let $(A,\mu,\eta)$ be a commutative algebra in $\ca{C}$. Let $f_{ij} \colon A_i \rightarrow A_j$ a filtered diagram of objects with duals, together with morphisms $\eta_i, f_i$ making the triangles
\[
\vcenter{ \xymatrix@C=20pt{& \U \ar[rd]^{\eta} \ar[ld]_{\eta_i} \\ A_i \ar[rr]_{f_i} && A}}
 \quad \text{and} \quad
\vcenter{ \xymatrix@C=20pt{A_i \ar[rd]_{f_i} \ar[rr]^{f_{ij}} & & A_j \ar[ld]^{f_j} \\ & A}}
\]
 commutative. 

 We say that the morphisms $\eta_i \colon \U  \rightarrow A_i$ (and, implicitly, $f_i$ and $f_{ij}$) \emph{exhibit $A$ as Adams algebra} if the cocone $f_i \colon A_i \rightarrow A$ exhibits $A$ as filtered colimit of the $A_i$, and the diagram
\[
  \xymatrix{ A_i^{\vee} \otimes A_i^{\vee} \ar@<0.5ex>[r]^-{A_i^{\vee} \otimes \eta_i^{\vee}} \ar@<-0.5ex>[r]_-{\eta_i^{\vee} \otimes A_i^{\vee}} & A_i^{\vee} \ar[r]^-{\eta_i^{\vee}} & \U }
\]
 is a coequalizer diagram for each $i$. An \emph{Adams algebra} in $\ca{C}$ is a commutative algebra such that there exist $(A_i,\eta_i)$ as above which exhibit $A$ as Adams algebra.
\end{dfn}

 Note that by Lemma~\ref{lemma:strong_adams}, a commutative algebra in an abelian tensor category with exact filtered colimits is Adams in the sense of Definition~\ref{dfn:adams_algebra} if and only if it is Adams in the sense of Definition~\ref{dfn:general_adams_algebra}.

 There are some similarities between Adams algebras and the special descent algebras introduced in \cite{BRANDENBURG_THESIS}[Definition~4.10.6]: they have analogous stability properties. However, the latter involves a condition that certain cokernels admit duals, which is often harder to check in practice (cf. Proposition~\ref{prop:locally_split_criteria} below). In an abelian category, every special descent algebra is an Adams algebra by Proposition~\ref{prop:faithfully_flat_implies_adams_criterion}. It is not clear if the converse holds.

 If $\ca{C}$ is a cocomplete $R$-linear category, we say that $\ca{C}$ has \emph{exact filtered colimits} if filtered colimits in $\ca{C}$ commute with finite limits (that is, with those finite limits that exist in $\ca{C}$). This is for example the case if $\ca{C}$ is lfp (see \cite[Korollar~7.12]{GABRIEL_ULMER}).

 \begin{prop}\label{prop:general_adams_faithfully_flat}
  Let $\ca{C}$ be a (not necessarily abelian) tensor category, and let $A \in \ca{C}$ be an Adams algebra in the sense of Definition~\ref{dfn:general_adams_algebra}. If filtered colimits in $\ca{C}$ are exact, then $A$ is faithfully flat (in the sense of Definition~\ref{dfn:faithfully_flat}).
 \end{prop}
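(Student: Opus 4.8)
The definition of faithful flatness (Definition~\ref{dfn:faithfully_flat}) asks for two things: that $A \otimes -$ preserves the finite limits which exist in $\ca{C}$, and that $A \otimes -$ is conservative. The flatness half is immediate. If $\eta_i \colon \U \to A_i$ exhibit $A$ as an Adams algebra, then each $A_i$ has a dual, so $A_i \otimes -$ is right adjoint to $A_i^{\vee} \otimes -$ and therefore preserves all limits. Since $- \otimes X$ preserves colimits we have $A \otimes X \cong \colim_i A_i \otimes X$, naturally in $X$, so $A \otimes -$ is the filtered colimit of the functors $A_i \otimes -$; as filtered colimits in $\ca{C}$ commute with finite limits by hypothesis, $A \otimes -$ preserves finite limits.

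For conservativity the plan is to prove that, for every object $W$, the ``cobar diagram'' of the algebra unit $\eta \colon \U \to A$,
\[
 \xymatrix{W \ar[r]^-{\eta \otimes W} & A \otimes W \ar@<0.5ex>[r]^-{\eta \otimes A \otimes W} \ar@<-0.5ex>[r]_-{A \otimes \eta \otimes W} & A \otimes A \otimes W}
\]
is an equalizer, naturally in $W$. Granting this, conservativity is a formality: given $f \colon X \to Y$ with $A \otimes f$ invertible, naturality of the cobar diagram in $W$ yields a morphism between the equalizer diagrams for $X$ and for $Y$ whose components at $A \otimes -$ and $A \otimes A \otimes -$ are the isomorphisms $A \otimes f$ and $A \otimes A \otimes f$; the morphism induced on equalizers is $f$, hence $f$ is an isomorphism.

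To establish the equalizer property I would argue one stage at a time first. Applying the contravariant internal hom $[-,W]$, which sends colimits to limits since $\ca{C}(X,[Y,W]) \cong \ca{C}(Y,[X,W])$, to the defining coequalizer $A_i^{\vee} \otimes A_i^{\vee} \rightrightarrows A_i^{\vee} \to \U$ of Definition~\ref{dfn:general_adams_algebra}, and using the canonical isomorphisms $[\U,W] \cong W$, $[A_i^{\vee},W] \cong A_i^{\vee\vee} \otimes W \cong A_i \otimes W$ and $[A_i^{\vee} \otimes A_i^{\vee},W] \cong A_i \otimes A_i \otimes W$ coming from the dualizability of $A_i$, one obtains that
\[
 \xymatrix{W \ar[r]^-{\eta_i \otimes W} & A_i \otimes W \ar@<0.5ex>[r]^-{\eta_i \otimes A_i \otimes W} \ar@<-0.5ex>[r]_-{A_i \otimes \eta_i \otimes W} & A_i \otimes A_i \otimes W}
\]
is an equalizer, naturally in $W$. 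One then passes to the filtered colimit over the indexing category $\ca{I}$: since the diagonal $\ca{I} \to \ca{I} \times \ca{I}$ is cofinal one identifies $\colim_i A_i \otimes A_i \otimes W \cong A \otimes A \otimes W$, the stagewise cobar diagrams assemble into a filtered diagram of parallel pairs whose colimit is the cobar diagram of $\eta = \colim_i \eta_i$, and exactness of filtered colimits (together with the fact that the constant colimit over the connected category $\ca{I}$ recovers $W$) promotes the stagewise equalizers to the equalizer for $A$.

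The step that requires care---and the main obstacle---is this last assembly: for the stagewise cobar diagrams to form a filtered diagram and for their colimit to be the cobar diagram of $\eta$, one needs the unit morphisms $\eta_i$ to be compatible with the transition maps $f_{ij}$, so that ``inserting $\eta_i$'' is natural in $i$. Concretely, one should realise the pairs $(A_i,\eta_i)$ as a filtered diagram in the category of objects under $\U$ refining the given presentation of $A$ as $\colim_i A_i$. Once this is arranged, identifying the colimit of the stagewise cobar diagrams with the cobar diagram of $\eta$, using that $\otimes$ preserves colimits in each variable, and invoking exactness of filtered colimits is routine.
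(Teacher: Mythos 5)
Your proof is correct and follows essentially the same route as the paper: flatness from $A$ being a filtered colimit of objects with duals, and conservativity by showing the cobar diagram of $\eta$ is an equalizer, which is obtained by applying the contravariant internal hom $[-,X]$ to the defining coequalizers of Definition~\ref{dfn:general_adams_algebra} and then passing to the filtered colimit using exactness. The compatibility of the $\eta_i$ with the transition maps that you flag at the end is left equally implicit in the paper's proof, which performs the same reduction to the stagewise equalizers, so it is not a point of divergence between the two arguments.
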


\begin{proof}
 Clearly $A \cong \colim A_i$ is flat as filtered colimit of objects with duals. It remains to check that $f \colon X \rightarrow Y$ is an isomorphism if $A\otimes f$ is.

 To see this, we will show that both rows in the commutative diagram
\[
 \xymatrix@C=50pt{
 X \ar[d]_f \ar[r]^-{\eta \otimes X} & A \otimes X \ar@<0.5ex>[r]^-{A \otimes \eta \otimes X} \ar@<-0.5ex>[r]_-{\eta \otimes A \otimes X} \ar[d]_{A \otimes f} & A \otimes A \otimes X \ar[d]^{A \otimes A\otimes f} \\
 Y \ar[r]_-{\eta \otimes Y} & A \otimes Y \ar@<0.5ex>[r]^-{A \otimes \eta \otimes Y} \ar@<-0.5ex>[r]_-{\eta \otimes A \otimes Y} & A \otimes A \otimes Y }
\]
 are equalizer diagrams. This implies the claim: since $A\otimes f$ is an isomorphism, so is $A \otimes A\otimes f$, and therefore $f$ by the universal property of equalizers.

 Since filtered colimits in $\ca{C}$ are exact, it suffices to check that
\[
 \xymatrix@C=50pt{X \ar[r]^-{\eta_i \otimes X} & A_i \otimes X \ar@<0.5ex>[r]^-{A_i \otimes \eta_i \otimes X} \ar@<-0.5ex>[r]_-{\eta_i \otimes A_i \otimes X} & A_i \otimes A_i \otimes X}
\]
 is an equalizer diagram for all $i$ in the indexing category. But this diagram is isomorphic to
\[
 \xymatrix@C=50pt{[\U^{\vee},X] \ar[r]^-{[\eta_i^{\vee}, X]} & [A_i^{\vee},  X] \ar@<0.5ex>[r]^-{[\eta_i^{\vee} \otimes A_i^{\vee}, X]} \ar@<-0.5ex>[r]_-{[A_i^{\vee} \otimes \eta_i^{\vee} , X]} & [ A_i^{\vee} \otimes A_i^{\vee}, X ]}
\]
 since we have natural isomorphisms $M \otimes X \cong M^{\vee\vee} \otimes X \cong [M^{\vee},X]$ for any object $M$ with dual. This last diagram is obtained by applying the contravariant internal hom functor $[-,X]$ to the coequalizer diagram in Definition~\ref{dfn:general_adams_algebra}. The claim follows since $[-,X]$ sends colimits to limits.
\end{proof}

\begin{prop}\label{prop:adams_strongly_stable}
 Tensor functors preserve Adams algebras in the sense of Definition~\ref{dfn:general_adams_algebra}. In particular, if $F \colon \ca{C} \rightarrow \ca{D}$ is a tensor functor, $A \in \ca{C}$ is an Adams algebra, and filtered colimits in $\ca{D}$ are exact, then $FA \in \ca{D}$ is faithfully flat (in the sense of Definition~\ref{dfn:faithfully_flat}).
\end{prop}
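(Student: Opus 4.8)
The plan is to verify directly that $F$ sends the data exhibiting $A$ as an Adams algebra to data exhibiting $FA$ as one, and then to invoke Proposition~\ref{prop:general_adams_faithfully_flat} for the ``in particular'' clause. The key observation is that Definition~\ref{dfn:general_adams_algebra} refers only to structure that a tensor functor preserves: filtered colimits (since $F$, having a right adjoint, is cocontinuous), duals and the unit object (since $F$ is symmetric strong monoidal), and a single coequalizer diagram (again by cocontinuity). This is precisely the advantage of Definition~\ref{dfn:general_adams_algebra} over the characterization via faithful flatness and exact sequences discussed at the beginning of this subsection, which involves left exactness --- a property tensor functors generally lack.

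Concretely, I would fix morphisms $\eta_i \colon \U \rightarrow A_i$, together with $f_i$ and $f_{ij}$, exhibiting $A$ as an Adams algebra. First, $Ff_{ij} \colon FA_i \rightarrow FA_j$ is again a filtered diagram (with the same indexing category) of objects with duals, since symmetric strong monoidal functors send dual pairs to dual pairs; the $Ff_i$ exhibit $FA$ as its filtered colimit, since $F$ preserves filtered colimits; and the triangles involving $F\eta_i$, $Ff_i$, $Ff_{ij}$, and the composite $\U_{\ca{D}} \cong F\U_{\ca{C}} \xrightarrow{F\eta} FA$ commute by functoriality. Next, applying the cocontinuous functor $F$ to the coequalizer diagram
\[
  \xymatrix{ A_i^{\vee} \otimes A_i^{\vee} \ar@<0.5ex>[r]^-{A_i^{\vee} \otimes \eta_i^{\vee}} \ar@<-0.5ex>[r]_-{\eta_i^{\vee} \otimes A_i^{\vee}} & A_i^{\vee} \ar[r]^-{\eta_i^{\vee}} & \U }
\]
of Definition~\ref{dfn:general_adams_algebra} yields a coequalizer in $\ca{D}$, and the coherence isomorphisms $\varphi^F$, $\varphi_0^F$ together with the canonical identifications $F(A_i^{\vee}) \cong (FA_i)^{\vee}$ and $F(\eta_i^{\vee}) \cong (F\eta_i)^{\vee}$ identify this coequalizer with exactly the diagram prescribed by Definition~\ref{dfn:general_adams_algebra} for the data $F\eta_i \colon \U_{\ca{D}} \rightarrow FA_i$. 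Hence the $F\eta_i$ exhibit $FA$ as an Adams algebra in the sense of Definition~\ref{dfn:general_adams_algebra}.

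For the final assertion, if filtered colimits in $\ca{D}$ are exact, then Proposition~\ref{prop:general_adams_faithfully_flat} applied to the Adams algebra $FA \in \ca{D}$ shows immediately that $FA$ is faithfully flat in the sense of Definition~\ref{dfn:faithfully_flat}. I expect the only point requiring a little care to be the bookkeeping with the coherence isomorphisms $\varphi^F$ in the penultimate step --- that is, checking that a symmetric strong monoidal functor respects the evaluation/coevaluation data defining the duals, so that $F(\eta_i^{\vee})$ really is $(F\eta_i)^{\vee}$ under the canonical identification --- but this is a standard fact about strong monoidal functors rather than a genuine obstacle.
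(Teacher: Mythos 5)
Your proposal is correct and is exactly the paper's argument, just written out in more detail: the paper's proof simply notes that the second claim follows from the first together with Proposition~\ref{prop:general_adams_faithfully_flat}, and that the first is immediate because tensor functors preserve duals and coequalizers (and, being left adjoints, filtered colimits). The coherence bookkeeping you flag at the end is indeed the standard fact that strong monoidal functors preserve dual pairs, so there is no gap.
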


\begin{proof}
 The second claim follows from the first and Proposition~\ref{prop:general_adams_faithfully_flat}. The first claim is immediate from the fact that tensor functors preserve duals and coequalizers.
\end{proof}

 Proposition~\ref{prop:adams_strongly_stable} gives us a powerful tool for constructing a faithfully flat algebra in a general tensor category: we just need to show that it is the image of an Adams algebra under a tensor functor. The important part is that no exactness properties are required of the tensor functor. We will use this in the next section to study torsors of Adams Hopf algebras in general tensor categories.

 The concept of an Adams algebra also gives us a dual way of detecting faithfully flat algebras: if we have a candidate of morphisms $\eta_i \colon \U \rightarrow A_i$ which might exhibit $A$ as Adams algebra, then it suffices to check that their image under some tensor functor $F$ exhibits $FA$ as an Adams algebra. This holds under fairly mild assumptions on a tensor functor involved: it need not be exact, faithful, or even conservative.

\begin{prop}\label{prop:adams_detection}
 Let $\ca{C}$, $\ca{D}$ be tensor categories, and let $A \in \ca{C}$ be a commutative algebra. Let $F \colon \ca{C} \rightarrow \ca{D}$ be a tensor functor which reflects cokernels. Then a collection of morphisms $\eta_i \colon \U \rightarrow A_i$ (together with $f_i \colon A_i \rightarrow A$ and $f_{ij} \colon A_i \rightarrow A_j$) as in Definition~\ref{dfn:general_adams_algebra} exhibit $A$ as Adams algebra if and only if the $F\eta_i$ exhibit $FA$ as Adams algebra in $\ca{D}$.

 If both $\ca{C}$ and $\ca{D}$ are abelian, the same is true for a tensor functor that merely reflects epimorphisms between objects with duals.
\end{prop}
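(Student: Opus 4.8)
The ``only if'' direction is immediate from Proposition~\ref{prop:adams_strongly_stable}: the morphisms $F\eta_i$ together with the $Ff_i$ and $Ff_{ij}$ are exactly the image of the given data under $F$, and a tensor functor preserves duals, filtered colimits, and coequalizers. So the plan is to establish the converse, by verifying the two conditions of Definition~\ref{dfn:general_adams_algebra} for the original data one at a time, using in each case that $F$ is $R$-linear and reflects cokernels.

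For the coequalizer condition I would argue as follows. Since $F$ is symmetric strong monoidal it carries duals of dualizable objects to duals and commutes with forming the dual of a morphism, so it sends $A_i^\vee \otimes A_i^\vee \rightrightarrows A_i^\vee \xrightarrow{\eta_i^\vee} \U$ to a diagram isomorphic to $(FA_i)^\vee \otimes (FA_i)^\vee \rightrightarrows (FA_i)^\vee \xrightarrow{(F\eta_i)^\vee} \U_{\ca{D}}$, which is a coequalizer by hypothesis. In an $R$-linear category a coequalizer of a parallel pair $(u,v)$ is precisely the cokernel of $u-v$, and $F$, being additive, commutes with forming $u-v$; hence $F$ sends the cokernel diagram $A_i^\vee \otimes A_i^\vee \to A_i^\vee \to \U$ (first map the relevant difference) to a cokernel diagram. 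As $F$ reflects cokernels, this was already a cokernel diagram, i.e.\ the coequalizer condition holds.

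For the colimit condition, write $g_j \colon A_j \to \colim_k A_k$ for the colimit cocone in $\ca{C}$ and $h \colon \colim_k A_k \to A$ for the comparison map with $hg_j = f_j$. As a tensor functor $F$ is cocontinuous, so $Fg_j$ is again a colimit cocone and $Fh$ is the comparison map for the cocone $Ff_j$; since the $Ff_j$ form a colimit cocone by hypothesis, $Fh$ is an isomorphism. Now I would invoke the elementary observation that a morphism $m \colon X \to Y$ in an $R$-linear category is an isomorphism if and only if $m$ is the cokernel of the zero endomorphism $0 \colon X \to X$. Applying $F$ (which preserves zero morphisms) to $\colim_k A_k \xrightarrow{0} \colim_k A_k \xrightarrow{h} A$ gives a cokernel diagram, because $Fh$ is an isomorphism; reflecting cokernels, $h$ is an isomorphism, so the $f_j$ exhibit $A$ as a filtered colimit. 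This finishes the first assertion.

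For the abelian refinement I would instead use the criterion of Definition~\ref{dfn:adams_algebra}, which agrees with Definition~\ref{dfn:general_adams_algebra} by Lemma~\ref{lemma:strong_adams}: the $\eta_i$ exhibit $A$ as an Adams algebra exactly when the $f_i$ form a filtered colimit cocone and each $\eta_i^\vee \colon A_i^\vee \to \U^\vee \cong \U$ is an epimorphism. The epimorphism condition reflects at once, since $\eta_i^\vee$ is a morphism between dualizable objects, $F$ carries it to a morphism isomorphic to the epimorphism $(F\eta_i)^\vee$, and $F$ reflects epimorphisms between dualizable objects. The delicate point---and the step I expect to be the main obstacle---is the colimit condition, because $F$ now only controls epimorphisms of dualizable objects while $\colim_k A_k$ and $A$ need not be dualizable, so the trick from the first part no longer applies. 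My plan here is to route through faithful flatness: using that $FA$ is faithfully flat (Proposition~\ref{prop:general_adams_faithfully_flat}, resp.\ Proposition~\ref{prop:adams_faithfully_flat}), that faithful flatness of $A$ can be detected by testing $A \otimes -$ against dualizable objects, and that $F$ automatically reflects monomorphisms---hence isomorphisms---between dualizable objects (pass to duals and use that $F$ commutes with $(-)^\vee$), one should be able to transport faithful flatness back from $FA$ to $A$ and then descend the isomorphism $Fh$ along $A \otimes -$ to conclude that $h$ is an isomorphism. Making ``testing against dualizable objects'' and the descent of $h$ precise is where the genuine work lies.
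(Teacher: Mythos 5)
Your treatment of the first assertion is correct and, for the coequalizer condition, is exactly the paper's argument: a tensor functor carries the diagram $A_i^{\vee}\otimes A_i^{\vee}\rightrightarrows A_i^{\vee}\to\U$ to the corresponding diagram for $(FA_i)^{\vee}$ and $(F\eta_i)^{\vee}$, and in the $R$-linear setting a coequalizer of a parallel pair is the cokernel of the difference, so reflection of cokernels gives reflection of these coequalizers. Your additional step for the filtered-colimit condition --- that a cokernel-reflecting additive functor reflects isomorphisms because an isomorphism is precisely a cokernel of the zero endomorphism, applied to the comparison map $h\colon\colim A_k\to A$ --- is sound and is in fact \emph{more} than the paper does: the paper's proof addresses only the coequalizer condition and tacitly treats the presentation of $A$ as a filtered colimit of the $A_i$ as part of the given data (this is also how the proposition is used later, e.g.\ in Lemma~\ref{lemma:detecting_locally_split}, where the colimit cocone is already supplied by Lemma~\ref{lemma:adams_presentation} and only the coequalizer condition remains to be detected).

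For the second assertion, your handling of the epimorphism condition again coincides with the paper's: $\eta_i^{\vee}$ is an epimorphism if and only if $F(\eta_i^{\vee})\cong(F\eta_i)^{\vee}$ is, since $F$ preserves duals and reflects epimorphisms between dualizable objects. The ``delicate point'' you flag --- reflecting the colimit condition under the weaker hypothesis --- is real under a literal reading of the statement, but it is not something the paper resolves either; its proof of the second claim consists solely of the epimorphism reduction. Under the reading the paper actually uses, the colimit cocone is given and nothing remains to be checked there, so your proof is complete once you drop the faithful-flatness detour. That detour, as sketched, would not easily be made to work: faithful flatness of $FA$ does not obviously transport back to $A$ along a functor that only reflects epimorphisms between dualizable objects, and even granting it you would still have to descend the isomorphism $Fh$ rather than $(Fh)_{FA}$. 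I would simply state explicitly that the filtered-colimit presentation is part of the hypothesis (or, for the first claim, keep your iso-reflection argument), and not attempt to reflect it in the abelian case.
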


\begin{proof}
 Since $F$ is a tensor functor, the assumption implies that
\[
  \xymatrix{ A_i^{\vee} \otimes A_i^{\vee} \ar@<0.5ex>[r]^-{A_i^{\vee} \otimes \eta_i^{\vee}} \ar@<-0.5ex>[r]_-{\eta_i^{\vee} \otimes A_i^{\vee}} & A_i^{\vee} \ar[r]^-{\eta_i^{\vee}} & \U }
\]
 is a coequalizer diagram in $\ca{C}$ if and only if
\[
  \xymatrix@C=40pt{ FA_i^{\vee} \otimes FA_i^{\vee} \ar@<0.5ex>[r]^-{FA_i^{\vee} \otimes F\eta_i^{\vee}} \ar@<-0.5ex>[r]_-{F\eta_i^{\vee} \otimes FA_i^{\vee}} & FA_i^{\vee} \ar[r]^-{F\eta_i^{\vee}} & \U }
\]
 is a coequalizer diagram in $\ca{D}$.

 For the second claim it suffices to check that $F(\eta_i^{\vee})$ is an epimorphism for all $i$ in the indexing category. But tensor functors preserve duals, so this is the case if and only if $(F\eta_{i})^{\vee}$ is an epimorphism for all $i$.
\end{proof}

\subsection{Torsors for Adams Hopf algebras}\label{section:torsors_subsection}

 Let $G$ be a flat affine group scheme over $R$, and let $H$ be its corresponding Hopf algebra in $\Mod_R$. It is well known that the stack $BG$ classifies $G$-torsors: morphisms $X \rightarrow BG$ correspond to $G$-torsors on $X$. If $BG$ is an Adams stack (that is, if $H$ is an Adams Hopf algebra), then such morphisms in turn correspond to tensor functors $\QCoh(BG) \rightarrow \QCoh(X)$. We can ask if the category $\QCoh(BG)$ (which is equivalent to the category $\Rep(G)$ of algebraic representations of $G$ and to the category $\Comod(H)$ of comodules of $H$) has a corresponding universal property for general tensor categories---not just the ones of the form $\QCoh(X)$. 

 In order to state this, we first need to define torsors at this level of generality. Recall that in any cocomplete $R$-linear category $\ca{C}$, we have an \emph{external} tensor product: the external tensor product $C\odot M$ for $C \in \ca{C}$ and $M \in \Mod_R$ is characterized by a natural isomorphism $\ca{C}(C\odot M,-) \cong \Mod_R\bigl(M,\ca{C}(C,-)\bigr)$.

\begin{dfn}\label{dfn:torsor}
 Let $\ca{C}$ be a tensor category, and let $H$ be a commutative Hopf algebra in $\Mod_R$. An \emph{$H$-torsor} in $\ca{C}$ consists of a faithfully flat commutative algebra $A \in \ca{C}$, together with a morphism $\tau \colon A \rightarrow A \odot H$ of algebras such that the diagrams
\[
 \vcenter{\xymatrix{A \ar[r]^-{\tau} \ar[d]_{\tau} & A \odot H \ar[d]^{\tau \odot H} \\ A \odot H \ar[r]_-{A \odot \delta} & A \odot H \odot H}}
\quad \text{and}\quad
\vcenter{\xymatrix{A \ar[r]^-{\tau} \ar@{=}[rd] & A \odot H \ar[d]^{A \odot \varepsilon} \\ & A }}
\]
 commute, and the composite 
\[
 \xymatrix{A \otimes A \ar[r]^-{A \otimes \tau} & A \otimes A \odot H \ar[r]^-{\mu \odot H} & A \odot H}
\]
 is an isomorphism. A morphism of $H$-torsors $(A,\tau) \rightarrow (A^{\prime},\tau^{\prime})$ is a morphism $\varphi \colon A \rightarrow A^{\prime}$ of algebras such that the diagram
\[
 \xymatrix{A \ar[r]^-{\tau} \ar[d]_{\varphi} & A \odot H \ar[d]^{\varphi \odot H} \\ A^{\prime} \ar[r]_-{\tau^{\prime}} & A^{\prime} \odot H }
\]
 is commutative. The category of $H$-torsors in $\ca{C}$ is denoted by $\Tors_H(\ca{C})$.
\end{dfn}

\begin{example}\label{example:universal_torsor}
 Let $H$ be flat commutative Hopf algebra in $\Mod_R$. Let $\ubar{H}=(H,\delta)$ be the cofree comodule on $R$ (that is, the regular representation in $\Rep(G)$). Then $\tau \defl \delta \colon \ubar{H} \rightarrow \ubar{H} \odot H$ gives $\ubar{H}$ the structure of an $H$-torsor in $\Comod(H)$. Indeed, the underlying morphism in $\Mod_R$ of the composite
\[
 \xymatrix{ \ubar{H} \otimes \ubar{H} \ar[r]^-{\ubar{H} \otimes \delta} & \ubar{H} \otimes \ubar{H} \odot H \ar[r]^-{\mu \odot H} & \ubar{H} \odot H }
\]
 is one of the fusion operators of $H$. Its inverse is $\mu \otimes H \circ H\otimes s \otimes H \circ H \otimes \delta$, where $s$ denotes the antipode of $H$. Commutativity of the required diagrams is straightforward from the axioms of a Hopf algebra. The assumption that $H$ is flat implies that $\ubar{H}$ is faithfully flat.
\end{example}

\begin{example}
 If $X$ is a reasonable scheme or stack, then an $H$-torsor $(A,\tau)$ in $\QCoh(X)$ in the sense of Definition~\ref{dfn:torsor} corresponds to a scheme $Y=\Spec_X(A)$, affine and faithfully flat over $X$, together with an action $Y \times G \cong \Spec_X (A \odot H) \rightarrow Y$ of $G$ on $Y$ such that the morphism $G \times Y \rightarrow Y\times Y$ which represents $(g,y) \mapsto (gy,y)$ is an isomorphism. Thus $H$-torsors in $\QCoh(X)$ do indeed correspond to $G$-torsors on $X$ in the usual sense.
\end{example}

\begin{rmk}
 In \cite[\S 5.5]{BRANDENBURG_THESIS}, Brandenburg considered torsors of finite groups. In that case, both $H$ and $A$ have duals, and the definition of $H$-torsors can be rephrased in terms of $H^{\vee}$ and the coalgebra $A^{\vee}$. Thus, for \emph{finite} discrete groups, we arrive at the same notion of torsor as considered in \cite[Example~5.5.3]{BRANDENBURG_THESIS}. It is not clear if the two notions coincide for infinite groups.
\end{rmk}

 Recall that an \emph{Adams Hopf algebra} is a commutative Hopf algebra $H \in \Mod_R$ such that the category $\Comod(H)$ of comodules of $H$ is generated by objects with duals (equivalently, if and only if the classifying stack $BG$ of the affine group scheme corresponding to $H$ is an Adams stack).

\begin{lemma}\label{lemma:torsor_preserved}
 Let $\ca{C}$ be a tensor category with exact filtered colimits. If $H$ is an Adams Hopf algebra in $\Mod_R$, then any tensor functor $F \colon \Comod(H) \rightarrow \ca{C}$ preserves the torsor $\ubar{H}$ of Example~\ref{example:universal_torsor}.
\end{lemma}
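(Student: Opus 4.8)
The plan is to check the three defining conditions of an $H$-torsor from Definition~\ref{dfn:torsor} for the object $F(\ubar{H})$ equipped with the structure induced by $F$. The first observation I would make is that $F$, being a tensor functor, has a right adjoint and is therefore cocontinuous; in particular it preserves the external tensor product, so that there is a natural isomorphism $F(C \odot M) \cong F(C) \odot M$ for $C \in \Comod(H)$ and $M \in \Mod_R$ (present $M$ by generators and relations; both sides then compute the same colimit of copies of $F(C)$). Combining this with the symmetric strong monoidal structure of $F$, one sees that $F$ carries the commutative algebra $\ubar{H}$ of Example~\ref{example:universal_torsor} to a commutative algebra $F(\ubar{H})$ in $\ca{C}$ and carries $\tau \colon \ubar{H} \to \ubar{H} \odot H$ to a coaction $F(\ubar{H}) \to F(\ubar{H}) \odot H$; the two coaction diagrams in Definition~\ref{dfn:torsor} then commute because, transported along the coherence isomorphisms just described, they are the $F$-images of the corresponding commuting diagrams in $\Comod(H)$.

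Next I would dispose of the torsor isomorphism condition. The composite $F(\ubar{H}) \otimes F(\ubar{H}) \to F(\ubar{H}) \otimes (F(\ubar{H}) \odot H) \to F(\ubar{H}) \odot H$ is, after the same coherence bookkeeping, canonically identified with the $F$-image of the analogous composite for $\ubar{H}$, which is an isomorphism by Example~\ref{example:universal_torsor}. Since every functor preserves isomorphisms, this condition holds automatically.

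The only substantive point is the faithful flatness of $F(\ubar{H})$. Here I would use that $H$ is an Adams Hopf algebra: by Example~\ref{example:adams_hopf_alg_is_adams_alg}, the regular comodule $\ubar{H}$, viewed as a commutative algebra in $\Comod(H)$, is then an Adams algebra in the sense of Definition~\ref{dfn:adams_algebra}, and hence also in the sense of Definition~\ref{dfn:general_adams_algebra} by the remark following that definition (the category $\Comod(H)$ is abelian with exact filtered colimits). Now Proposition~\ref{prop:adams_strongly_stable} applies verbatim: the tensor functor $F$ takes the Adams algebra $\ubar{H}$ to an Adams algebra $F(\ubar{H})$, and since $\ca{C}$ has exact filtered colimits by hypothesis, $F(\ubar{H})$ is faithfully flat in the sense of Definition~\ref{dfn:faithfully_flat}.

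I expect the main obstacle to be bookkeeping rather than mathematics: one must be careful about the interaction between the external tensor product $\odot$ and the monoidal constraints of $F$, so as to be sure that the three structural diagrams for $F(\ubar{H})$ really are (canonically isomorphic to) the $F$-images of those for $\ubar{H}$. Once that compatibility is in place, the lemma reduces to the facts that $F$ is cocontinuous and symmetric strong monoidal, together with Proposition~\ref{prop:adams_strongly_stable} for the faithful flatness.
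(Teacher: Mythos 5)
Your proposal is correct and follows essentially the same route as the paper: transport the coaction along the canonical isomorphism $F(\ubar{H}\odot H)\cong F(\ubar{H})\odot H$ coming from cocontinuity of $F$, verify the coaction and fusion diagrams by naturality of that isomorphism (the paper makes the fusion-isomorphism check explicit with a diagram, which is the only place where your "coherence bookkeeping" needs to be carried out), and deduce faithful flatness of $F(\ubar{H})$ from the fact that $\ubar{H}$ is an Adams algebra preserved by tensor functors via Proposition~\ref{prop:adams_strongly_stable}.
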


\begin{proof}
 Since any tensor functor preserves colimits, we have a natural isomorphism $\psi \colon  F(\ubar{H}) \odot H \cong F(\ubar{H} \odot H)$. We claim that the composite $\psi F \tau$ endows $F\ubar{H}$ with the structure of an $H$-torsor. Naturality of $\psi$ implies that the required diagrams commute, and commutativity of the diagram
\[
 \xymatrix@C=20pt{&& F(\ubar{H})F(\ubar{H})H \ar[rd]^{\varphi^F_{\ubar{H},\ubar{H}} \odot H} \\
F(\ubar{H}) F(\ubar{H}) \ar[rd]_{\varphi^F_{\ubar{H},\ubar{H}}} \ar[r]^-{F\ubar{H} F\tau} & F(\ubar{H}) F(\ubar{H}H) \ar[rd]^{\varphi^F_{\ubar{H},\ubar{H}\odot H}} \ar[ru]^{F\ubar{H} \psi} & & F(\ubar{H}\ubar{H})H \ar[r]^-{F(\mu)H} & F(\ubar{H})H \\ 
& F(\ubar{H}\ubar{H}) \ar[r]_-{F(\ubar{H}\tau)} & F(\ubar{H}\ubar{H}H) \ar[ru]_{\psi} \ar[r]_-{F(\mu H)} & F(\ubar{H}H) \ar[ru]_{\psi} }
\]
 (where we have omitted the symbols $\otimes$ and $\odot$) implies that $\mu_{F\ubar{H}} \odot H \circ F\ubar{H} \otimes (\psi F\tau)$ is an isomorphism. It only remains to check that $F \ubar{H}$ is faithfully flat. This follows from the fact that it is the image of an Adams algebra under a tensor functor (see Proposition~\ref{prop:adams_strongly_stable}).
\end{proof}

 The following theorem shows that all $H$-torsors in reasonable tensor categories arise in this way.

\begin{thm}\label{thm:universal_torsor}
 Let $H \in \Mod_R$ be an Adams Hopf algebra. Let $\ca{C}$ be a tensor category with exact filtered colimits and with equalizers. Then the functor
\[
 \Fun_{c,\otimes}\bigl( \Comod(H),\ca{C} \bigr) \rightarrow \Tors_H(\ca{C})
\]
 which sends $F$ to $F \ubar{H}$ is an equivalence of categories.
\end{thm}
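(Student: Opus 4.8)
The functor $F \mapsto F\ubar{H}$ is well defined by Lemma~\ref{lemma:torsor_preserved} --- this is the one place where we use that filtered colimits in $\ca{C}$ are exact --- and it clearly sends symmetric monoidal natural transformations to morphisms of torsors. To show that it is an equivalence, the plan is to produce a quasi-inverse $(A,\tau) \mapsto F_{(A,\tau)}$ by realising $\ca{C}$ as a category of $H$-comodules via faithfully flat descent along the torsor.

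So fix an $H$-torsor $(A,\tau)$ in $\ca{C}$. First I would observe that the category $\ca{C}_A$ of $A$-modules is again a tensor category, cocomplete and closed because $\ca{C}$ is, with equalizers and exact filtered colimits computed on underlying objects, and that the base change $(-)_A \colon \ca{C} \to \ca{C}_A$ is a tensor functor. Since $A$ is faithfully flat in the sense of Definition~\ref{dfn:faithfully_flat} --- that is, $A \otimes -$ preserves finite limits and is conservative --- the functor $(-)_A$ is comonadic by Beck's theorem, and this is exactly where the hypothesis that $\ca{C}$ has equalizers is used. Hence $\ca{C} \simeq \Coalg(\mathbb{G})$ for the induced comonad $\mathbb{G}$ on $\ca{C}_A$, whose underlying endofunctor is $M \mapsto M \ten{A} (A \otimes A)$; because $(-)_A$ is symmetric strong monoidal, $\mathbb{G}$ is a symmetric lax monoidal comonad and the equivalence is one of symmetric monoidal categories. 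The torsor structure now enters: the algebra isomorphism $(\mu \odot H) \circ (A \otimes \tau) \colon A \otimes A \to A \odot H$ of Definition~\ref{dfn:torsor}, together with the remaining two axioms there, says precisely that under it $\mathbb{G}$ corresponds to the comonad $- \odot H$ with comultiplication $- \odot \delta$, counit $- \odot \varepsilon$, and lax monoidal structure induced by the multiplication of $H$. Therefore $\ca{C} \simeq \Comod_H(\ca{C}_A)$, the category of $H$-comodules in $\ca{C}_A$, as symmetric monoidal categories.

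Composing the canonical symmetric strong monoidal left adjoint $\Mod_R \to \ca{C}$, $M \mapsto \U \odot M$, with $(-)_A$ gives a cocontinuous, symmetric strong monoidal, $\Mod_R$-linear functor $\Mod_R \to \ca{C}_A$, $M \mapsto A \odot M$; being cocontinuous and $\Mod_R$-linear it is compatible with the comonads $- \otimes_R H$ on $\Mod_R$ and $- \odot H$ on $\ca{C}_A$, so it induces a functor
\[
 F_{(A,\tau)} \colon \Comod(H) = \Comod_H(\Mod_R) \longrightarrow \Comod_H(\ca{C}_A) \simeq \ca{C} .
\]
One checks that $F_{(A,\tau)}$ is symmetric strong monoidal (as $H$ is a commutative bialgebra) and cocontinuous (colimits of comodules are created by the forgetful functors), and that its underlying functor is a left adjoint, so $F_{(A,\tau)} \in \Fun_{c,\otimes}(\Comod(H),\ca{C})$.

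Finally I would verify that the two constructions are mutually quasi-inverse. Tracing through the definitions, the comodule $\ubar{H} = (H,\delta)$ is sent to the $H$-comodule $(A \odot H, A \odot \delta)$ in $\ca{C}_A$, which is exactly the descent datum of the object $A \in \ca{C}$; hence $F_{(A,\tau)}\ubar{H} \cong (A,\tau)$ as $H$-torsors, and the composite $(A,\tau) \mapsto F_{(A,\tau)} \mapsto F_{(A,\tau)}\ubar{H}$ is naturally isomorphic to the identity of $\Tors_H(\ca{C})$. For the other composite, let $F \in \Fun_{c,\otimes}(\Comod(H),\ca{C})$ and set $A \defl F\ubar{H}$ with torsor structure $\tau \defl F\delta$ (Lemma~\ref{lemma:torsor_preserved}); then $F \cong F_{(A,\tau)}$, because a cocontinuous $\Mod_R$-linear functor on $\Comod(H)$ is determined up to isomorphism by its value on the comodule $(\ubar{H},\delta)$ --- every comodule is canonically an equalizer of cofree comodules $\ubar{H} \odot M$, with structure maps obtained from $\delta$ by external tensoring --- and both $F$ and $F_{(A,\tau)}$ send $(\ubar{H},\delta)$ to $(A,\tau)$. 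The main obstacle is not any single step but the coherence bookkeeping that joins them: checking carefully that the three axioms of Definition~\ref{dfn:torsor} really amount to an isomorphism of symmetric lax monoidal comonads $\mathbb{G} \cong - \odot H$, so that faithfully flat descent yields a symmetric monoidal equivalence $\ca{C} \simeq \Comod_H(\ca{C}_A)$, and that all the identifications used in the quasi-inverse argument are natural --- in $(A,\tau)$, respectively in $F$ --- and compatible with the symmetric monoidal structures.
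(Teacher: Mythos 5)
Your overall strategy---realize the functor as an instance of faithfully flat (comonadic) descent along the torsor---is the same idea the paper uses; the paper says explicitly that its proof is ``basically techniques of faithfully flat descent, equivalently techniques used in the proof of Beck's monadicity theorem,'' just carried out by hand in categories of symmetric lax monoidal functors rather than via the equivalence $\ca{C} \simeq \Comod_H(\ca{C}_A)$. The descent half of your argument is fine: comonadicity of $(-)_A$ follows from conservativity and preservation of (existing) equalizers, and identifying the descent comonad with $-\odot H$ via the isomorphism $\mu\odot H \circ A\otimes\tau$ is the (real, but routine) coherence check you flag.

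The genuine gap is in the last step, where you claim that a cocontinuous $\Mod_R$-linear (symmetric strong monoidal) functor out of $\Comod(H)$ is determined by its value on $\ubar{H}$ ``because every comodule is canonically an equalizer of cofree comodules.'' That cobar resolution is an \emph{equalizer}, i.e.\ a limit, and cocontinuity gives you no control over it: you must prove that an arbitrary $F \in \Fun_{c,\otimes}\bigl(\Comod(H),\ca{C}\bigr)$ preserves the equalizers $\ubar{M} \to \ubar{H}\odot M \rightrightarrows \ubar{H}\odot H \odot M$ before you can conclude $F \cong F_{(F\ubar{H},F\delta)}$ (and hence before the unit of your would-be equivalence is an isomorphism). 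This is exactly the paper's Lemma~\ref{lemma:pointwise_equalizer}, and it is where the hypothesis that $H$ is an \emph{Adams} Hopf algebra enters in an essential way: one tensors the cobar equalizer with $\ubar{H}$, uses the projection formula $\ubar{H}\otimes\ubar{N} \cong \ubar{H}\odot N$ to recognize the result as a split equalizer (hence preserved by $F$), and then descends along $F\ubar{H}\otimes -$, which is faithfully flat precisely because $\ubar{H}$ is an Adams algebra and tensor functors preserve Adams algebras (Proposition~\ref{prop:adams_strongly_stable}). Your proposal never uses the Adams condition beyond the well-definedness lemma, which is a sign that this step is missing; without it the argument does not close. (A cosmetic version of the same issue appears in your verification that $F_{(A,\tau)}\ubar{H}\cong A$: there the needed equalizer is split after base change to the faithfully flat $A$, which is why that direction goes through---the paper's Lemmas~\ref{lemma:construction_of_fa} and \ref{lemma:torsor_eso} make this explicit.)
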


 As an immediate corollary, we find that the category of torsors is actually a groupoid.

\begin{cor}\label{cor:torsors_groupoid}
 Let $H \in \Mod_R$ be an Adams Hopf algebra and let $\ca{C}$ be a tensor category with equalizers and exact filtered colimits. Then $\Tors_{H}(\ca{C})$ is a groupoid.
\end{cor}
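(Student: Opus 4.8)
The plan is to read the statement off Theorem~\ref{thm:universal_torsor}. That theorem provides an equivalence $\Fun_{c,\otimes}\bigl(\Comod(H),\ca{C}\bigr) \simeq \Tors_H(\ca{C})$, so it suffices to show that the category $\Fun_{c,\otimes}\bigl(\Comod(H),\ca{C}\bigr)$ is a groupoid, i.e.\ that every symmetric monoidal natural transformation $\theta \colon F \Rightarrow G$ between cocontinuous symmetric strong monoidal functors $\Comod(H) \to \ca{C}$ is invertible.

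I would first treat an object $X \in \Comod(H)$ with a dual $\ldual{X}$. A symmetric strong monoidal functor preserves duality data, so $F\ldual{X}$ and $G\ldual{X}$ are duals of $FX$ and $GX$, compatibly with evaluations and coevaluations. One then writes down the ``conjugate'' morphism $\bar{\theta}_X \colon GX \to FX$ as the composite
\[
 GX \cong \U \otimes GX \xrightarrow{\coev_{FX}\otimes GX} FX \otimes \ldual{(FX)} \otimes GX \xrightarrow{FX \otimes \theta_{\ldual{X}}\otimes GX} FX \otimes \ldual{(GX)} \otimes GX \xrightarrow{FX \otimes \ev_{GX}} FX \otimes \U \cong FX ,
\]
 and checks --- using the triangle identities for $\ev$ and $\coev$ together with the compatibility of $\theta$ with the structure constraints $\varphi^F,\varphi^F_0,\varphi^G,\varphi^G_0$ --- that $\bar{\theta}_X$ is a two-sided inverse of $\theta_X$. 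This is the only computational point, and it is entirely formal: it is the familiar fact that monoidal natural transformations are pointwise invertible on dualizable objects.

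It then remains to globalise. Since $H$ is an Adams Hopf algebra, $\Comod(H)$ is a locally finitely presentable abelian tensor category generated by objects with duals; by the results of \S\ref{section:universal} (apply Theorem~\ref{thm:lfp_abelian} with $\ca{A}$ a generating set of objects with duals, closed under finite direct sums) it is equivalent to $\Lex_{\Sigma}[\ca{A}^{\op},\Mod_R]$ in such a way that the objects of $\ca{A}$ become the representable functors. Hence the objects with duals form a \emph{dense} family: every comodule $M$ is the canonical colimit of the (essentially small) diagram of objects with duals mapping to it. As $F$ and $G$ are cocontinuous this colimit is preserved, and naturality of $\theta$ identifies $\theta_M$ with the colimit of the isomorphisms $\theta_X$ taken over the maps $X \to M$ with $X$ dualizable; therefore $\theta_M$ is an isomorphism and $\theta$ is a natural isomorphism. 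The one place requiring genuine care is exactly this last step: one needs \emph{density} of the dualizable objects, not merely that every comodule is a \emph{filtered} colimit of dualizable ones --- the latter is already false for $R=\mathbb{Z}$ with $H=R$, where $R/2R$ is not flat and hence is not a filtered colimit of finitely generated projectives.
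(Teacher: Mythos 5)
Your proof is correct and follows the paper's route: reduce to the functor category $\Fun_{c,\otimes}\bigl(\Comod(H),\ca{C}\bigr)$ via Theorem~\ref{thm:universal_torsor}, then observe that every symmetric monoidal natural transformation between such functors is invertible because $\Comod(H)$ is generated by duals. The paper simply cites the ``duals invert'' fact from the literature, whereas you supply its proof (pointwise invertibility on dualizable objects plus density of the dualizables via Theorem~\ref{thm:lfp_abelian}); your caveat that one needs density rather than mere filtered-colimit generation is well taken.
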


\begin{proof}
 By Theorem~\ref{thm:universal_torsor}, it suffices to check that any symmetric monoidal natural transformation between tensor functors $\Comod(H) \rightarrow \ca{C}$ is invertible. Since $\Comod(H)$ is generated by duals, this follows from the fact that ``duals invert,'' see for example \cite{DUALS_INVERT}.
\end{proof}

 The proof of the above theorem largely consists of showing that various diagrams in $\ca{C}$ are equalizers. The technique we usually use to do this is the following: given a faithfully flat algebra $A$ in $\ca{C}$ (in the sense of Definition~\ref{dfn:faithfully_flat}), the functor $A\otimes-$ preserves equalizers and reflects isomorphisms. Therefore it also \emph{detects} equalizers. It is often possible to show that the diagrams we care about are sent to a split equalizer by $A\otimes-$.

 In other words, we use basically use techniques of faithfully flat descent (equivalently, techniques used in the proof Beck's monadicity theorem) to prove Theorem~\ref{thm:universal_torsor}. Note, however, that the algebra $A$ will often change depending on the equalizer diagram in question, and the categories involved may not be abelian. Therefore the proof is not just a completely straightforward application of faithfully flat descent. For this reason we will provide full details below. We first introduce some notation.

\begin{notation}
 Let $H \in \Mod_R$ be a Hopf algebra. We write
\[
\ubar{H} \colon \Mod_R \rightarrow \Comod(H) 
\]
 for the functor which sends $M$ to $\ubar{H} \odot M =(H \otimes M, \delta \otimes M)$. The forgetful functor (its left adjoint) is denoted by $V \colon \Comod(H) \rightarrow \Mod_R$. We write $\rho \colon \id \Rightarrow \ubar{H}V$ for the unit of the adjunction. This is justified since its component at the comodule $\ubar{M}=(M,\rho)$ is $\rho \colon \ubar{M} \rightarrow \ubar{H} \odot M$.

 In particular, the component of $\rho \ubar{H} \colon \ubar{H} \rightarrow \ubar{H} V \ubar{H}$ at the trivial $R$-module $R$ is $\delta \colon \ubar{H} \rightarrow \ubar{H} \odot H$, that is, it is the structure morphism of the torsor of Example~\ref{example:universal_torsor}. 
\end{notation}

 Both $V$ and $\ubar{H}$ are symmetric lax monoidal functors. The following lemma shows that we can also talk about commutative algebras (and hence torsors) using the language of lax monoidal functors. Instead of translating back and forth, we use the lemma as a justification to work entirely in the world of symmetric lax monoidal functors. We denote the category of symmetric lax monoidal functors between two symmetric monoidal categories $\ca{C}$ and $\ca{D}$ by $\Fun_{\otimes,\lax}(\ca{C},\ca{D})$, and $\Fun_{c,\otimes,\lax}(\ca{C},\ca{D})$ for the full subcategory consisting of symmetric lax monoidal functors which are also left adjoints. Note that both $V$ and $\ubar{H}$ are of this latter kind.

\begin{lemma}\label{lemma:calg_as_monoidal}
 Let $\ca{C}$ be a tensor category. The functor which sends a symmetric lax monoidal left adjoint $F \colon \Mod_R \rightarrow \ca{C}$ to the commutative algebra $FR$ gives an equivalence between $\Fun_{c,\otimes,\lax}(\Mod_R,\ca{C})$ and the category of commutative algebras in $\ca{C}$. Its inverse sends a commutative algebra $A$ to the functor $ A\odot -$.
\end{lemma}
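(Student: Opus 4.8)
The plan is to exhibit the two functors explicitly and check they are mutually inverse equivalences. In one direction, given a symmetric lax monoidal left adjoint $F \colon \Mod_R \rightarrow \ca{C}$, the object $FR$ carries a canonical commutative algebra structure: the multiplication is $FR \otimes FR \xrightarrow{\varphi^F_{R,R}} F(R \otimes R) \cong FR$ and the unit is $\varphi^F_0 \colon \U_\ca{C} \rightarrow FR$, where we use that $R$ is the unit of $\Mod_R$. The algebra axioms for $FR$ follow formally from the coherence axioms for the lax monoidal structure on $F$; a morphism of lax monoidal functors induces a morphism of the resulting algebras, so this assignment is functorial.

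In the other direction, given a commutative algebra $A$ in $\ca{C}$, the functor $A \odot - \colon \Mod_R \rightarrow \ca{C}$ is a left adjoint since $\ca{C}$ is cocomplete and $A \odot -$ is defined by the universal property $\ca{C}(A \odot M, -) \cong \Mod_R(M, \ca{C}(A,-))$ (its right adjoint sends $C$ to the $R$-module $\ca{C}(A,C)$, which makes sense because $\ca{C}$ is $R$-linear). First I would equip $A \odot -$ with a symmetric lax monoidal structure: the external tensor product satisfies a natural isomorphism $(A \odot M) \otimes (A \odot N) \cong (A \otimes A) \odot (M \otimes_R N)$ (this follows by comparing universal properties, using $R$-bilinearity of $\otimes$ on $\ca{C}$), and composing with $\mu \odot (M \otimes_R N)$ gives the structure map $\varphi$; the unit map is $\U_\ca{C} \cong \U_\ca{C} \odot R \xrightarrow{\eta \odot R} A \odot R$. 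Commutativity and associativity of $\mu$, together with coherence for the external product, give the lax monoidal axioms, and the symmetry uses commutativity of $A$. Functoriality in $A$ is clear.

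Finally I would check the two round-trips. Starting from an algebra $A$, the functor $A \odot -$ has $(A \odot -)(R) = A \odot R \cong A$, and one checks the transported algebra structure is the original one. Starting from a lax monoidal left adjoint $F$, we get the algebra $FR$ and must produce a natural isomorphism $FR \odot - \cong F$; since both are left adjoints $\Mod_R \rightarrow \ca{C}$ and every $R$-module is a colimit (in fact a coequalizer of free modules, hence a reflexive coequalizer of copies of $R$), it suffices to give a coherent isomorphism at $R$ and invoke cocontinuity — at $R$ we have $FR \odot R \cong FR$ naturally, and compatibility with the lax monoidal structures forces this to extend, using that a lax monoidal left adjoint out of $\Mod_R$ is determined up to monoidal isomorphism by its value on the unit together with the algebra structure there. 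I expect the main obstacle to be this last point: verifying carefully that the comparison $FR \odot - \Rightarrow F$ built on generators respects all the monoidal coherence data, and that it is natural and invertible. This is essentially the observation that $\Mod_R$ is the free cocomplete symmetric monoidal $R$-linear category on one commutative-algebra-free generator; making it precise requires a short argument with the density of $\{R\}$ in $\Mod_R$ under reflexive (or sifted) colimits and the fact that $F$, being a left adjoint, preserves those colimits while its lax structure maps are compatible with them.
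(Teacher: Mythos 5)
Your proposal is correct and follows essentially the same route as the paper: the paper's proof is a two-line appeal to the fact that every $R$-module is canonically a colimit of copies of $R$ and that tensor products commute with colimits on both sides (citing Im--Kelly, Theorem~5.1, for the details), which is precisely the density-plus-cocontinuity argument you spell out for the round-trip $FR \odot - \cong F$. The only difference is that you carry out explicitly the coherence checks that the paper delegates to the reference.
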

 
\begin{proof}
 This follows from the fact that all modules can be built via colimits from $R$ in a canonical way, and that colimits commute with tensor products both in $\Mod_R$ and $\ca{C}$. Details can be found in the proof of \cite[Theorem~5.1]{IM_KELLY}.
\end{proof}

 To construct an inverse to the functor from Theorem~\ref{thm:universal_torsor}, we need to construct tensor functors and symmetric monoidal natural transformations between them. We will use the following proposition to do this in both cases. Recall that a limit diagram $F \rightarrow F_i$ in a functor category is called \emph{pointwise} if $FC \rightarrow F_i C$ is a limit diagram in the target category for every object $C$ in the domain.

\begin{prop}\label{prop:pointwise_limits}
 The forgetful functor
\[
 \Fun_{\otimes, \lax} (\ca{C},\ca{D}) \rightarrow \Fun(\ca{C},\ca{D})
\]
 creates pointwise limits. Explicitly, given a pointwise limit diagram $\kappa^i \colon F \rightarrow F_i$ in $\Fun(\ca{C},\ca{D})$ such that all the $F_i$ are symmetric lax monoidal and all the $F_i \rightarrow F_j$ are symmetric monoidal natural transformations, there exists a unique symmetric lax monoidal structure on $F$ such that the $\kappa^i$ become symmetric monoidal, and that they exhibit the resulting $(F,\varphi^F,\varphi^F_0)$ as a limit of the $F_i$ in $\Fun_{\otimes, \lax}(\ca{C},\ca{D})$
\end{prop}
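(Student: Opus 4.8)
The plan is to reconstruct the lax monoidal structure on $F$ directly from the universal property of the pointwise limit, and then to verify all the required identities using the fact that the legs $\kappa^i_C \colon FC \to F_i C$ of a pointwise limit cone are jointly monic.

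First I would build the structure morphisms. For the unit, the family $\varphi^{F_i}_0 \colon \U_{\ca{D}} \to F_i \U_{\ca{C}}$ is a cone over the diagram $i \mapsto F_i\U_{\ca{C}}$ precisely because each transition natural transformation $F_i \to F_j$ is symmetric monoidal; since $F\U_{\ca{C}} = \lim_i F_i\U_{\ca{C}}$ pointwise, this induces a unique $\varphi^F_0 \colon \U_{\ca{D}} \to F\U_{\ca{C}}$ with $\kappa^i_{\U_{\ca{C}}} \circ \varphi^F_0 = \varphi^{F_i}_0$. For the tensor constraint, fix $C, C' \in \ca{C}$ and consider the morphisms $\varphi^{F_i}_{C,C'}\circ(\kappa^i_C \otimes \kappa^i_{C'}) \colon FC\otimes FC' \to F_i(C\otimes C')$. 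Using that $\kappa$ is a cone and that the transition transformations are symmetric monoidal, one checks that these morphisms are compatible with the maps $F_i(C\otimes C') \to F_j(C\otimes C')$, so they assemble into a cone over $i \mapsto F_i(C\otimes C')$ and hence factor uniquely through the pointwise limit $F(C\otimes C') = \lim_i F_i(C\otimes C')$, defining $\varphi^F_{C,C'}$. Note that no exactness hypothesis on $\otimes$ in $\ca{D}$ is needed, since the structure morphisms point from $FC \otimes FC'$ \emph{into} the limit $F(C\otimes C')$, not out of it.

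Next I would verify naturality of $\varphi^F$ and $\varphi^F_0$, the associativity and unitality coherence axioms for a lax monoidal functor, and the compatibility with the symmetries $\sigma$. Each of these is an equation between two morphisms whose common codomain has the form $F(\,\text{---}\,)$, i.e.\ a pointwise limit; since the limit legs are jointly monic it suffices to check the equation after postcomposing with each $\kappa^i_{\,\text{---}}$, and doing so reduces it — via the defining property of $\varphi^F$, $\varphi^F_0$ and the naturality of the $\kappa^i$ — to the corresponding axiom for the symmetric lax monoidal functor $F_i$. Uniqueness of the lax structure is then immediate: demanding that each $\kappa^i$ be symmetric monoidal forces $\kappa^i_{C\otimes C'}\circ\varphi^F_{C,C'} = \varphi^{F_i}_{C,C'}\circ(\kappa^i_C\otimes\kappa^i_{C'})$ and $\kappa^i_{\U_{\ca{C}}}\circ\varphi^F_0 = \varphi^{F_i}_0$, which by joint monicity pin down $\varphi^F_{C,C'}$ and $\varphi^F_0$.

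Finally, to see that $(F,\varphi^F,\varphi^F_0)$ together with the $\kappa^i$ is a limit in $\Fun_{\otimes,\lax}(\ca{C},\ca{D})$ — which, combined with the above, is exactly the assertion that the forgetful functor creates these limits — I would take any symmetric lax monoidal $G$ equipped with a cone of symmetric monoidal natural transformations $\lambda^i \colon G\to F_i$, let $\theta\colon G\to F$ be the unique underlying natural transformation with $\kappa^i\circ\theta = \lambda^i$, and check that $\theta$ is symmetric monoidal by the same joint-monicity argument applied to the two composites $FC\otimes FC'\to F(C\otimes C')$ and to the unit morphisms. The whole argument is a routine diagram chase with no genuine obstacle; the only point requiring a moment's care is the bookkeeping involved in checking the cone-compatibility of the $\varphi^{F_i}_{C,C'}\circ(\kappa^i_C\otimes\kappa^i_{C'})$ and in reducing the lax coherence axioms to those of the $F_i$, together with the observation that pointwise-ness of the limit is precisely what makes $F(C\otimes C')$ literally the limit through which these morphisms must factor.
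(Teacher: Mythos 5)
Your proposal is correct and follows essentially the same route as the paper's proof: define $\varphi^F_{C,C'}$ and $\varphi^F_0$ as the unique factorizations through the pointwise limits $F(C\otimes C')$ and $F\U$, verify the lax coherence axioms and the two-dimensional limit property by postcomposing with the jointly monic legs $\kappa^i$ and reducing to the corresponding facts for the $F_i$. The paper states these steps more tersely, but there is no difference in substance.
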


\begin{proof}
 The assumption implies that there exist unique morphisms $\varphi^F_{M,N}$ and $\varphi^F_0$ such that the diagrams
\[
 \vcenter{\xymatrix@C=40pt{FM \otimes FN \ar[d]_{\varphi^F_{M,N} } \ar[r]^-{\kappa^i_M \otimes \kappa^i_N} & F_i M \otimes F_i N \ar[d]^{\varphi^{F_i}_{M,N} } \\ F(M\otimes N) \ar[r]_-{\kappa^i_{M\otimes N}}  & F_i(M\otimes N)  }}
\quad \text{and}\quad
\vcenter{\xymatrix@C=20pt{ & \ar[ld]_{\varphi^F_0} \U \ar[rd]^{\varphi^{F_i}_0} \\ F\U \ar[rr]_{\kappa^i_{\U}} && F_i \U }}
\]
 are commutative for all $M,N \in \ca{C}$. Using the universal property for limits, one checks that these morphisms endow $F$ with the structure of a symmetric lax monoidal functor. Given a cone $\lambda^i \colon G \rightarrow F_i$ consisting of symmetric monoidal natural transformations, we get a unique natural transformation $\lambda \colon G \rightarrow F$ such that $\kappa^i \lambda=\lambda^i$. Again using the universal property of limits we find that $\lambda$ is symmetric monoidal.
\end{proof}

 The following lemma is the key to showing that the functor from Theorem~\ref{thm:universal_torsor} is fully faithful. It shows that certain equalizers in $\Comod(H)$ are preserved by \emph{any} tensor functor.

\begin{lemma}\label{lemma:pointwise_equalizer}
 Let $H \in \Mod_R$ be an Adams Hopf algebra, and let $\ca{C}$ be a tensor category with equalizers and exact filtered colimits. Let $ \ubar{M} \defl (M,\rho_M)$ be a comodule, and let $F \colon \Comod(H) \rightarrow \ca{C}$ be a tensor functor. Then
\begin{equation}\label{eqn:equalizer_image}
 \xymatrix@C=30pt{F\ubar{M} \ar[r]^-{F\rho_M} & F(\ubar{H} \odot M) \ar@<0.5ex>[r]^-{F(\delta \odot M)} \ar@<-0.5ex>[r]_-{F(\ubar{H} \odot \rho_M)} & F(\ubar{H} \odot H \odot M)}
\end{equation}
 is an equalizer diagram in $\ca{C}$. Moreover,
\[
 \xymatrix@C=30pt{F \ar[r]^-{F\rho} & F \ubar{H}V \ar@<0.5ex>[r]^-{F \rho \ubar{H} V } \ar@<-0.5ex>[r]_-{F \ubar{H} V\rho} & F\ubar{H} V \ubar{H} V}
\]
 is a (pointwise) equalizer diagram in $\Fun_{\otimes,\lax}\bigl(\Comod(H),\ca{C}\bigr)$.
\end{lemma}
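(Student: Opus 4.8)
The plan is to reduce both assertions to a single fact: writing $\ca{D}$ for the diagram
\[
 \ubar{M} \xrightarrow{\rho_M} \ubar{H}\odot M \rightrightarrows \ubar{H}\odot H\odot M
\]
in $\Comod(H)$ with the two parallel arrows $\delta\odot M$ and $\ubar{H}\odot\rho_M$ (so that \eqref{eqn:equalizer_image} is exactly $F\ca{D}$), the diagram $\ubar{H}\otimes\ca{D}$ obtained by tensoring with the torsor $\ubar{H}$ is a split, hence \emph{absolute}, equalizer in $\Comod(H)$. Granting this, $F$ is symmetric strong monoidal, so it preserves $\ubar{H}\otimes\ca{D}$, and the comparison isomorphism $F(\ubar{H}\otimes-)\cong F\ubar{H}\otimes F(-)$ shows that $F\ubar{H}\otimes F\ca{D}$ is an equalizer in $\ca{C}$. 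Now $\ubar{H}$ is an Adams algebra in $\Comod(H)$ (Example~\ref{example:adams_hopf_alg_is_adams_alg}, applied to the flat Hopf algebroid $(R,H)$), so $F\ubar{H}$ is a faithfully flat algebra in $\ca{C}$ by Proposition~\ref{prop:adams_strongly_stable}, using that filtered colimits in $\ca{C}$ are exact. A faithfully flat algebra $A$ has the property that $A\otimes-$ preserves those equalizers that exist in $\ca{C}$ and reflects isomorphisms, hence \emph{detects} equalizers: comparing $F\ca{D}$ with an actual equalizer of its parallel pair (which exists since $\ca{C}$ has equalizers) and tensoring with $F\ubar{H}$ turns the canonical comparison map into an isomorphism, so it was already one. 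Therefore \eqref{eqn:equalizer_image} is an equalizer, which is the first assertion.

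The substance of the argument is thus the claim that $\ubar{H}\otimes\ca{D}$ is a split equalizer, and it rests on the classical ``untwisting'' isomorphism of comodules
\[
 \theta_{\ubar{N}}\colon \ubar{H}\otimes\ubar{N} \xrightarrow{\cong} \ubar{H}\odot V\ubar{N} \smash{\rlap{,}}
\]
natural in $\ubar{N}$, which on underlying objects is $h\otimes n\mapsto\sum h\,s(n_{(-1)})\otimes n_{(0)}$ for $s$ the antipode of $H$. (This expresses that tensoring with the cofree comodule $\ubar{H}$ takes values in cofree comodules; it is a formal consequence of the Hopf algebra axioms, and for $\ubar{N}=\ubar{H}$ it is the torsor isomorphism of Definition~\ref{dfn:torsor}.) Every arrow of $\ca{D}$ is a morphism of comodules, so naturality of $\theta$ identifies $\ubar{H}\otimes\ca{D}$ with $\ubar{H}\odot(V\ca{D})$, where $V\ca{D}$ is the diagram $M\xrightarrow{\rho_M}H\otimes M\rightrightarrows H\otimes H\otimes M$ in $\Mod_R$ with parallel arrows $\delta\otimes M$ and $H\otimes\rho_M$. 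This diagram is a split equalizer, with retractions $\varepsilon\otimes M\colon H\otimes M\to M$ and $\varepsilon\otimes H\otimes M$ supplied by the counit $\varepsilon$ of $H$: the counit axioms for the comodule $\ubar{M}$ and for the coalgebra $H$ give $(\varepsilon\otimes M)\rho_M=\id$ and $(\varepsilon\otimes H\otimes M)(\delta\otimes M)=\id$, coassociativity of $\rho_M$ gives $(\delta\otimes M)\rho_M=(H\otimes\rho_M)\rho_M$, and the remaining split-equalizer identity is immediate. Since $\ubar{H}\odot-$ preserves split equalizers, $\ubar{H}\otimes\ca{D}$ is a split equalizer in $\Comod(H)$. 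I expect this step---setting up $\theta$ and checking the split-equalizer identities---to be the only part that is not purely formal; everything else is bookkeeping with results already established.

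For the second assertion, evaluating the stated diagram at a comodule $\ubar{N}$ recovers \eqref{eqn:equalizer_image} with $\ubar{M}=\ubar{N}$, via the identifications $\ubar{H}V\ubar{N}\cong\ubar{H}\odot N$ and $\ubar{H}V\ubar{H}V\ubar{N}\cong\ubar{H}\odot H\odot N$ together with a check that the structure maps correspond; so by the first part it is a pointwise equalizer in $\Fun(\Comod(H),\ca{C})$. To upgrade this to $\Fun_{\otimes,\lax}(\Comod(H),\ca{C})$ I would apply Proposition~\ref{prop:pointwise_limits}. Its hypotheses hold: $F$ is symmetric strong monoidal; the forgetful functor $V$ is strict, hence strong, symmetric monoidal, so by doctrinal adjunction (cf.\ \cite{KELLY_DOCTRINAL}) its right adjoint $\ubar{H}$ is symmetric lax monoidal and the unit $\rho$ and counit are symmetric monoidal; hence $F\ubar{H}V$ and $F\ubar{H}V\ubar{H}V$ are symmetric lax monoidal and $F\rho$, $F\rho\ubar{H}V$, $F\ubar{H}V\rho$ are symmetric monoidal natural transformations. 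Proposition~\ref{prop:pointwise_limits} then equips the pointwise equalizer with a unique symmetric lax monoidal structure making $F\rho$ monoidal and exhibits it as a limit in $\Fun_{\otimes,\lax}(\Comod(H),\ca{C})$; since the given strong monoidal structure on $F$ already makes $F\rho$ monoidal, uniqueness identifies the two, completing the proof.
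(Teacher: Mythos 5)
Your proposal is correct and follows essentially the same route as the paper: reduce to showing that the diagram tensored with $\ubar{H}$ is a split (hence absolute) equalizer via the projection-formula isomorphism $\ubar{H}\otimes\ubar{N}\cong\ubar{H}\odot V\ubar{N}$, use that $F\ubar{H}$ is faithfully flat (as the image of an Adams algebra under a tensor functor) to detect the equalizer in $\ca{C}$, and deduce the second statement from the first via Proposition~\ref{prop:pointwise_limits}. The only difference is that you spell out the untwisting isomorphism and the split-equalizer identities explicitly, where the paper cites them from the literature.
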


\begin{proof}
 The second statement follows from the first by applying Proposition~\ref{prop:pointwise_limits} above.

 To see the first claim, note that it suffices to check that $F$ sends the diagram
\begin{equation}\label{eqn:tensored_equalizer}
 \xymatrix@C=40pt{\ubar{H} \otimes \ubar{M} \ar[r]^-{ \ubar{H} \otimes \rho_M} & \ubar{H} \otimes \ubar{H} \odot M \ar@<0.5ex>[r]^-{F(\delta \odot M)} \ar@<-0.5ex>[r]_-{\ubar{H} \otimes \ubar{H} \odot \rho_M} & \ubar{H} \otimes \ubar{H} \odot H \odot M}
\end{equation}
 to an equalizer diagram. Indeed, since $F$ is a tensor functor, the image of Diagram~\eqref{eqn:tensored_equalizer} under $F$ is isomorphic to the image of Diagram~\eqref{eqn:equalizer_image} under the functor $F \ubar{H} \otimes -$. Since $\ubar{H}$ is an Adams algebra, Proposition~\ref{prop:adams_strongly_stable} implies that $F \ubar{H}$ is faithfully flat. From the assumption that $\ca{C}$ has equalizers it follows that $F \ubar{H} \otimes -$ \emph{detects} equalizers, so it does suffice to check that the image of Diagram~\eqref{eqn:tensored_equalizer} under $F$ is an equalizer, as claimed.

 In fact, we will show that Diagram~\ref{eqn:tensored_equalizer} is a split equalizer, and hence absolute (that is, it is preserved by \emph{any} functor, not just $F$). Recall that the fact that the projection formula holds for the adjunction $V \dashv \ubar{H}$ means that for any comodule $\ubar{N}=(N,\rho_N)$, the natural morphism $\ubar{H} \otimes \ubar{N} \rightarrow \ubar{H} \odot N$ is an isomorphism (see \cite[\S 3]{FAUSK_HU_MAY} for the definition and \cite[Lemma~1.1.5]{HOVEY} for a proof). Thus Diagram~\eqref{eqn:tensored_equalizer} above is isomorphic to the image of the split equalizer diagram
\[
 \xymatrix{M \ar[r]^-{\rho} & H\otimes M \ar@<0.5ex>[r]^-{\delta \otimes M} \ar@<-0.5ex>[r]_-{H \otimes \rho_M} & H\otimes H \otimes M}
\]
 in the category $\Mod_R$ (with splitting given by $\varepsilon \otimes M$ and $ \varepsilon \otimes H \otimes M$) under the functor $\ubar{H} \colon \Mod_R \rightarrow \Comod(H)$.
\end{proof}

\begin{lemma}\label{lemma:torsor_fully_faithful}
 The functor from Theorem~\ref{thm:universal_torsor} is fully faithful.
\end{lemma}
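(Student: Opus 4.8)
The claim is that for all tensor functors $F, G \colon \Comod(H) \to \ca{C}$ the assignment $\alpha \mapsto \alpha_{\ubar{H}}$ defines a bijection from $\Nat_{\otimes}(F,G)$, the set of symmetric monoidal natural transformations (equivalently, the hom-set of $\Fun_{c,\otimes}(\Comod(H),\ca{C})$, into which $F$ and $G$ embed since strong monoidal functors are lax), onto $\Tors_H(\ca{C})(F\ubar{H}, G\ubar{H})$. This assignment lands in the set of torsor morphisms because the torsor structures $\tau_F$, $\tau_G$ on $F\ubar{H}$, $G\ubar{H}$ are obtained from $F(\delta)$, $G(\delta)$ by transport across the canonical isomorphisms $\psi_F \colon F\ubar{H} \odot H \cong F(\ubar{H} \odot H)$, $\psi_G \colon G\ubar{H} \odot H \cong G(\ubar{H} \odot H)$, and are therefore natural in the functor (Lemma~\ref{lemma:torsor_preserved}). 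The main tool is Lemma~\ref{lemma:pointwise_equalizer}: applying the corepresentable functor $\Nat_{\otimes}(F,-) \colon \Fun_{\otimes,\lax}(\Comod(H),\ca{C}) \to \Set$ to the equalizer $G \xrightarrow{G\rho} G\ubar{H}V \rightrightarrows G\ubar{H}V\ubar{H}V$ exhibits $\Nat_{\otimes}(F,G)$ as the equalizer of $\Nat_{\otimes}(F, G\ubar{H}V) \rightrightarrows \Nat_{\otimes}(F, G\ubar{H}V\ubar{H}V)$, the two maps being induced by the cobar differentials $G\rho\ubar{H}V$ and $G\ubar{H}V\rho$.

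Next I would give an algebraic description of this equalizer diagram. The forgetful functor $V$ is strong monoidal with right adjoint $\ubar{H}$, so the adjunction $V \dashv \ubar{H}$ is monoidal (doctrinal adjunction, cf.\ \cite{KELLY_DOCTRINAL}) and precomposition yields a monoidal adjunction $(-)\circ\ubar{H} \dashv (-)\circ V$. Since $F$ and $G$ are cocontinuous one has $F\circ\ubar{H} \cong (F\ubar{H})\odot(-)$, $G\ubar{H}V \cong \bigl((G\ubar{H})\odot(-)\bigr)\circ V$ and $G\ubar{H}V\ubar{H}V \cong \bigl((G\ubar{H}\odot H)\odot(-)\bigr)\circ V$; feeding this into the monoidal adjunction and Lemma~\ref{lemma:calg_as_monoidal} (which translates symmetric monoidal natural transformations between functors of the form $A\odot(-)$ into homomorphisms of commutative algebras) produces natural bijections $\Nat_{\otimes}(F, G\ubar{H}V) \cong \CAlg(\ca{C})(F\ubar{H}, G\ubar{H})$ and $\Nat_{\otimes}(F, G\ubar{H}V\ubar{H}V) \cong \CAlg(\ca{C})(F\ubar{H}, G\ubar{H}\odot H)$. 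Unwinding the equalizer inclusion through these identifications — the counit of the precomposition adjunction is induced by $\varepsilon$, one has $(G\rho)_{\ubar{H}} = G(\delta)$ which is $\tau_G$ up to $\psi_G$, and $(G\ubar{H}\odot\varepsilon)\circ\tau_G = \id$ by the counit axiom for the torsor $G\ubar{H}$ (Definition~\ref{dfn:torsor}) — shows that the inclusion becomes exactly $\alpha \mapsto \alpha_{\ubar{H}}$; in particular that map is injective.

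It then remains to identify the two parallel maps $\CAlg(\ca{C})(F\ubar{H}, G\ubar{H}) \rightrightarrows \CAlg(\ca{C})(F\ubar{H}, G\ubar{H}\odot H)$ coming from $G\rho\ubar{H}V$ and $G\ubar{H}V\rho$. I expect that unwinding the first (which at $\ubar{H}$ amounts to the coaction $\delta$ of the cofree comodule, hence $\tau_G$ after applying $G$) gives $\varphi \mapsto \tau_G\circ\varphi$, and unwinding the second — using again $\rho_{\ubar{H}} = \delta$ together with the counit axiom $(F\ubar{H}\odot\varepsilon)\circ\tau_F = \id$ for the torsor $F\ubar{H}$ — gives $\varphi \mapsto (\varphi\odot H)\circ\tau_F$. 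Since Definition~\ref{dfn:torsor} declares a morphism of $H$-torsors $F\ubar{H} \to G\ubar{H}$ to be exactly an algebra homomorphism $\varphi$ with $\tau_G\circ\varphi = (\varphi\odot H)\circ\tau_F$, the equalizer of these two maps is $\Tors_H(\ca{C})(F\ubar{H}, G\ubar{H})$, and together with the previous paragraph this shows that $\alpha \mapsto \alpha_{\ubar{H}}$ is a bijection onto it. The main obstacle is this last round of bookkeeping: matching the abstractly defined comparison maps (the equalizer inclusion, and the cobar differentials whiskered by $F$ and $G$) against the concrete maps built from $\tau_F$ and $\tau_G$ requires careful tracking of the coherence isomorphisms $\psi_F$, $\psi_G$ comparing $F(-\odot-)$ with $F(-)\odot(-)$, of which copy of $H$ each structure map acts on, and of the unit and counit of the precomposition adjunction — no single step is conceptually deep, but it is a diagram chase in which it is easy to compose things in the wrong order.
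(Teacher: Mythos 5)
Your proof is correct and rests on the same two ingredients as the paper's: the translation of torsor morphisms into coaction-compatible symmetric monoidal transformations $F\ubar{H}\to G\ubar{H}$ via Lemma~\ref{lemma:calg_as_monoidal}, and the cobar equalizer of Lemma~\ref{lemma:pointwise_equalizer}. The paper packages the argument slightly differently---it induces $\varphi\colon F\to G$ directly from the universal property of the pointwise equalizer $G\to G\ubar{H}V\rightrightarrows G\ubar{H}V\ubar{H}V$ and then deduces $\varphi\ubar{H}=\psi$ from the fact that $G\rho\ubar{H}$ is a monomorphism, which sidesteps your explicit identification of the two cobar differentials with $\tau_G\circ(-)$ and $\bigl((-)\odot H\bigr)\circ\tau_F$ under the precomposition adjunction---but the underlying argument is the same.
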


\begin{proof}
 Since commutative algebras can be identified with symmetric lax monoidal functors by Lemma~\ref{lemma:calg_as_monoidal}, this boils down to the following statement: for tensor functors $F,G \colon \Comod(H) \rightarrow \ca{C}$ and any symmetric monoidal natural transformation $\psi \colon F \ubar{H} \rightarrow G \ubar{H}$ making the diagram
\[
 \xymatrix{ F \ubar{H} \ar[d]_{\psi} \ar[r]^-{F \rho \ubar{H}} & F\ubar{H} V \ubar{H} \ar[d]^{\psi V \ubar{H}} \\ G \ubar{H}  \ar[r]_-{G \rho \ubar{H}} & G \ubar{H} V \ubar{H}}
\]
 commutative, there exists a unique symmetric monoidal natural transformation $\varphi \colon F \rightarrow G$ such that $\psi =\varphi \ubar{H}$.

 The assumption on $\psi$ and naturality of $\psi$ imply that the solid arrow part of the diagram
\[
 \xymatrix{ F \ar@{..>}[d]_{\varphi} \ar[r]^-{F \rho} & F\ubar{H} V \ar[d]_{\psi V } \ar@<0.5ex>[r]^-{F \rho \ubar{H} V } \ar@<-0.5ex>[r]_-{F \ubar{H} V\rho} & F\ubar{H} V \ubar{H} V \ar[d]^{\psi V \ubar{H} V} \\ 
G  \ar[r]_-{G \rho} & G\ubar{H} V  \ar@<0.5ex>[r]^-{G \rho \ubar{H} V } \ar@<-0.5ex>[r]_-{G \ubar{H} V\rho} & G\ubar{H} V \ubar{H} V }
\]
 is commutative. Since both rows are pointwise equalizers in the functor category $\Fun_{\otimes,\lax}\bigr(\Comod(H),\ca{C}\bigl)$ (see Lemma~\ref{lemma:pointwise_equalizer}), there exists a unique symmetric monoidal natural transformation $\varphi$ making the above diagram commutative.

 The same lemma shows that $G\rho \ubar{H}$ is a monomorphism. Precomposing the left square with $\ubar{H}$ therefore shows that $\varphi \ubar{H}=\psi$. This shows that the functor of Theorem~\ref{thm:universal_torsor} is full.

 To see that it is faithful, let $\varphi^{\prime} \colon F\rightarrow G$ be a symmetric monoidal natural transformation with $\varphi^{\prime} \ubar{H}=\psi$. Then $\varphi^{\prime}$ makes the left square above commutative by naturality. Thus $\varphi^{\prime}=\varphi$, which shows that the functor of Theorem~\ref{thm:universal_torsor} is also faithful.
\end{proof}

 In what follows, given a commutative algebra $A \in \ca{C}$, we simply write $A$ for the corresponding symmetric lax monoidal functor $A\odot - \colon \Mod_R \rightarrow \ca{C}$. This is justified by Lemma~\ref{lemma:calg_as_monoidal}, and in accordance with our convention to write $\ubar{H}$ for $\ubar{H} \odot - \colon \Mod_R \rightarrow \Comod(H)$.

\begin{lemma}\label{lemma:construction_of_fa}
 Let $\ca{C}$ be a tensor category with equalizers and exact filtered colimits, and let $(A,\tau)$ be an $H$-torsor. Let
\[
 \xymatrix{F_A \ar[r]^-{\xi} & AV \ar@<0.5ex>[r]^-{\tau V} \ar@<-0.5ex>[r]_-{AV\rho} & AV\ubar{H}V }
\]
 be an equalizer diagram in $\Fun_{\otimes,\lax}\bigl(\Comod(H),\ca{C}\bigr)$. Then there exists a unique symmetric monoidal isomorphism $\alpha \colon F_A \ubar{H} \rightarrow A$ such that $\tau \alpha=\xi \ubar{H}$. Moreover, the diagram
\[
 \xymatrix@C=10pt{ F_A \ar[rd]_{\xi} \ar[rr]^-{F_A \rho} && F_A \ubar{H} V \ar[ld]^{\alpha V} \\ & AV}
\]
 is commutative.
\end{lemma}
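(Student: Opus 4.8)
The plan is to realize both $F_A\ubar{H}$ and $A$ as the equalizer of one and the same parallel pair of symmetric lax monoidal functors, so that $\alpha$ is forced by the uniqueness of equalizers. To set this up I would first rewrite the diagram defining $F_A$. Since $\ubar{H}\colon\Mod_R\rightarrow\Comod(H)$ is symmetric lax monoidal, precomposition with it carries the given equalizer to a diagram in $\Fun_{\otimes,\lax}(\Mod_R,\ca{C})$; as the original is a pointwise equalizer by Proposition~\ref{prop:pointwise_limits} and precomposition preserves pointwise limits, Proposition~\ref{prop:pointwise_limits} applied once more shows that
\[
 F_A\ubar{H}\xrightarrow{\ \xi\ubar{H}\ }AV\ubar{H}\rightrightarrows AV\ubar{H}V\ubar{H}
\]
is an equalizer in $\Fun_{\otimes,\lax}(\Mod_R,\ca{C})$, the parallel pair being $\tau V\ubar{H}$ and $AV\rho\ubar{H}$. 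Under Lemma~\ref{lemma:calg_as_monoidal} the three functors here are the commutative algebras $A$, $A\odot H$ and $A\odot H\odot H$, and, evaluated at $M\in\Mod_R$, the pair becomes $\tau\odot H\odot M$ and $A\odot\delta\odot M$.

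Next I would show that $A$ itself, with the morphism $\tau\colon A\rightarrow AV\ubar{H}$ that corresponds under Lemma~\ref{lemma:calg_as_monoidal} to the torsor structure map, is also an equalizer of this pair. By Proposition~\ref{prop:pointwise_limits} it suffices to check this pointwise, and the crucial observation is that the fork
\[
 A\xrightarrow{\ \tau\ }A\odot H\rightrightarrows A\odot H\odot H,\qquad\text{with parallel pair }(A\odot\delta,\ \tau\odot H),
\]
is a \emph{split} equalizer in the sense of Definition~\ref{dfn:split_equalizer}, with retractions $p=A\odot\varepsilon\colon A\odot H\rightarrow A$ and $q=A\odot H\odot\varepsilon\colon A\odot H\odot H\rightarrow A\odot H$. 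The four defining identities hold for transparent reasons: $us=vs$ is the first torsor axiom, $ps=\id$ is the second torsor axiom, $qu=\id$ is the counit axiom of the Hopf algebra $H$, and $sp=qv$ holds because both composites amount to letting $\tau$ act on the $A$-tensorand and $\varepsilon$ on an $H$-tensorand, which commute by bifunctoriality of $\odot$. Split equalizers are absolute, so applying $-\odot M$ (and using $(A\odot H)\odot M=A\odot(H\otimes M)$, and similarly for the remaining term) yields exactly the pointwise statement required. Comparing the two equalizers of the pair $(\tau V\ubar{H},AV\rho\ubar{H})$ then produces a unique symmetric monoidal isomorphism $\alpha\colon F_A\ubar{H}\rightarrow A$ with $\tau\alpha=\xi\ubar{H}$; explicitly, $\alpha$ is $\xi\ubar{H}$ postcomposed with the retraction $A\odot\varepsilon$ of $\tau$.

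For the commutativity of the triangle I would whisker the identity $\tau\alpha=\xi\ubar{H}$ with $V$ to obtain $(\tau V)(\alpha V)=\xi\ubar{H}V$, precompose with $F_A\rho$, and rewrite the right-hand side by the interchange law as $(AV\rho)\,\xi$. Since $\xi$ equalizes $\tau V$ and $AV\rho$, this becomes $(\tau V)(\alpha V)(F_A\rho)=(\tau V)\,\xi$; but $\tau V$ is a pointwise split monomorphism (its value at $\ubar{N}$ is $\tau\odot N$, with retraction $(A\odot\varepsilon)\odot N$), so it can be cancelled to give $(\alpha V)(F_A\rho)=\xi$, as required.

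The step I expect to be the main obstacle is the middle one: recognizing that $A$, with the torsor coaction $\tau$ serving as the limiting cone, \emph{is} the equalizer of the relevant pair. Once one sees that the two torsor axioms together with the Hopf counit assemble into precisely the data of a split equalizer, everything else is a formal manipulation with whiskering and the universal property of equalizers; in particular no exactness or faithful flatness of the functors involved is needed, only that $\ca{C}$ has equalizers so that $F_A$ exists.
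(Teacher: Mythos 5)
Your proof is correct and follows essentially the same route as the paper: precompose the defining equalizer with $\ubar{H}$, recognize $A \xrightarrow{\tau} A\odot H \rightrightarrows A\odot H\odot H$ as a split equalizer of the same parallel pair (with splittings coming from the counit), and then cancel the monomorphism $\tau V$ to get the triangle. The one small divergence is that you show $\tau V$ is a pointwise \emph{split} monomorphism directly from the counit axiom of the torsor, whereas the paper deduces monicity from faithful flatness of $A$ and the invertibility of $\mu\odot H\circ A\otimes\tau$; your version is, if anything, slightly more economical.
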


\begin{proof}
 Since the equalizer is computed pointwise (see Proposition~\ref{prop:pointwise_limits}) and
\[
\ubar{H} \colon \Mod_R \rightarrow \Comod(H) 
\]
 is exact, the equalizer is preserved by precomposing with $\ubar{H}$. In order to show that the unique isomorphism $\alpha$ exists, it therefore suffices to check that
\[
\xymatrix{A \ar[r]^-{\tau} & AV\ubar{H} \ar@<0.5ex>[r]^-{\tau V \ubar{H}} \ar@<-0.5ex>[r]_-{AV\rho \ubar{H}} & AV\ubar{H}V \ubar{H} }
\]
 is also an equalizer diagram in $\Fun_{\otimes,\lax}(\Mod_R,\ca{C})$. This diagram is in fact a split equalizer, with splitting given by $AV\ubar{H} \varepsilon \colon AV\ubar{H}V \ubar{H} \rightarrow AV\ubar{H}$ and $A\varepsilon \colon AV\ubar{H} \rightarrow A$, where $\varepsilon$ denotes the counit of the adjunction $V \dashv \ubar{H}$.

 To see the second claim, note that $\tau V$ is a monomorphism. Indeed, since $A$ is an $H$-torsor, it is faithfully flat, and it suffices to show that $A\tau$ (and therefore $A \tau V$) is a split monomorphism. This follows from the fact that the composite $\mu \odot H \circ A \otimes \tau$ is an isomorphism (see Definition~\ref{dfn:torsor}). Commutativity of the diagram
\[
 \xymatrix{ & F_A \ubar{H} V \ar[rd]^{\xi \ubar{H} V} \ar[r]^-{\alpha V} & AV \ar[d]^{\tau V} \\ F_A \ar[r]_-{\xi} \ar[ru]^{F_A \rho} & AV \ar@<0.5ex>[r]^-{AV \rho} \ar@<-0.5ex>[r]_-{\tau V} & AV \ubar{H} V }
\]
 therefore implies that $\alpha V \circ F_A \rho=\xi$, as claimed.
\end{proof}

\begin{lemma}\label{lemma:torsor_eso}
 Let $\ca{C}$ be a tensor category with equalizers and exact filtered colimits, and let $(A,\tau)$ be an $H$-torsor in $\ca{C}$. Then the functor $F_A$ of Lemma~\ref{lemma:construction_of_fa} is a symmetric \emph{strong} monoidal left adjoint, and the isomorphism $\alpha \colon F_A \ubar{H} \rightarrow A$ is an isomorphism of $H$-torsors $(F_A \ubar{H},F_A \rho \ubar{H}) \rightarrow (A,\tau)$.
\end{lemma}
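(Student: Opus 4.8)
The statement has three parts: that $F_A$ is cocontinuous (hence a left adjoint, since $\Comod(H)$ is locally finitely presentable, being the category of quasi-coherent sheaves on the Adams stack $BG$), that $F_A$ is symmetric \emph{strong} monoidal, and that $\alpha$ is an isomorphism of $H$-torsors. The plan is to extract the first two from a single faithfully flat descent statement, and then to obtain the third formally from Lemma~\ref{lemma:construction_of_fa}. The tool is base change along the faithfully flat algebra $A$: write $(-)_A = A \otimes - \colon \ca{C} \to \ca{C}_A$, which is symmetric strong monoidal, cocontinuous, conservative (Definition~\ref{dfn:faithfully_flat}, as $A$ is a torsor hence faithfully flat), and preserves finite limits (as $A$ is flat). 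The key claim is that there is an isomorphism of symmetric lax monoidal functors
\[
 (-)_A \circ F_A \;\cong\; (A \odot -) \circ V \colon \Comod(H) \longrightarrow \ca{C}_A \text{,}
\]
where $V$ is the forgetful functor and $A \odot - \colon \Mod_R \to \ca{C}_A$ sends $M$ to the free $A$-module $A \odot M$; both are symmetric strong monoidal left adjoints (the latter because $A$ is the unit object of $\ca{C}_A$). Granting the claim, the right-hand side is symmetric strong monoidal and cocontinuous, so: since $(-)_A$ reflects isomorphisms, the structure morphisms $\varphi^{F_A}$ and $\varphi^{F_A}_0$ supplied by Proposition~\ref{prop:pointwise_limits} are isomorphisms, i.e. $F_A$ is strong monoidal; and since a conservative cocontinuous functor between cocomplete categories reflects colimits, $F_A$ is cocontinuous, hence a left adjoint.

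To prove the claim I would apply $(-)_A$ to the defining equalizer $F_A \xrightarrow{\xi} AV \rightrightarrows AV\ubar{H}V$ of Lemma~\ref{lemma:construction_of_fa}. This equalizer is computed pointwise (Proposition~\ref{prop:pointwise_limits}), and $(-)_A$ is symmetric strong monoidal and preserves equalizers, so $(-)_A \circ F_A$ is the pointwise equalizer of $(AV)_A \rightrightarrows (AV\ubar{H}V)_A$ in $\Fun_{\otimes,\lax}(\Comod(H), \ca{C}_A)$. At a comodule $\ubar{M} = (M,\rho_M)$ this is the fork $A \otimes (A \odot M) \rightrightarrows A \otimes (A \odot H \odot M)$. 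Using the torsor isomorphism $\theta \defl (\mu \odot H) \circ (A \otimes \tau) \colon A \otimes A \to A \odot H$ of Definition~\ref{dfn:torsor} to rewrite $A \otimes (A \odot -)$ as $A \odot H \odot -$, this fork is carried to the fork $A \odot H \odot M \rightrightarrows A \odot H \odot H \odot M$ obtained by applying $A \odot -$ to the comonadic resolution $H \otimes M \rightrightarrows H \otimes H \otimes M$ of $\ubar{M}$ (with legs $\delta_H \otimes M$ and $H \otimes \rho_M$); the matching up of the two legs is precisely the cocycle condition packaged in the torsor axioms (the coassociativity square and counit triangle of Definition~\ref{dfn:torsor}). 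That resolution is a split equalizer in $\Mod_R$, with splitting induced by the counit $\varepsilon$ of the adjunction $V \dashv \ubar{H}$ and equalizer $M$ (via $\rho_M$); the splitting is preserved by $A \odot -$, so the equalizer of the base-changed fork at $\ubar{M}$ is $A \odot M = (A \odot -)(V\ubar{M})$, naturally in $\ubar{M}$ and compatibly with the lax monoidal structures. This gives the displayed isomorphism.

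Finally, the claim about $\alpha$ is formal once $F_A$ is known to be cocontinuous. By Lemma~\ref{lemma:construction_of_fa}, $\alpha \colon F_A\ubar{H} \to A$ is a symmetric monoidal isomorphism with $\tau \circ \alpha = \xi \ubar{H}$ and $\xi = (\alpha V) \circ (F_A \rho)$. Since $F_A$ preserves colimits, $F_A \rho \ubar{H}$ may be regarded as a morphism $F_A \ubar{H} \to F_A \ubar{H} \odot H$ — the candidate torsor structure on $F_A\ubar{H}$ — and combining the two relations yields $\tau \circ \alpha = (\alpha \odot H) \circ (F_A \rho \ubar{H})$, so $\alpha$ is a morphism of $H$-torsors $(F_A\ubar{H}, F_A\rho\ubar{H}) \to (A,\tau)$; being invertible, it is an isomorphism of torsors. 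The main obstacle is the descent step of the previous paragraph, namely identifying the base-changed fork with the split equalizer attached to the comonad $V \dashv \ubar{H}$ while keeping track of the symmetric monoidal data; once the displayed isomorphism is available, strong monoidality, cocontinuity, and the torsor-morphism property all follow with no further work.
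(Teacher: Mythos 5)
Your proposal is correct and follows essentially the same route as the paper: base change along the faithfully flat algebra $A$, identify $A \otimes F_A(-)$ with $A \odot V(-)$ via the torsor isomorphism $\mu \odot H \circ A \otimes \tau$ and a split equalizer coming from the counit of $V \dashv \ubar{H}$, then transfer strong monoidality and cocontinuity back through the conservative functor $(-)_A$, with the torsor claim for $\alpha$ extracted formally from Lemma~\ref{lemma:construction_of_fa}. No substantive differences.
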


\begin{proof}
 Since $A$ is faithfully flat, it suffices to show that the composite
\[
 \xymatrix{\Comod(H) \ar[r]^-{F_A} & \ca{C} \ar[r]^-{A\otimes -} & \ca{C}_A}
\]
 is symmetric strong monoidal and cocontinuous to prove the same for $F_A$. Establishing these two properties implies the first claim, since any cocontinuous functor with domain $\Comod(H)$ is a left adjoint (this follows from the fact that $\Comod(H)$ is locally finitely presentable).

 We claim that the (a priori only symmetric lax monoidal) composite $AF_A$ is isomorphic to $A V$ as symmetric lax monoidal functors. More precisely, both these functors have natural lifts to the category $\ca{C}_A$ of $A$-modules (which we do not distinguish in our notation). The lift of $AV$ to $\ca{C}_A$ is symmetric strong monoidal. Thus it suffices to check that the lifts of $AF_A$ and $AV$ to $\ca{C}_A$ are isomorphic as lax monoidal functors.

 To see this, let $\overline{\tau} \defl \mu V\ubar{H} \circ A\tau \colon AA \rightarrow AV \ubar{H}$ be the morphism of $A$-algebras corresponding to $\tau \colon A \rightarrow AV \ubar{H}$. Since $(A,\tau)$ is a torsor, $\overline{\tau}$ is an isomorphism (see Definition~\ref{dfn:torsor}). The solid arrow part of the diagram
\[
 \xymatrix{AF_A \ar@{..>}[d]_{\beta} \ar[r]^-{A\xi} & AAV \ar@<0.5ex>[r]^-{A\tau V} \ar@<-0.5ex>[r]_-{AAV\rho} \ar[d]_{\overline{\tau} V} & AAV\ubar{H} V \ar[d]^{\overline{\tau} V\ubar{H} V} \\
 AV \ar[r]_-{AV\rho} & AV\ubar{H}V \ar@<0.5ex>[r]^-{AV\rho \ubar{H}V} \ar@<-0.5ex>[r]_-{AV\ubar{H}V \rho} & AV\ubar{H} V \ubar{H} V}
\]
 in the category $\Fun_{\otimes, \lax}\bigl(\Comod(H), \ca{C}_A \bigr)$ is commutative, so it suffices to check that both rows are equalizer diagrams to get the desired isomorphism $\beta$. This is immediate for the top row since $A$ is flat, and the bottom row is a split equalizer, with splitting given by $A\varepsilon V$ and $A\varepsilon V \ubar{H} V$, where $\varepsilon$ denotes the counit of the adjunction $V \dashv \ubar{H}$.

 It remains to check that $\alpha$ is an isomorphism of torsors. This follows from commutativity of the diagram
\[
 \xymatrix{F_A \ubar{H} \ar[r]^-{F_A \rho \ubar{H}} \ar[rd]^{\xi \ubar{H}} \ar[d]_{\alpha} & F_A \ubar{H} V\ubar{H} \ar[d]^{\alpha V\ubar{H}} \\ A \ar[r]_-{\tau} & AV\ubar{H}}
\]
 (see Lemma~\ref{lemma:construction_of_fa}).
\end{proof}

\begin{proof}[Proof of Theorem~\ref{thm:universal_torsor}]
 Let $H$ be an Adams Hopf algebra, and let $\ca{C}$ be a tensor category with equalizers and exact filtered colimits. The functor
\[
 \Fun_{c,\otimes}\bigl(\Comod(H), \ca{C} \bigr) \rightarrow \Tors_H(\ca{C})
\]
 which sends $F$ to $F\ubar{H}$ is well-defined by Lemma~\ref{lemma:torsor_preserved}. It is fully faithful by Lemma~\ref{lemma:torsor_fully_faithful} and essentially surjective by Lemma~\ref{lemma:torsor_eso}.
\end{proof}

\subsection{The case of the general linear group}\label{section:locally_free_objects}

 The goal of this section is to analyze the structure of torsors over the general linear group in general tensor categories. Let $H_d=R[x_{ij}\vert i,j=1,\ldots,d]_{\mathrm{det}}$ be the Hopf algebra of regular functions on the affine group scheme $\mathrm{GL}_d \defl \mathrm{GL}_d(R)$.

 It is well known that $\mathrm{BGL}_d$ classifies locally free objects of rank $d$ on schemes and algebraic stacks $X$. As we will see, this is true in a much more general context. In order to do this we will first give a definition of locally free objects in an arbitrary tensor category.

\begin{dfn}\label{dfn:locally_free_objects}
 Let $\ca{C}$ be a tensor category with exact filtered colimits. An object $M \in \ca{C}$ with a dual is called \emph{locally free of rank $d$} if there exists a faithfully flat commutative algebra $B \in \ca{C}$ such that $B \otimes M \cong B^{\oplus d}$ in the category $\ca{C}_B$ of $B$-modules in $\ca{C}$. We denote the groupoid of locally free objects of rank $d$ in $\ca{C}$ and isomorphisms between them by $\LF^{\rk d}_{\iso}(\ca{C})$.
\end{dfn}

\begin{rmk}
 Ideally, we would like to have a more intrinsic characterization of locally free objects of rank $d$. In characteristic zero, such a characterization has been given by Brandenburg, see \cite[\S 4.9]{BRANDENBURG_THESIS}. In positive characteristic this remains an open question.
\end{rmk}

 Instead of studying the above mentioned problem, we use the theory of torsors developed in \S \ref{section:torsors_subsection} to give a different characterization of the groupoid $\LF^{\rk d}_{\iso}(\ca{C})$: we will show that there is an equivalence
\[
 \Fun_{c,\otimes}\bigl(\Comod(H_d), \ca{C}) \rightarrow \LF^{\rk d}_{\iso}(\ca{C})
\]
 under a mild restriction on the tensor category $\ca{C}$ (see Theorem~\ref{thm:universal_property_rep_gld} below for a precise statement).

 In this section, we will write $M_B$ for the free $B$-module $B\otimes M$ on an object $M \in \ca{C}$, and we will write $M^d$ for the direct sum $M^{\oplus d}$ of $d$ copies of $M$. 

\begin{dfn}\label{dfn:framing}
 Let $\ca{C}$ be a tensor category, $M \in \ca{C}$ an object, and $B \in \ca{C}$ a commutative algebra. A \emph{$d$-framing} of $M$ (over $B$) is a morphism $\xi \colon M \rightarrow B^d$ such that the induced morphism $\overline{\xi} \colon M_B \rightarrow B^d$ is an isomorphism.
\end{dfn}
 
\begin{lemma}\label{lemma:framings_functor}
 If $\xi$ is a $d$-framing of $M$ over $B$ and $\varphi \colon B \rightarrow B^{\prime}$ is a morphism of commutative algebras, then $\varphi^d \circ \sigma$ is a $d$-framing of $M$ over $B^{\prime}$. Thus $d$-framings on $M$ define a functor
\[
 \mathrm{Fr}^d_M \colon \CAlg_{\ca{C}} \rightarrow \Set
\]
 on the category of commutative algebras on $\ca{C}$.
\end{lemma}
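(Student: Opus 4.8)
The plan is to note that $\mathrm{Fr}^d_M$ is a subfunctor of the assignment $B \mapsto \ca{C}(M, B^d)$, which is manifestly functorial in $B$: a morphism $\varphi \colon B \rightarrow B^{\prime}$ of commutative algebras acts by post-composition with $\varphi^d \colon B^d \rightarrow (B^{\prime})^d$. (Here $\mathrm{Fr}^d_M(B)$ is a set because it is a subclass of the hom-$R$-module $\ca{C}(M,B^d)$.) Granting the first assertion of the lemma---that this transition map sends $d$-framings to $d$-framings---functoriality of $\mathrm{Fr}^d_M$ is then immediate, since preservation of identities and of composites is inherited from $\ca{C}(M,-)$ and from $(-)^d$.

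It thus remains to show that $\varphi^d \circ \xi$ is a $d$-framing of $M$ over $B^{\prime}$ whenever $\xi$ is one over $B$. I would do this by identifying the induced morphism $\overline{\varphi^d \circ \xi} \colon M_{B^{\prime}} \rightarrow (B^{\prime})^d$ with the base change along $\varphi$ of the induced morphism $\overline{\xi} \colon M_B \rightarrow B^d$. Since $\ca{C}$ is a tensor category it has all coequalizers and tensoring preserves them, so the module categories $\ca{C}_B$, $\ca{C}_{B^{\prime}}$ exist and the base change functor $B^{\prime} \ten{B} (-) \colon \ca{C}_B \rightarrow \ca{C}_{B^{\prime}}$ along $\varphi$ is available. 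The identification rests on two standard compatibilities: first, that base change of a free module is free, i.e.\ there is a canonical isomorphism $B^{\prime} \ten{B} (B \otimes N) \cong B^{\prime} \otimes N$ natural in $N \in \ca{C}$, which in particular identifies $B^{\prime} \ten{B} M_B$ with $M_{B^{\prime}}$ and $B^{\prime} \ten{B} B^d$ with $(B^{\prime})^d$; and second, naturality of the free--forgetful adjunction bijection $\ca{C}(M,N) \cong \ca{C}_B(M_B,N)$, through which $\overline{(-)}$ is defined, which forces $B^{\prime} \ten{B} \overline{\xi}$ to agree with $\overline{\varphi^d \circ \xi}$ under those identifications. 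Alternatively, unwinding the description of induced morphisms from \S\ref{section:background}, one computes $\overline{\varphi^d \circ \xi} \cong (\lambda \otimes R^d) \circ (B^{\prime} \otimes \xi)$, where $\lambda \colon B^{\prime} \otimes B \rightarrow B^{\prime}$ is the $B$-action on $B^{\prime}$ determined by $\varphi$, and this is exactly the base change of $\overline{\xi}$. As $\overline{\xi}$ is an isomorphism by hypothesis and $B^{\prime} \ten{B} (-)$ preserves isomorphisms, $\overline{\varphi^d \circ \xi}$ is an isomorphism, so $\varphi^d \circ \xi$ is a $d$-framing.

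The main obstacle is entirely bookkeeping: making the identification $\overline{\varphi^d \circ \xi} \cong B^{\prime} \ten{B} \overline{\xi}$ precise requires juggling three adjunctions (the two free-module adjunctions for $B$ and $B^{\prime}$ and the base change adjunction along $\varphi$) and the canonical isomorphisms relating them. There is no conceptual difficulty and, crucially, no exactness hypothesis on anything is used; I expect the argument to take only a few lines once the conventions of \S\ref{section:background} are in hand.
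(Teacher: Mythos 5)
Your proposal is correct and matches the paper's argument: the paper's one-line proof likewise identifies $\overline{\varphi^d \circ \xi}$ with the image of $\overline{\xi}$ under the base change functor $B^{\prime} \ten{B} - \colon \ca{C}_B \rightarrow \ca{C}_{B^{\prime}}$, whence it is an isomorphism. Your additional bookkeeping just makes explicit what the paper leaves implicit.
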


\begin{proof}
 This follows from the fact that $\overline{\varphi^d \xi}$ is (up to isomorphism) given by the image of $\overline{\xi}$ under the tensor functor $B^{\prime} \ten{B} - \colon \ca{C}_B \rightarrow \ca{C}_{B^{\prime}}$.
\end{proof}

 In order to state the next proposition, we need to fix some notation. Recall that $\Sym(C)=\bigoplus_{n \in \mathbb{N}} \Sym^{n}(C)$ denotes the free commutative algbera on the object $C \in \ca{C}$. Here $\Sym^n(C)$ is the quotient of $C^{\otimes n}$ by the action of the symmetric group $\Sigma_n$. As a left adjoint, $\Sym(-)$ sends finite direct sums to finite coproducts of commutative algebras, that is, to tensor products.

 Now fix an object $M \in \ca{C}$ with a dual. We write $M^d=\bigoplus_{i=1}^d M_i$ to keep track of the different copies of $M$ in the direct sum. Using the fact about direct sums mentioned above and the isomorphism $\Sym^1(C)\cong C$, we get natural inclusions of $M_i^{\vee} \otimes M_j$ in $\Sym\bigl((M^{\vee})^d \oplus M^d \bigr )$. Using the (twisted) coevaluation $\U \rightarrow M\otimes M^{\vee} \cong M^{\vee}\otimes M$ on $M$ and the universal property of $\Sym(\U)$ we get morphisms
\[
 \alpha_{ij} \colon \Sym(\U) \rightarrow \Sym\bigl((M^{\vee})^d \oplus M^d \bigr )
\]
 whose composite with the inclusion $\U \rightarrow \Sym(\U)$ is given by the twisted coevaluation on $M_i^{\vee} \otimes M_j$.

 Given $i, j \in \{1,\ldots, d\}$, we write $\delta_{ij} \colon \Sym(\U) \rightarrow \U$ for the unique morphism of algebras which restricts to the identity of $\U$ if $i=j$, and to the zero morphism if $i \neq j$.

 We write $\beta$ for the morphism
\[
 \Sym(M^{\vee} \otimes M) \rightarrow \Sym\bigl((M^{\vee})^d \oplus M^d \bigr )
\]
 induced by the composite of the diagonal $\colon M^{\vee} \otimes M \rightarrow  \bigoplus_i M_i^{\vee} \otimes M_i$ and the inclusion of $\bigoplus_i M_i^{\vee} \otimes M_i$ in $\Sym^2\bigl((M^{\vee})^d \oplus M^d \bigr )$.

 Finally, we let $\overline{\ev} \colon \Sym(M^{\vee}\otimes M) \rightarrow \U$ denote the unique morphism of commutative algebras which restricts to the evaluation $\ev \colon M^{\vee} \otimes M \rightarrow \U$.

\begin{lemma}\label{lemma:universal_framing}
 Let $M \in \ca{C}$ be an object with a dual. Let $A_M$ be the colimit of the diagram
\[
 \xymatrix@!C=20pt{& \Sym(\U) \ar[rd]^{\alpha_{ij}} \ar[ld]_{\delta_{ij}} && \Sym(M^{\vee} \otimes M) \ar[ld]_{\beta} \ar[rd]^{\overline{\ev}} \\ \U && \Sym\bigl((M^{\vee})^d \oplus M^d \bigr ) && \U}
\]
 in $\CAlg_{\ca{C}}$. Denote the composite
\[
 \xymatrix{M=M_i \ar[r] & \Sym(M_i)  \ar[r]^-{\mathrm{incl}_i} & \Sym\bigl((M^{\vee})^d \oplus M^d \bigr ) \ar[r] & A_M}
\]
 by $\sigma_i \colon M \rightarrow A_M$. Then $\sigma=(\sigma_i)_{i=1}^d \colon M \rightarrow A_M^d$ gives a $d$-framing of $M$ over $A_M$. Moreover, this $d$-framing is \emph{universal}: given any other $d$-framing $\xi \colon M \rightarrow B$, there exists a unique morphism of commutative algebras $\varphi \colon A_M \rightarrow B$ such that the diagram
\[
 \xymatrix@C=10pt{ & M \ar[rd]^{\xi} \ar[ld]_{\sigma} \\ A_M^d \ar[rr]_-{\varphi^d} && B^d }
\]
 is commutative. Thus, for objects $M$ with dual, the functor $\mathrm{Fr}_M^d \colon \CAlg_{\ca{C}} \rightarrow \Set$ is represented by $A_M$.
\end{lemma}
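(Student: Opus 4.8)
The plan is to prove the (strongest) last sentence, namely that $\mathrm{Fr}^d_M$ is representable with representing object $A_M$, by computing $\CAlg_\ca{C}(A_M,B)$ explicitly for an arbitrary commutative algebra $B\in\ca{C}$ and identifying it, naturally in $B$, with the set of $d$-framings of $M$ over $B$. The first two assertions are then just the Yoneda statement, read off at $\id_{A_M}$.

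First I would unwind what a cocone under the defining diagram of $A_M$ with vertex $B$ is. Since $\U$ is initial in $\CAlg_\ca{C}$, the legs at the $\U$-vertices are forced to be the unit $\eta_B$, so a cocone is precisely an algebra morphism $g\colon\Sym\bigl((M^\vee)^d\oplus M^d\bigr)\to B$ with $g\circ\alpha_{ij}=\eta_B\circ\delta_{ij}$ for all $i,j$ and $g\circ\beta=\eta_B\circ\overline{\ev}$. By the universal property of the free commutative algebra, such a $g$ amounts to a pair of $d$-tuples of morphisms in $\ca{C}$: the morphisms $u_i\colon M^\vee\to B$ obtained by restricting $g$ to the $i$-th summand $M_i^\vee$, and the morphisms $\xi_j\colon M\to B$ obtained by restricting to the $j$-th summand $M_j$. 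The $\xi_j$ assemble into $\xi\colon M\to B^d$ with associated $B$-linear map $\overline\xi\colon M_B\to B^d$, and the $u_i$ assemble, via the duality bijection $\ca{C}(M^\vee,B)\cong\ca{C}(\U,B\otimes M)\cong\ca{C}_B(B,M_B)$ applied coordinatewise, into a $B$-linear map $\overline u\colon B^d\to M_B$; both assignments $(\xi_j)_j\mapsto\overline\xi$ and $(u_i)_i\mapsto\overline u$ are bijections onto the relevant hom-sets.

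Next I would translate the two families of relations into identities about $\overline\xi$ and $\overline u$, again using freeness of $\Sym$ to reduce each relation to its restriction to the degree-one part. Restricting $g\circ\alpha_{ij}=\eta_B\circ\delta_{ij}$ to $\U$, and using that $\alpha_{ij}$ restricted to $\U$ is the twisted coevaluation landing in the summand $M_i^\vee\otimes M_j$ together with the multiplicativity of $g$, the left-hand relations become $\mu_B\circ(u_i\otimes\xi_j)\circ\coev=\delta_{ij}\,\eta_B$ for all $i,j$; a short chase with the duality data of $M$ identifies the left-hand side with the $(i,j)$-entry of $\overline\xi\circ\overline u$, so the left relations say exactly $\overline\xi\circ\overline u=\id_{B^d}$. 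Similarly, restricting $g\circ\beta=\eta_B\circ\overline{\ev}$ to $M^\vee\otimes M$ and using that $\beta$ restricted to $M^\vee\otimes M$ is the diagonal into $\bigoplus_i M_i^\vee\otimes M_i$ followed by the inclusion into $\Sym^2$, the remaining relation becomes $\sum_{i=1}^d\mu_B\circ(u_i\otimes\xi_i)=\eta_B\circ\ev$, which the same kind of chase identifies with $\overline u\circ\overline\xi=\id_{M_B}$. Hence a cocone with vertex $B$ is exactly a pair of mutually inverse $B$-linear maps $\overline\xi\colon M_B\to B^d$ and $\overline u\colon B^d\to M_B$, that is, an isomorphism $\overline\xi$ together with a choice of inverse. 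Because the inverse of an isomorphism is unique and $(u_i)_i\mapsto\overline u$ is bijective, forgetting $\overline u$ gives a bijection from the set of cocones onto $\mathrm{Fr}^d_M(B)$, and this bijection is manifestly natural in $B$ (both sides transform by postcomposition); thus $\CAlg_\ca{C}(A_M,-)\cong\mathrm{Fr}^d_M$.

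Finally I would obtain the explicit statements by feeding $\id_{A_M}$ through this bijection at $B=A_M$: the corresponding algebra morphism $g$ is the canonical leg $\Sym\bigl((M^\vee)^d\oplus M^d\bigr)\to A_M$, whose restriction to $M_i$ is $\sigma_i$ by definition, so $\sigma=(\sigma_i)_i$ is the $d$-framing over $A_M$ representing the identity, and its universal property — existence and uniqueness of $\varphi\colon A_M\to B$ with $\varphi^d\circ\sigma=\xi$ — is exactly the Yoneda isomorphism evaluated at the given $\xi\in\mathrm{Fr}^d_M(B)$. The main obstacle is the bookkeeping in the previous paragraph: one must track the duality isomorphisms (the twisted coevaluation, and the adjunction presenting the $u_i$ as the ``columns'' of $\overline u$) and the additive ``diagonal'' combinatorics carefully enough to be certain that the relations imposed by the colimit diagram cut out precisely the two matrix identities $\overline\xi\,\overline u=\id$ and $\overline u\,\overline\xi=\id$, with no missing relation and no spurious twist.
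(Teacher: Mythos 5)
Your proposal is correct and follows essentially the same route as the paper: morphisms $A_M \rightarrow B$ are unwound (using that $\U$ is initial in $\CAlg_{\ca{C}}$ and the freeness of $\Sym$) into a pair of $d$-tuples of morphisms whose associated $B$-linear maps $M_B \rightarrow B^d$ and $B^d \rightarrow M_B$ are forced by the $\alpha_{ij}$- and $\beta$-relations to be mutually inverse, and uniqueness of inverses then identifies $\CAlg_{\ca{C}}(A_M,B)$ with $\mathrm{Fr}^d_M(B)$. Your version merely makes the Yoneda packaging and the duality bookkeeping more explicit than the paper does.
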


\begin{proof}
 Since $\U$ is the initial algebra, giving a morphism $A_M \rightarrow B$ simply amounts to giving a morphism $\Sym\bigl((M^{\vee})^d \oplus M^d \bigr ) \rightarrow B$ subject to various restrictions involving the evaluation and coevaluation of $M$. Explicitly, such a morphism amounts to giving morphisms $\tau_i \colon M^{\vee}_B \rightarrow B$ and $\sigma_j \colon M_B \rightarrow B$ of $B$-modules, $i,j=1,\ldots,d$, subject to the following equations. Writing $\overline{\tau}_i$ for the dual of $\tau_i$, the equations coming from the $\alpha_{ij}$ say that $\sigma_j \overline{\tau}_i$ is the identity for $i=j$ and zero if $i \neq j$, and the equation coming from $\beta$ says that $\sum_{i=1}^d  \tau_i \sigma_i$ is the identity on $M_B$. Thus the morphisms $(\sigma_i)_{i=1}^d \colon M_B \rightarrow B^d$ and $(\overline{\tau}_i)_{i=1}^d \colon B^d \rightarrow M_B$ are mutually inverse. This shows that $\sigma \colon M \rightarrow A_M^d$ is indeed a universal $d$-framing.
\end{proof}

\begin{example}\label{example:framing_on_free_module}
 The universal $d$-framing on the module $R^d \in \Mod_R$ is given by $A_{R^d} =H_d=R[x_{ij}]_{\mathrm{det}}$, where $\sigma \colon R^d \rightarrow (H_d)^{\oplus d}$ sends the basis vector $e_i$ to $\sum x_{ij}e_j$. Indeed, a $d$-framing $R^d \rightarrow B^d$ simply amounts to picking an invertible matrix in the commutative $R$-algebra $B$.
\end{example}

\begin{lemma}\label{lemma:framings_preserved}
 Tensor functors preserve universal $d$-framings on objects with duals.
\end{lemma}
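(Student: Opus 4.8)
The plan is to transport the colimit presentation of the universal framing from Lemma~\ref{lemma:universal_framing}: I will show that a tensor functor $F\colon\ca{C}\to\ca{D}$ sends the diagram in $\CAlg_{\ca{C}}$ that defines $A_M$ to the analogous diagram for $FM$, so that $F$ applied to $A_M$ becomes $A_{FM}$, and that under this identification $F\sigma$ becomes the universal $d$-framing of $FM$. I will use three standard facts about a tensor functor $F$ (which by definition is symmetric strong monoidal with a right adjoint $F^{R}$). First, the adjunction $F\dashv F^{R}$ is symmetric monoidal (doctrinal adjunction, \cite{KELLY_DOCTRINAL}), so the induced functor $F_{\ast}\colon\CAlg_{\ca{C}}\to\CAlg_{\ca{D}}$ on commutative algebras has a right adjoint $(F^{R})_{\ast}$ and is therefore cocontinuous; in particular it preserves the finite colimit defining $A_M$. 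Second, $F$ preserves duals, so $F(M^{\vee})$ is a dual of $FM$ and $F$ carries the evaluation and coevaluation of $M$ to those of $FM$ up to the coherence isomorphisms $\varphi^{F}$, $\varphi^{F}_{0}$. Third, $F_{\ast}$ preserves free commutative algebras, $F_{\ast}\Sym(C)\cong\Sym(FC)$ naturally, because $\Sym^{n}$ is a quotient of a tensor power by the symmetric group action and $F$ (being strong monoidal and a left adjoint) respects both.

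The heart of the argument is then to check that $F_{\ast}$ takes the diagram of Lemma~\ref{lemma:universal_framing} to its counterpart for $FM$. Its vertices $\Sym(\U)$, $\Sym(M^{\vee}\otimes M)$, $\Sym((M^{\vee})^{d}\oplus M^{d})$ and $\U$ go, via the isomorphisms above (and using that $F$ preserves finite direct sums), to $\Sym(\U)$, $\Sym((FM)^{\vee}\otimes FM)$, $\Sym(((FM)^{\vee})^{d}\oplus(FM)^{d})$ and $\U$. For the edges: $\delta_{ij}$ is characterised as the algebra map restricting to $\id_{\U}$ or to $0$, hence is preserved; $\alpha_{ij}$ is characterised by the twisted coevaluation on $M_{i}^{\vee}\otimes M_{j}$, $\beta$ by the diagonal $M^{\vee}\otimes M\to\bigoplus_{i}M_{i}^{\vee}\otimes M_{i}$, and $\overline{\ev}$ by the evaluation of $M$ --- all preserved because $F$ preserves the relevant (co)evaluations and $F_{\ast}$ preserves $\Sym$ and colimits. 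Since $F_{\ast}$ also preserves the colimit itself, this produces a canonical isomorphism $F_{\ast}A_{M}\cong A_{FM}$ in $\CAlg_{\ca{D}}$.

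It remains to match the framings. Tracing the construction, $\sigma_{i}$ is the composite $M=M_{i}\to\Sym(M_{i})\to\Sym((M^{\vee})^{d}\oplus M^{d})\to A_{M}$; applying $F$ and using $F_{\ast}\Sym\cong\Sym F$ together with the identification of the two diagrams turns this into precisely the composite defining the $i$-th component of the universal framing $\sigma^{\prime}$ of $FM$ over $A_{FM}$. Hence $F\sigma$ is a $d$-framing of $FM$ over $F_{\ast}A_{M}$ --- equivalently, $\overline{F\sigma}$ is the image of the isomorphism $\overline{\sigma}$ under $F$ (which preserves isomorphisms, and the $F_{\ast}A_{M}$-module structures match) --- and under $F_{\ast}A_{M}\cong A_{FM}$ it corresponds to $\sigma^{\prime}$, so it is universal by Lemma~\ref{lemma:universal_framing} applied to $FM$.

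The main obstacle is the bookkeeping in the second paragraph: one has to verify carefully that $F$ sends each of $\alpha_{ij}$, $\delta_{ij}$, $\beta$ and $\overline{\ev}$ to its $FM$-counterpart, which amounts to chasing their defining universal properties through the coherence data of the strong monoidal functor $F$. The remaining ingredients --- cocontinuity of $F_{\ast}$, preservation of $\Sym$, and preservation of duality data --- are routine.
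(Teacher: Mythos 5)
Your proposal is correct and follows the same route as the paper: the paper's own proof simply observes that a tensor functor induces a left adjoint on categories of commutative algebras and that the universal $d$-framing is the colimit in $\CAlg_{\ca{C}}$ from Lemma~\ref{lemma:universal_framing}, hence is preserved. You have merely spelled out the bookkeeping (preservation of $\Sym$, duals, and the defining edges of the diagram) that the paper leaves implicit.
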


\begin{proof}
 Any tensor functor induces a left adjoint between categories of commutative algebras. Since universal $d$-framings are given by the colimit in the category of commutative algebras described in Lemma~\ref{lemma:universal_framing}, they are preserved by tensor functors.
\end{proof}

 The following proposition gives a characterization of locally free objects of rank $d$ in a tensor category with exact filtered colimits (hence in particular in an lfp tensor category).

\begin{prop}\label{prop:locally_free_objects}
 Let $\ca{C}$ be a tensor category with exact filtered colimits and let $M \in \ca{C}$ be an object with dual. Then the following are equivalent:
\begin{itemize}
 \item[(i)] The object $M$ is locally free of rank $d$ (see Definition~\ref{dfn:locally_free_objects});
\item[(ii)] The universal $d$-framing $A_M$ is a faithfully flat algebra;

\item[(iii)] There exists a tensor category $\ca{D}$ with exact filtered colimits and a conservative and left exact tensor functor $F \colon \ca{C} \rightarrow \ca{D}$ such that $FM \cong \U^d$, where $\U$ denotes the unit of $\ca{D}$.
\end{itemize}
\end{prop}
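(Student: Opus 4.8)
The plan is to establish the cycle of implications (i) $\Rightarrow$ (iii) $\Rightarrow$ (ii) $\Rightarrow$ (i), of which two directions are short. For (ii) $\Rightarrow$ (i) I would take $B \defl A_M$: by Lemma~\ref{lemma:universal_framing} the universal $d$-framing $\sigma \colon M \to A_M^d$ induces an isomorphism $M_{A_M} \cong A_M^{\oplus d}$ of $A_M$-modules, so if $A_M$ is faithfully flat then $M$ is locally free of rank $d$ in the sense of Definition~\ref{dfn:locally_free_objects}. For (i) $\Rightarrow$ (iii), given a faithfully flat $B \in \ca C$ with $M_B \cong B^{\oplus d}$ in $\ca C_B$, I would set $\ca D \defl \ca C_B$ and let $F$ be the base change (free module) functor $(-)_B \colon \ca C \to \ca C_B$. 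Since the forgetful functor $\ca C_B \to \ca C$ creates limits and colimits, $\ca C_B$ is again a tensor category with exact filtered colimits; $F$ is a tensor functor, it is conservative and left exact because $B$ is faithfully flat, and $FM = M_B \cong \U_{\ca D}^{\oplus d}$.

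The substantial step is (iii) $\Rightarrow$ (ii): assuming a conservative left exact tensor functor $F \colon \ca C \to \ca D$ with $FM \cong \U_{\ca D}^{\oplus d}$, one must show that $A_M$ is faithfully flat in $\ca C$. The first task is to compute $FA_M$. By Lemma~\ref{lemma:framings_preserved}, tensor functors preserve universal $d$-framings, so $FA_M$ is the universal $d$-framing algebra $A_{FM}$ of $FM \cong \U_{\ca D}^{\oplus d}$. Applying the same lemma to the canonical tensor functor $G \defl \U_{\ca D} \odot (-) \colon \Mod_R \to \ca D$, which sends $R^d$ to $\U_{\ca D}^{\oplus d}$, together with Example~\ref{example:framing_on_free_module} (which identifies $A_{R^d}$ with $H_d = R[x_{ij}]_{\mathrm{det}}$), yields $FA_M \cong G(H_d)$.

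Next I would observe that $H_d$ is a faithfully flat $R$-algebra: it is flat, and $\Spec(H_d) \to \Spec(R)$ is surjective. Being flat over $R$ it is a filtered colimit of finitely generated free $R$-modules, and since $R$ is finitely presentable in $\Mod_R$, Proposition~\ref{prop:faithfully_flat_implies_adams_criterion} shows that $H_d$ is an Adams algebra. By Proposition~\ref{prop:adams_strongly_stable}, $G(H_d)$ is then an Adams algebra in $\ca D$, hence faithfully flat there as $\ca D$ has exact filtered colimits; thus $FA_M \cong G(H_d)$ is faithfully flat in $\ca D$. It then remains to descend faithful flatness along $F$. Conservativity of $A_M \otimes -$ is immediate: if $A_M \otimes f$ is invertible then so is $F(A_M \otimes f) \cong FA_M \otimes Ff$, whence $Ff$ is invertible because $FA_M$ is faithfully flat, whence $f$ is invertible because $F$ is conservative. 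For preservation of finite limits, $R$-linearity of $A_M \otimes -$ takes care of finite biproducts, and the remaining case is that of an equalizer $E \to X$ of a parallel pair in $\ca C$: one compares the fork obtained by tensoring it with $A_M$ against its image under $F$, which is an equalizer in $\ca D$ since $F$ is left exact and $FA_M$ is flat, and invokes that a conservative left exact functor reflects those finite limits that happen to exist in $\ca C$.

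I expect the last reflection step — that a conservative left exact tensor functor reflects faithfully flat algebras — to be the main obstacle, the delicate point being the bookkeeping of which finite limits are genuinely available in $\ca C$; everything else is an assembly of the results on universal framings and on Adams algebras established earlier in this section. One could also prove (i) $\Rightarrow$ (ii) directly and sidestep this reflection: base changing along $B$ identifies $(A_M)_B$ with the universal $d$-framing algebra of the free module $M_B$ in $\ca C_B$, which as computed above is faithfully flat over $B$, and then faithfully flat descent of faithful flatness along $B$ gives the claim.
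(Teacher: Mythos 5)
Your proposal is correct and follows essentially the same route as the paper: the same cycle (i) $\Rightarrow$ (iii) $\Rightarrow$ (ii) $\Rightarrow$ (i), with the key step identifying $FA_M$ with the universal $d$-framing $\U_{\ca{D}} \odot H_d$ of $\U_{\ca{D}}^{\oplus d}$ via Lemma~\ref{lemma:framings_preserved} and Example~\ref{example:framing_on_free_module}, deducing its faithful flatness from Proposition~\ref{prop:adams_strongly_stable}, and descending along the conservative left exact $F$. You in fact spell out more than the paper does (the Adams-algebra justification for $H_d$ in $\Mod_R$ and the limit-reflection bookkeeping, which the paper dispatches in one sentence), so there is nothing to correct.
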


\begin{proof}
 Using the fact that $\ca{C}_B$ has exact filtered colimits if $B$ does, it is clear that (ii) implies (i) and (i) implies (iii). To see that (iii) implies (ii), let $F$ be a tensor functor as in (iii).

 Since tensor functors preserve universal $d$-framings (see Lemma~\ref{lemma:framings_preserved} above), we know that $F(A_M)$ is a universal $d$-framing of $FM \cong \U^d$. On the other hand, $\U^d$ is also the image of $R^d$ under the tensor functor $\U \odot - \colon \Mod_R \rightarrow \ca{D}$, so the universal $d$-framing of $FM\cong \U^d$ is also isomorphic to $\U \odot H_d$. This algebra is faitfhfully flat as image of an Adams algebra under a tensor functor (see Proposition~\ref{prop:adams_strongly_stable}). This shows that $F(A_M)$ is faithfully flat, and since $F$ is conservative and preserves finite limits, it follows that $A_M$ is faithfully flat as well.
\end{proof}

\begin{lemma}\label{lemma:universal_locally_free_object}
 Let $(\ubar{R}^d,\rho)$ be the standard representation of $\mathrm{GL}_d$ given by the coaction $\rho \colon \ubar{R}^d \rightarrow \ubar{H}_d \odot R^d$ with $\rho(e_i)=\sum x_{ij}\otimes e_j$. Then $\rho$ exhibits $\ubar{H}_d$ as universal $d$-framing of $\ubar{R}^d$ in $\Comod(H_d)$. Moreover, if $\ca{C}$ is a tensor category with filtered exact colimits and $F \colon \Comod(H_d)\rightarrow \ca{C}$ is a tensor functor, then $F\ubar{R}^d$ is locally free of rank $d$. 
\end{lemma}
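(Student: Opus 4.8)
The plan is to verify the universal property of $(\ubar{H}_d,\rho)$ directly, pushing everything down along the forgetful tensor functor $V \colon \Comod(H_d) \rightarrow \Mod_R$ (the symmetric strong monoidal left adjoint of $\ubar{H}_d \odot -$; recall it is faithful and conservative), and then to deduce the second assertion formally from the stability properties of Adams algebras. Conceptually, $\ubar{H}_d$ is the $\mathrm{GL}_d$-torsor attached to the canonical atlas $\Spec R \rightarrow B\mathrm{GL}_d$, i.e.\ the frame bundle of the universal rank-$d$ bundle, so the statement is what one expects; the point is to establish it at the level of tensor categories.

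First I would check that $\rho$ is a $d$-framing. Identifying $\ubar{H}_d^{\oplus d}$ with $\ubar{H}_d \odot R^d$, the morphism $\rho$ is exactly the coaction of $\ubar{R}^d$, hence the component at $\ubar{R}^d$ of the unit of $V \dashv \ubar{H}_d$, so it is a comodule morphism. To see that $\overline{\rho} \colon (\ubar{R}^d)_{\ubar{H}_d} \rightarrow \ubar{H}_d^{\oplus d}$ is an isomorphism I would apply the conservative functor $V$; since $V$ is a tensor functor, $V\overline{\rho}$ is the $H_d$-linear morphism extending $V\rho$, and $V\rho = \sigma \colon R^d \rightarrow H_d^{\oplus d}$, $e_i \mapsto \sum_j x_{ij}e_j$, is the universal $d$-framing of $R^d$ over $H_d$ by Example~\ref{example:framing_on_free_module}, so $V\overline{\rho} = \overline{\sigma}$ is invertible.

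Next I would prove universality. Given a $d$-framing $\xi \colon \ubar{R}^d \rightarrow B^{\oplus d}$ over a commutative algebra $B$ in $\Comod(H_d)$ (with coaction $\rho_B$ on $B$), applying $V$ gives a $d$-framing $V\xi$ of $R^d$ over $VB$, so Example~\ref{example:framing_on_free_module} supplies a unique $R$-algebra morphism $\psi \colon H_d \rightarrow VB$ with $\psi^{\oplus d}\circ \sigma = V\xi$. The only nonformal point is that $\psi$ commutes with coactions, and hence underlies a morphism $\varphi \colon \ubar{H}_d \rightarrow B$ of commutative algebras in $\Comod(H_d)$; since $H_d$ is generated as an $R$-algebra by the $x_{ij}$ (with $\det^{-1}$ then determined), it is enough to check that $\rho_B \circ \psi$ and $(H_d \otimes \psi)\circ \delta$ agree on the $x_{ij}$, which is immediate once one writes $V\xi(e_i) = \sum_j b_{ij}e_j$: the comodule condition on $\xi$ says $\rho_B(b_{ij}) = \sum_k x_{ik} \otimes b_{kj}$, while $\psi(x_{ij}) = b_{ij}$ and $\delta(x_{ij}) = \sum_k x_{ik}\otimes x_{kj}$, so both composites send $x_{ij}$ to $\sum_k x_{ik}\otimes b_{kj}$. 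Faithfulness of $V$ then gives $\varphi^{\oplus d}\circ\rho = \xi$ and the uniqueness of $\varphi$, so $(\ubar{H}_d,\rho)$ represents $\mathrm{Fr}^d_{\ubar{R}^d}$, i.e.\ it is the universal $d$-framing.

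For the second assertion, suppose $F \colon \Comod(H_d) \rightarrow \ca{C}$ is a tensor functor and $\ca{C}$ has exact filtered colimits. Since $\ubar{R}^d$ has a dual, so does $F\ubar{R}^d$, and by Lemma~\ref{lemma:framings_preserved} the universal $d$-framing of $F\ubar{R}^d$ is $F(\ubar{H}_d)$. Now $\ubar{H}_d$ is an Adams algebra in $\Comod(H_d)$ because $H_d$ is an Adams Hopf algebra (Example~\ref{example:adams_hopf_alg_is_adams_alg}), so $F(\ubar{H}_d)$ is faithfully flat by Proposition~\ref{prop:adams_strongly_stable}. Hence the universal $d$-framing of $F\ubar{R}^d$ is faithfully flat, and $F\ubar{R}^d$ is locally free of rank $d$ by the implication (ii)$\Rightarrow$(i) of Proposition~\ref{prop:locally_free_objects}. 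I expect the coaction-compatibility check in the universality step to be the only real obstacle; the rest is a formal consequence of how tensor functors interact with duals, framings, and Adams algebras.
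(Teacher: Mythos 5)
Your proof is correct, and the second half (tensor functors preserve universal $d$-framings, $\ubar{H}_d$ is an Adams algebra, hence $F\ubar{H}_d$ is faithfully flat and condition (ii) of Proposition~\ref{prop:locally_free_objects} applies) coincides with the paper's argument. For the first half you and the paper share the same starting point---reduce to the universal framing of $R^d$ over $H_d$ from Example~\ref{example:framing_on_free_module} via the forgetful functor $V$---but you justify the reduction differently. The paper observes that $V$ preserves tensor products and \emph{creates} colimits, so it creates the colimit in $\CAlg_{\Comod(H_d)}$ that defines the representing object $A_{\ubar{R}^d}$ of Lemma~\ref{lemma:universal_framing}; the universal property in $\Comod(H_d)$ is then automatic and no compatibility with coactions ever needs to be checked. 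You instead verify the universal property by hand: you lift the unique algebra map $\psi\colon H_d \rightarrow VB$ to a comodule algebra morphism by checking $\rho_B\circ\psi = (H_d\otimes\psi)\circ\delta$ on the generators $x_{ij}$, using that both sides are algebra maps out of the localization $R[x_{ij}]_{\det}$. That computation is correct and makes the mechanism visible (it is exactly the cocycle identity for the frame bundle), but it is precisely the step the creation-of-colimits argument renders unnecessary; the abstract route also generalizes without change to any forgetful functor that is strong monoidal and creates the relevant colimits.
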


\begin{proof}
 Since $V \colon \Comod(H_d) \rightarrow \Mod_R$ preserves tensor products and creates colimits, it suffices to check that $V\sigma$ exhibits $H_d$ as universal framing of $R^d$, which is precisely the content of Example~\ref{example:framing_on_free_module}.

 To see the second claim, note that $F\sigma$ exhibits $F\ubar{H}_d$ as universal $d$-framing of $F\ubar{R}^d$ since tensor functors preserve universal $d$-framings (see Lemma~\ref{lemma:framings_preserved}). But $F\ubar{H}_d$ is faithfully flat as an image of an Adams algebra (see Proposition~\ref{prop:adams_strongly_stable}). Thus the second condition of Proposition~\ref{prop:locally_free_objects} is satisfied, which shows that $F\ubar{R}^d$ is indeed locally free.
\end{proof}

 We are now ready to state the main theorem of this section.

\begin{thm}\label{thm:universal_property_rep_gld}
 Let $\ca{C}$ be a tensor category with equalizers and exact filtered colimits. Then the functor
\[
 \Fun_{c,\otimes}\bigl(\Comod(H_d),\ca{C}\bigr) \rightarrow \LF^{\rk d}_{\iso}(\ca{C})
\]
 which sends $F$ to $F \ubar{R}^d$ is an equivalence of categories.
\end{thm}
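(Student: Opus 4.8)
The plan is to deduce the statement from the universal property of torsors. By Theorem~\ref{thm:universal_torsor} applied to the Adams Hopf algebra $H_d$, the functor $E\colon \Fun_{c,\otimes}\bigl(\Comod(H_d),\ca{C}\bigr)\to\Tors_{H_d}(\ca{C})$, $F\mapsto F\ubar{H}_d$, is already an equivalence. So it suffices to produce an equivalence $\mathrm{Fr}^d\colon \LF^{\rk d}_{\iso}(\ca{C})\to\Tors_{H_d}(\ca{C})$ that intertwines the functor $T\colon F\mapsto F\ubar{R}^d$ of the theorem with $E$, i.e.\ with $\mathrm{Fr}^d\circ T\cong E$: then $\mathrm{Fr}^d$ is automatically essentially surjective, and once we also know it is fully faithful it is an equivalence, whence $T\cong (\mathrm{Fr}^d)^{-1}\circ E$ is an equivalence. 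The bridge is the universal $d$-framing: morally $A_M$ is the frame bundle of $M$, which is a $\mathrm{GL}_d$-torsor, and $M$ is recovered as the bundle associated to the standard representation.

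First I would upgrade the assignment $M\mapsto A_M$ of Lemma~\ref{lemma:universal_framing} to a functor
\[
 \mathrm{Fr}^d\colon \LF^{\rk d}_{\iso}(\ca{C})\longrightarrow\Tors_{H_d}(\ca{C})\text{.}
\]
If $M$ is locally free of rank $d$ then $M$ has a dual, so $A_M$ is defined, and $A_M$ is faithfully flat by Proposition~\ref{prop:locally_free_objects}. The group-valued functor $B\mapsto\mathrm{GL}_d(B)=\CAlg_{\ca{C}}(\U\odot H_d,B)$ acts on $\mathrm{Fr}^d_M\cong\CAlg_{\ca{C}}(A_M,-)$ by postcomposing a framing $M_B\xrightarrow{\sim}B^{\oplus d}$ with an invertible matrix over $B$; since both $\mathrm{Fr}^d_M$ and the representing object $\U\odot H_d$ of $\mathrm{GL}_d$ are preserved by tensor functors, so is this action. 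By Yoneda the action corresponds to a morphism of algebras $\tau_M\colon A_M\to A_M\odot H_d$, and the coaction axioms hold because the group-action axioms do. The torsor condition — that $A_M\otimes A_M\to A_M\odot H_d$ is an isomorphism — translates under Yoneda into the assertion that any two $d$-framings of $M$ over $B$ differ by a unique element of $\mathrm{GL}_d(B)$, which is immediate since each induces an isomorphism $M_B\cong B^{\oplus d}$. Functoriality in isomorphisms of $M$ is clear.

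Next I would verify $\mathrm{Fr}^d\circ T\cong E$. For a tensor functor $F$ we have $A_{F\ubar{R}^d}\cong F(A_{\ubar{R}^d})$ by Lemma~\ref{lemma:framings_preserved}, and $A_{\ubar{R}^d}=\ubar{H}_d$ by Lemma~\ref{lemma:universal_locally_free_object}; moreover the $\mathrm{GL}_d$-action on framings of $R^d$ is matrix multiplication, which is the comultiplication of $H_d$, so $\tau_{\ubar{R}^d}$ is the tautological torsor structure $\delta$ of Example~\ref{example:universal_torsor} (this is essentially the computation of Example~\ref{example:framing_on_free_module}, checked after applying the faithful forgetful functor to $\Mod_R$). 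Since tensor functors respect universal framings and the $\mathrm{GL}_d$-action, $\mathrm{Fr}^d(F\ubar{R}^d)\cong (F\ubar{H}_d,F\delta)=E(F)$, naturally in $F$.

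It then remains to prove that $\mathrm{Fr}^d$ is fully faithful; note that both its source and target are groupoids (the latter by Corollary~\ref{cor:torsors_groupoid}), so a morphism lifting an isomorphism is automatically an isomorphism. Faithfulness is easy: if isomorphisms $\phi,\phi'\colon M\to M'$ induce the same morphism $A_M\to A_{M'}$, then composing the universal framing $\sigma_{M'}$ with $\phi$ and with $\phi'$ gives the same $d$-framing of $M$ over $A_{M'}$; since $A_{M'}$ is faithfully flat the base-change functor $A_{M'}\otimes-$ is conservative, and the framing isomorphism is in particular monic, so $\phi=\phi'$. \textbf{The main obstacle is fullness}, which is precisely the associated-bundle reconstruction of $M$ from the torsor $(A_M,\tau_M)$: because $A_M$ is faithfully flat, $M$ is the equalizer of the two coface maps $M_{A_M}\rightrightarrows M_{A_M\otimes A_M}$ — a split equalizer after applying $A_M\otimes-$, as in the proof of Proposition~\ref{prop:general_adams_faithfully_flat} — and under the isomorphism $M_{A_M}\cong A_M^{\oplus d}$ coming from the universal framing this equalizer diagram is expressed entirely in terms of $\tau_M$ and the coaction $\rho\colon R^d\to H_d\odot R^d$ of the standard representation. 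Hence an isomorphism of $H_d$-torsors $A_M\xrightarrow{\sim}A_{M'}$ transports this presentation of $M$ to that of $M'$ and yields an isomorphism $M\cong M'$ inducing it; running the same reconstruction inside $\Comod(H_d)$ recovers $\ubar{R}^d$ from $\ubar{H}_d$, which is what makes the whole identification compatible with $T$. With $\mathrm{Fr}^d$ shown to be an equivalence, the theorem follows.
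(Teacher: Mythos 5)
Your proof is correct and rests on the same pillars as the paper's: the reduction via Theorem~\ref{thm:universal_torsor} to an equivalence $\LF^{\rk d}_{\iso}(\ca{C})\simeq\Tors_{H_d}(\ca{C})$, the universal $d$-framing algebra $A_M$ as the bridge, and the descent equalizer presenting $M$ in terms of $(A_M,\tau_M)$ and $\rho$ (the paper's Diagram~\eqref{eqn:equalizer_from_locally_free}). Where you diverge is in how the equivalence $\LF^{\rk d}_{\iso}(\ca{C})\to\Tors_{H_d}(\ca{C})$ is verified: the paper constructs an explicit quasi-inverse $(A,\tau)\mapsto F_A(\ubar{R}^d)$ using the tensor functor $F_A$ of Lemmas~\ref{lemma:construction_of_fa} and~\ref{lemma:torsor_eso} and checks both round-trips ($M\cong M^{A_M}$ and $A\cong A_{M^A}$), whereas you prove full faithfulness of $M\mapsto(A_M,\tau_M)$ directly and extract essential surjectivity for free from the factorization $E\cong\mathrm{Fr}^d\circ T$. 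This buys you the omission of the second round-trip check $A\cong A_{M^A}$, at the cost of the fullness argument, which is the same descent computation in different clothing. Your functor-of-points construction of the torsor structure $\tau_M$ (as the Yoneda transcription of the $\mathrm{GL}_d$-action on framings) is a clean alternative to the explicit diagram chases in the paper's Lemma~\ref{lemma:torsor_from_locally_free}; just note that the equalizer fact you invoke for fullness is not free and is exactly what that lemma establishes, and that one should record (as you implicitly do via uniqueness in the universal property of $A_M$) that the isomorphism $M\cong M'$ obtained by transporting the equalizer presentation does induce the given torsor isomorphism.
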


 We will use the equivalence between $\Fun_{c,\otimes}\bigl(\Comod(H_d),\ca{C}\bigr)$ and $\Tors_{H_d}(\ca{C})$ (see Theorem~\ref{thm:universal_torsor}) to prove this. The difficult part is to construct a torsor from a locally free object $M$. We can do this using the algebra with a universal $d$-framing on the object $M$.

\begin{lemma}\label{lemma:torsor_from_locally_free}
 Let $M$ be a locally free object of rank $d$ in the tensor category $\ca{C}$. Then there exists a unique morphism of algebras $\tau \colon A_M \rightarrow A_M \odot H_d$ such that the diagram
\[
 \xymatrix{M \ar[r]^-{\sigma} \ar[d]_{\sigma} & A_M \odot R^d \ar[d]^{A_M \odot \rho} \\ A_M \odot R^d \ar[r]_-{\tau \odot R^d} & A_M \odot H_d \odot R^d}
\]
 is commutative. The pair $(A_M,\tau)$ is an $H_d$-torsor, and the diagram
\begin{equation}\label{eqn:equalizer_from_locally_free}
 \xymatrix{M \ar[r]^-{\sigma} & A_M \odot R^d \ar@<0.5ex>[r]^-{A_M \rho} \ar@<-0.5ex>[r]_-{\tau R^d} & A_M \odot H_d \odot R^d}
\end{equation}
is an equalizer diagram in $\ca{C}$.

 Moreover, this construction is functorial in $M$: given an isomorphism $M \rightarrow M^{\prime}$ of locally free objects, the induced isomorphism $A_M \rightarrow A_{M^{\prime}}$ is an isomorphism of torsors.
\end{lemma}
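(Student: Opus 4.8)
The plan is to construct $\tau$ from the universal property of the $d$-framing $A_M$ (Lemma~\ref{lemma:universal_framing}), to deduce the torsor axioms and the equalizer statement by faithfully flat descent along $A_M$, and to read off functoriality from the same uniqueness. First I would check that the composite $M \xrightarrow{\sigma} A_M \odot R^d \xrightarrow{A_M \odot \rho} A_M \odot H_d \odot R^d$ is a $d$-framing of $M$ over the commutative algebra $A_M \odot H_d$: the morphism $\overline{\sigma} \colon M_{A_M} \to A_M^d$ is an isomorphism by definition of $A_M$, its base change along the canonical algebra map $A_M \to A_M \odot H_d$ is again an isomorphism, and post-composing with the automorphism of the free $(A_M \odot H_d)$-module of rank $d$ given by the invertible matrix $(x_{ij})$ (cf. Example~\ref{example:framing_on_free_module}) shows the displayed composite is a $d$-framing. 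The universal property of $A_M$ then produces a unique morphism of commutative algebras $\tau \colon A_M \to A_M \odot H_d$ with $\tau^d \circ \sigma = (A_M \odot \rho) \circ \sigma$; since $\tau^d = \tau \odot R^d$, this is exactly the asserted commutative square, and the uniqueness of $\tau$ is the uniqueness in the representing property of $A_M$.

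Next I would verify that $(A_M, \tau)$ is an $H_d$-torsor. Faithful flatness of $A_M$ is immediate from the implication (i)$\Rightarrow$(ii) of Proposition~\ref{prop:locally_free_objects}, since $M$ is locally free of rank $d$. The coassociativity and counit diagrams for $\tau$ are equalities of algebra morphisms out of $A_M$, so by the universal property of $A_M$ it suffices to compare both sides after applying $(-)^d$ and precomposing with $\sigma$; using functoriality of $\odot$ and the defining identity for $\tau$, the two comparisons collapse to the coassociativity, respectively the counit, axiom of the comodule $(\ubar{R}^d, \rho)$. For the remaining torsor condition --- that $(\mu \odot H_d) \circ (A_M \otimes \tau)$ is an isomorphism --- and for the equalizer claim for Diagram~\eqref{eqn:equalizer_from_locally_free}, I would use faithfully flat descent: since $A_M$ is faithfully flat, $A_M \otimes -$ preserves the finite limits in question (Definition~\ref{dfn:faithfully_flat}), is conservative, and hence detects equalizers and isomorphisms. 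Applying $A_M \otimes -$ and using that it is a tensor functor preserving universal framings (Lemma~\ref{lemma:framings_preserved}) together with the trivialization $\overline{\sigma}$, one identifies $(A_M \otimes A_M, A_M \otimes \tau)$ with the trivial $H_d$-torsor $A_M \odot H_d$ on the free module of rank $d$ (cf. Example~\ref{example:universal_torsor}); for that torsor the isomorphism condition holds and the image of Diagram~\eqref{eqn:equalizer_from_locally_free} is a split equalizer, with splittings built from the counit $\varepsilon$ and antipode $s$ of $H_d$ and the multiplication $\mu$ of $A_M$. Faithful flatness of $A_M$ transports both conclusions back to $\ca{C}$.

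Finally, an isomorphism $g \colon M \to M'$ of locally free objects of rank $d$ induces, via the universal property of the framings, an isomorphism of algebras $A_M \to A_{M'}$ (classified by the $d$-framing $\sigma' \circ g$, with inverse classified by $\sigma \circ g^{-1}$), and the same uniqueness argument used to pin down $\tau$ shows that this isomorphism intertwines $\tau$ and $\tau'$, so it is an isomorphism of $H_d$-torsors. I expect the descent step in the middle paragraph to be the main obstacle: it requires careful bookkeeping of the external tensor $\odot$ against the internal tensor $\otimes$ and of the tensor factor on which each structure morphism acts, followed by the explicit identification of the relevant split (co)equalizers and of the antipode-built inverse of the trivial torsor. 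By contrast, the comodule axioms and the functoriality claim are routine consequences of the universal property of $A_M$.
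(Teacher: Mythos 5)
Your proposal is correct and follows essentially the same route as the paper: the composite $(A_M\odot\rho)\circ\sigma$ is shown to be a $d$-framing so that $\tau$ comes from the universal property of $A_M$, the comodule-algebra axioms are checked by uniqueness of classifying maps, and both the invertibility of $(\mu\odot H_d)\circ(A_M\otimes\tau)$ and the equalizer claim are obtained after faithfully flat base change to $A_M$, where everything reduces to the standard framing $A_M\odot H_d$ of the free module and a split equalizer. The only differences are cosmetic (your matrix-$(x_{ij})$ phrasing versus the paper's explicit diagram, and your mention of the antipode where the counit alone suffices for the splitting).
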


\begin{proof}
 The last statement about functoriality follows immediately from the fact that all the morphisms involved in the diagram are defined by the universal property of $\sigma$.

 Thus we only need to show that, for a fixed $M$, the morphism of algebras $\tau$ exists and that $(A_M,\tau)$ is a torsor. Troughout the proof we write $A=A_M$, $H=H_d$, and we omit the symbols $\otimes, \odot$.

 Commutativity of the diagram
\[
 \xymatrix@!C=30pt{AHM \ar[rr]^-{AH\sigma} \ar[d]_{(12)} & & AHAR^d \ar[ld]_{(12)} \ar[rd]^{(23)} \ar[rrr]^-{AHA\rho} &&& AHAHR^d \ar[d]^{(23)} \\ 
 HAM \ar[r]^-{HA \sigma} \ar[rd]_{\cong} & HAAR^d \ar[d]^{H \mu R^d} && AAHR^d \ar[dd]_{\mu HR^d} \ar[rr]^{AAH\rho} && AAHHR^d \ar[ld]_{\mu HH R^d} \ar[dd]^{\mu\mu R^d} \\
& HAR^d \ar[rrd]_{(12)} &&& AHHR^d \ar[rd]^{A\mu R^d} \\
& & & AHR^d \ar[rr]_{\cong} \ar[ru]^{AH \rho} && AHR^d}
\]
 shows that $A\rho \circ \sigma$ is a $d$-framing of $M$ over $AH$. Thus we do indeed get a unique morphism $\tau \colon A \rightarrow AH$ of algebras making the desired diagram commutative.

 Commutativity of the defining diagrams of a torsor is straightforward to check using the universal property of $\sigma$. To complete the proof that $(A,\tau)$ is a torsor, it only remains to show that the composite morphism $\mu H \circ A\tau$ of algebras is an isomorphism. The top horizontal composite in the commutative diagram
\[
 \xymatrix{AM \ar[r]^-{A \sigma} \ar[d]_{A \sigma} & AAR^d \ar[d]^{AA \rho} \ar[r]^{\mu R^d} & AR^d \ar[d]^{A \rho} \\
AAR^d \ar[r]_-{A \tau R^d} & AAHR^d \ar[r]_-{\mu H R^d} & AHR^d }
\]
 of $A$-modules is an isomorphism since $\sigma$ is a $d$-framing over $A$. But both $(AM,A\sigma)$ and $(AR^d,A\rho)$ are universal $d$-framings in the category $\ca{C}_A$ of $A$-algebras. Thus $\mu H \circ A\tau$ is an isomorphism, as claimed.

 To conclude the proof we need to check that Diagram~\eqref{eqn:equalizer_from_locally_free} is an equalizer diagram. Since $A$ is faithfully flat, it suffices to check that the diagram obtained by tensoring it with $A$ is an equalizer diagram. Let $\overline{\sigma} \colon AV \rightarrow AR^d$ and $\overline{\tau} \colon AA \rightarrow AH$ denote the morphisms of $A$-modules corresponding to $\sigma$ and $\tau$ respectively. By definition of $d$-framings, $\overline{\sigma}$ is an isomorphism, and---as we have just observed above---so is $\overline{\tau}$. The commutative diagram
\[
 \xymatrix{AM \ar[d]_{\overline{\sigma}} \ar[r]^-{A\sigma} & AAR^d \ar[d]_{\overline{\tau} R^d} \ar@<0.5ex>[r]^-{AA\rho} \ar@<-0.5ex>[r]_-{A\tau R^d} & AAHR^d \ar[d]^{\overline{\tau} HR^n} \\ 
AR^d  \ar[r]^-{A\rho} & AHR^d \ar@<0.5ex>[r]^-{AH\rho} \ar@<-0.5ex>[r]_-{A \delta R^d} & AHHR^d}
\]
 therefore reduces the problem to checking that the bottom row is an equalizer. It is in fact a split equalizer, with splitting given by the counit of $H$.
\end{proof}

\begin{proof}[Proof of Theorem~\ref{thm:universal_property_rep_gld}]
 Using the equivalence 
\[
\Fun_{c,\otimes}\bigl(\Comod(H_d), \ca{C}\bigr) \rightarrow \Tors_{H_d}(\ca{C}) 
\]
 of Theorem~\ref{thm:universal_torsor} we can reduce the problem to showing that the construction of Lemma~\ref{lemma:torsor_from_locally_free} gives an equivalence $\LF^{\rk d}_{\iso}(\ca{C}) \rightarrow \Tors_{H_d}(\ca{C}) $. Given an $H_d$-torsor $(A,\tau)$, let $M^A \defl F_A(\ubar{R}^d)$, where $F_A$ denotes the tensor functor defined in Lemmas~\ref{lemma:construction_of_fa} and \ref{lemma:torsor_eso}, that is, $M^A$ is defined by the equalizer diagram
\[
\xymatrix{M^A \ar[r]^-{\xi} & A \odot R^d \ar@<0.5ex>[r]^-{A \odot \rho} \ar@<-0.5ex>[r]_-{\tau \odot R^d} & A\odot H_d \odot R^d} 
\]
 in $\ca{C}$. The object $M^A$ is thus locally free of rank $d$ by Lemma~\ref{lemma:universal_locally_free_object}. From the fact that Diagram~\eqref{eqn:equalizer_from_locally_free} is an equalizer diagram we immediately get a natural isomorphism $M \cong M^{A_M}$. 

 It remains to check that $A \cong A_{M^A}$. To see this, it suffices to check that $\xi$ exhibits $A$ as universal $d$-framing of $M^A$.

 Since tensor functors preserve universal $d$-framings, $F_A\rho$ exhibits $F_A \ubar{H}_d$ as universal $d$-framing of $M^A$. The claim therefore follows from the existence of the isomorphism $\alpha \colon F_A \ubar{H}_d \cong A$ of algebras with $\alpha \odot R^d \circ F_A \rho =\xi$ (see Lemma~\ref{lemma:construction_of_fa}).
\end{proof}
\section{Universal geometric tensor categories}\label{section:limits} 

\subsection{Generalized Tannaka duality}\label{section:generalized_td}

 As mentioned in the introduction, we will use the generalized Tannaka duality developed in \cite{SCHAEPPI_STACKS, SCHAEPPI_INDABELIAN, SCHAEPPI_GEOMETRIC} to prove that the 2-category $\ca{AS}$ of Adams stacks over $R$ is (bicategorically) cocomplete. This duality gives a contravariant biequivalence between $\ca{AS}$ and a certain 2-category of right exact symmetric monoidal categories, the \emph{weakly Tannakian categories}. In order to recall the definition of these, we first need to introduce some terminology.

\begin{dfn}\label{dfn:rex_monoidal}
 Let $\ca{A}$ be an essentially small $R$-linear with finite colimits (that is, it is additive and has cokernels). Then $\ca{A}$ is called \emph{right exact symmetric monoidal} if it is symmetric monoidal, and for all $A \in \ca{A}$, the functor $A \otimes - \colon \ca{A} \rightarrow \ca{A}$ is right exact.

 The 2-category of right exact symmetric monoidal categories, right exact symmetric strong monoidal $R$-linear functors, and symmetric monoidal natural transformations is denoted by $\ca{RM}$.
\end{dfn}

 We also need the concept of an ind-abelian category: an essentially small $R$-linear category $\ca{A}$ is \emph{ind-abelian} if and only if $\Ind(\ca{A})$ is abelian. An intrinsic definition of ind-abelian categories is given in \cite[Definition~1.1]{SCHAEPPI_INDABELIAN}. The theorem that the intrinsic definition coincides with the one given here is proved using the notion of ind-class introduced in \cite[\S 2]{SCHAEPPI_INDABELIAN} (see also Definition~\ref{dfn:ind_class}). More precisely, a finitely cocomplete $R$-linear category $\ca{A}$ is ind-abelian if and only if the class of \emph{all} cokernel diagrams in $\ca{A}$ forms an ind-class. The author recently learned that this was also proved by Breitsprecher in \cite{BREITSPRECHER} (using different methods).

\begin{dfn}\label{dfn:weakly_tannakian}
 Let $\ca{A}$ be an abelian right exact symmetric monoidal $R$-linear category, and $B$ a commutative $R$-algebra. A symmetric strong monoidal $R$-linear functor
\[
 w \colon \ca{A} \rightarrow \Mod_B
\]
 is called a \emph{fiber functor} or \emph{flat covering} if it is faithful, flat\footnote{A functor is flat if and only if its extension to ind-objects is left exact. For equivalent characterizations see for example \cite[\S 3.1]{SCHAEPPI_INDABELIAN}.}, and right exact.

 The category $\ca{A}$ is called \emph{weakly Tannakian} if:
\begin{enumerate}
 \item[(i)] There exists a fiber functor $w \colon \ca{A} \rightarrow \Mod_B$ for some commutative $R$-algebra $B$;
\item[(ii)] For all objects $A \in \ca{A}$ there exists an epimorphism $A^{\prime} \rightarrow A$ such that $A^{\prime}$ has a dual. 
\end{enumerate}
\end{dfn}

 The two main results about weakly Tannakian categories from \cite{SCHAEPPI_STACKS, SCHAEPPI_INDABELIAN} give the following theorem.

\begin{thm}\label{thm:equivalence_adams_weakly_tannakian}
 The pseudofunctor
\[
 \QCoh_{\fp}(-) \colon \ca{AS}^{\op} \rightarrow \ca{RM}
\]
 is 2-fully faithful and it induces a contravariant biequivalence between $\ca{AS}$ and the full sub-2-category of $\ca{RM}$ consisting of weakly Tannakian categories.
\end{thm}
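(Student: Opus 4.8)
The statement is essentially a restatement of the generalized Tannaka duality of \cite{SCHAEPPI_STACKS, SCHAEPPI_INDABELIAN}, so the plan is to assemble it from those references rather than to reprove it. There are two things to establish: (a) that for all Adams stacks $X$, $Y$ the induced functor
\[
 \ca{AS}(X,Y)\longrightarrow\ca{RM}\bigl(\QCoh_{\fp}(Y),\QCoh_{\fp}(X)\bigr)
\]
is an equivalence of categories (2-full faithfulness), and (b) that its essential image is exactly the class of weakly Tannakian categories. For (a), the first step is to reduce to the corresponding statement for the cocomplete categories $\QCoh(-)$. Since $\QCoh(X)$ is an lfp abelian tensor category with $\QCoh(X)_{\fp}=\QCoh_{\fp}(X)$ (cf. the proof of Theorem~\ref{thm:tonini}), we have $\QCoh(X)\simeq\Ind\bigl(\QCoh_{\fp}(X)\bigr)$, and it remains to observe that in this particular case --- unlike for general lfp tensor categories --- tensor functors do preserve finitely presentable objects: by the resolution property every finitely presentable sheaf on an Adams stack is the cokernel of a morphism between vector bundles, a tensor functor preserves cokernels (being a left adjoint) and duals (being symmetric strong monoidal), and cokernels of morphisms between vector bundles are again finitely presentable. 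Thus restriction along the inclusions and $\Ind$-extension (using Part~(i) of Lemma~\ref{lemma:kan_extensions}) give mutually inverse equivalences between $\Fun_{c,\otimes}\bigl(\QCoh(Y),\QCoh(X)\bigr)$ and $\ca{RM}\bigl(\QCoh_{\fp}(Y),\QCoh_{\fp}(X)\bigr)$, and one is reduced to the assertion that morphisms of Adams stacks $X\rightarrow Y$ correspond $2$-naturally to tensor functors $\QCoh(Y)\rightarrow\QCoh(X)$ --- which is the geometric Tannaka reconstruction theorem of \cite{SCHAEPPI_STACKS}, proved there by $\fpqc$-descent along an affine atlas together with the structure theory of Adams Hopf algebroids.

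For (b), the point is that $\ca{A}\in\ca{RM}$ lies in the essential image if and only if $\Ind(\ca{A})$ is equivalent as a tensor category to $\QCoh(X)$ for some Adams stack $X$, and the intrinsic characterisation of such tensor categories extracted from \cite{SCHAEPPI_STACKS, SCHAEPPI_INDABELIAN} translates into exactly the two conditions of Definition~\ref{dfn:weakly_tannakian}: $\ca{A}$ abelian, every object a quotient of an object with a dual, and the existence of a fiber functor $\ca{A}\rightarrow\Mod_B$. In the reconstruction direction the fiber functor $w$ is precisely what provides a faithfully flat affine atlas $\Spec B\rightarrow X$; the associated affine groupoid $\Spec B\times_{X}\Spec B\rightrightarrows\Spec B$ is an Adams Hopf algebroid because every object of $\ca{A}$ --- hence of $\Ind(\ca{A})$ --- is a quotient of an object with a dual, so $X$ is an Adams stack with $\QCoh_{\fp}(X)\simeq\ca{A}$. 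Conversely, an Adams stack carries such a fiber functor by pulling back along a faithfully flat affine cover (cf. Proposition~\ref{prop:adams_affine_algebra}), and $\QCoh_{\fp}(X)$ is abelian with the stated generation property by the resolution property.

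The only genuinely non-formal input is contained in the two cited papers: reconstructing the stack and its Hopf algebroid from $(\ca{A},w)$, and proving that this reconstruction is a (pseudo-)inverse to $\QCoh_{\fp}$. The hard part there is making the descent argument work without any coherence hypothesis on $X$, which is exactly what the ind-class machinery of \cite[\S 2]{SCHAEPPI_INDABELIAN} is designed for; this is also what forces the abelian category $\Ind(\ca{A})$ rather than $\ca{A}$ itself into the foreground. Nothing new needs to be proved here; the statement is recorded only because it is the bridge through which Theorem~\ref{thm:complete_cocomplete} will be deduced from Theorem~\ref{thm:coreflective}.
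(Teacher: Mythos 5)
Your proposal is correct and follows essentially the same route as the paper: the paper's proof simply cites \cite[Theorem~1.2.1]{SCHAEPPI_STACKS} for 2-full faithfulness and \cite[Theorem~1.6]{SCHAEPPI_INDABELIAN} for the identification of the essential image with the weakly Tannakian categories, which is exactly the decomposition you use. The extra detail you supply (passing to $\Ind$-categories and checking that tensor functors between categories of quasi-coherent sheaves on Adams stacks preserve finitely presentable objects via the resolution property) is accurate and consistent with how the paper handles this point elsewhere (cf.\ the use of \cite[Lemma~2.6]{SCHAEPPI_GEOMETRIC} in the proof of Theorem~\ref{thm:weakly_tannakian_coreflective}).
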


\begin{proof}
 The first claim is the content of \cite[Theorem~1.2.1]{SCHAEPPI_STACKS} and the second claim follows from \cite[Theorem~1.6]{SCHAEPPI_INDABELIAN}, which shows that $\ca{A}$ is weakly Tannakian if and only if there exists an Adams stack $X$ and an equivalence $\ca{A} \simeq \QCoh_{\fp}(X)$ in $\ca{RM}$. 
\end{proof}

 The main aim of \S \ref{section:affine_coverings} is to provide necessary and sufficient conditions for the existence of fiber functors.

 For Tannakian categories over a field of characteristic zero, this problem was studied by Deligne, see \cite[\S 7]{DELIGNE}. In \cite{SCHAEPPI_GEOMETRIC}, Deligne's argument was generalized to the context of geometric categories (respectively weakly Tannakian categories) in the case where the ground ring $R$ is a $\mathbb{Q}$-algebra. In this section we give variants of this result that work for arbitrary base rings. Although the conditions are not as tractable in positive characteristic as they are in characteristic zero, they can be used to show that limits of weakly Tannakian categories---and hence colimits of Adams stacks---exist over arbitrary base rings.
 
 The argument we present also allows us to to generalize \cite[Theorem~1.4]{SCHAEPPI_GEOMETRIC}: we show that one of the necessary conditions for the existence of a fiber functors listed there is implied by the others (see Theorem~\ref{thm:qalg_geometric_characterization} below).

 As already observed in \cite{SCHAEPPI_GEOMETRIC}, it is convenient to shift our perspective from right exact symmetric monoidal categories to their tensor categories of ind-objects.

 We call a tensor functor $F \colon \ca{C} \rightarrow \ca{D}$ a \emph{covering} (or a \emph{flat covering}) if $F$ is conservative (that is, it reflects isomorphisms) and it preserves finite limits. In this situation we say that $\ca{D}$ covers $\ca{C}$ or that $F$ is a covering of $\ca{C}$ by $\ca{D}$. This reversal stems from the fact that the passage from geometric objects to their associated tensor categories is contravariant.

 An example of a covering is given by the base change functor
\[
 (-)_A \colon \ca{C} \rightarrow \ca{C}_A
\]
 for a faithfully flat commutative algebra $A \in \ca{C}$.

\begin{dfn}\label{dfn:geometric_tensor_category}
 Let $\ca{C}$ be a tensor category over $R$. We call $\ca{C}$ \emph{pre-geometric} if it is lfp, abelian, and generated by objects with duals. We call it \emph{geometric} if, in addition, there exists a covering
\[
 \ca{C} \rightarrow \Mod_B
\]
 for some commutative $R$-algebra $B$.
\end{dfn}

 The following proposition shows that geometric tensor categories are in a precise sense equivalent to weakly Tannakian categories.

\begin{prop}\label{prop:geometric_weakly_tannakian}
 A right exact symmetric monoidal category $\ca{A}$ is weakly Tannakian if and only if $\Ind(\ca{A})$ is geometric. An lfp tensor category $\ca{C}$ is geometric if and only if $\ca{C}_{\fp}$ is weakly Tannakian. These constructions define mutually inverse 2-equivalences between the 2-category of weakly Tannakian categories and the 2-category of geometric tensor categories and tensor functors between them.
\end{prop}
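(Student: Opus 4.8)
The plan is to reduce everything to the dictionary between $\Ind(-)$ and $(-)_{\fp}$ already recorded in \S\ref{section:background}, together with the notion of flat functor from the footnote to Definition~\ref{dfn:weakly_tannakian} and the universal property of $\Lex_\Sigma$ (Theorem~\ref{thm:lex_sigma_universal}). First I would recall the basic correspondence: for an essentially small right exact symmetric monoidal $\ca{A}$, the category $\Ind(\ca{A})\simeq\Lex[\ca{A}^{\op},\Mod_R]$ is an lfp tensor category under Day convolution with $\Ind(\ca{A})_{\fp}\simeq\ca{A}$ (using that $\ca{A}$, having cokernels, is idempotent complete); conversely every lfp tensor category $\ca{C}$ satisfies $\ca{C}\simeq\Ind(\ca{C}_{\fp})$ with $\ca{C}_{\fp}$ right exact symmetric monoidal. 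It therefore suffices to prove the first ``if and only if'', the second being the case $\ca{A}=\ca{C}_{\fp}$.

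For that first equivalence I would match up the defining conditions one at a time under $\ca{A}\leftrightarrow\Ind(\ca{A})$. The category $\Ind(\ca{A})$ is abelian precisely when $\ca{A}$ is ind-abelian (this is the definition of the latter). Since the objects with duals in $\Ind(\ca{A})$ all lie in $\ca{A}$, since every finitely presentable object of $\Ind(\ca{A})$ is a quotient of a finite direct sum of such, and since a finite direct sum of objects with duals again has a dual, $\Ind(\ca{A})$ is generated by objects with duals exactly when condition~(ii) of Definition~\ref{dfn:weakly_tannakian} holds. Finally, I would check that fiber functors $w\colon\ca{A}\to\Mod_B$ correspond to coverings of $\Ind(\ca{A})$ by $\Mod_B$: given $w$, its left Kan extension $\widehat{w}=\Lan_Y w$ is, by Theorem~\ref{thm:lex_sigma_universal} applied with $\Sigma$ the right exact sequences, a symmetric strong monoidal left adjoint, hence a tensor functor; it preserves finite limits precisely because $w$ is flat (that is the definition of flatness), and faithfulness of $w$ forces $\widehat{w}$ to reflect nonzero objects, so that, being exact between abelian categories, $\widehat{w}$ is faithful and conservative. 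In the other direction the restriction of a covering $u\colon\Ind(\ca{A})\to\Mod_B$ to $\ca{A}$ is symmetric strong monoidal, right exact, flat (its Ind-extension is $u$ itself), and faithful (an exact conservative functor between abelian categories is faithful). This chain of reformulations is essentially the one already carried out in \cite{SCHAEPPI_GEOMETRIC}, which I would cite for the details.

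It remains to promote the object-level bijection to a $2$-equivalence. A right exact symmetric strong monoidal functor $F\colon\ca{A}\to\ca{A}'$ extends, again by Theorem~\ref{thm:lex_sigma_universal}, to the cocontinuous symmetric strong monoidal $\widehat{F}\colon\Ind(\ca{A})\to\Ind(\ca{A}')$, which is a tensor functor between the corresponding geometric tensor categories (Definition~\ref{dfn:geometric_tensor_category}), and a symmetric monoidal natural transformation extends uniquely to one between the Kan extensions. Conversely a tensor functor $G\colon\ca{C}\to\ca{C}'$ between geometric tensor categories restricts to a right exact symmetric strong monoidal functor $G_{\fp}\colon\ca{C}_{\fp}\to\ca{C}'_{\fp}$. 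The two assignments are pseudo-inverse because $\Ind(G_{\fp})\cong G$ and $(\widehat{F})_{\fp}\cong F$ (in each case both functors are cocontinuous and agree on a generating set of finitely presentable objects, respectively agree on $\ca{A}$), and the same density argument matches the $2$-cells; compatibility with composition and units is routine pseudofunctoriality.

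The main obstacle is exactly the claim that $G$ restricts to $\ca{C}_{\fp}$, since an arbitrary tensor functor between lfp tensor categories need not preserve finitely presentable objects at all. Here one must use the weakly Tannakian structure on $\ca{C}_{\fp}$: by condition~(ii) applied twice, and because $\Ind(\ca{C}_{\fp})=\ca{C}$ is abelian, every object of $\ca{C}_{\fp}$ is the cokernel of a morphism between objects with duals; a tensor functor $G$ preserves duals and preserves cokernels; objects with duals in $\ca{C}'$ are finitely presentable and $\ca{C}'_{\fp}$ is closed under finite colimits, so $G$ carries such a cokernel into $\ca{C}'_{\fp}$. This is precisely where the hypothesis ``generated by objects with duals'' does real work.
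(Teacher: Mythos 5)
Your proposal is correct and takes essentially the same approach as the paper, which simply cites \cite[Proposition~2.8]{SCHAEPPI_GEOMETRIC}: your argument is a faithful reconstruction of what that citation covers, matching up ind-abelianness, generation by duals, and fiber functors versus coverings under the correspondence $\ca{A}\leftrightarrow\Ind(\ca{A})$, with the observation that tensor functors out of geometric categories preserve finitely presentable objects (the content of \cite[Lemma~2.6]{SCHAEPPI_GEOMETRIC}) correctly identified as the one point where the weakly Tannakian hypotheses do real work. The only step you gloss is that the kernel of an epimorphism between finitely presentable objects is merely finitely generated, so condition~(ii) must be applied via an intermediate epimorphism from a finitely presentable object; this is standard and harmless.
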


\begin{proof}
 This is \cite[Proposition~2.8]{SCHAEPPI_GEOMETRIC}.
\end{proof}

 This allows us to state Theorem~\ref{thm:equivalence_adams_weakly_tannakian} in terms of geometric tensor categories.

\begin{thm}\label{thm:equivalence_adams_geometric}
 The pseudofunctor which sends an Adams stack $X$ to the tensor category $\QCoh(X)$ gives a biequivalence between the 2-category $\ca{AS}$ of Adams stacks and the 2-category of geometric tensor categories and tensor functors between them.
\end{thm}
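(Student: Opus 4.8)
The plan is to obtain the asserted biequivalence by composing the two results already in place. Theorem~\ref{thm:equivalence_adams_weakly_tannakian} provides a contravariant biequivalence between $\ca{AS}$ and the 2-category of weakly Tannakian categories, realized by $\QCoh_{\fp}(-)$, while Proposition~\ref{prop:geometric_weakly_tannakian} provides a covariant 2-equivalence between weakly Tannakian categories and geometric tensor categories, realized by the ind-completion $\ca{A} \mapsto \Ind(\ca{A})$. The only substantive task is to identify the composite of these with the pseudofunctor $X \mapsto \QCoh(X)$.

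First I would record that, for an Adams stack $X$, the tensor category $\QCoh(X)$ is lfp and abelian with $\QCoh_{\fp}(X)$ as its full subcategory of finitely presentable objects; this is precisely the fact invoked in the proof of Theorem~\ref{thm:tonini}, namely that $\QCoh(X)$ is the category of comodules of an Adams Hopf algebroid. Since an lfp category is canonically the category of ind-objects of its finitely presentable part, the inclusion $\QCoh_{\fp}(X) \hookrightarrow \QCoh(X)$ induces an equivalence $\Ind(\QCoh_{\fp}(X)) \xrightarrow{\sim} \QCoh(X)$. As the tensor product of $\QCoh(X)$ is right exact and $\QCoh_{\fp}(X)$ is closed under it and under finite colimits, this equivalence is symmetric strong monoidal for the Day convolution tensor product on $\Ind(\QCoh_{\fp}(X))$; that is, it agrees with the comparison equivalence of Proposition~\ref{prop:geometric_weakly_tannakian} for the weakly Tannakian category $\QCoh_{\fp}(X)$.

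Next I would check that these comparison equivalences are pseudonatural in $X$. A morphism $f\colon X \to Y$ of Adams stacks induces a tensor functor $f^{\ast}\colon \QCoh(Y) \to \QCoh(X)$ which preserves finitely presentable objects (it is cocontinuous, having right adjoint $f_{\ast}$), and whose restriction to finitely presentable sheaves is the value on $f$ of the pseudofunctor $\QCoh_{\fp}(-)$. Because $f^{\ast}$ preserves filtered colimits and every quasi-coherent sheaf is a filtered colimit of finitely presentable ones, $f^{\ast}$ agrees up to coherent isomorphism with $\Ind$ of its own restriction. Thus $X \mapsto \QCoh(X)$ is equivalent, as a pseudofunctor, to the composite of $\QCoh_{\fp}(-)\colon \ca{AS}^{\op} \to \ca{RM}$ (onto its essential image, the weakly Tannakian categories) with the ind-completion 2-equivalence; since both factors are (bi)equivalences, so is the composite, giving 2-full-faithfulness and essential surjectivity at once.

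The step I expect to be the main obstacle is exactly this pseudonaturality bookkeeping: one must check not just that $\Ind(\QCoh_{\fp}(X)) \simeq \QCoh(X)$ for each $X$, but that these equivalences are coherent with composition of pullbacks and with identity morphisms, so that the composite of the two cited (bi)equivalences genuinely reproduces the pseudofunctor $\QCoh(-)$. The purely category-theoretic inputs --- that an lfp category is the ind-completion of its finitely presentable objects, and that this is compatible with a right exact tensor product via Day convolution --- are standard, and are already implicit in the discussion of lfp tensor categories in Section~\ref{section:background}.
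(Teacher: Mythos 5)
Your proposal is correct and follows exactly the paper's own route: the proof is the composition of Theorem~\ref{thm:equivalence_adams_weakly_tannakian} with Proposition~\ref{prop:geometric_weakly_tannakian}, together with the identification $\Ind\bigl(\QCoh_{\fp}(X)\bigr) \simeq \QCoh(X)$. The pseudonaturality bookkeeping you flag is real but routine, and the paper leaves it implicit as well.
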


\begin{proof}
 This follows from Theorem~\ref{thm:equivalence_adams_weakly_tannakian} and Proposition~\ref{prop:geometric_weakly_tannakian}, see also \cite[Theorem~2.9]{SCHAEPPI_GEOMETRIC}.
\end{proof}

 In some sense this shows that we have simply introduced another language to talk about the same thing. It turns out that this language is better suited for constructing coverings. The main reason is that we will construct coverings of $\ca{C}$ by constructing faithfully flat commutative algebras in $\ca{C}$, and the underlying objects of these will rarely be finitely presentable.

\subsection{Existence results for coverings by module categories}\label{section:affine_coverings}

 The aim of this section is to provide necessary and sufficient conditions for the existence of a covering
\[
 \ca{C} \rightarrow \Mod_B
\]
 on a pre-geometric category $\ca{C}$ (respectively of fiber functors on $\ca{C}_{\fp}$, the category of finitely presentable objects in $\ca{C}$).

 The key ingredients are the notion of locally free object of (constant) finite rank in a general tensor category (see Definition~\ref{dfn:locally_free_objects}), and the notion of locally split epimorphism defined analogously below.

\begin{dfn}\label{dfn:locally_split_epi}
 Let $\ca{C}$ be a tensor category. A morphism $p \colon M \rightarrow N$ is called a \emph{locally split epimorphism} (respectively \emph{locally split monomorphism}) if there exists a faithfully flat commutative algebra $A \in \ca{C}$ such that $p_A \colon M_A \rightarrow N_A$ is a split epimorphism (respectively a split monomorphism) in $\ca{C}_A$. 
\end{dfn}

 Note that the fact that $A$ is faithfully flat implies that a locally split epimorphism is in particular an epimorphism.

\begin{example}\label{example:locally_split_in_geom_cat}
 Let $\ca{C}$ be a geometric category, that is, $\ca{C}\simeq \QCoh(X)$ for some Adams stack $X$. Then all epimorphisms in $\ca{C}$ whose target is a dual are locally split. Indeed, since $X$ is an Adams stack there exists an affine covering $f \colon \Spec(B) \rightarrow X$, and $f^{\ast} p$ is a split epimorphism since $f^{\ast}$ sends objects with duals to finitely generated projective $B$-modules (here we use the equivalence $\QCoh\bigl(\Spec(B)\bigr) \simeq \Mod_B$). The claim follows since there is an equivalence $\Mod_B \simeq \ca{C}_{f_\ast B}$ which is compatible with $f^{\ast}$ and $(-)_{f_\ast B}$ (see \cite[Proposition~3.9]{SCHAEPPI_GEOMETRIC}), and $f_\ast B$ is faithfully flat since it is an Adams algebra, see Example~\ref{example:adams_hopf_alg_is_adams_alg}.
\end{example}

 In fact, slightly more is true: the locally split epimorphisms of the above example are preserved by \emph{any} tensor functor under mild conditions on the target.

\begin{lemma}\label{lemma:split_epi_preserved}
 Let $\ca{C}$ be a geometric tensor category, $p \colon M \rightarrow N$ an epimorphism in $\ca{C}$, and $F \colon \ca{C} \rightarrow \ca{D}$ a tensor functor. If $N$ has a dual and filtered colimits in $\ca{D}$ are exact, then $Fp$ is a locally split epimorphism.
\end{lemma}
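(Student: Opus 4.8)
The plan is to reduce to the case of an Adams algebra and then invoke Proposition~\ref{prop:adams_strongly_stable}. First I would recall, from the proof of Example~\ref{example:locally_split_in_geom_cat}, that the faithfully flat algebra witnessing that $p$ is locally split can be taken to be an Adams algebra: writing $\ca{C} \simeq \QCoh(X)$ for an Adams stack $X$ and choosing a faithfully flat affine covering $f \colon \Spec(B) \rightarrow X$, the relevant algebra $A \in \ca{C}$ is the image under $f_{\ast}$ of the structure sheaf of $\Spec(B)$, which is an Adams algebra (see Example~\ref{example:locally_split_in_geom_cat} together with Example~\ref{example:adams_hopf_alg_is_adams_alg}), and $p_A \colon M_A \rightarrow N_A$ is a split epimorphism in $\ca{C}_A$ because $f^{\ast} N$ is finitely generated projective over $B$; this is the only point where the hypothesis that $N$ has a dual is used.

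Next I would transport this splitting along $F$. Since $F$ is symmetric strong monoidal it sends the commutative algebra $A$ to a commutative algebra $FA$ and induces a functor $\ca{C}_A \rightarrow \ca{D}_{FA}$ that is compatible with the free module functors, in the sense that for every $X \in \ca{C}$ the object $F(A \otimes X)$, equipped with its induced $FA$-module structure, is isomorphic to $FA \otimes FX$ naturally in $X$ via the coherence isomorphism $\varphi^F_{A,X}$. Applying this functor to an $A$-module section $s$ of $p_A = A \otimes p$ yields an $FA$-module section of $F(A \otimes p)$, and under the natural isomorphism above this corresponds to an $FA$-module section of $(Fp)_{FA} = FA \otimes Fp$. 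Hence $(Fp)_{FA}$ is a split epimorphism in $\ca{D}_{FA}$.

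Finally, because $A$ is an Adams algebra and filtered colimits in $\ca{D}$ are exact by hypothesis, Proposition~\ref{prop:adams_strongly_stable} shows that $FA$ is faithfully flat in $\ca{D}$ in the sense of Definition~\ref{dfn:faithfully_flat}. Thus $FA$ is a faithfully flat commutative algebra in $\ca{D}$ such that $(Fp)_{FA}$ splits, so $Fp$ is a locally split epimorphism, as claimed.

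I do not expect a genuine obstacle here; the one point requiring (routine) care is the verification that a symmetric strong monoidal functor really does induce a functor on categories of modules compatible with base change, so that it carries the $A$-module section $s$ to an $FA$-module section. This is the standard bookkeeping with the coherence isomorphisms $\varphi^F$, and can be either cited from the general theory of monoidal functors or deduced exactly as $\ca{C}_A$ is built from $\ca{C}$ in \S\ref{section:background}.
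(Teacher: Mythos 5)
Your proposal is correct and follows essentially the same route as the paper's proof: extract an Adams algebra $A$ splitting $p$ from Example~\ref{example:locally_split_in_geom_cat}, transport the splitting through the square relating $(-)_A$, $F$, $\overline{F}$, and $(-)_{FA}$, and conclude via Proposition~\ref{prop:adams_strongly_stable} that $FA$ is faithfully flat. The extra care you take with the coherence isomorphisms $\varphi^F$ is exactly the bookkeeping the paper leaves implicit.
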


\begin{proof}
 By Example~\ref{example:locally_split_in_geom_cat} above, there exists an Adams algebra $A \in \ca{C}$ such that $p_A$ is split. By extending the tensor functor $F$ to $A$-modules, we obtain a diagram
\[
 \xymatrix{ \ca{C} \ar[d]_{(-)_A} \ar[r]^-{F} & \ca{D} \ar[d]^{(-)_{FA}} \\
 \ca{C}_A \ar[r]_-{\overline{F}} & \ca{D}_{FA}  }
\]
 which commutes up to natural isomorphism. Thus $(Fp)_{FA}$ is a split epimorphism. The claim follows since $F$ sends Adams algebras to faithfully flat algebras by Proposition~\ref{prop:adams_strongly_stable}.
\end{proof}

 The following results contain the purely structural aspects of the version of Deligne's argument given in \cite{SCHAEPPI_GEOMETRIC}. These are in particular independent of the characteristic of the ground ring $R$. We first recall the following lemma.

\begin{lemma}\label{lemma:filtered_ffl}
 Let $\ca{C}$ be a pre-geometric tensor category. Then filtered colimits of faithfully flat algebras are again faithfully flat.
\end{lemma}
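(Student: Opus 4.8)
The plan is to verify directly that $A \defl \colim_i A_i$ satisfies the two conditions of Definition~\ref{dfn:faithfully_flat}, where $f_{ij}\colon A_i \to A_j$ denotes the given filtered diagram of faithfully flat commutative algebras in $\ca{C}$. For flatness: since $\ca{C}$ is a tensor category, the endofunctor $A\otimes-$ is a left adjoint and hence preserves all colimits; as colimits in the endofunctor category are computed pointwise we obtain $A\otimes - \cong \colim_i(A_i\otimes-)$. Each $A_i$ is flat, so each $A_i\otimes-$ preserves finite limits, and since $\ca{C}$ is lfp, filtered colimits commute with finite limits in $\ca{C}$ (see \cite[Korollar~7.12]{GABRIEL_ULMER}). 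Therefore the pointwise filtered colimit $A\otimes-$ again preserves finite limits, i.e. $A$ is flat. Note that this also shows $A\otimes-$ is exact, being right exact as a left adjoint.

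It remains to show that $A\otimes-$ is conservative. As $\ca{C}$ is abelian and $A\otimes-$ is exact, it suffices to show that $A\otimes X\cong 0$ forces $X\cong 0$. Because the transition maps $f_{ij}$ are \emph{algebra} homomorphisms, they are compatible with the units $\eta_i\colon\U\to A_i$, so the morphisms $\U\otimes X\xrightarrow{\eta_i\otimes X}A_i\otimes X$ assemble into a natural transformation from the constant diagram on $\U\otimes X\cong X$ to the diagram $i\mapsto A_i\otimes X$. Each of its components is a monomorphism: $A_i\otimes(\eta_i\otimes X)$ is a split monomorphism with retraction $\mu_i\otimes X$, and a faithfully flat algebra reflects monomorphisms (its base change functor is exact and conservative). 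Passing to the filtered colimit over the nonempty filtered indexing category, and using that filtered colimits in $\ca{C}$ are exact—so they preserve monomorphisms and send the constant diagram on $X$ to $X$—we conclude that $\eta_A\otimes X\colon X\to A\otimes X$ is a monomorphism. If $A\otimes X\cong 0$ then $X$ is a subobject of $0$, hence $X\cong 0$, and $A$ is faithfully flat.

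The argument is essentially routine, and uses only that $\ca{C}$ is lfp and abelian (not that it is generated by objects with duals). The one point that needs care is that the source of the natural transformation in the second step is genuinely a \emph{constant} diagram, which is exactly where one uses that the $f_{ij}$ are algebra homomorphisms rather than arbitrary morphisms, together with the standard fact that an exact functor between abelian categories is conservative precisely when it annihilates no nonzero object.
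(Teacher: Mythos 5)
Your argument is correct. It differs in presentation from the paper, whose entire proof of Lemma~\ref{lemma:filtered_ffl} is a citation: ``this follows from the fact that $\ca{C}$ has enough flat resolutions, see \cite[Lemma~5.7]{SCHAEPPI_GEOMETRIC}.'' You instead verify Definition~\ref{dfn:faithfully_flat} directly: flatness of $A=\colim A_i$ because $A\otimes-\cong\colim_i(A_i\otimes-)$ and filtered colimits commute with finite limits in an lfp category, and conservativity by reducing (via exactness of $A\otimes-$ and abelianness of $\ca{C}$) to showing $A\otimes X\cong 0\Rightarrow X\cong 0$, which you get from the fact that each $\eta_i\otimes X$ is a monomorphism (split after tensoring with the faithfully flat $A_i$, using the unit axiom $\mu_i\circ(A_i\otimes\eta_i)=\id$) and that filtered colimits preserve monomorphisms. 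This split-mono-after-base-change device is exactly the one the paper uses elsewhere (Propositions~\ref{prop:faithfully_flat_implies_adams_criterion} and \ref{prop:adams_faithfully_flat}), so your proof is very much in the spirit of the surrounding text while being self-contained; it also makes visible that only ``lfp and abelian'' is needed, not the generation by duals, whereas the paper's route through enough flat resolutions does lean on the pre-geometric hypothesis. The only points worth being explicit about, which you handle correctly, are that the forgetful functor from commutative algebras creates filtered colimits (so $A$ and its unit really are computed objectwise) and that the indexing category is nonempty and connected, so the constant diagram at $X$ has colimit $X$.
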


\begin{proof}
 This follows from the fact that $\ca{C}$ has enough flat resolutions, see \cite[Lemma~5.7]{SCHAEPPI_GEOMETRIC}.
\end{proof}

\begin{prop}\label{prop:locally_split_implies_projective}
 Let $\ca{C}$ be a pre-geometric tensor category. Let $\ca{G}$ be a generating set of $\ca{C}$, closed under finite direct sums. If all epimorphisms $M \rightarrow \U$ with $M \in \ca{G}$ are locally split, then there exists a faithfully flat algebra $B \in \ca{C}$ such that $B$ is projective as a $B$-module.
\end{prop}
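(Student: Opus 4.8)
The plan is to build $B$ as the colimit of a countable tower $\U=B_0\to B_1\to B_2\to\cdots$ of faithfully flat algebras, forcing more and more epimorphisms onto the unit to split at each stage, and then to check that at the top of the tower every epimorphism onto the unit of the module category splits, i.e. the unit is a projective object there. Before starting I would record a few formal facts: base change $(-)_A\colon\ca{C}\to\ca{C}_A$ along a faithfully flat $A$ is a cocontinuous, conservative, finite-limit-preserving tensor functor (hence preserves split epimorphisms); a finite tensor product of faithfully flat algebras is faithfully flat (the associated endofunctor is a composite of conservative left exact endofunctors); $\ca{C}_A$ is again pre-geometric; the unit $\U_{\ca{C}_A}=A$ is finitely presentable in $\ca{C}_A$ (the forgetful functor preserves filtered colimits and $\U$ is finitely presentable in $\ca{C}$); and $\{M_A:M\in\ca{G}\}$ is again a generator of $\ca{C}_A$ closed under finite direct sums. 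Finally, from the hypothesis together with finite presentability of $\U$ and closure of $\ca{G}$ under finite direct sums one deduces that \emph{every} epimorphism $N\to\U$ in $\ca{C}$ is locally split: pick an epimorphism $\bigoplus_i M_i\to N$ with $M_i\in\ca{G}$, use finite presentability of $\U$ and exactness of filtered colimits to see that $\bigoplus_{i\in J}M_i\to N\to\U$ is already an epimorphism for some finite $J$, split this after base change, and compose with the map to $N$. The same argument applies verbatim inside each $\ca{C}_A$.

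For the construction, set $B_0=\U$. Given $B_n$ faithfully flat, let $I_n$ be the set of epimorphisms $q\colon M_{B_n}\to B_n$ in $\ca{C}_{B_n}$ with $M\in\ca{G}$ (a set, by local smallness). Each such $q$ is locally split in $\ca{C}_{B_n}$ (for $n=0$ this is the hypothesis); choose a faithfully flat $A^{(n)}_q$ over which it splits and set $B_{n+1}\defl\colim_S\bigl(\bigotimes_{q\in S}A^{(n)}_q\bigr)$, the filtered colimit over finite subsets $S\subseteq I_n$ of the $B_n$-tensor products. By Lemma~\ref{lemma:filtered_ffl} together with the composition property, $B_{n+1}$ is faithfully flat over $B_n$, hence faithfully flat in $\ca{C}$, and every $q\in I_n$ becomes split after base change along $B_n\to B_{n+1}$. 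Put $B\defl\colim_n B_n$, again faithfully flat by Lemma~\ref{lemma:filtered_ffl}.

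To conclude, I first show every epimorphism $q\colon M_B\to B$ with $M\in\ca{G}$ splits in $\ca{C}_B$. Such a $q$ corresponds to a morphism $g\colon M\to B$ in $\ca{C}$; since $M$ is finitely presentable and $B=\colim_n B_n$, $g$ factors through some $B_n$, and since $B$ is faithfully flat over $B_n$ the corresponding $\bar g_n\colon M_{B_n}\to B_n$ in $\ca{C}_{B_n}$ is again an epimorphism, so $\bar g_n\in I_n$, hence splits after base change to $B_{n+1}$, a fortiori after base change to $B$; that splitting is a section of $q$. Now for an arbitrary epimorphism $N\to B$ in $\ca{C}_B$: choose an epimorphism onto $N$ from a direct sum of objects $M_B$; using finite presentability of the unit $B$ of $\ca{C}_B$ and exactness of filtered colimits, pass to a finite sub-sum (whose total is again $M_B$ with $M\in\ca{G}$, by closure under finite direct sums) which still surjects onto $B$; applying the previous case and composing with the map to $N$ gives a section. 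Hence every epimorphism onto the unit of $\ca{C}_B$ splits, i.e. $B$ is projective as a $B$-module.

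The subtle point — which I expect to be the real content — is the claim, used at each successor step, that every epimorphism $M_{B_n}\to B_n$ with $M\in\ca{G}$ is locally split in $\ca{C}_{B_n}$; that is, that the local-splitting hypothesis propagates along the base changes performed. For $n=0$ it is assumed; for $n>0$ the natural route is to use finite presentability of $M$ to exhibit such an epimorphism as a base change, along a faithfully flat algebra, of an epimorphism onto the unit of an earlier category $\ca{C}_{B_{n-1}}$, and then argue by induction; if this threatens to be circular, the fallback is to run the iteration transfinitely rather than countably and force it to stabilize by a cardinality bound on the generating data, with Lemma~\ref{lemma:filtered_ffl} supplying faithful flatness at limit stages. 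Everything else in the argument is formal bookkeeping with base change and filtered colimits.
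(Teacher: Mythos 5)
The gap you flag at the end is real, and neither of your fallbacks closes it. The hypothesis concerns epimorphisms onto $\U$ in $\ca{C}$; at each successor stage of your tower you need every epimorphism $q\colon M_{B_n}\to B_n$ with $M\in\ca{G}$ to be locally split in $\ca{C}_{B_n}$, but such a $q$ corresponds under adjunction to an arbitrary morphism $M\to B_n$ in $\ca{C}$, which is in general neither an epimorphism onto $\U$ nor the base change of one, so fallback (a) as stated does not apply. Fallback (b) is beside the point: transfinite length does not help, because the obstruction recurs identically at every successor stage, and no cardinality bound manufactures the local splittings you need there.

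The propagation claim is nevertheless true, and the argument that proves it is exactly the paper's proof --- which also shows your tower is unnecessary. Let $B$ be just your stage-one algebra, i.e.\ the filtered colimit of finite tensor products of the algebras $B_p$ splitting the epimorphisms $p\colon M\to\U$ with $M\in\ca{G}$ (faithfully flat by Lemma~\ref{lemma:filtered_ffl}). Given any epimorphism $q\colon N\to B$ in $\ca{C}_B$, its underlying morphism in $\ca{C}$ is still an epimorphism (colimits in $\ca{C}_B$ are computed in $\ca{C}$), and the covering argument of Lemma~\ref{lemma:epis_in_lfp_abelian}, applied to $q$ and $\eta\colon\U\to B$ using that $\U$ is finitely presentable and $\ca{G}$ is a generator closed under finite direct sums, produces $M\in\ca{G}$, an epimorphism $p\colon M\to\U$, and a morphism $M\to N$ in $\ca{C}$ with $q\circ(M\to N)=\eta\circ p$. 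Transposing across $(-)_B\dashv U$ yields $M_B\to N$ in $\ca{C}_B$ with $q\circ(M_B\to N)=p_B$; since $p_B$ is split by construction of $B$, composing its section with $M_B\to N$ splits $q$. Thus every epimorphism onto $B$ in $\ca{C}_B$ is already split, with no iteration. (The same transposition, together with the observation that a splitting of $p$ over a faithfully flat $A\in\ca{C}$ gives one over the faithfully flat $B_n$-algebra $A\otimes B_n$, is also how you would prove your propagation claim directly --- but once that is in hand the one-step construction already finishes the proof.)
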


\begin{proof}
 The proof given in \cite[Proposition~5.8]{SCHAEPPI_GEOMETRIC} does not depend on the fact that $R$ is a $\mathbb{Q}$-algebra. Since this argument is central for our existence results for coverings by module categores, we supply an outline below. More details can be found in the proof of \cite[Proposition~5.8]{SCHAEPPI_GEOMETRIC}.

 By assumption, for every epimorphism $p\colon M \rightarrow \U$ with $M \in \ca{G}$ there exists a faithfully flat algebra $B_p$ such that $p_{B_p}$ is a split epimorphism. Let $B$ be the filtered colimit of the finite tensor products of the these algebras $B_p$. Since finite tensor products of faithfully flat algebras are faithfully flat, Lemma~\ref{lemma:filtered_ffl} implies that $B$ is faithfully flat. By construction, $p_B$ is a split epimorphism for all $p \colon M \rightarrow \U$ with $M \in \ca{G}$. We claim that $B$ is projective as a $B$-module.

 First note that---by using pullbacks---it suffices to show that every epimorphism $q \colon N \rightarrow B$ in $\ca{C}_B$ has a section. As in the proof of \cite[Proposition~5.8]{SCHAEPPI_GEOMETRIC} (see also Lemma~\ref{lemma:epis_in_lfp_abelian}), we can find $M \in \ca{G}$ and an epimorphism $p \colon M \rightarrow \U$ in $\ca{C}$ such that the diagram
\[
 \xymatrix{M \ar[r] \ar[d]_{p} & N \ar[d]^{q} \\ \U \ar[r]_-{\eta} & B }
\]
 is commutative. Under the free-forgetful adjunction $(-)_B \dashv U \colon \ca{C} \rightarrow \ca{C}_B$, the above diagram corresponds to the commutative diagram
\[
 \xymatrix{M_B \ar[r] \ar[d]_{p_B} & N \ar[d]^{q} \\ B \ar@{=}[r] & B } 
\]
 in $\ca{C}_B$. But $p_B$ has a splitting by construction of $B$. This shows that $B \in \ca{C}_B$ is indeed projective.
\end{proof}

\begin{prop}\label{prop:locally_free_implies_generator}
 Let $\ca{C}$ be a pre-geometric tensor category. If $\ca{C}$ is generated by locally free objects of constant finite rank, then there exists a faithfully flat algebra $A \in \ca{C}$ such that $A$ is a generator of $\ca{C}_A$.
\end{prop}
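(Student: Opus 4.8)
The plan is to construct the faithfully flat algebra $A$ as a filtered colimit of finite tensor products of the universal framing algebras of a generating set of locally free objects, and then to observe that after base change to $A$ every such generator becomes a finite direct sum of copies of the unit $\U_{\ca{C}_A}=A$, so that $A$ alone generates $\ca{C}_A$.

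First I would fix a generating set $\{M_i\}_{i\in I}$ of $\ca{C}$ consisting of locally free objects of constant finite rank $d_i$; such a set exists by hypothesis. For each $i$, let $A_i \defl A_{M_i}$ be the universal $d_i$-framing algebra of Lemma~\ref{lemma:universal_framing}. Since $M_i$ is locally free of rank $d_i$, Proposition~\ref{prop:locally_free_objects} shows that $A_i$ is faithfully flat. A finite tensor product $\bigotimes_{j\in J} A_j$ of faithfully flat algebras is again faithfully flat, because tensoring with it is a composite of faithfully flat base change functors and is therefore conservative and finite-limit-preserving. Hence, by Lemma~\ref{lemma:filtered_ffl}, the filtered colimit
\[
 A \defl \colim_{J \subseteq I \text{ finite}} \bigotimes_{j \in J} A_j
\]
(taken over the directed poset of finite subsets of $I$) is a faithfully flat algebra in $\ca{C}$.

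Next I would trivialise each generator over $A$. The canonical algebra morphism $A_i \to A$ together with Lemma~\ref{lemma:framings_functor} turns the universal $d_i$-framing $\sigma \colon M_i \to A_i^{\oplus d_i}$ into a $d_i$-framing of $M_i$ over $A$; that is, $(M_i)_A \cong A^{\oplus d_i}$ in $\ca{C}_A$. It then remains to deduce that $A=\U_{\ca{C}_A}$ is a generator of $\ca{C}_A$. The base change functor $(-)_A \colon \ca{C} \to \ca{C}_A$ is left adjoint to the forgetful functor $U \colon \ca{C}_A \to \ca{C}$, which is faithful and conservative. Hence for a morphism $f$ in $\ca{C}_A$, the maps $\ca{C}_A\bigl((M_i)_A, f\bigr)$ are all isomorphisms if and only if the maps $\ca{C}(M_i, Uf)$ are all isomorphisms (by adjunction), if and only if $Uf$ is an isomorphism (since the $M_i$ generate $\ca{C}$), if and only if $f$ is an isomorphism (since $U$ is conservative). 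Thus $\{(M_i)_A\}$ generates $\ca{C}_A$; and since $\ca{C}_A\bigl(A^{\oplus d_i}, f\bigr) \cong \ca{C}_A(A, f)^{\times d_i}$, already the single object $A$ detects isomorphisms in $\ca{C}_A$, as required.

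I do not expect a serious obstacle: the argument is essentially a patching together of facts already established. The points needing care are the bookkeeping that $A$ is faithfully flat (combining closure of faithfully flat algebras under finite tensor products with Lemma~\ref{lemma:filtered_ffl}), and the last paragraph's passage from ``the $(M_i)_A$ generate'' to ``$A$ generates,'' which relies on the strong-generator notion of the lfp setting and on the faithfulness and conservativity of $U$. The one genuinely substantive input is Proposition~\ref{prop:locally_free_objects}, which is precisely what converts local freeness of $M_i$ into a faithfully flat algebra over which $M_i$ becomes free.
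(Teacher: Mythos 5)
Your proposal is correct and follows essentially the same route as the paper: choose a faithfully flat algebra trivialising each generator, pass to the filtered colimit of finite tensor products (faithfully flat by Lemma~\ref{lemma:filtered_ffl}), and observe that the base-changed generators become free $A$-modules, so $A$ generates $\ca{C}_A$. Your only additions---using the universal framing algebras $A_{M_i}$ as the canonical choice of trivialising algebra and spelling out the adjunction/conservativity argument for why $\{(M_i)_A\}$ generates---are correct elaborations of steps the paper leaves implicit.
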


\begin{proof}
 Let $\ca{G}$ be a generating set of locally free objects of finite rank. Thus for each $M \in \ca{G}$ there exists a faithfully flat algebra $A$ and $d \in \mathbb{N}$ such that $M_A \cong A^d$. As in the proof of Proposition~\ref{prop:locally_split_implies_projective}, we can find a single faithfully flat algebra $A$ which has this property for all $M \in \ca{G}$ simultaneously (where the rank $d=d_M$ will of course depend on the object $M$). The claim follows from the fact that $\{M_A\vert M \in \ca{G} \}$ is a generator of $\ca{C}_A$.
\end{proof}

\begin{cor}\label{cor:geometric_structural_proof}
 Let $\ca{C}$ be an lfp abelian tensor category which has a generating set $\ca{G}$, closed under finite direct sums, which consists of locally free objects of finite rank. If each epimorphism $M \rightarrow \U$ with $M \in \ca{G}$ is locally split, there exists a covering
\[
 \ca{C} \rightarrow \Mod_B
\]
 for some commutative $R$-algebra $B$. Thus $\ca{C}$ is geometric and there exists an Adams stack $X$ and an equivalence $\ca{C} \simeq \QCoh(X)$ of tensor categories.
\end{cor}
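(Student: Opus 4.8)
The plan is to build a covering $\ca{C} \rightarrow \Mod_{B'}$ out of a single faithfully flat algebra $B \in \ca{C}$ whose module category $\ca{C}_B$ is equivalent, as a tensor category, to the category of modules over the commutative $R$-algebra $B' \defl \ca{C}(\U, B)$; the covering is then the composite of the base change functor $(-)_B \colon \ca{C} \rightarrow \ca{C}_B$ with this equivalence. To begin, I would record that $\ca{C}$ is pre-geometric in the sense of Definition~\ref{dfn:geometric_tensor_category}: it is lfp and abelian by hypothesis, and it is generated by objects with duals since every object of the generating set $\ca{G}$ is locally free of finite rank and hence has a dual by Definition~\ref{dfn:locally_free_objects}. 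This makes the hypotheses of Propositions~\ref{prop:locally_split_implies_projective} and~\ref{prop:locally_free_implies_generator} available.

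Next I would apply those two propositions to $\ca{G}$. As $\ca{G}$ is closed under finite direct sums and every epimorphism $M \rightarrow \U$ with $M \in \ca{G}$ is locally split by assumption, Proposition~\ref{prop:locally_split_implies_projective} produces a faithfully flat algebra $B_1$ that is projective as a $B_1$-module; and as $\ca{G}$ consists of locally free objects of finite rank, Proposition~\ref{prop:locally_free_implies_generator} produces a faithfully flat algebra $B_2$ such that $B_2$ is a generator of $\ca{C}_{B_2}$. I would then set $B \defl B_1 \otimes B_2$, which is faithfully flat since finite tensor products of faithfully flat algebras are faithfully flat (as in the proof of Proposition~\ref{prop:locally_split_implies_projective}). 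Up to isomorphism $(-)_B$ factors both through $\ca{C}_{B_1}$ and through $\ca{C}_{B_2}$, in each case with second factor a base change functor; such a functor sends the unit to the unit, is a left adjoint, and has the restriction-of-scalars functor — which is exact and faithful, hence conservative — as its right adjoint. Since a left adjoint of an exact functor preserves projectives, the unit of $\ca{C}_B$, being the base change of the projective object $B_1 \in \ca{C}_{B_1}$, is projective; and since $\ca{C}_B(B, -)$ is isomorphic to the composite $\ca{C}_{B_2}(B_2, U(-))$ of the two conservative functors $U$ (restriction) and $\ca{C}_{B_2}(B_2,-)$, the unit of $\ca{C}_B$, being the base change of the generator $B_2 \in \ca{C}_{B_2}$, is a generator of $\ca{C}_B$. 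Finally, $\ca{C}_B$ is again an lfp tensor category, so its unit $B$ is finitely presentable.

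It remains to identify $\ca{C}_B$ with $\Mod_{B'}$ as a tensor category and to assemble the conclusion. The global sections functor $\ca{C}_B(B, -) \colon \ca{C}_B \rightarrow \Mod_{B'}$, where $B' = \End_{\ca{C}_B}(B) \cong \ca{C}(\U, B)$ is the (necessarily commutative) endomorphism ring of the unit, is cocontinuous, faithful, and exact because $B$ is a finitely presentable projective generator, so the standard reconstruction theorem for cocomplete abelian categories with a compact projective generator shows it is an equivalence; it is canonically lax symmetric monoidal, and a lax symmetric monoidal functor which is an equivalence is automatically strong monoidal, so it is a tensor functor and, being an equivalence, a covering. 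Composing with the covering $(-)_B \colon \ca{C} \rightarrow \ca{C}_B$ — which is conservative and left exact precisely because $B$ is faithfully flat — yields a covering $\ca{C} \rightarrow \Mod_{B'}$, whence $\ca{C}$ is geometric; an Adams stack $X$ with $\ca{C} \simeq \QCoh(X)$ of tensor categories then exists by Theorem~\ref{thm:equivalence_adams_geometric}. Since all the substantive geometry is already contained in the two cited propositions, I expect the only delicate point to be the reconstruction step — in particular the verification that the resulting equivalence $\ca{C}_B \simeq \Mod_{B'}$ is compatible with the symmetric monoidal structures.
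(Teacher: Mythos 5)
Your proof is correct and follows essentially the same route as the paper's: observe that $\ca{C}$ is pre-geometric, combine Propositions~\ref{prop:locally_split_implies_projective} and~\ref{prop:locally_free_implies_generator} into a single faithfully flat algebra that is a finitely presentable projective generator of its module category, identify that module category with $\Mod_{B'}$ via Gabriel--Mitchell reconstruction, and compose with base change. The paper's own proof is terser (it simply asserts that the two propositions yield one algebra with both properties and invokes the ``affine algebra'' terminology of \cite{SCHAEPPI_GEOMETRIC} for the reconstruction step); your tensor product $B = B_1 \otimes B_2$ and the transfer of projectivity along base change and of the generation property along restriction of scalars is exactly the intended way to fill that in. One caveat: the general principle you invoke at the end --- that a lax symmetric monoidal functor whose underlying functor is an equivalence is automatically strong monoidal --- is false; for instance the identity on the poset $[0,1]$, viewed as a lax symmetric monoidal functor from $([0,1],\min,1)$ to $([0,1],\cdot,1)$, is an isomorphism of categories with non-invertible structure maps. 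The step you need is nevertheless sound for a different reason: the inverse functor $B \otimes_{B'} - \colon \Mod_{B'} \rightarrow \ca{C}_B$, being the essentially unique cocontinuous functor sending $B'$ to $B$, is symmetric strong monoidal (cf.\ Lemma~\ref{lemma:calg_as_monoidal} and Theorem~\ref{thm:day_convolution_universal}), and a strong monoidal equivalence always admits a strong monoidal pseudo-inverse; this is also how the analogous identification is handled in the proof of Corollary~\ref{cor:hopf_monoidal_comonad}.
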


\begin{proof}
 Note that $\ca{C}$ is pre-geometric since locally free objects have duals. From Propositions~\ref{prop:locally_split_implies_projective} and \ref{prop:locally_free_implies_generator} it follows that there exists a faithfully flat algebra $A$ such that $A$ is a projective generator of $\ca{C}_A$. Thus $\ca{C}_A \simeq \Mod_B$ for some commutative algebra $B$ (in the terminology of \cite{SCHAEPPI_GEOMETRIC}: $A$ is an affine algebra). The composite of the functor
\[
 (-)_A \colon \ca{C} \rightarrow \ca{C}_A
\]
 with this equivalence gives the desired covering. The second claim follows from Theorem~\ref{thm:equivalence_adams_geometric}.
\end{proof}

 Note that the above results also have implications for tensor categories that are not geometric.

\begin{cor}\label{cor:hopf_monoidal_comonad}
 Let $\ca{C}$ be a pre-geometric tensor category such that all epimorphisms $M \rightarrow \U$ where $M$ has a dual are locally split. Then there exists a small symmetric monoidal $R$-linear category $\ca{B}$ with duals and an exact cocontinuous symmetric Hopf monoidal comonad
\[
 H \colon \Prs{B} \rightarrow \Prs{B}
\]
 (where the presheaf category $\Prs{B}$ is endowed with the Day convolution symmetric monoidal structure) such that $\ca{C}$ is equivalent to the category $\Comod(H)$ of $H$-comodules in $\Prs{B}$.
\end{cor}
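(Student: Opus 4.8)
The plan is to realise $\ca{C}$ as comonadic over a presheaf category by way of faithfully flat descent. First I would apply Proposition~\ref{prop:locally_split_implies_projective} to a generating set $\ca{G}$ of objects with duals closed under finite direct sums (such a set exists because $\ca{C}$ is pre-geometric, and objects with duals are finitely presentable since $\U$ is and tensoring with a dualizable object is cocontinuous); the hypothesis of the corollary is exactly the hypothesis of that proposition. This produces a faithfully flat commutative algebra $A\in\ca{C}$ which is projective as an $A$-module. The category $\ca{C}_A$ is again an lfp abelian tensor category, generated by the base changes $G_A$ of the objects of $\ca{G}$, which are themselves dualizable. The crucial observation is that, because $A=\U_{\ca{C}_A}$ is projective, \emph{every} object $N$ of $\ca{C}_A$ with a dual is projective: the functor $\ca{C}_A(N,-)$ is isomorphic to the composite of the right exact functor $N^{\vee}\otimes-$ with $\ca{C}_A(A,-)$, and the latter preserves epimorphisms. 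Since objects with duals are also finitely presentable, the $R$-linear functors $\ca{C}_A(N,-)\colon\ca{C}_A\to\Mod_R$ they corepresent preserve filtered colimits, finite direct sums and cokernels, hence all small colimits.

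Next I would fix a small skeleton $\ca{B}$ of the full subcategory of $\ca{C}_A$ on the objects with duals; as objects with duals are closed under finite direct sums, finite tensor products, duals and retracts, $\ca{B}$ is a small symmetric monoidal $R$-linear category with duals. Applying Theorem~\ref{thm:lfp_abelian}, together with Theorem~\ref{thm:lex_sigma_universal} as in the proof of Corollary~\ref{cor:C_universal_tensor}, to $\ca{C}_A$ with generator $\ca{B}$ gives an equivalence $\ca{C}_A\simeq\Lex_\Sigma[\ca{B}^{\op},\Mod_R]$ of symmetric monoidal categories, where $\Sigma$ is the set of right exact sequences in $\ca{B}$ which are right exact in $\ca{C}_A$. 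But every object of $\ca{B}$ is projective in $\ca{C}_A$, so the cokernel maps appearing in the sequences of $\Sigma$ are split epimorphisms and (a short diagram chase, using that $\ca{B}$ is closed under retracts) every $R$-linear presheaf on $\ca{B}$ sends the sequences in $\Sigma$ to limit diagrams; that is, $\Lex_\Sigma[\ca{B}^{\op},\Mod_R]$ is the whole presheaf category $\Prs{B}$, and its Day reflection tensor product is the Day convolution. Thus $\ca{C}_A\simeq\Prs{B}$ as symmetric monoidal $R$-linear categories.

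Now I would consider the base change functor $(-)_A\colon\ca{C}\to\ca{C}_A\simeq\Prs{B}$. It is symmetric strong monoidal (the free $A$-module functor), cocontinuous (it is left adjoint to the forgetful functor $U$), and, since $A$ is faithfully flat, conservative and left exact (Definition~\ref{dfn:faithfully_flat} and the flatness of $A$). A cocontinuous conservative functor with a right adjoint that preserves equalizers is comonadic by the Beck comonadicity theorem, so $\ca{C}$ is equivalent to $\Comod(H)$, where $H\defl(-)_A\circ U$ is the comonad induced on $\ca{C}_A\simeq\Prs{B}$ by the adjunction $(-)_A\dashv U$. It remains to verify the stated properties of $H$. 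As the comonad generated by an adjunction whose left adjoint is symmetric strong monoidal, $H$ carries a symmetric lax monoidal structure for which $\varepsilon$ and $\delta$ are symmetric monoidal, by doctrinal adjunction \cite{KELLY_DOCTRINAL}. It is exact because $U$ is exact (limits and colimits in $\ca{C}_A$ are computed in $\ca{C}$) and $(-)_A=A\otimes-$ is exact ($A$ is flat and $(-)_A$ is a left adjoint), and it is cocontinuous because both $U$ and $(-)_A$ are. Finally $H$ is a Hopf comonad: it is the comonad of faithfully flat descent along $A$, whose associated flat commutative Hopf algebroid in $\Prs{B}$ has underlying object the Amitsur algebra $A\otimes A$ with antipode induced by the symmetry $A\otimes A\cong A\otimes A$; equivalently, the fusion operators of $H$ are the canonical isomorphisms arising from the identities $X\otimes_A(A\otimes N)\cong X\otimes N$, hence invertible. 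Transporting $H$ along the monoidal equivalence $\ca{C}_A\simeq\Prs{B}$ yields the required comonad on $\Prs{B}$.

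The main obstacle is the identification of the tensor structure on $\ca{C}_A\simeq\Prs{B}$ with the Day convolution in the second paragraph — this is precisely where the projectivity of the dualizable generators is genuinely used, so that the set $\Sigma$ imposes no condition and the Day reflection is unreflected — together with the verification of the Hopf property of $H$; the latter I would carry out by recognising $H$ concretely as the descent comonad of the faithfully flat algebra $A$ and invoking the standard fact that such comonads are Hopf, the antipode being furnished by the symmetry of $\ca{C}$.
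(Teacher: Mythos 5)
Your proposal is correct and follows essentially the same route as the paper: produce a faithfully flat algebra $A$ with $A$ projective over itself via Proposition~\ref{prop:locally_split_implies_projective}, identify $\ca{C}_A$ with presheaves on its dualizable objects (which are small projective generators), and observe that the descent comonad of $(-)_A$ is exact, cocontinuous, and Hopf because the adjunction satisfies the projection formula. The only cosmetic differences are that the paper obtains $\ca{C}_A\simeq\Prs{B}$ directly from Kelly's density theorem for small projectives rather than from Theorem~\ref{thm:lfp_abelian} with a vacuous class $\Sigma$, and it cites Chikhladze--Lack--Street for the Hopf property where you argue with the fusion operators explicitly.
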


\begin{proof}
 We claim that for any faithfully flat algebra $B \in \ca{C}$, the functor
\[
(-)_B \colon \ca{C} \rightarrow \ca{C}_B 
\]
 is comonadic, and the comonad on $\ca{C}_B$ is exact, cocontinuous, and symmetric Hopf monoidal. Exactness and comonadicity is immediate from the fact that $B$ is faithfully flat and from (the dual of) Beck's monadicity theorem. The comonad
\[
 \xymatrix{\ca{C}_B \ar[r]^-{U} & \ca{C} \ar[r]^-{(-)_B} & \ca{C}_B}
\]
 (where $U$ denotes the forgetful functor) is clearly cocontinuous and symmetric (lax) monoidal. It remains to check that it is Hopf monoidal.

 The adjunction $(-)_B \dashv B$ satisfies the projection formula (it is \emph{strong coclosed} in the sense of \cite{CHIKHLADZE_LACK_STREET}) by \cite[Proposition~3.8]{SCHAEPPI_GEOMETRIC}. Thus the comonad is Hopf monoidal by \cite[Proposition~4.4]{CHIKHLADZE_LACK_STREET}.

 To conclude the proof, it suffices to show that---for a suitably chosen faithfully flat algebra $B$---the category $\ca{C}_B$ is equivalent to $\Prs{B}$ as tensor category, where $\ca{B}$ is a small symmetric monoidal $R$-linear category $\ca{B}$ with duals. 

 Let $B$ be a faithfully flat algebra such that $B$ is projective as a $B$-module (such an algebra exists by Proposition~\ref{prop:locally_split_implies_projective}). Let $\ca{B} \subseteq \ca{C}_B$ be the full subcategory consisting of objects with a dual. We claim that the objects in $\ca{B}$ are \emph{small projective} in the sense of \cite[\S 5.5]{KELLY_BASIC}, that is, the hom-functors $\ca{C}_B(M,-)$ are cocontinuous for all $M \in \ca{B}$. Since the objects have duals, it suffices to show this for $M=B$, the unit object. The hom-functor $\ca{C}_B(B,-)\cong \ca{C}(\U,-)$ preserves filtered colimtis (recall that the unit object $\U$ of $\ca{C}$ is finitely presentable by assumption). Since $B$ is projective as a $B$-module, the left exact functor $\ca{C}_B(B,-)$ is in fact exact. Since any exact functor which preserves all filtered colimits is cocontinuous, it follows that $B$ (and hence all objects with duals in $\ca{C}_B$) are small projective.

 From the proof of \cite[Theorem~5.26]{KELLY_BASIC} it follows that the left Kan extension of the inclusion $K \colon \ca{B} \rightarrow \ca{C}_B$ induces an equivalence of categories $\Lan_Y K \colon \Prs{B} \rightarrow \ca{C}_B$. But this functor is symmetric strong monoidal by Theorem~\ref{thm:day_convolution_universal}, so we do indeed have the desired equivalence of tensor categories.
\end{proof}

\subsection{Recognizing locally split epimorphisms}\label{section:locally_split_epis}

 In order to use the corollaries of \S \ref{section:affine_coverings}, we need to find good criteria for epimorphisms $p\colon M \rightarrow \U$ to be locally split. In Deligne's original argument for the case of Tannakian categories in characteristic zero, the following construction plays a crucial role.

\begin{dfn}\label{dfn:universal_splitting_algebra}
 Let $\ca{C}$ be a tensor category, let $M \in \ca{C}$ be an object with a dual, and let $p \colon M \rightarrow \U$ be an epimorphism. The morphism
\[
 \xymatrix{\U \cong \U^{\vee} \ar[r]^-{p^{\vee}} & M^{\vee}=\Sym^1(M^{\vee}) \ar[r] & \Sym(M^{\vee})}
\]
 induces a homomorphism of commutative algebras $\Sym(\U) \rightarrow \Sym(M^{\vee})$ in $\ca{C}$. Let
\[
 \xymatrix{\Sym(\U) \ar[r] \ar[d] & \Sym(M^{\vee}) \ar[d] \\ \U \ar[r] & A_p }
\]
 be a pushout diagram in the category of commutative algebras in $\ca{C}$, where the morphism $\Sym(\U) \rightarrow \U$ corresponds to the identity on $\U$. The composite
\[
 \xymatrix{M^{\vee} \ar[r] & \Sym(M^{\vee}) \ar[r] & A_p}
\]
 corresponds to a morphism $\tau \colon M^{\vee}_{A_p} \rightarrow A_p$ of $A_p$-modules. We let $\sigma \colon \U \rightarrow M_{A_p}$ denote the morphism in $\ca{C}$ which corresponds to the dual $\tau^{\vee} \colon A_p \rightarrow M_{A_p}$ of $\tau$.
\end{dfn}

 This construction has the following universal property.

\begin{prop}\label{prop:universal_splitting_algebra}
 The algebra $A_p$ in Definition~\ref{dfn:universal_splitting_algebra} is the universal algebra endowed with a splitting of $p_{A_p}$. More precisely: given a commutative algebra $B$ and a morphism $s \colon \U \rightarrow M_B$ which corresponds to a splitting $ \overline{s} \colon B \rightarrow M_B$ of $p_B \colon M_B \rightarrow B$, there exists a unique ring homomorphism $\varphi \colon A_p \rightarrow B$ such that $s=\varphi \otimes M \circ \sigma$.
\end{prop}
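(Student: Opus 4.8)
The plan is to prove the universal property asserted in Proposition~\ref{prop:universal_splitting_algebra} by chaining together the defining universal properties of the free commutative algebras $\Sym(\U)$ and $\Sym(M^{\vee})$, of the pushout $A_p$ in $\CAlg_{\ca{C}}$, and of the base change adjunction $(-)_B \dashv U \colon \ca{C} \to \ca{C}_B$, so as to produce a bijection---natural in the commutative algebra $B$---between ring homomorphisms $\varphi \colon A_p \to B$ and splittings of $p_B \colon M_B \to B$.

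First I would unwind the pushout. Since $\U$ is the initial commutative algebra, a ring homomorphism $\varphi \colon A_p \to B$ is the same datum as a ring homomorphism $\psi \colon \Sym(M^{\vee}) \to B$ whose composite with $\Sym(\U) \to \Sym(M^{\vee})$ agrees with the composite $\Sym(\U) \to \U \to B$, the second map being the unit $\eta$ of $B$. By the universal property of $\Sym(M^{\vee})$, such a $\psi$ corresponds to a morphism $g \colon M^{\vee} \to B$ in $\ca{C}$. Since the structure morphism $\Sym(\U) \to \Sym(M^{\vee})$ is induced by $p^{\vee} \colon \U \to M^{\vee}$ followed by the inclusion $M^{\vee} \to \Sym(M^{\vee})$, the universal property of $\Sym(\U)$ turns the compatibility condition into the single equation $g \circ p^{\vee} = \eta$ in $\ca{C}(\U, B)$.

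Next I would transport this data through two adjunctions. Passing to the adjunct of $g$ under $(-)_B \dashv U$ gives a morphism $\tau \colon M^{\vee}_B \to B$ of $B$-modules; passing to the dual (using that $(-)_B$, being symmetric strong monoidal, preserves duals, so that $(M^{\vee})_B = (M_B)^{\vee}$ and $B = B^{\vee}$) gives a morphism $\overline{s} \defl \tau^{\vee} \colon B \to M_B$ of $B$-modules. The crucial point is that the equation $g \circ p^{\vee} = \eta$ becomes, after these transports, exactly $p_B \circ \overline{s} = \id_B$; this uses that dualization is fully faithful on dualizable objects, that $(p^{\vee})_B = (p_B)^{\vee}$, and the triangle identities of $(-)_B \dashv U$. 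I expect this matching of the pushout compatibility condition with the splitting condition to be the main obstacle, since it requires juggling the two adjunctions together with the behaviour of duals under base change; the remaining steps are formal manipulations of universal properties.

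Composing the bijections above yields, naturally in $B$, a bijection between ring homomorphisms $\varphi \colon A_p \to B$ and splittings $\overline{s}$ of $p_B$, equivalently the morphisms $s \colon \U \to M_B$ in $\ca{C}$ corresponding to such splittings; existence and uniqueness of $\varphi$ are then immediate. Finally, to identify the formula $s = (\varphi \otimes M) \circ \sigma$, I would observe that $\sigma \colon \U \to M_{A_p}$ is, by Definition~\ref{dfn:universal_splitting_algebra}, precisely the morphism assigned under this bijection to $\id_{A_p} \in \CAlg_{\ca{C}}(A_p, A_p)$ (indeed $\tau$ is the adjunct of $M^{\vee} \to \Sym(M^{\vee}) \to A_p$, and $\sigma$ corresponds to $\tau^{\vee}$). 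Naturality of the bijection in $B$, applied to $\varphi \colon A_p \to B$, then shows that the morphism assigned to $\varphi$ is the image of $\sigma$ under base change along $\varphi$, namely $(\varphi \otimes M) \circ \sigma$.
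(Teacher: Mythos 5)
Your proof is correct and takes essentially the same route as the paper's: the paper's entire argument is that, by construction of the pushout, a ring homomorphism $\varphi \colon A_p \rightarrow B$ amounts to a $B$-module morphism $\overline{s}^{\vee} \colon M^{\vee}_B \rightarrow B$ with $\overline{s}^{\vee} p_B^{\vee}=\id_B$, after which one takes duals. You merely spell out the intermediate universal properties (initiality of $\U$, the $\Sym$ adjunction, base change, duals) and the naturality argument identifying $\sigma$ as the element corresponding to $\id_{A_p}$, all of which is implicit in the paper's one-line proof.
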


\begin{proof}
 By construction, to give a morphism $\varphi \colon A_p \rightarrow B$ of commutative algebras amounts to giving a morphism $\overline{s}^{\vee} \colon M^{\vee}_B \rightarrow B$ in $\ca{C}_B$ such that $\overline{s}^{\vee} p_B^{\vee}=\id_B$, and the conclusion follows by taking duals.
\end{proof}

 The following lemma gives a more concrete description of the algebra $A_p$. It can essentially be found in \cite[\S 7]{DELIGNE} and the proof of \cite[Proposition~5.6]{SCHAEPPI_GEOMETRIC}.

\begin{lemma}\label{lemma:adams_presentation}
 Let $\ca{C}$ be a tensor category, $M \in \ca{C}$ an object with dual, and $p \colon M \rightarrow \U$ a morphism. Let $A_p$ denote the algebra from Definition~\ref{dfn:universal_splitting_algebra}.

 For $i \in \mathbb{N}$, let $\eta_i$ denote the composite
\[
 \xymatrix@C=50pt{\U \cong \Sym^{i}(\U^{\vee}) \ar[r]^-{\Sym^{i}(p^{\vee})} & \Sym^i(M^{\vee}) }
\]
 in $\ca{C}$ and let $f_i \colon \Sym^{i}(M^{\vee}) \rightarrow \Sym(M^{\vee}) \rightarrow A_p$ denote the canonical morphism which exists by Definition of $A_p$. Let $f_{i,i+1}$ be the morphism making the diagram
\[
 \xymatrix{\Sym^{i} (M^{\vee}) \ar[d]_{\cong} \ar[r]^-{f_{i,i+1}} & \Sym^{i+1}(M^{\vee})\\
\U^{\vee} \otimes \Sym^{i}(M^{\vee}) \ar[r]_-{p^{\vee} \otimes \id} & M^{\vee} \otimes \Sym^{i}(M^{\vee}) \ar[u]_{\mu} }
\]
 in $\ca{C}$ commutative, where $\mu$ is the multiplication of $\Sym(M^{\vee})$. Then $f_{i,i+1} \circ \eta_i=\eta_{i+1}$, and the $f_i$ exhibit $A_p$ as colimit of the chain
\begin{equation}\label{eqn:splitting_algebra_chain}
 \xymatrix{ \U \ar[r]^-{f_{01}} & M^{\vee} \ar[r]^-{f_{12}} & \Sym^2(M^{\vee}) \ar[r] & \ldots }
\end{equation}
 in $\ca{C}$.
\end{lemma}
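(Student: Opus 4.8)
The plan is to compute the underlying object of $A_p$ explicitly, by recognising the pushout in $\CAlg_{\ca{C}}$ as a relative tensor product and then base-changing a very simple presentation. First, note that the pushout defining $A_p$ is the relative tensor product $\Sym(M^{\vee}) \otimes_{\Sym(\U)} \U$, where $\Sym(\U)$ acts on $\U$ through the augmentation corresponding to $\id_{\U}$ and on $\Sym(M^{\vee})$ through the algebra homomorphism $\phi$ corresponding to $p^{\vee}$ (so $\phi$ restricts in degree $i$ to $\eta_i$, after the canonical identification $\Sym^i(\U) \cong \Sym^i(\U^{\vee})$). As a $\Sym(\U)$-module, $\U$ is the coequalizer of $\id_{\Sym(\U)}$ and multiplication by the degree-one generator: indeed, decomposing $\Sym(\U) = \bigoplus_{n \geq 0} \Sym^n(\U)$ and using that each $\Sym^n(\U) \cong \U$ (the $\Sigma_n$-action on $\U^{\otimes n} \cong \U$ is trivial by coherence), this multiplication shifts degrees by an isomorphism, so the associated mapping telescope is $\U$ itself, with coequalizer map the augmentation.

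Next I would base-change. Since $\Sym(M^{\vee}) \otimes_{\Sym(\U)} -$ is cocontinuous and the forgetful functor from $\Sym(M^{\vee})$-modules to $\ca{C}$ creates colimits (both standard, using that $\ca{C}$ is closed so that tensoring preserves all colimits), the underlying object of $A_p$ is the coequalizer in $\ca{C}$
\[
 \xymatrix{\Sym(M^{\vee}) \ar@<0.5ex>[r]^-{\id} \ar@<-0.5ex>[r]_-{m_{\eta_1}} & \Sym(M^{\vee}) \ar[r] & A_p}
\]
where $m_{\eta_1}$ is multiplication by $\eta_1 = p^{\vee} \in \Sym^1(M^{\vee})$ (the image of the degree-one generator of $\Sym(\U)$ under $\phi$).

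To finish, decompose $\Sym(M^{\vee}) = \bigoplus_{i \geq 0} \Sym^i(M^{\vee})$. Multiplication by the degree-one element $\eta_1$ carries $\Sym^i(M^{\vee})$ into $\Sym^{i+1}(M^{\vee})$, and unwinding the definitions of $f_{i,i+1}$ and of $\eta_1$ (which involves only the unit isomorphisms and the isomorphism $\U \cong \U^{\vee}$) shows that this restriction is precisely $f_{i,i+1}$. Hence the displayed coequalizer is the standard presentation of a sequential colimit as a coequalizer of $\id$ and a "shift" on a coproduct; that is, $A_p \cong \colim\bigl(\U \xrightarrow{f_{01}} M^{\vee} \xrightarrow{f_{12}} \Sym^2(M^{\vee}) \to \cdots\bigr)$ as in \eqref{eqn:splitting_algebra_chain}, with colimit cocone the canonical maps $f_i \colon \Sym^i(M^{\vee}) \to A_p$. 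The relation $f_{i,i+1} \circ \eta_i = \eta_{i+1}$ then follows from the multiplicativity of the algebra map $\Sym(p^{\vee})$ together with the isomorphism $\U^{\vee} \otimes \Sym^i(\U^{\vee}) \cong \Sym^{i+1}(\U^{\vee})$; equivalently one reads off directly that $\eta_i = f_{i-1,i} \circ \cdots \circ f_{01}$ (with $\eta_0 = \id_{\U}$).

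The main obstacle is not conceptual but bookkeeping: one must keep careful track of the canonical isomorphisms ($\U^{\vee} \cong \U$, $\Sym^n(\U) \cong \U$, the unit constraints, and the identifications of $\Sym^i(M^{\vee})$ and $\Sym^n(\U)$ with the summands of the relevant direct sums) in order to be sure that the augmentation of $\Sym(\U)$, the restriction of $m_{\eta_1}$ to $\Sym^i(M^{\vee})$, and the transition maps $f_{i,i+1}$ really are the morphisms claimed, and to justify that the pushout's underlying object is the coequalizer above. None of these steps is deep; this is the same computation that underlies \cite[\S 7]{DELIGNE} and the proof of \cite[Proposition~5.6]{SCHAEPPI_GEOMETRIC}.
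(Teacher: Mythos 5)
Your proof is correct and takes essentially the same route as the paper: the paper defers the identification of $A_p$ with the colimit of the chain to \cite[\S 7]{DELIGNE} and \cite[Lemma~5.14]{SCHAEPPI_GEOMETRIC}, and your relative-tensor-product/telescope computation (writing $A_p \cong \Sym(M^{\vee}) \otimes_{\Sym(\U)} \U$ as the coequalizer of $\id$ and multiplication by $p^{\vee}$, i.e.\ the shift presentation of the sequential colimit) is exactly the argument underlying those references. Your verification of $f_{i,i+1} \circ \eta_i = \eta_{i+1}$ via multiplicativity matches the paper's computation that $f_{i-1,i} \circ \cdots \circ f_{01} = \eta_i$ using that the projections $\pi_i$ form an algebra map from the tensor algebra to the symmetric algebra.
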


\begin{proof}
 That the $f_i$ exhibit $A_p$ as colimit of the chain of Diagram~\eqref{example:locally_split_in_geom_cat} was already used in \cite[\S 7]{DELIGNE}, see also \cite[Lemma~5.14]{SCHAEPPI_GEOMETRIC} for details.

 The composite $f_{i-1,i} \circ \ldots \circ f_{01}$ is given by
\[
 \xymatrix@C=40pt{\U \cong (\U^{\vee})^{\otimes i} \ar[r]^-{(p^{\vee})^{\otimes i}} & (M^{\vee})^{\otimes i} \ar[r]^-{\pi_i} & \Sym^i(M^{\vee}) } \smash{\rlap{,}}
\]
 where $\pi_i$ denotes the canonical projection. Since the $\pi_i$ give a morphism of algebras from the tensor algebra to the symmetric algebra, this composite is equal to
\[
 \xymatrix@C=40pt{\U \cong {(\U^{\vee})}^{\otimes i} \ar[r]^{\cong} & \Sym^i(\U^{\vee})   \ar[r]^-{\Sym^i(p^{\vee})} & \Sym^i(M^{\vee}) } \smash{\rlap{,}}
\]
 which shows that $f_{i-1,i} \circ \ldots \circ f_{01}=\eta_{i}$. The desired equality $f_{i,i+1} \circ \eta_i =\eta_{i+1}$ follows easily from this.
\end{proof}

 With this lemma in hand, we can prove the following proposition, which gives useful criteria for an epimorphism $p \colon M \rightarrow \U$ whose domain has a dual to be locally split.

\begin{prop}\label{prop:locally_split_criteria}
 Let $\ca{C}$ be an lfp abelian tensor category. Let $M \in \ca{C}$ be an object such that $\Sym^i(M^{\vee})$ has a dual for all $i \in \mathbb{N}$. Then the following are equivalent for an epimorphism $p \colon M \rightarrow \U$:
\begin{enumerate}
 \item[(i)] There exists an Adams algebra $A$ such that $p_A \colon M_A \rightarrow A$ is a split epimorphism in $\ca{C}_A$;
\item[(ii)] The morphism $p$ is a locally split epimorphism;
\item[(iii)] There exists a tensor functor $F \colon \ca{C} \rightarrow \ca{D}$ which reflects epimorphisms between objects with duals such that $F(p)$ is a split epimorphism;
\item[(iv)] The duals of the morphisms $\Sym^i(p^{\vee}) \colon \Sym^i(\U^{\vee}) \rightarrow \Sym^{i}(M^{\vee})$ are epimorphisms;
\item[(v)] The algebra $A_p$ from Definition~\ref{dfn:universal_splitting_algebra} is an Adams algebra;
\item[(vi)] For all $i \in \mathbb{N}$, the morphism
\[
 (p^{\otimes i})^{\Sigma_i} \colon (M^{\otimes i})^{\Sigma_i} \rightarrow (\U^{\otimes i})^{\Sigma_i}
\]
 induced by $p$ on fixed points of the symmetric group actions is an epimorphism. 
\end{enumerate}
\end{prop}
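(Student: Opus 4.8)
The plan is to prove the cyclic chain of implications $(i)\Rightarrow(ii)\Rightarrow(iii)\Rightarrow(iv)\Rightarrow(v)\Rightarrow(i)$, together with the separate equivalence $(iv)\Leftrightarrow(vi)$ obtained from a duality computation; this collapses all six conditions into a single equivalence class. Throughout I would use freely that $\ca{C}$, being an lfp abelian tensor category, has exact filtered colimits, and that tensor functors are cocontinuous and symmetric strong monoidal, so in particular preserve duals of objects, commute with dualization of morphisms, and commute with the formation of the functors $\Sym^{i}$.

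The easy links are the following. The implication $(i)\Rightarrow(ii)$ is immediate, since Adams algebras are faithfully flat by Proposition~\ref{prop:adams_faithfully_flat}. For $(ii)\Rightarrow(iii)$ I would take $F$ to be the base change functor $(-)_A\colon\ca{C}\to\ca{C}_A$ along the faithfully flat algebra $A$ provided by $(ii)$: this is a tensor functor, it is conservative and exact, hence reflects epimorphisms, and $F(p)=p_A$ is split by hypothesis. For $(v)\Rightarrow(i)$: by Proposition~\ref{prop:adams_faithfully_flat} the Adams algebra $A_p$ is faithfully flat, and by Proposition~\ref{prop:universal_splitting_algebra} it carries a splitting of $p_{A_p}$, so $(i)$ holds with $A=A_p$.

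The substantive steps are $(iii)\Rightarrow(iv)$ and $(iv)\Rightarrow(v)$. For $(iii)\Rightarrow(iv)$, start from the split epimorphism $F(p)$ in $\ca{D}$; then $F(p)^{\vee}\cong F(p^{\vee})$ is a split monomorphism, hence so is $\Sym^{i}\bigl(F(p^{\vee})\bigr)\cong F\bigl(\Sym^{i}(p^{\vee})\bigr)$ (a functor preserves split monomorphisms, and a tensor functor commutes with $\Sym^{i}$), and therefore its dual $F\bigl(\Sym^{i}(p^{\vee})^{\vee}\bigr)\cong F\bigl(\Sym^{i}(p^{\vee})\bigr)^{\vee}$ is a split epimorphism, in particular an epimorphism, in $\ca{D}$. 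Since $\Sym^{i}(M^{\vee})$ has a dual by hypothesis and $\Sym^{i}(\U^{\vee})\cong\U$, the morphism $\Sym^{i}(p^{\vee})^{\vee}$ is a morphism between objects with duals, so the assumption that $F$ reflects such epimorphisms yields $(iv)$. For $(iv)\Rightarrow(v)$ I would invoke Lemma~\ref{lemma:adams_presentation}: it exhibits $A_p$ as the filtered colimit of the chain \eqref{eqn:splitting_algebra_chain}, whose vertices $\Sym^{i}(M^{\vee})$ have duals and whose cocone maps out of $\U$ are exactly $\eta_i=\Sym^{i}(p^{\vee})$; condition $(iv)$ says precisely that the $\eta_i^{\vee}$ are epimorphisms, so these data exhibit $A_p$ as an Adams algebra in the sense of Definition~\ref{dfn:adams_algebra}.

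Finally, $(iv)\Leftrightarrow(vi)$ rests on the fact that dualizing exchanges $\Sigma_i$-coinvariants and $\Sigma_i$-invariants. For a dualizable object $N$ with a $\Sigma_i$-action such that $N_{\Sigma_i}$ is again dualizable, a Yoneda argument (using that $X\otimes-$ preserves colimits and that $\ca{C}(-,\U)$ carries the colimit defining $N_{\Sigma_i}$ to the limit defining $(N^{\vee})^{\Sigma_i}$) produces a natural isomorphism $(N_{\Sigma_i})^{\vee}\cong(N^{\vee})^{\Sigma_i}$; applied with $N=(M^{\vee})^{\otimes i}$, which has dual $M^{\otimes i}$ and for which $N_{\Sigma_i}=\Sym^{i}(M^{\vee})$ is dualizable by hypothesis (and similarly with $\U$ in place of $M$), this identifies $\Sym^{i}(p^{\vee})^{\vee}$ with $(p^{\otimes i})^{\Sigma_i}$, so one is epic for all $i$ if and only if the other is. I expect the main obstacle to be precisely the bookkeeping in this last step: one has to verify that the invariants/coinvariants duality isomorphism genuinely carries the dual of $\Sym^{i}(p^{\vee})=\bigl((p^{\vee})^{\otimes i}\bigr)_{\Sigma_i}$ to $(p^{\otimes i})^{\Sigma_i}$, and not to a twisted variant, tracking the double-dual identifications $M^{\vee\vee}\cong M$ and $\U^{\vee\vee}\cong\U$ correctly. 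Everything else is a routine assembly of the results on Adams algebras from \S\ref{section:torsors} together with Proposition~\ref{prop:universal_splitting_algebra} and Lemma~\ref{lemma:adams_presentation}.
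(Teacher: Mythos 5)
Your proposal is correct and follows essentially the same route as the paper: the same cyclic chain $(i)\Rightarrow(ii)\Rightarrow(iii)\Rightarrow(iv)\Rightarrow(v)\Rightarrow(i)$ using Propositions~\ref{prop:adams_faithfully_flat} and \ref{prop:universal_splitting_algebra} and Lemma~\ref{lemma:adams_presentation}, and the same invariants/coinvariants duality $\Sym^i(N)^{\vee}\cong\bigl((N^{\vee})^{\otimes i}\bigr)^{\Sigma_i}$ for $(iv)\Leftrightarrow(vi)$. The only cosmetic difference is that the paper phrases $(iii)\Rightarrow(iv)$ by observing that the single functor $\Sym^i\bigl((-)^{\vee}\bigr)^{\vee}$ preserves split epimorphisms, whereas you unwind the same chain of split mono/epi steps explicitly.
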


\begin{proof}
 We will show the implications
\[
 \text{(i)} \Rightarrow \text{(ii)} \Rightarrow \text{(iii)} \Rightarrow \text{(iv)} \Rightarrow \text{(v)} \Rightarrow \text{(i)}
\]
 and the equivalence $\text{(iv)} \Leftrightarrow \text{(vi)}$.

 Since Adams algebras are faithfull flat (see Proposition~\ref{prop:adams_faithfully_flat}), we have $\text{(i)} \Rightarrow \text{(ii)}$. To see that $\text{(ii)} \Rightarrow \text{(iii)}$, note that any faithfully flat algebra $B$ such that $p_B$ is a split epimorphism induces a functor $F=(-)_B \colon \ca{C} \rightarrow \ca{C}_B$ with the desired properties. For the implication $\text{(iii)} \Rightarrow \text{(iv)}$ it suffices to show that the images under $F$ of the morphisms in question are epimorphisms. Since $F$ commutes with duals and symmetric powers, this reduces the problem to checking that $\Sym^{i}\bigl(F(p)^{\vee}\bigr)^{\vee}$ is an epimorphism. But $F(p)$ is a split epimorphism, and any functor---in particular $\Sym^i\bigl((-)^{\vee}\bigr)^{\vee}$---preserves split epimorphisms. Condition~(iv) and Lemma~\ref{lemma:adams_presentation} show that the morphisms
\[
 \eta_i \colon \U \cong \Sym^i(\U^{\vee}) \rightarrow \Sym^i(M^{\vee})
\]
 defined in Lemma~\ref{lemma:adams_presentation} exhibit $A_p$ as Adams algebra. This shows that $\text{(iv)} \Rightarrow \text{(v)}$, and $\text{(v)} \Rightarrow \text{(i)}$ follows from the fact that $p_{A_p}$ is a split epimorphism (see Proposition~\ref{prop:universal_splitting_algebra}).

 It remains to check that $\text{(iv)} \Leftrightarrow \text{(vi)}$. For any $N \in \ca{C}$ (with a dual) we have isomorphisms
\[
 \Sym^i(N)^{\vee}=[\Sym^{i}(N),\U]=[N^{\otimes i}\slash {\Sigma_i}, \U] \cong[N^{\otimes i},\U]^{\Sigma_i} \cong \bigl((N^{\vee})^{\otimes i} \bigr)^{\Sigma_i}
\]
 which are natural in $N$. This shows that the duals of the morphisms in Condition~(iv) are isomorphic to the morphisms of Condition~(vi), hence that one of these sets consists entirely of epimorphisms if and only if the other one does.
\end{proof}

\begin{cor}\label{cor:qalg_epis_locally_split}
 Let $R$ be a $\mathbb{Q}$-algebra, let $\ca{C}$ be an lfp abelian tensor category over $R$, and let $M \in \ca{C}$ be an object with a dual. Then any epimorphism $p \colon M \rightarrow \U$ is a locally split epimorphism.
\end{cor}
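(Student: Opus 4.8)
The plan is to deduce Corollary~\ref{cor:qalg_epis_locally_split} directly from Proposition~\ref{prop:locally_split_criteria}, by checking that its standing hypothesis holds automatically over a $\mathbb{Q}$-algebra and then verifying condition~(vi). First I would observe that, since $R$ is a $\mathbb{Q}$-algebra, $i!$ is invertible for every $i \in \mathbb{N}$, so the symmetrizing operator $\tfrac{1}{i!}\sum_{\sigma \in \Sigma_i}\sigma$ is an idempotent endomorphism of $(M^{\vee})^{\otimes i}$ whose splitting is $\Sym^i(M^{\vee})$. Thus $\Sym^i(M^{\vee})$ is a retract of $(M^{\vee})^{\otimes i}$. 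Since $M$ has a dual, so does $M^{\vee}$ (its dual being $M^{\vee\vee}\cong M$), hence $(M^{\vee})^{\otimes i}$ has a dual, and a retract of an object with a dual again has a dual. Therefore $\Sym^i(M^{\vee})$ has a dual for all $i$, and Proposition~\ref{prop:locally_split_criteria} is applicable to $p$.

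Next I would verify condition~(vi): that $(p^{\otimes i})^{\Sigma_i}\colon (M^{\otimes i})^{\Sigma_i} \to (\U^{\otimes i})^{\Sigma_i}$ is an epimorphism for every $i$. Since $\ca{C}$ is a tensor category, tensoring with a fixed object is cocontinuous, hence right exact, hence preserves epimorphisms; iterating, $p^{\otimes i}\colon M^{\otimes i} \to \U^{\otimes i}$ is an epimorphism. By naturality of the symmetry constraints this morphism is $\Sigma_i$-equivariant, so it commutes with the averaging idempotent $e = \tfrac{1}{i!}\sum_{\sigma}\sigma$ on source and target. Consequently $p^{\otimes i}$ respects the direct-sum decompositions $M^{\otimes i}\cong (M^{\otimes i})^{\Sigma_i}\oplus K$ and $\U^{\otimes i}\cong (\U^{\otimes i})^{\Sigma_i}\oplus K'$ determined by $e$, and so splits as a direct sum of $(p^{\otimes i})^{\Sigma_i}$ with the complementary morphism $K \to K'$. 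A direct sum of morphisms can be an epimorphism only if each summand is, so $(p^{\otimes i})^{\Sigma_i}$ is an epimorphism, as required.

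Finally I would invoke the implications $\text{(vi)} \Leftrightarrow \text{(iv)} \Rightarrow \text{(v)} \Rightarrow \text{(i)} \Rightarrow \text{(ii)}$ from Proposition~\ref{prop:locally_split_criteria} to conclude that $p$ is a locally split epimorphism. I do not anticipate a substantial obstacle; the one point requiring a little care is that the $\Sigma_i$-fixed-point functor preserves epimorphisms, and this is precisely where the hypothesis that $R$ is a $\mathbb{Q}$-algebra is used—without invertibility of $i!$ the idempotent $e$ need not exist, passage to fixed points need not be exact, and indeed the conclusion fails in positive characteristic (cf.\ the discussion preceding Theorem~\ref{thm:description}).
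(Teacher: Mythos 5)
Your proposal is correct and follows essentially the same route as the paper: both verify the hypothesis of Proposition~\ref{prop:locally_split_criteria} by exhibiting $\Sym^i(M^{\vee})$ as a retract of $(M^{\vee})^{\otimes i}$ via the symmetrizer, and both verify condition~(vi) by observing that in characteristic zero the $\Sigma_i$-fixed points form a natural direct summand of the $i$-fold tensor power, so that $(p^{\otimes i})^{\Sigma_i}$ is a retract of the epimorphism $p^{\otimes i}$. The only cosmetic difference is that the paper phrases the second step via the norm map identifying coinvariants with invariants up to a factor of $i!$, whereas you phrase it via the averaging idempotent; these are the same argument.
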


\begin{proof}
 The statement is evident if $R=0$, hence we may assume that $\mathbb{Q} \subseteq R$. In that case, we can divide by $i!$, so $\Sym^{i}(M^{\vee})$ is  a direct summand of $(M^{\vee})^{\otimes i}$ and thus has a dual. Therefore we can apply Proposition~\ref{prop:locally_split_criteria} above, and it suffices to check that the morphisms
\[
 (p^{\otimes i})^{\Sigma_i} \colon (M^{\otimes i})^{\Sigma_i} \rightarrow (\U^{\otimes i})^{\Sigma_i} 
\]
 are epimorphisms. This follows from the fact that the morphisms
\[
 N^{\otimes i} \slash {\Sigma_i} \rightarrow (N^{\otimes i})^{\Sigma_i}
\]
 induced by $\sum_{\sigma \in \Sigma_i} \sigma \colon N^{\otimes i} \rightarrow N^{\otimes i}$ are inverse to the composite
\[
 (N^{\otimes i})^{\Sigma_i} \rightarrow N^{\otimes i} \rightarrow N^{\otimes i} \slash {\Sigma_i}
\]
 up to a factor of $i! \in \mathbb{Q} \subseteq R$.
\end{proof}

\begin{cor}\label{cor:qalg_free_generators}
 Let $R$ be a $\mathbb{Q}$-algebra and let $\ca{C}$ be an lfp abelian tensor category over $R$. If $\ca{C}$ is generated by locally free objects of constant rank, then $\ca{C}$ is geometric.
\end{cor}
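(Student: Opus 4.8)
The plan is to deduce this directly from Corollary~\ref{cor:geometric_structural_proof}, using Corollary~\ref{cor:qalg_epis_locally_split} to supply the missing hypothesis. First I would fix a generating set $\ca{G}_0$ of $\ca{C}$ consisting of locally free objects of constant finite rank, which exists by assumption. The one piece of bookkeeping is that Corollary~\ref{cor:geometric_structural_proof} requires a generating set closed under finite direct sums, so I would pass to the closure $\ca{G}$ of $\ca{G}_0$ under finite direct sums. This is still a generating set, and it still consists of locally free objects of constant finite rank: given finitely many locally free objects $M_1, \ldots, M_n$ of constant ranks $d_1, \ldots, d_n$, one can (as in the proof of Proposition~\ref{prop:locally_free_implies_generator}) find a single faithfully flat algebra $A$ which simultaneously trivializes all of them, and then $(M_1 \oplus \cdots \oplus M_n)_A \cong A^{d_1 + \cdots + d_n}$, so the direct sum is locally free of constant rank $d_1 + \cdots + d_n$.

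Next I would observe that every object of $\ca{G}$ has a dual, since locally free objects of finite rank have duals. Hence Corollary~\ref{cor:qalg_epis_locally_split} applies to each $M \in \ca{G}$: as $R$ is a $\mathbb{Q}$-algebra and $\ca{C}$ is an lfp abelian tensor category over $R$, every epimorphism $p \colon M \rightarrow \U$ with $M \in \ca{G}$ is a locally split epimorphism. This is precisely the hypothesis of Corollary~\ref{cor:geometric_structural_proof} that is not immediate from the assumptions.

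Finally, applying Corollary~\ref{cor:geometric_structural_proof} to $\ca{C}$ together with the generating set $\ca{G}$ produces a covering $\ca{C} \rightarrow \Mod_B$ for some commutative $R$-algebra $B$; since $\ca{C}$ is also pre-geometric (locally free objects have duals), this says exactly that $\ca{C}$ is geometric, and moreover $\ca{C} \simeq \QCoh(X)$ for an Adams stack $X$ by Theorem~\ref{thm:equivalence_adams_geometric}. I do not expect a genuine obstacle here: the only content beyond citing the two corollaries is the verification in the first step that the generating set may be taken closed under finite direct sums without leaving the class of locally free objects of constant rank.
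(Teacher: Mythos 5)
Your proposal is correct and follows exactly the paper's route: the paper's proof is the one-line observation that Corollary~\ref{cor:geometric_structural_proof} applies because the relevant epimorphisms are locally split by Corollary~\ref{cor:qalg_epis_locally_split}. Your additional bookkeeping (closing the generator under finite direct sums and checking that a direct sum of locally free objects of constant rank is again locally free of constant rank, via a single simultaneously trivializing faithfully flat algebra) is a correct and worthwhile verification that the paper leaves implicit.
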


\begin{proof}
 This follows from Corollary~\ref{cor:geometric_structural_proof}, taking into account that the relevant epimorphisms are locally split by Corollary~\ref{cor:qalg_epis_locally_split} above.
\end{proof}

 We can further specialize this to tensor categories generated by line bundles.

\begin{dfn}\label{dfn:line_bundle}
 Let $\ca{C}$ be a tensor category. An invertible object $L \in \ca{C}$ is called a \emph{line bundle} if the symmetry $s_{L,L} \colon L\otimes L \rightarrow L \otimes L$ is equal to the identity.
\end{dfn}

\begin{cor}\label{cor:qalg_line_bundle_generator}
 Let $R$ be a $\mathbb{Q}$-algebra and let $\ca{C}$ be an lfp abelian tensor category over $R$. If $\ca{C}$ is generated by line bundles, then $\ca{C}$ is geometric.
\end{cor}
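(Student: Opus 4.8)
The plan is to deduce this from Corollary~\ref{cor:qalg_free_generators}, which asserts that a $\mathbb{Q}$-linear lfp abelian tensor category generated by locally free objects of constant rank is geometric. It therefore suffices to show that every line bundle $L$ in $\ca{C}$ is locally free of constant rank, since then a generating set of line bundles is in particular a generating set of locally free objects of constant rank.

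Fix a line bundle $L$. I claim that $L$ is locally free of rank $1$ in the sense of Definition~\ref{dfn:locally_free_objects}, the trivializing algebra being the $\mathbb{Z}$-graded object
\[
 B \defl \bigoplus_{n \in \mathbb{Z}} L^{\otimes n} , \qquad L^{\otimes n} \defl (L^{\vee})^{\otimes (-n)} \text{ for } n < 0 .
\]
The first step is to endow $B$ with the structure of a commutative algebra in $\ca{C}$, with multiplication given in each degree by the canonical isomorphisms $L^{\otimes m} \otimes L^{\otimes n} \cong L^{\otimes (m+n)}$ (using the evaluation and coevaluation of $L$ for the mixed-sign summands). The second step is to check that $B$ is faithfully flat: it is flat, being a filtered colimit of the objects $\bigoplus_{|n| \le N} L^{\otimes n}$---each of which has a dual---and filtered colimits commute with finite limits in the lfp category $\ca{C}$; and the projection $\pi \colon B \to \U$ onto the degree-zero summand satisfies $\pi \circ \eta = \id_{\U}$, so $\id_{\ca{C}}$ is a retract of the functor $B \otimes -$, which is therefore conservative (in the sense of Definition~\ref{dfn:faithfully_flat}). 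The third step is to assemble the degree-shift isomorphisms into an isomorphism
\[
 B \otimes L \;=\; \bigoplus_{n \in \mathbb{Z}} L^{\otimes (n+1)} \;\cong\; \bigoplus_{m \in \mathbb{Z}} L^{\otimes m} \;=\; B
\]
of $B$-modules; thus $L$ is trivialized over the faithfully flat algebra $B$, hence locally free of constant rank $1$. Invoking Corollary~\ref{cor:qalg_free_generators} then completes the argument.

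The one step demanding genuine care is the first: verifying that the grading-wise multiplication on $B$ is associative and, crucially, commutative as a morphism in $\ca{C}$. This is exactly where the defining property $s_{L,L} = \id$ of a line bundle (which also forces $s_{L^{\vee},L^{\vee}} = \id$) enters, trivializing all the symmetry constraints among tensor powers of $L$; it is also the reason one works with $\bigoplus_{n} L^{\otimes n}$ rather than the free commutative algebra on $L$, whose symmetric-power quotients would obstruct the trivialization. The remaining steps---flatness, conservativity, and the trivialization---are purely formal once $B$ has been constructed.
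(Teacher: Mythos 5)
Your proposal is correct and follows essentially the same route as the paper: both reduce to Corollary~\ref{cor:qalg_free_generators} by showing every line bundle is locally free of rank one, trivialized over the commutative algebra $\bigoplus_{n\in\mathbb{Z}} L^{\otimes n}$ (commutative precisely because $s_{L,L}=\id$), which is flat as a (colimit of) objects with duals and faithfully flat because the unit $\U\rightarrow B$ is split mono. The only cosmetic difference is that you phrase conservativity via the retract of $B\otimes -$ where the paper simply cites the split unit; these are the same observation.
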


\begin{proof}
 It suffices to check that any line bundle $L \in \ca{C}$ is locally free of rank one. The algebra $A=\bigoplus_{i \in \mathbb{Z}} L^{\otimes i}$ is commutative since the $\Sigma_i$-action on $L^{\otimes i}$ and its inverse are trivial. It is flat as direct sum of objects with duals, and thus faithfully flat since the unit $\U \rightarrow A$ is a split monomorphism. Moreover, we have $L_A \cong A$ as $A$-modules, that is, $L$ is locally free of rank one (see also \cite[Proposition~4.9.9]{BRANDENBURG_THESIS}).
\end{proof}

 Combining this with one of the main results of \cite{SCHAEPPI_GEOMETRIC}, we also get the following intrinsic characterization of geometric tensor categories over $\mathbb{Q}$-algebras. In order to state it, we need the concepts of rank and exterior powers in general tensor categories. The \emph{rank} of an object with a dual is the trace of the identity morphism, that is, the composite
\[
 \xymatrix{\U \ar[r] & X \otimes X^{\vee} \ar[r]^{\cong} & X^{\vee} \otimes X \ar[r] & \U }
\]
 of the coevaluation and the evaluation of $X$ in the endomorphism ring $\ca{C}(\U,\U)$ of the unit object. The $i$-th \emph{exterior power} $\Lambda^i(X)$ of $X$ is the splitting of the idempotent
\[
 \frac{1}{i!} \sum_{\sigma \in \Sigma_i} \sgn(\sigma) \sigma \colon X^{\otimes i} \rightarrow X^{\otimes i}
\]
 on $X^{\otimes i}$.

\begin{thm}\label{thm:qalg_geometric_characterization}
 Let $R$ be a $\mathbb{Q}$-algebra and let $\ca{C}$ be an lfp abelian tensor category over $R$. Then $\ca{C}$ is geometric if and only if the following conditions hold:
\begin{enumerate}
 \item[(i)] The category $\ca{C}$ is generated by duals;
\item[(ii)] If $X$ has a dual and $\rk(X)=0$, then $X\cong 0$;
\item[(iii)] For all objects $X$ with a dual there exists an $i \in \mathbb{N}$ such that $\Lambda^i(X) \cong 0$.
\end{enumerate}
\end{thm}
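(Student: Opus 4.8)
The plan is to prove the two implications separately. For the ``only if'' direction, suppose $\ca{C}$ is geometric. Condition~(i) holds because geometric tensor categories are pre-geometric, hence generated by objects with duals, by Definition~\ref{dfn:geometric_tensor_category}. For (ii) and (iii) I would fix a covering $w \colon \ca{C} \rightarrow \Mod_B$: being conservative, left exact, $\mathbb{Q}$-linear and symmetric strong monoidal, $w$ preserves duals, the rank, and all exterior powers, and sends an object $X$ with a dual to a finitely generated projective $B$-module. If $\rk(X) = 0$ then $wX$ has identically vanishing rank function on the quasi-compact space $\Spec B$, so $wX \cong 0$ and $X \cong 0$ by conservativity; and since the locally constant rank function of $wX$ is bounded by some $d$, we get $\Lambda^{d+1}(wX) \cong 0$, hence $\Lambda^{d+1}(X) \cong 0$. (These conditions are in any case among the ones shown necessary in \cite[Theorem~1.4]{SCHAEPPI_GEOMETRIC}.)

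For ``if'', assume (i)--(iii); by Corollary~\ref{cor:qalg_free_generators} it suffices to exhibit a generating set of $\ca{C}$ consisting of locally free objects of constant rank. By (i) there is a generating set $\ca{G}$ of objects with duals. Fix $X \in \ca{G}$ and set $r = \rk(X)$ in the commutative $\mathbb{Q}$-algebra $k = \ca{C}(\U,\U)$. By (iii) there is $n$ with $\Lambda^{n+1}(X) \cong 0$, and using the identity $\rk(\Lambda^i Y) = \binom{\rk Y}{i}$ in $\End(\U)$ for dualizable $Y$ (a polynomial identity in $\rk Y$, valid over a $\mathbb{Q}$-algebra, coming from the cycle description of permutation traces on $Y^{\otimes i}$) this forces $r(r-1)\cdots(r-n) = 0$ in $k$. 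The ideals $(r-j)$, $0 \le j \le n$, are pairwise comaximal since their differences are nonzero rationals, so the Chinese remainder theorem produces orthogonal idempotents $e_0, \dots, e_n \in k$ with $\sum_j e_j = 1$ and $r\, e_j = j\, e_j$. These split $X \cong \bigoplus_{j=0}^n X^{(j)}$ with $X^{(j)} = \mathrm{im}(e_j \otimes \id_X)$ a retract of $X$ (hence an object with a dual), and one checks $X^{(j)} \otimes \mathrm{im}(1-e_j) \cong 0$ (so $X^{(j)}$ is supported over $\U_j := \mathrm{im}(e_j)$), $\rk(X^{(j)}) = e_j r = j\, e_j$, and that $\Lambda^{j+1}(X^{(j)})$ is the retract of $\Lambda^{j+1}(X)$ cut out by $e_j$, whose rank is $\binom{j}{j+1} e_j = 0$, so $\Lambda^{j+1}(X^{(j)}) \cong 0$ by (ii).

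I would then replace $X^{(j)}$ by $Z_j := X^{(j)} \oplus \U_{\bar{e}_j}^{\oplus j}$, where $\U_{\bar{e}_j} := \mathrm{im}(1-e_j)$; this is an object with a dual of constant rank $\rk(Z_j) = j\, e_j + j(1-e_j) = j \cdot \id_{\U}$. Using that $X^{(j)}$ is supported over $\U_j$ while $\U_{\bar{e}_j}^{\oplus j}$ is supported over the complementary factor $\mathrm{im}(1-e_j)$, that $\U_j \otimes \mathrm{im}(1-e_j) \cong 0$, that $\Lambda^q(\U_{\bar{e}_j}^{\oplus j}) \cong 0$ for $q > j$, and the splitting $\Lambda^{j+1}(A \oplus B) \cong \bigoplus_{p+q = j+1} \Lambda^p A \otimes \Lambda^q B$, a short calculation gives $\Lambda^{j+1}(Z_j) \cong 0$. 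Thus $Z_j$ is a dualizable object of constant rank $j$ with vanishing $(j{+}1)$-st exterior power, so by Brandenburg's intrinsic characterization of locally free objects in characteristic zero (\cite[\S 4.9]{BRANDENBURG_THESIS}, cf.\ the Remark after Definition~\ref{dfn:locally_free_objects}) it is locally free of rank $j$. Since $X^{(j)}$ is a direct summand of $Z_j$ and the family $\{X^{(j)} : X \in \ca{G},\ 0 \le j \le n_X\}$ generates $\ca{C}$ (because $X \cong \bigoplus_j X^{(j)}$ for each $X$), so does $\{Z_j\}$; closing it under finite direct sums yields a generating set of locally free objects of constant rank, and Corollary~\ref{cor:qalg_free_generators} gives that $\ca{C}$ is geometric.

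The main obstacle — and the only point where characteristic zero is genuinely used — is the passage from ``$Z_j$ is a dualizable object of constant rank $j$ with $\Lambda^{j+1}(Z_j) \cong 0$'' to ``$Z_j$ is locally free of rank $j$'', i.e.\ the intrinsic characterization of locally free objects, which is available over $\mathbb{Q}$-algebras but open in positive characteristic (as recorded after Definition~\ref{dfn:locally_free_objects}). If one preferred a self-contained argument one would instead take the universal $j$-framing algebra $A_{Z_j}$ of Lemma~\ref{lemma:universal_framing} and verify that it is faithfully flat, dividing by factorials exactly as in the proof of Corollary~\ref{cor:qalg_epis_locally_split} to control the relevant symmetric powers.
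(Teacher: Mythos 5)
Your proof is correct and follows the same route as the paper: both reduce, via Corollary~\ref{cor:qalg_free_generators}, to showing that conditions (i)--(iii) force $\ca{C}$ to be generated by locally free objects of constant finite rank. The paper disposes of that remaining step by citing the proof of \cite[Proposition~5.12]{SCHAEPPI_GEOMETRIC}, and your argument---the Chinese-remainder idempotent decomposition of the rank, the padding by copies of the complementary unit, and the appeal to Brandenburg's characteristic-zero characterization of locally free objects---is essentially a reconstruction of that cited argument.
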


\begin{proof}
 By Corollary~\ref{cor:qalg_free_generators}, it suffices to check that $\ca{C}$ is generated by locally free objects if Conditions~(i)-(iii) hold. This is precisely the content of the proof of \cite[Proposition~5.12]{SCHAEPPI_GEOMETRIC}.
\end{proof}

 If we translate this to the language of weakly Tannakian categories we obtain a proof of Theorem~\ref{thm:existence_of_fiber_functor}.

\begin{proof}[Proof of Theorem~\ref{thm:existence_of_fiber_functor}]
 Using the fact that $\ca{A}$ is weakly Tannakian if and only if $\Ind(\ca{A})$ is geometric (see Proposition~\ref{prop:geometric_weakly_tannakian}), Theorem~\ref{thm:existence_of_fiber_functor} is an immediate consequence of Theorem~\ref{thm:qalg_geometric_characterization}.
\end{proof}

 If $R$ is not a $\mathbb{Q}$-algebra, it is not clear that the functor which takes fixed points of a group action preserves epimorphisms. In particular, it is not clear that the conclusion of Corollary~\ref{cor:qalg_epis_locally_split} holds in general. Therefore we need to include an additional condition in the following description result for geometric categories over arbitrary ground rings. Despite being less tractable than the above theorem and corollaries, it suffices for the universal constructions of geometric categories that we describe in the next section.

\begin{thm}\label{thm:description}
 Let $R$ be a an arbitrary commutative ring and let $\ca{C}$ be an lfp abelian tensor category over $R$. Then $\ca{C}$ is geometric if and only if the following hold:
\begin{enumerate}
 \item[(i)] There exists a generating set $\ca{G}$ of $\ca{C}$ consisting of locally free objects of constant rank.
 \item[(ii)] If $p \colon M \rightarrow \U$ is an epimorphism and $M$ is a finite direct sum of objects in the generating set $\ca{G}$, then the induced morphisms
\[
 (p^{\otimes i})^{\Sigma_i} \colon (M^{\otimes i})^{\Sigma_i} \rightarrow (\U^{\otimes i})^{\Sigma_i}
\]
 on the $\Sigma_i$-fixed points are epimorphisms for all $i \in \mathbb{N}$.
\end{enumerate}
\end{thm}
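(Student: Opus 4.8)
The plan is to prove the two implications separately. The forward one is essentially a translation of standard facts about $\QCoh(X)$ for an Adams stack $X$; the converse is an application of Corollary~\ref{cor:geometric_structural_proof}, once the hypotheses have been massaged into the right shape, together with Proposition~\ref{prop:locally_split_criteria}.

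Suppose first that $\ca{C}$ is geometric, so that $\ca{C}\simeq\QCoh(X)$ for an Adams stack $X$ by Theorem~\ref{thm:equivalence_adams_geometric}. For~(i) I would take $\ca{G}$ to be a set of representatives for the isomorphism classes of vector bundles of constant rank on $X$; this is a set because objects with duals in an lfp category are finitely presentable. It generates $\QCoh(X)$ because $\QCoh(X)$ is generated by objects with duals and every such object is a retract of one of constant rank by Proposition~\ref{prop:vb_c_generator}, and each object of $\ca{G}$ is locally free of constant rank in the sense of Definition~\ref{dfn:locally_free_objects}: pull back along an affine covering $f\colon\Spec B\to X$ to a finitely generated projective module of constant rank $d$, refine by a finite product of localisations so that it becomes free of rank $d$, and note that the resulting composite has affine source, so the pushforward of its structure sheaf is an Adams algebra by Proposition~\ref{prop:adams_affine_algebra}, hence faithfully flat by Proposition~\ref{prop:adams_faithfully_flat}, which witnesses local freeness. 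For~(ii), let $M$ be a finite direct sum of objects of $\ca{G}$ and $p\colon M\to\U$ an epimorphism. Each $\Sym^i(M^{\vee})$ is a vector bundle --- on an affine cover it is a symmetric power of a free module, hence a direct summand of a free module --- so it has a dual, and $p$ is locally split by Example~\ref{example:locally_split_in_geom_cat}. Then Proposition~\ref{prop:locally_split_criteria} applies, and its implication from condition~(ii) to condition~(vi) gives that $(p^{\otimes i})^{\Sigma_i}$ is an epimorphism for all $i$.

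Conversely, assume~(i) and~(ii), and let $\ca{G}$ be as in~(i). Replace $\ca{G}$ by its closure $\ca{G}'$ under finite direct sums: a finite direct sum of locally free objects of constant ranks $d_1,\dots,d_n$ is locally free of constant rank $d_1+\cdots+d_n$, by base change along the finite tensor product of the witnessing faithfully flat algebras, which is again faithfully flat. Thus $\ca{G}'$ is a generating set, closed under finite direct sums, consisting of locally free objects of constant rank; in particular $\ca{C}$ is pre-geometric, since locally free objects have duals. The crucial step is to show that every epimorphism $p\colon M\to\U$ with $M\in\ca{G}'$ is locally split, and for this I invoke Proposition~\ref{prop:locally_split_criteria}. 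Its standing hypothesis, that $\Sym^i(M^{\vee})$ has a dual for all $i$, holds because $M^{\vee}$ is locally free of constant rank and symmetric powers of locally free objects of constant rank are again locally free (see below); condition~(ii) of the theorem is exactly condition~(vi) of the proposition for these $M$; and the chain of implications $(\mathrm{vi})\Rightarrow(\mathrm{iv})\Rightarrow(\mathrm{v})\Rightarrow(\mathrm{i})\Rightarrow(\mathrm{ii})$ in the proposition then shows that $p$ is locally split. Now $\ca{C}$ together with $\ca{G}'$ satisfies all the hypotheses of Corollary~\ref{cor:geometric_structural_proof}, which produces a covering $\ca{C}\to\Mod_B$; hence $\ca{C}$ is geometric (and $\ca{C}\simeq\QCoh(X)$ for an Adams stack $X$).

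The main obstacle, used in the converse, is the claim that symmetric powers of a locally free object of constant rank are again locally free, in particular have duals. In the $\mathbb{Q}$-algebra case of Corollary~\ref{cor:qalg_epis_locally_split} this is formal, since $\Sym^i(M^{\vee})$ is then a direct summand of $(M^{\vee})^{\otimes i}$; in positive characteristic it is not, which is precisely why condition~(ii) has to be imposed. I would prove it by faithfully flat descent of dualizable objects: writing $N=\Sym^i(M^{\vee})$ as the cokernel of a map between dualizable objects, internal hom out of $N$ commutes with flat base change, so if $B$ is a faithfully flat algebra with $M^{\vee}_B\cong B^{\oplus d}$ then $[N,\U]_B$ is the dual of the free, hence dualizable, $B$-module $N_B\cong\Sym^i(B^{\oplus d})$; one then descends the coevaluation of $N_B$ along $B$ --- using that for faithfully flat $B$ the fork $\U\to B\otimes\U\rightrightarrows B\otimes B\otimes\U$ is an equalizer preserved by $\ca{C}(\U,-)$, and that the two base changes of the coevaluation to $B\otimes B$ agree by uniqueness --- and checks the triangle identities using that base change along $B$ is conservative and exact.
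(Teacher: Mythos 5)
Your proof is correct, and its skeleton coincides with the paper's: both directions ultimately reduce to the chain of equivalences in Proposition~\ref{prop:locally_split_criteria} (in particular condition~(vi) there, which is condition~(ii) of the theorem) followed by Corollary~\ref{cor:geometric_structural_proof}. The one place where you genuinely diverge is the verification of the standing hypothesis of Proposition~\ref{prop:locally_split_criteria}, namely that $\Sym^i(M^{\vee})$ has a dual when $M$ is locally free of constant rank. You prove this by faithfully flat descent of dualizability: present $\Sym^i(M^{\vee})$ as a cokernel of a map between dualizable objects so that $[\Sym^i(M^{\vee}),\U]$ commutes with flat base change, observe that the base change to a trivializing algebra $B$ is a free $B$-module, and then descend the coevaluation along the Amitsur equalizer, checking the triangle identities after the conservative exact base change. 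The paper instead invokes Theorem~\ref{thm:universal_property_rep_gld}: a locally free object of rank $d$ is the image of the standard representation $\ubar{R}^d$ under a tensor functor $\Rep(\mathrm{GL}_d)\rightarrow\ca{C}$, and since tensor functors preserve symmetric powers (being colimits) and duals, one only has to note that $\Sym^i\bigl((\ubar{R}^d)^{\vee}\bigr)$ has finitely generated free underlying $R$-module. The paper's route is shorter because it piggybacks on the machinery of \S\ref{section:torsors}; yours is more self-contained and isolates a reusable general principle (dualizability descends along faithfully flat algebras, at least for objects presented as cokernels of maps between dualizables), at the cost of the somewhat delicate descent of the coevaluation. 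Both are valid; your write-up of the forward direction also makes explicit what the paper leaves implicit.
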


\begin{proof}
 Note that locally free objects of constant rank are closed under finite direct sums. To apply Proposition~\ref{prop:locally_split_criteria}, it therefore suffices to check that the symmetric powers of a locally free objects of constant rank $d$ have duals. But any such object is the image of the standard representation $\ubar{R}^d \in \Rep(\mathrm{GL}_d)$ under some tensor functor $\Rep(\mathrm{GL}_d) \rightarrow \ca{C}$ (see Theorem~\ref{thm:universal_property_rep_gld}). This reduces the problem to checking that $\Sym^{i}(\ubar{R}^d)$ has a dual, which follows from the fact that its underlying $R$-module is finitely generated and free. Thus we can indeed apply Proposition~\ref{prop:locally_split_criteria}. Part~(vi) of that proposition and the first assumption show that the conditions of Corollary~\ref{cor:geometric_structural_proof} are satisfied.
\end{proof}

 We conclude this section with a version of Proposition~\ref{prop:locally_split_criteria} that works for tensor categories with exact filtered colimits which need not be abelian. This lemma will be used in a sequel in order to give a more tractable description of certain universal weakly Tannakian categories.

\begin{lemma}\label{lemma:detecting_locally_split}
 Let $\ca{C}$ be a tensor category with exact filtered colimits. Let
\[
 p \colon M \rightarrow \U 
\]
 be a morphism whose domain is a retract of a locally free object of constant finite rank. Let $\ca{D}$ be a tensor category and $F \colon \ca{C} \rightarrow \ca{D}$ a tensor functor which detects those coequalizer diagrams whose entries have duals.

 In this situation, if $Fp$ is a locally split epimorphism, there exists an Adams algebra $A \in \ca{C}$ such that $p_A$ is a split epimorphism. 
\end{lemma}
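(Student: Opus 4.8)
The plan is to take $A$ to be the universal splitting algebra $A_p$ of Definition~\ref{dfn:universal_splitting_algebra}. By Proposition~\ref{prop:universal_splitting_algebra} the base change $p_{A_p}$ is a split epimorphism, so the whole statement reduces to showing that $A_p$ is an Adams algebra in the sense of Definition~\ref{dfn:general_adams_algebra}. By Lemma~\ref{lemma:adams_presentation} the algebra $A_p$ is the colimit of the chain $\U \to M^{\vee} \to \Sym^2(M^{\vee}) \to \cdots$, which is a filtered (indeed sequential) colimit, and the morphisms $\eta_i \colon \U \cong \Sym^i(\U^{\vee}) \to \Sym^i(M^{\vee})$ fit into the triangles required in Definition~\ref{dfn:general_adams_algebra}. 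So it remains to check that: (a) each $\Sym^i(M^{\vee})$ has a dual; and (b) for each $i$ the diagram
\[
 \Sym^i(M^{\vee})^{\vee} \otimes \Sym^i(M^{\vee})^{\vee} \rightrightarrows \Sym^i(M^{\vee})^{\vee} \xrightarrow{\ \eta_i^{\vee}\ } \U
\]
(the two parallel arrows being $\eta_i^{\vee}$ applied in each tensor factor) is a coequalizer in $\ca{C}$.

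For (a): since $M$ is a retract of a locally free object $N$ of constant rank $d$, $M$ has a dual and $M^{\vee}$ is a retract of $N^{\vee}$, so $\Sym^i(M^{\vee})$ is a retract of $\Sym^i(N^{\vee})$; hence it suffices to know that $\Sym^i(N^{\vee})$ has a dual. After base change along a faithfully flat algebra trivialising $N$, the object $N^{\vee}$ becomes a finite free module, hence so does $\Sym^i(N^{\vee})$ (the symmetric powers of a finite free module over any commutative ring are again finite free, in arbitrary characteristic; compare the argument in the proof of Proposition~\ref{prop:vb_c_generator}), and in particular it becomes dualizable; since dualizability descends along faithfully flat algebras, $\Sym^i(N^{\vee})$ — and with it its retract $\Sym^i(M^{\vee})$ — has a dual in $\ca{C}$. (Alternatively one realises $N$ as the image of the standard representation $\ubar{R}^d$ of $\mathrm{GL}_d$ under a tensor functor, as in the theory of \S\ref{section:locally_free_objects}, and takes the image of the dualizable object $\Sym^i\bigl((\ubar{R}^d)^{\vee}\bigr)$.)

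For (b): by (a) all three objects in the displayed diagram have duals, so by the detection hypothesis on $F$ it is enough to show that $F$ carries it to a coequalizer in $\ca{D}$. Since $F$ is a tensor functor it commutes with duals, tensor products and symmetric powers, so $F$ applied to the diagram is exactly the diagram of Definition~\ref{dfn:general_adams_algebra} attached to the morphism $Fp \colon FM \to \U$ at index $i$. As $Fp$ is locally split we may pick a faithfully flat algebra $C \in \ca{D}$ with $(Fp)_C$ a split epimorphism in $\ca{D}_C$, and apply the tensor functor $(-)_C \colon \ca{D} \to \ca{D}_C$; this yields the same diagram for $(Fp)_C$. Now $(Fp)_C$ is a split epimorphism between dualizable objects (using (a) once more, via $F$ and $(-)_C$), so $(Fp)_C^{\vee}$ is a split monomorphism, $\Sym^i$ of it is a split monomorphism, and dualizing it back (its source $\U$ and target both being dualizable) the last structure map of the $(Fp)_C$-diagram is a split epimorphism. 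Finally, for any split epimorphism $q \colon Y \to \U$ one checks directly that $Y \otimes Y \rightrightarrows Y \xrightarrow{q} \U$ (with $q$ in each factor) is a split, hence absolute, coequalizer; so the $(Fp)_C$-diagram is a coequalizer in $\ca{D}_C$. Since $(-)_C$ is conservative and cocontinuous and $\ca{D}$ is cocomplete, $(-)_C$ reflects this coequalizer, so $F$ of the original diagram is a coequalizer in $\ca{D}$, and the detection hypothesis then gives that the original diagram is a coequalizer in $\ca{C}$. Hence $A_p$ is an Adams algebra, and $A = A_p$ works.

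The step I expect to be the main obstacle is (a): dualizability of the symmetric powers $\Sym^i(M^{\vee})$ is not automatic in positive characteristic, and making it precise here needs either a descent statement for dualizability along faithfully flat algebras or a careful invocation of the structure theory of \S\ref{section:locally_free_objects} — slightly delicate precisely because $\ca{C}$ is only assumed to have exact filtered colimits, not all equalizers. Everything after that is bookkeeping: once the relevant objects are known to have duals, transporting the coequalizers of Definition~\ref{dfn:general_adams_algebra} through $(-)_C$ and then through $F$ is routine, using only that split epimorphisms produce absolute coequalizers of the indicated shape.
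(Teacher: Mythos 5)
Your proposal is correct and follows essentially the same route as the paper: reduce via Proposition~\ref{prop:universal_splitting_algebra} and Lemma~\ref{lemma:adams_presentation} to showing $A_p$ is Adams, establish dualizability of the $\Sym^i(M^{\vee})$, and then use the detection hypothesis together with a faithfully flat base change in $\ca{D}$ to reduce to the split case, where the relevant diagram is a split (hence absolute) coequalizer. The one caution is in step (a): your primary justification, descent of dualizability along a faithfully flat algebra, is not established anywhere in the paper and is not obviously available when $\ca{C}$ only has exact filtered colimits, so you should rely on your parenthetical alternative --- realizing $\Sym^i(N)$ as the image of a dualizable object of $\Rep(\mathrm{GL}_d)$ under a tensor functor via Theorem~\ref{thm:universal_property_rep_gld} --- which is exactly what the paper does.
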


\begin{proof}
 By Proposition~\ref{prop:universal_splitting_algebra} it suffices to check that the algebra $A_p$ defined there is an Adams algebra (any Adams algebra in $\ca{C}$ is faithfully flat since filtered colimits in $\ca{C}$ are exact, see Proposition~\ref{prop:adams_strongly_stable}).

 The object $\Sym^i(M^{\vee})$ has a dual: it is a retract of $\Sym^i(N)$ for some locally free object $N$ of constant finite rank, and $\Sym^i(N)$ is the image of some some locally free object of constant rank in $\Rep(\mathrm{GL}_d)$ by Theorem~\ref{thm:universal_property_rep_gld}. Thus the objects $A_i \defl \Sym^i(M^{\vee})$ and the morphisms $\eta_i \colon \U \rightarrow A_i$ given by the composite
\[
 \xymatrix@C=50pt{\U \cong \Sym^{i}(\U^{\vee}) \ar[r]^-{\Sym^{i}(p^{\vee})} & \Sym^i(M^{\vee})} 
\]
 give a ``candidate'' collection for exhibiting $A_p$ as Adams algebra (see Lemma~\ref{lemma:adams_presentation}).

 It only remains to check that
\begin{equation}\label{eqn:adams_coequalizer}
   \xymatrix{ A_i^{\vee} \otimes A_i^{\vee} \ar@<0.5ex>[r]^-{A_i^{\vee} \otimes \eta_i^{\vee}} \ar@<-0.5ex>[r]_-{\eta_i^{\vee} \otimes A_i^{\vee}} & A_i^{\vee} \ar[r]^-{\eta_i^{\vee}} & \U }
\end{equation}
 is a coequalizer diagram in $\ca{C}$. 

 By assumption, it suffices to check that the image of this diagram under $F$ is a coequalizer diagram in $\ca{D}$. Moreover, by composing $F \colon \ca{C} \rightarrow \ca{D}$ with any faithfully flat base change functor $(-)_B \colon \ca{D} \rightarrow \ca{D}_B$, we obtain a new tensor functor with the same properties as $F$. Therefore we can assume that $Fp$ is a split epimorphism. It follows that $F\eta_i^{\vee} \cong \Sym^i(Fp^{\vee})^{\vee}$ has a section $s \colon F\U \rightarrow FA_i$.

 Since $s$ and $FA_i \otimes s$ exhibit the image of Diagram~\eqref{eqn:adams_coequalizer} as a split coequalizer\footnote{Split coequalizers are the dual notion to split equalizers.}, the assumption on $F$ implies that Diagram~\eqref{eqn:adams_coequalizer} is a coequalizer diagram, as claimed.
\end{proof}

\subsection{The construction}\label{section:universal_geometric_cats}

 In this section we prove that any lfp tensor category $\ca{C}$ (which need \emph{not} be abelian) has an associated geometric tensor category $\ca{G}$, together with a tensor functor $\ca{G} \rightarrow \ca{C}$ which is universal: any other tensor functor $\ca{G}^{\prime} \rightarrow \ca{C}$ whose domain $\ca{G}^{\prime}$ is a geometric tensor category factors essentially uniquely through $\ca{G} \rightarrow \ca{C}$.

 In a general $R$-linear category with cokernels, the only sensible definition of a right exact sequence is a pair of morphisms $p \colon L \rightarrow M$ and $q \colon M \rightarrow N$ such that $q$ is a cokernel of $p$. If the category also has kernels, then $q$ is easily seen to be the cokernel of its kernel $K \rightarrow M$. However, it need \emph{not} be true in general that the induced morphism $L \rightarrow K$ is itself a cokernel (also known as \emph{regular} epimorphism). Indeed, taking $p$ to be any epimorphism which is not regular and $N=0$ gives an example. In this context, there is thus some ambiguity: we could demand that a right exact sequence is a sequence
\[
 \xymatrix{L \ar[r]^{p} & M \ar[r]^{q} & N \ar[r] & 0}
\]
 such that the induced morphism from $L$ to the kernel $K$ of $q$ is a regular epimorphism. We will therefore say that the above sequence is a \emph{cokernel diagram} if $q$ is a cokernel of $p$, and that it is \emph{right exact} if, in addition, the induced morphism $L \rightarrow K$ is a regular epimorphism. That being said, most of the cokernel diagrams we consider in this section are right exact.

 \begin{dfn}\label{dfn:split_rex_sequence}
 Let $\ca{C}$ be an $R$-linear with kernels and cokernels. A sequence
\[
 \xymatrix{L \ar[r]^{p} & M \ar[r]^{q} & N \ar[r] & 0}
\]
 in $\ca{C}$ with $pq=0$ is called \emph{split right exact} if both $q$ and the induced morphism from $L$ to the kernel of $q$ are split epimorphisms. If $\ca{C}$ is an lfp tensor category over $R$, then a sequence as above with $pq=0$ is called \emph{locally split right exact} if there exists a faithfully flat commutative algebra $B \in \ca{C}$ such that the sequence
\[
 \xymatrix{L_B \ar[r]^{p} & M_B \ar[r]^{q} & N_B \ar[r] & 0} 
\]
 in $\ca{C}_B$ is split right exact.
 \end{dfn}

 The names are justified since such sequences are right exact. This is immediate for split right exact sequences, and for locally split right exact sequences it follows from the facts that $B$ is faithfully flat and that an epimorphism is regular if and only if it is the cokernel of its kernel.

\begin{example}\label{example:duals_locally_split}
 Let $\ca{C}=\QCoh(X)$ for some Adams stack $X$. Then any right exact sequence
\[
 \xymatrix{L \ar[r] & M \ar[r] & N \ar[r] & 0}
\]
 in $\ca{C}$ where both $M$ and $N$ have a dual is locally split right exact. Indeed, for $f \colon \Spec(A) \rightarrow X$ any affine covering and $B=f_\ast (A)$, we have $\ca{C}_B \simeq \Mod_A$. Thus both $M_B$ and $N_B$ are projective. It follows that the kernel $K$ of the split epimorphism $M_B \rightarrow N_B$ is projective too, hence that the epimorphism $L_B \rightarrow K$ is split.
\end{example}

\begin{rmk}\label{rmk:locally_split_rex_stable_under_tensoring}
 Locally split right exact sequences in an lfp tensor category $\ca{C}$ are preserved by $M \otimes - \colon \ca{C} \rightarrow \ca{C}$ for any object $M \in \ca{C}$. Using the fact that $(-)_B$ creates colimits for any faithfully flat $B$, we can immediately reduce this to showing that $M \otimes -$ preserves split right exact sequences. This follows since any $R$-linear functor, in particular $M \otimes -$, preserves split epimorphisms and their kernels.
\end{rmk}

\begin{dfn}\label{dfn:associated_geometric_cat}
 Let $\ca{C}$ be an lfp tensor category over an arbitrary commutative ring $R$ (not necessarily abelian), and let $\ca{A} \subseteq \ca{C}$ be a full subcategory that consists of locally free objects of constant finite rank and that is closed under finite tensor products. Let $\Sigma$ be the set\footnote{Note that locally free objects of constant finite rank are in particular finitely presentable, hence $\ca{A}$ is essentially small.} of all the cokernel diagrams in $\ca{A}$ which are locally split right exact sequences in $\ca{C}$. Let
\[
 G(\ca{A},\ca{C}) \defl \Lex_{\Sigma}[\ca{A}^{\op},\Mod_R]
\]
 be the category of $R$-linear presheaves which send all the cokernel diagrams in $\Sigma$ to kernel diagrams (that is, left exact sequences in $\Mod_R$). If there is no potential for confusion we simply write $G(\ca{A})$ for $G(\ca{A},\ca{C})$.
\end{dfn}

\begin{rmk}\label{rmk:universal_tensor_functor}
 Let $\ca{A} \subseteq \ca{C}$ be as in Definition~\ref{dfn:associated_geometric_cat} above. Since locally split right exact sequences are stable under tensoring, the category $G(\ca{A})$ inherits the structure of an lfp tensor category over $R$ via Day reflection (see Proposition~\ref{prop:day_reflection}), and the inclusion $K \colon \ca{A} \rightarrow \ca{C}$ induces a tensor functor
\[
 E \colon G(\ca{A}) \rightarrow \ca{C}
\]
 whose restriction along the Yoneda embedding $\ca{A} \rightarrow G(\ca{A})$ is isomorphic to $K$ (see Theorem~\ref{thm:lex_sigma_universal}).
\end{rmk}

 The main result of this section is the following theorem.

\begin{thm}\label{thm:universal_geometric_category}
 Let $\ca{C}$ be an lfp tensor category over an arbitrary commutative ring $R$ and let $\ca{A} \subseteq \ca{C}$ be a full subcategory consisting of locally free objects of constant finite rank. Suppose the following conditions hold:
\begin{enumerate}
 \item[(i)] The category $\ca{A}$ is closed under finite direct sums, finite tensor products, and duals;
 \item[(ii)] If $M \rightarrow N$ is a locally split epimorphism in $\ca{C}$, $N$ is locally free of constant rank, and $M \in \ca{A}$, then $N \in \ca{A}$ as well.
\end{enumerate}
 Then the category $G(\ca{A})$ of Definition~\ref{dfn:associated_geometric_cat} is a geometric tensor category.
\end{thm}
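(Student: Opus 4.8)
The plan is to verify the hypotheses of Corollary~\ref{cor:geometric_structural_proof} for the category $G(\ca{A})$. By Remark~\ref{rmk:universal_tensor_functor} we already know that $G(\ca{A})=\Lex_\Sigma[\ca{A}^{\op},\Mod_R]$ is an lfp tensor category over $R$; moreover the corestricted Yoneda embedding $Z\colon\ca{A}\to G(\ca{A})$ is symmetric strong monoidal, and there is a tensor functor $E\colon G(\ca{A})\to\ca{C}$ with $EZ\cong K$ (Theorem~\ref{thm:lex_sigma_universal}). Three things then remain: that $G(\ca{A})$ is abelian; that the representables $ZN$ with $N\in\ca{A}$ form a generating set, closed under finite direct sums, consisting of locally free objects of constant rank; and that every epimorphism $p\colon ZN\to\U$ in $G(\ca{A})$ is locally split. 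The generating property and closure under finite direct sums are routine (representables always generate $\Lex_\Sigma$, and $Z$ is additive, with $\ca{A}$ closed under finite direct sums). The local splitness of $p$ I would deduce once the local freeness of the $ZN$ is in hand: writing $d$ for the rank of $N$, the universal $d$-framing algebra $A_{ZN}$ of $ZN$ (Lemma~\ref{lemma:universal_framing}) is then faithfully flat by Proposition~\ref{prop:locally_free_objects}, and $(ZN)_{A_{ZN}}\cong A_{ZN}^{\oplus d}$; base-changing $p$ along the resulting covering produces an epimorphism onto the free module of rank one, which splits because that module is projective, so $p$ is locally split in the sense of Definition~\ref{dfn:locally_split_epi}.

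To see that each $ZN$ is locally free of constant rank I would use Proposition~\ref{prop:locally_free_objects} again, now via faithful flatness of the universal $d$-framing algebra $A_{ZN}$ computed inside $G(\ca{A})$. Since $N\in\ca{A}$ is locally free of constant rank $d$ in $\ca{C}$, its universal $d$-framing algebra $A_N$ is faithfully flat, and in fact an Adams algebra: its underlying object is a filtered colimit of dualizable objects, obtained by transporting the filtration of $\ubar{H}_d$ by its finite-dimensional subcomodules through the classifying tensor functor $\Comod(H_d)\to\ca{C}$ attached to $N$ by Theorem~\ref{thm:universal_property_rep_gld}, using that $\ubar{H}_d$ is an Adams algebra in $\Comod(H_d)$ (Example~\ref{example:adams_hopf_alg_is_adams_alg}) and that tensor functors preserve Adams algebras (Proposition~\ref{prop:adams_strongly_stable}). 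Tensor functors preserve universal framings (Lemma~\ref{lemma:framings_preserved}), so $E(A_{ZN})\cong A_N$; the remaining task is to transfer the Adams property back across $E$. Since the symmetric powers of a locally free object of constant rank are again locally free of constant rank, the objects governing the defining presentation of $A_{ZN}$ are dualizable in $G(\ca{A})$, so Proposition~\ref{prop:adams_detection} (together with a direct comparison of the colimit presentations of $A_{ZN}$ and $A_N=E(A_{ZN})$) lets us conclude that $A_{ZN}$ is an Adams algebra in $G(\ca{A})$, hence faithfully flat, and therefore $ZN$ is locally free of rank $d$.

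The remaining, and most delicate, point is that $G(\ca{A})$ is abelian, which I would establish by showing that $\Sigma$ is an ind-class in the sense of Definition~\ref{dfn:ind_class} and invoking Proposition~\ref{prop:ind_class_implies_abelian}. The first axiom is immediate: a morphism in $R(\Sigma)$ is a locally split epimorphism $q\colon A_1\to A_2$ between objects of $\ca{A}$, and $A_1\xrightarrow{q}A_2\to 0\to 0$ is then a cokernel diagram in $\ca{A}$ that is trivially locally split right exact in $\ca{C}$, hence lies in $\Sigma$. For the filler axiom the key preliminary observation is that the kernel of a locally split epimorphism between objects of $\ca{A}$ again lies in $\ca{A}$: such a kernel is a locally split monomorphism, so its dual is a locally split epimorphism onto a locally free object of constant rank whose source lies in $\ca{A}$, and hypothesis~(ii) of the theorem together with closure under duals forces the kernel back into $\ca{A}$; consequently every sequence in $\Sigma$ factors through its kernel object $K\in\ca{A}$ by a locally split epimorphism that again lies in $R(\Sigma)$. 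Given a sequence $A_0\xrightarrow{p}A_1\xrightarrow{q}A_2\to 0$ in $\Sigma$ and $f\colon D\to A_1$ with $qf=0$, one factors $f$ and $p$ through $K$ and is reduced to filling a square over $K$. \emph{This is where the main obstacle lies}: because $\ca{A}$ is not assumed to generate $\ca{C}$, one cannot simply cover an ambient pullback by objects of $\ca{A}$; instead the filler must be produced inside $\ca{A}$ directly, using the faithfully flat algebras witnessing local splitness to split the relevant morphisms after base change and then descending the resulting sections along these coverings, which is possible precisely because the kernel objects involved already lie in $\ca{A}$. Once $\Sigma$ is known to be an ind-class, all the hypotheses of Corollary~\ref{cor:geometric_structural_proof} are verified, and $G(\ca{A})$ is geometric.
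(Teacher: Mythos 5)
Your overall architecture (abelianness via an ind-class, local freeness of the representables, then a structural existence result for a covering) matches the paper's, and your treatment of the ind-class axioms correctly identifies the key fact that kernels of locally split epimorphisms between objects of $\ca{A}$ stay in $\ca{A}$ (the paper's Lemma~\ref{lemma:pullback_of_locally_split} extends this to arbitrary pullbacks, which gives the filler directly and avoids the ``descent of sections'' you gesture at). But there is a genuine gap at the step you treat as routine: the local splitness of epimorphisms $p \colon ZN \rightarrow \U$ in $G(\ca{A})$. You base-change $p$ along $A = A_{ZN}$ to get an epimorphism $A^{\oplus d} \rightarrow A$ and assert it splits ``because that module is projective.'' Being free of rank one over $A$ does \emph{not} make $A$ a projective object of the abelian category $G(\ca{A})_A$; the unit of an abelian tensor category is typically not projective. (Concretely, in $\QCoh(\mathbb{A}^2 \setminus \{0\})$ the epimorphism $(x,y) \colon \U^{\oplus 2} \rightarrow \U$ does not split.) Producing a faithfully flat algebra over which the unit becomes projective is exactly the content of Proposition~\ref{prop:locally_split_implies_projective}, whose hypothesis is the very local splitness you are trying to prove, so the argument is circular. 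This is precisely why the paper does not verify the hypotheses of Corollary~\ref{cor:geometric_structural_proof} directly but instead invokes Theorem~\ref{thm:description}, whose condition~(ii) (epimorphy of $(p^{\otimes i})^{\Sigma_i}$) is checked using Proposition~\ref{prop:epi_characterization}: an arrow $\ca{A}(-,f)$ between representables is an epimorphism in $G(\ca{A})$ if and only if $f$ is a locally split epimorphism in $\ca{C}$. That proposition rests on the enriched sheaf theory of Appendix~A, and your proposal contains no substitute for it.

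A secondary problem is your use of Proposition~\ref{prop:adams_detection} along $E \colon G(\ca{A}) \rightarrow \ca{C}$ to transfer the Adams property of $A_N$ back to $A_{ZN}$: that proposition requires the detecting functor to reflect cokernels (or at least epimorphisms between dualizables), and $E$ is neither conservative nor exact in general, as the paper itself warns. The paper sidesteps this by going \emph{forward}: Lemma~\ref{lemma:factorization_through_F} factors the classifying tensor functor $\Rep(\mathrm{GL}_d) \rightarrow \ca{C}$ of $N$ through $G(\ca{A})$ (using the generator $\ca{G}_0$ of Lemma~\ref{lemma:generator_of_rep_gl_d} and hypothesis~(ii) of the theorem), so that $ZN$ is the image of the standard representation under a tensor functor into $G(\ca{A})$ and Theorem~\ref{thm:universal_property_rep_gld} applies directly. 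You should adopt that factorization argument rather than attempting backward detection along $E$.
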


 To prove this, we will use the description result of the previous section which works over arbitrary commutative rings (Theorem~\ref{thm:description}). We face three difficulties: we have to show that $G(\ca{A})$ is abelian, that it is generated by locally free objects, and that a technical condition about the interaction of epimorphisms and certain finite limits is satisfied. For the first fact, we will use the notion of ind-class introduced in \cite[\S 2]{SCHAEPPI_INDABELIAN}, see also Definition~\ref{dfn:ind_class} and Proposition~\ref{prop:ind_class_implies_abelian}. To check the condition about epimorphisms, we have to delve deeper into the details of the proof given in \cite[\S 2]{SCHAEPPI_INDABELIAN}: from this it will follow that $G(\ca{A})$ is a category of $R$-linear sheaves for an (enriched) Grothendieck topology. In the unenriched case, there are convenient characterizations of epimorphisms in categories of sheaves, and in Appendix~\ref{appendix:sheaves} we will see that similar characterizations are possible in the enriched case.

 To start, we will show that $G(\ca{A})$ is generated by locally free objects of finite rank by showing that the representable presheaves are locally free of constant rank. Note that this is not a tautology: although the objects in $\ca{A}$ are locally free \emph{in the category $\ca{C}$}, the same need a priori not be true in the category $G(\ca{A})$, since the tensor functor $E \colon G(\ca{A}) \rightarrow \ca{C}$ of Remark~\ref{rmk:universal_tensor_functor} need not be conservative nor exact. Thus we will need to use the machinery developed in \S \ref{section:torsors} to prove this.

\begin{lemma}\label{lemma:factorization_through_F}
 Let $\ca{C}$ be an lfp tensor category over $R$ and let $\ca{A} \subseteq \ca{C}$ be a subcategory as in Theorem~\ref{thm:universal_geometric_category}. Let $\ca{G}$ be a geometric tensor category with generating set $\ca{G}_0$ that consists of locally free objects of constant finite rank and that is closed under finite direct sums and tensor products. Let $F \colon \ca{G} \rightarrow \ca{C}$ be a tensor functor with the property that $F(M)$ lies in $\ca{A}$ for all $M \in \ca{G}_0$.

 Then there exists a tensor functor $F^{\prime} \colon \ca{G} \rightarrow G(\ca{A})$ such that $EF^{\prime} \cong F$, where $E \colon G(\ca{A}) \rightarrow \ca{C}$ denotes the tensor functor defined in Remark~\ref{rmk:universal_tensor_functor}. Moreover, the diagram
\[
 \xymatrix{\ca{G}_0 \ar[rr]^-{F^{\prime}\vert_{\ca{G}_0}} \ar[rd]_{F\vert_{\ca{G}_0}}  & & G(\ca{A}) \\ & \ca{A} \ar[ru]_{Y} }
\]
 commutes up to isomorphism, where $Y$ denotes the Yoneda embedding.
\end{lemma}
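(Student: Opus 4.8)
The plan is to obtain $F^{\prime}$ from the universal property of the geometric category $\ca{G}$ recorded in Corollary~\ref{cor:C_universal_tensor}. Since $\ca{G}$ is geometric it is lfp abelian, and $\ca{G}_0$ is a generator consisting of finitely presentable objects (locally free objects of constant finite rank have duals, hence are finitely presentable) which is closed under finite direct sums and finite tensor products. So Corollary~\ref{cor:C_universal_tensor} applies to $\ca{G}$ with generator $\ca{G}_0$: writing $\Sigma_{\ca{G}}$ for the set of right exact sequences in $\ca{G}$ with all entries in $\ca{G}_0$ and $J \colon \ca{G}_0 \hookrightarrow \ca{G}$ for the inclusion, restriction along $J$ induces an equivalence $\Fun_{c,\otimes}(\ca{G},\ca{D}) \to \Fun_{\Sigma_{\ca{G}},\otimes}(\ca{G}_0,\ca{D})$ for every tensor category $\ca{D}$ over $R$. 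Thus it suffices to produce a suitable object of $\Fun_{\Sigma_{\ca{G}},\otimes}(\ca{G}_0,G(\ca{A}))$. The natural candidate is $Y \circ F_0$, where $F_0 \colon \ca{G}_0 \to \ca{A}$ is the corestriction of $F\vert_{\ca{G}_0}$ — legitimate because $\ca{A}$ is a full symmetric monoidal subcategory of $\ca{C}$ by hypothesis~(i) and $F$ carries $\ca{G}_0$ into $\ca{A}$ by assumption, so $F_0$ is symmetric strong monoidal — and $Y \colon \ca{A} \to G(\ca{A})$ is the Yoneda embedding, which is symmetric strong monoidal for the Day reflection tensor structure by Theorem~\ref{thm:lex_sigma_universal} (see Remark~\ref{rmk:universal_tensor_functor}).

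The substantive point is to verify that $Y \circ F_0$ sends every $\xi \colon G_0 \to G_1 \to G_2 \to 0$ in $\Sigma_{\ca{G}}$ to a cokernel diagram in $G(\ca{A})$, equivalently that $F\xi$ lies in the defining set $\Sigma$ of $G(\ca{A}) = \Lex_\Sigma$. First, $F$ is cocontinuous, so $F\xi$ is a cokernel diagram in $\ca{C}$ with all entries in the full subcategory $\ca{A}$, hence a cokernel diagram in $\ca{A}$. Second, $F\xi$ is locally split right exact in $\ca{C}$: as $\ca{G}$ is geometric it is equivalent to $\QCoh(X)$ for some Adams stack $X$ (Theorem~\ref{thm:equivalence_adams_geometric}), and the entries $G_i$ all have duals, so Example~\ref{example:duals_locally_split} furnishes an Adams algebra $B \in \ca{G}$ (an affine covering algebra of $X$, cf.\ Proposition~\ref{prop:adams_affine_algebra}) with $\xi_B$ split right exact in $\ca{G}_B$; extending $F$ to $\overline{F} \colon \ca{G}_B \to \ca{C}_{FB}$ and using that the additive functor $\overline{F}$ preserves split epimorphisms and their kernels, one gets $(F\xi)_{FB} \cong \overline{F}(\xi_B)$ split right exact in $\ca{C}_{FB}$, while $FB$ is faithfully flat by Proposition~\ref{prop:adams_strongly_stable}. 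Hence $F\xi \in \Sigma$. Finally, by the very definition of $\Lex_\Sigma$ every presheaf in $G(\ca{A})$ sends $F\xi$ to a kernel diagram in $\Mod_R$; unwinding the Yoneda lemma, this says exactly that $Y(F\xi) = (Y \circ F_0)(\xi)$ is a cokernel diagram in $G(\ca{A})$. I expect this paragraph — recognizing that $\xi$ is already locally split right exact \emph{inside} $\ca{G}$, and then transporting that property across the (neither exact nor conservative) tensor functor $F$ via the Adams-algebra formalism of \S\ref{section:torsors} — to be the main obstacle.

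With $Y \circ F_0 \in \Fun_{\Sigma_{\ca{G}},\otimes}(\ca{G}_0,G(\ca{A}))$ in hand, Corollary~\ref{cor:C_universal_tensor} yields a cocontinuous symmetric strong monoidal functor, hence a tensor functor, $F^{\prime} \colon \ca{G} \to G(\ca{A})$ with $F^{\prime} \circ J \cong Y \circ F_0$; this is precisely the commuting triangle in the statement. It then remains to check $E F^{\prime} \cong F$, where $E \colon G(\ca{A}) \to \ca{C}$ is the tensor functor of Remark~\ref{rmk:universal_tensor_functor}, which satisfies $E \circ Y \cong K$. Indeed $(E F^{\prime}) \circ J \cong E \circ Y \circ F_0 \cong K \circ F_0 = F \circ J$, so $E F^{\prime}$ and $F$ are both cocontinuous symmetric strong monoidal functors $\ca{G} \to \ca{C}$ whose restrictions along $J$ are isomorphic compatibly with the monoidal structure; applying the equivalence of Corollary~\ref{cor:C_universal_tensor} once more, now with target $\ca{C}$, forces $E F^{\prime} \cong F$, completing the argument.
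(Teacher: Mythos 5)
Your proposal is correct and follows essentially the same route as the paper: reduce via the universal property of $\ca{G}$ relative to its generator $\ca{G}_0$ (the paper invokes Theorem~\ref{thm:adams_vb_universal}, which is itself an instance of Corollary~\ref{cor:C_universal_tensor}), take $Y \circ F\vert_{\ca{G}_0}$ as the candidate, and verify it preserves the relevant cokernel diagrams by showing $F$ carries them to locally split right exact sequences using an Adams algebra $B \in \ca{G}$, the extension $\overline{F} \colon \ca{G}_B \to \ca{C}_{FB}$, and Proposition~\ref{prop:adams_strongly_stable}. The only cosmetic difference is that you spell out the final step $EF' \cong F$ explicitly, which the paper leaves implicit.
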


\begin{proof}
 Let $\Sigma_0$ be the set of all cokernel diagrams in $\ca{G}_0$ which are right exact sequences in $\ca{G}$. From Theorem~\ref{thm:adams_vb_universal} we know that restriction along the inclusion $\ca{G}_0 \rightarrow \ca{G}$ induces an equivalence
\[
 \Fun_{c,\otimes}(\ca{G},\ca{D}) \rightarrow \Fun_{\Sigma_0,\otimes}(\ca{G}_0,\ca{D})
\]
 of categories for all tensor categories $\ca{D}$.

 Recall from Remark~\ref{rmk:universal_tensor_functor} that the restriction of $E$ along the Yoneda embedding $Y \colon \ca{A} \rightarrow G(\ca{A})$ is isomorphic to the inclusion $\ca{A} \rightarrow \ca{C}$  Thus, to prove the claim, we only need to check that the composite
\[
 \xymatrix{\ca{G}_0 \ar[r]^-{F\vert_{\ca{G}_0}} & \ca{A} \ar[r]^-{Y} & G(\ca{A}) }
\]
 sends cokernel diagrams in $\Sigma_0$ to cokernel diagrams in $G(\ca{A})$. By Definition~\ref{dfn:associated_geometric_cat}, $G(\ca{A})$ is the category $\Lex_{\Sigma}[\ca{A}^{\op},\Mod_R]$, where $\Sigma$ denotes the set of all cokernel diagrams in $\ca{A}$ which are locally split right exact sequences in $\ca{C}$. From the Yoneda lemma it follows that Yoneda embedding $\ca{A} \rightarrow G(\ca{A})$ sends cokernel diagrams in $\Sigma$ to cokernel diagrams. This reduces the problem to checking that $F$ sends cokernel diagrams in $\Sigma_0$ to locally split right exact sequences in $\ca{C}$.

 Let $B \in \ca{G}$ be an Adams algebra such that $B$ is a projective generator of $\ca{G}_B$ (such an algebra exists by Proposition~\ref{prop:adams_affine_algebra}). As we have seen in Example~\ref{example:locally_split_in_geom_cat}, the base change functor $(-)_B \colon \ca{G} \rightarrow \ca{G}_B$ sends all colimit diagrams in $\Sigma_0$ to split right exact sequences. The diagram
\[
 \xymatrix{\ca{G} \ar[r]^{F} \ar[d]_{(-)_B} & \ca{C} \ar[d]^{(-)_{FB}} \\ \ca{G}_B \ar[r]_{\overline{F}} & \ca{C}_{FB} }
\]
 (where $\overline{F}$ denotes the tensor functor induced by $F$ on the category of $B$-modules) commutes up to isomorphism, hence $(-)_{FB}$ sends the image of a cokernel diagram in $\Sigma_0$ to a split right exact sequence. Moreover, $FB$ is faithfully flat since $B$ is an Adams algebra and filtered colimits in the lfp category $\ca{C}$ are exact (see Proposition~\ref{prop:adams_strongly_stable}). Thus $F$ sends cokernel diagrams in $\Sigma_0$ to locally split right exact sequences in $\ca{C}$, as claimed.
\end{proof} 

\begin{lemma}\label{lemma:generator_of_rep_gl_d}
 Let $\ca{G}_0 \subseteq \Rep(\mathrm{GL}_d)$ be the smallest subcategory which contains the standard representation $V=\ubar{R}^d$, is closed under finite tensor products, finite direct sums, duals, and quotients which are locally free of finite constant rank. Then $\ca{G}_0$ is a generator of $\Rep(\mathrm{GL}_d)$.
\end{lemma}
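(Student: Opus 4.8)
The plan is to prove the stronger statement that $\ca{G}_0$ contains every vector bundle of constant rank on $\BGL_d$, i.e.\ every dualizable object of $\Comod(H_d)\simeq\QCoh(\BGL_d)$ of constant rank. The lemma then follows at once, since $\VB^c(\BGL_d)$ is a generator of $\Comod(H_d)$: this is exactly what is used in the proof of Corollary~\ref{cor:vb_density_presentation}, and it comes from Proposition~\ref{prop:vb_c_generator} together with the fact that $\Comod(H_d)$ is generated by objects with duals, $H_d$ being an Adams Hopf algebra ($\BGL_d$ an Adams stack).

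First I would dispose of some elementary memberships in $\ca{G}_0$. The evaluation $V^\vee\otimes V\to\U$ is an epimorphism (it is surjective on underlying modules) and $\U$ is locally free of rank one, so $\U\in\ca{G}_0$. Writing $L\defl\Lambda^d V$ for the determinant representation, the canonical epimorphism $V^{\otimes d}\to\Lambda^d V$ exhibits $L$ as a constant-rank-one quotient of an object of $\ca{G}_0$, so $L\in\ca{G}_0$, and hence $L^{\otimes m}\in\ca{G}_0$ and $(L^\vee)^{\otimes m}\in\ca{G}_0$ for every $m\in\mathbb{N}$; likewise every finite direct sum of tensor powers $V^{\otimes k}$ (including $V^{\otimes 0}=\U$) lies in $\ca{G}_0$.

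Now let $W$ be a dualizable $H_d$-comodule of constant rank. Since $H_d=R[x_{ij}]_{\mathrm{det}}$ and the underlying $R$-module of $W$ is finitely generated, its coaction clears denominators after a twist: $W_0\defl W\otimes L^{\otimes M}$ is a polynomial representation of $\mathrm{GL}_d$ for $M$ large enough, and as $(L^\vee)^{\otimes M}\in\ca{G}_0$ it suffices to show $W_0\in\ca{G}_0$. By the standard decomposition of a polynomial representation into homogeneous components (equivalently, comodules over the coproduct coalgebra $R[x_{ij}]=\coprod_n R[x_{ij}]_n$ form a product category), $W_0\cong\bigoplus_{n=0}^{K}W_0^{(n)}$ with $W_0^{(n)}$ a finitely generated module over the Schur algebra $S_R(d,n)$. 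The key input is the classical fact that the tensor space $V^{\otimes n}$ is a generator of the category of homogeneous degree-$n$ polynomial representations of $\mathrm{GL}_d$ over an \emph{arbitrary} commutative ring $R$; this reduces by right-exactness of base change to $R=\mathbb{Z}$, where it holds because $V^{\otimes n}$ is a projective generator of $S_{\mathbb{Z}}(d,n)$ for $d\geq n$ (Schur--Weyl duality) and the case $n>d$ follows by idempotent truncation from $S_{\mathbb{Z}}(n,n)$. Consequently each $W_0^{(n)}$ is, in $\Comod(H_d)$, a quotient of a finite power $(V^{\otimes n})^{\oplus k_n}$, and assembling these degree-wise gives an epimorphism $\bigoplus_{n=0}^{K}(V^{\otimes n})^{\oplus k_n}\to W_0$ whose source lies in $\ca{G}_0$. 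Finally $W_0$, being a vector bundle of constant rank, is locally free of constant rank in the sense of Definition~\ref{dfn:locally_free_objects}: its underlying projective $R$-module becomes free after a faithfully flat extension $R\to R'$, and $W_0$ then becomes free over the faithfully flat algebra obtained by pushing $R'$ forward along the atlas $\Spec(R)\to\BGL_d$. Hence closure of $\ca{G}_0$ under locally-free-of-constant-rank quotients yields $W_0\in\ca{G}_0$, whence $W=W_0\otimes(L^\vee)^{\otimes M}\in\ca{G}_0$. Thus $\VB^c(\BGL_d)\subseteq\ca{G}_0$, and the lemma follows.

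The step I expect to be the main obstacle is the Schur--Weyl input: proving that $V^{\otimes n}$ generates the homogeneous degree-$n$ polynomial representations over a general base ring --- in particular for $n>d$, where one must transport the progenerator property along the idempotent truncation $S(n,n)\to S(d,n)$ --- and verifying that the homogeneous decomposition of a polynomial representation and the reduction to $R=\mathbb{Z}$ are robust over an arbitrary commutative ring rather than over a field.
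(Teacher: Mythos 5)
Your overall strategy (show that $\ca{G}_0$ contains \emph{every} vector bundle of constant rank and then invoke Proposition~\ref{prop:vb_c_generator}) is more ambitious than what the lemma needs, and it founders on the step you yourself flag as the main obstacle: the claim that $V^{\otimes n}$ generates the category of homogeneous degree-$n$ polynomial representations over an arbitrary base (equivalently over $\mathbb{Z}$) is \emph{false}. For $d\geq n$ one has $V^{\otimes n}\cong S(d,n)\xi_\omega$ with $\omega=(1^n,0^{d-n})$, so $\Hom_{S(d,n)}(V^{\otimes n},N)$ is the $\omega$-weight space of $N$; thus $V^{\otimes n}$ is a generator only if $\xi_\omega$ is a full idempotent, i.e.\ only if the Schur functor is a Morita equivalence between $S(d,n)$ and $R\Sigma_n$ --- which is exactly what fails in characteristic $p\leq n$. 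Concretely, take $R=\mathbb{F}_2$ and $d=n=2$: the Frobenius twist $V^{(1)}$ (the matrix coefficients $x_{ij}^2$ make it an honest degree-$2$ polynomial representation) has weights $(2,0)$ and $(0,2)$ and trivial $(1,1)$-weight space, so $\Hom(V^{\otimes 2},V^{(1)})=0$ and $V^{(1)}$ is not a quotient of any direct sum of copies of $V^{\otimes 2}$; the same module, viewed over $S_{\mathbb{Z}}(2,2)$, kills the statement over $\mathbb{Z}$ as well. Schur--Weyl duality gives the double centralizer property, not the progenerator property, so the degree-wise epimorphism $\bigoplus_n (V^{\otimes n})^{\oplus k_n}\rightarrow W_0$ you need does not exist in general. (Note that $V^{(1)}$ does eventually land in $\ca{G}_0$, but only by exploiting closure under \emph{duals} --- e.g.\ as the double dual of a quotient of $(\Sym^2 V)^{\vee}$ --- which your homogeneous-quotient argument never uses except to remove determinant twists; any repair would have to interleave duals and quotients in an essential way.)

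The paper's proof sidesteps all of this by proving much less. It does not identify $\ca{G}_0$ with $\VB^c(\mathrm{BGL}_d)$; it only exhibits \emph{some} generating family inside $\ca{G}_0$. Namely, by \cite[Proposition~1.4.4]{HOVEY} it suffices to write the regular comodule $\ubar{H}$ as a filtered colimit of subcomodules $G_i\in\ca{G}_0$, since then the duals $G_i^{\vee}$ generate. After reducing to $R=\mathbb{Z}$, one takes the $G_i$ to be the images of maps $\bigoplus_{i=1}^k V^{\otimes n_i}\rightarrow \ubar{H}$ with $n_i\in\mathbb{Z}$ (negative powers meaning tensor powers of $V^{\vee}$): flatness of $H$ makes these images torsion-free, hence finitely generated free over $\mathbb{Z}$, hence locally free of constant rank, so they lie in $\ca{G}_0$ as quotients of objects of $\ca{G}_0$; the surjection $\Sym\bigl((V^{\vee})^{d}\oplus V^{d}\bigr)\rightarrow\ubar{H}$ from the universal framing shows these images exhaust $\ubar{H}$. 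No Schur algebra input is required. If you want to salvage your approach, you would need a genuinely different argument that every constant-rank bundle on $\mathrm{BGL}_d$ is reachable from $V$ by the four closure operations, and the degree-$n$ generation claim cannot be the engine of it.
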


\begin{proof}
 We use the equivalence between $\Rep(\mathrm{GL}_d)$ and the category of comodules of the Hopf $H=R[x_{ij}]_{\mathrm{det}}$ of regular functions on $\mathrm{GL}_d$. We write $\ubar{H}$ for the $H$-comodule $(H,\delta)$. By the proof of \cite[Proposition~1.4.4]{HOVEY}, it suffices to check that $\ubar{H}$ is a filtered colimit of objects $G_i \in \ca{G}_0$, for then the $G_i^{\vee}$ generate $\Rep(\mathrm{GL}_d)$. It suffices to prove this for $R=\mathbb{Z}$ since all the constructions under which $\ca{G}_0$ is assumed to be closed are stable under base change.

 Let $\ca{I}$ be the poset of subcomodules of $\ubar{H}$ consisting of images of homomorphisms of comodules
\[
 \textstyle\bigoplus\nolimits_{i=1}^k V^{\otimes n_i} \rightarrow \ubar{H}
\]
 where $n_i \in \mathbb{Z}$ and $V^n \defl (V^{\vee})^{-n}$ for $n<0$. Since $H$ is flat, these images are torsion free, hence their underlying $\mathbb{Z}$-modules are finitely generated free. Thus all the elements of $\ca{I}$ are contained in $\ca{G}_0$ (since $\ca{G}_0$ is closed under quotients which are locally free of constant finite rank). The poset $\ca{I}$ is clearly directed, so we are done if we can show that its union is all of $\ubar{H}$. This follows from the fact that there is an epimorphism
\[
 \Sym\bigl((V^{\vee})^{d} \oplus V^{d}\bigr) \rightarrow \ubar{H}
\]
 (see Lemmas~\ref{lemma:universal_framing} and \ref{lemma:universal_locally_free_object}). 
 \end{proof}

\begin{lemma}\label{lemma:generated_by_locally_free}
 Let $\ca{A}$, $\ca{C}$ be as in Theorem~\ref{thm:universal_geometric_category}, and let $A \in \ca{A}$ be an object which is locally free of rank $d$ as an object of $\ca{C}$. Then the representable functor $\ca{A}(-,A)$ is locally free of rank $d \in \mathbb{N}$ as an object of the tensor category $G(\ca{A})$. In particular, $G(\ca{A})$ is generated by locally free objects of constant finite rank.
\end{lemma}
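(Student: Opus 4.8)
The plan is to produce, for every $A \in \ca{A}$ which is locally free of rank $d$ in $\ca{C}$, a tensor functor $\Rep(\mathrm{GL}_d) \to G(\ca{A})$ sending the standard representation $\ubar{R}^d$ to the representable presheaf $\ca{A}(-,A)$; the second half of Lemma~\ref{lemma:universal_locally_free_object} will then immediately give that $\ca{A}(-,A)$ is locally free of rank $d$ in $G(\ca{A})$, using that $G(\ca{A})$ is lfp (Remark~\ref{rmk:universal_tensor_functor}) and hence has exact filtered colimits. Passing through $\Rep(\mathrm{GL}_d)$ is necessary because local freeness in $G(\ca{A})$ cannot be read off from $\ca{C}$ directly: the comparison functor $E \colon G(\ca{A}) \to \ca{C}$ of Remark~\ref{rmk:universal_tensor_functor} is in general neither exact nor conservative.

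First I would apply Theorem~\ref{thm:universal_property_rep_gld} to the tensor category $\ca{C}$, which has equalizers and exact filtered colimits since it is lfp: as $A$ represents an object of $\LF^{\rk d}_{\iso}(\ca{C})$, there is a tensor functor $F_A \colon \Rep(\mathrm{GL}_d) \to \ca{C}$ with $F_A(\ubar{R}^d) \cong A$. Next I would take $\ca{G}_0 \subseteq \Rep(\mathrm{GL}_d)$ to be the generating subcategory of Lemma~\ref{lemma:generator_of_rep_gl_d} and verify that $F_A(M) \in \ca{A}$ for every $M \in \ca{G}_0$. By minimality of $\ca{G}_0$ it suffices to show that the replete full subcategory $\{M : F_A(M) \in \ca{A}\}$ contains $\ubar{R}^d$ and is closed under the four operations defining $\ca{G}_0$. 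For finite direct sums, tensor products and duals this follows from condition~(i) of Theorem~\ref{thm:universal_geometric_category} together with the strong monoidality of $F_A$. The serious case is a quotient $q \colon M \twoheadrightarrow N$ with $N$ locally free of constant rank: then $N$ has a dual, so $F_A(q)$ is a locally split epimorphism in $\ca{C}$ by Lemma~\ref{lemma:split_epi_preserved} (applied to the geometric category $\Rep(\mathrm{GL}_d)$), and $F_A(N)$ is again locally free of constant rank, being the image under a tensor functor of such an object (Theorem~\ref{thm:universal_property_rep_gld} and Lemma~\ref{lemma:universal_locally_free_object}); condition~(ii) of Theorem~\ref{thm:universal_geometric_category} then forces $F_A(N) \in \ca{A}$, so that $N$ lies in the subcategory.

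With this in hand, Lemma~\ref{lemma:factorization_through_F} applies with $\ca{G} = \Rep(\mathrm{GL}_d)$ — which is geometric, being equivalent to $\QCoh(\BGL_d)$ — and with the above $\ca{G}_0$ (a generator of $\Rep(\mathrm{GL}_d)$, closed under finite direct sums and tensor products): it yields a tensor functor $F_A' \colon \Rep(\mathrm{GL}_d) \to G(\ca{A})$ with $E F_A' \cong F_A$ and $F_A'\vert_{\ca{G}_0} \cong Y \circ F_A\vert_{\ca{G}_0}$, whence $F_A'(\ubar{R}^d) \cong Y(A) = \ca{A}(-,A)$. By the first paragraph this proves that $\ca{A}(-,A)$ is locally free of rank $d$. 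Finally, every object of $\ca{A}$ is locally free of constant finite rank in $\ca{C}$ by hypothesis, so the representable presheaves are all locally free of constant finite rank in $G(\ca{A})$, and since these generate $G(\ca{A}) = \Lex_{\Sigma}[\ca{A}^{\op},\Mod_R]$, the category $G(\ca{A})$ is generated by locally free objects of constant finite rank. I expect the main obstacle to be the middle step — that $F_A$ carries $\ca{G}_0$ into $\ca{A}$; it is elementary but is exactly where the two closure hypotheses on $\ca{A}$ in Theorem~\ref{thm:universal_geometric_category} are used, and it rests on the local-splitting machinery of \S\ref{section:affine_coverings}.
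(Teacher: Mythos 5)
Your proposal is correct and follows essentially the same route as the paper's proof: obtain $F \colon \Rep(\mathrm{GL}_d) \to \ca{C}$ with $F\ubar{R}^d \cong A$ from Theorem~\ref{thm:universal_property_rep_gld}, check that $F(\ca{G}_0) \subseteq \ca{A}$ using the closure hypotheses and the local splitness of epimorphisms between duals in the geometric category $\Rep(\mathrm{GL}_d)$, factor through $G(\ca{A})$ via Lemma~\ref{lemma:factorization_through_F}, and conclude by applying the $\mathrm{GL}_d$-classification again. Your spelled-out verification of the middle step (via the minimal-subcategory argument and Lemma~\ref{lemma:split_epi_preserved}) is just a more explicit rendering of what the paper cites from Example~\ref{example:locally_split_in_geom_cat}.
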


\begin{proof}
 By Theorem~\ref{thm:universal_property_rep_gld}, there exists a tensor functor $F \colon \Rep(\mathrm{GL}_d) \rightarrow \ca{C}$ such that $F\ubar{R}^d \cong A$, where $\ubar{R}^d$ denotes the standard representation. We claim that this tensor functor sends all the objects of the generating set $\ca{G}_0$ defined in Lemma~\ref{lemma:generator_of_rep_gl_d} to objects in $\ca{A}$ (at least up to isomorphism). To see this, first recall that all epimorphisms between objects with duals in a geometric tensor category are locally split by an Adams algebra (see Example~\ref{example:locally_split_in_geom_cat}). Since $\ca{A}$ is closed under finite direct sums, tensor products, duals, and \emph{locally split} quotients which are locally free of constant finite rank, it follows that the image of $\ca{G}_0$ under $F$ is indeed contained in $\ca{A}$.

 Thus we can apply Lemma~\ref{lemma:factorization_through_F}, which shows that $F$ factors as
\[
 \xymatrix{\Rep(\mathrm{GL}_d) \ar[r]^-{F^{\prime}} & G(\ca{A}) \ar[r]^-{E} & \ca{C} }
\]
 for some tensor functor $F^{\prime}$ with $F^{\prime}\ubar{R}^d \cong \ca{A}(-,A)$.

 Applying Theorem~\ref{thm:universal_property_rep_gld} again, this shows that $\ca{A}(-,A)$ is indeed locally free of rank $d$ (the theorem is applicable since filtered colimits in $G(\ca{A})$ are exact, they are computed as in $\Prs{A}$).
\end{proof}

 As a next step towards our proof of Theorem~\ref{thm:universal_geometric_category}, we will show that $G(\ca{A})$ is an abelian category.

\begin{lemma}\label{lemma:locally_free_kernel}
 Let $\ca{C}$ be an lfp tensor category and let $M$, $N \in \ca{C}$ be locally free objects of rank $m$, $n \in \mathbb{N}$ respectively. Let $p \colon M \rightarrow N$ be a locally split epimorphism with kernel $K$. Then $K$ is locally free of rank $m-n$ and $K \rightarrow M$ is a locally split monomorphism.
\end{lemma}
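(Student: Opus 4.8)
The plan is to reduce everything to a single faithfully flat base change and then analyze the resulting split situation inside the module category over the endomorphism ring of the unit.

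First I would choose one faithfully flat commutative algebra $A \in \ca{C}$ doing three things at once: trivializing $M$, trivializing $N$, and splitting $p$. Since $M$ and $N$ are locally free there are faithfully flat $B_M$, $B_N$ with $M_{B_M} \cong B_M^{\oplus m}$ and $N_{B_N} \cong B_N^{\oplus n}$, and since $p$ is a locally split epimorphism there is a faithfully flat $B_p$ with $p_{B_p}$ a split epimorphism; put $A \defl B_M \otimes B_N \otimes B_p$, which is faithfully flat because finite tensor products of faithfully flat algebras are. Then $M_A \cong A^{\oplus m}$ and $N_A \cong A^{\oplus n}$ in $\ca{C}_A$, and $p_A$ is a split epimorphism. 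As $(-)_A$ preserves finite limits, $K_A$ is the kernel of $p_A$; a splitting of $p_A$ produces a complementary idempotent on $M_A$, so $K_A \to M_A$ is a split monomorphism, whence $K \to M$ is a locally split monomorphism (this settles the second assertion), and $M_A \cong K_A \oplus N_A$, i.e.
\[
 K_A \oplus A^{\oplus n} \cong A^{\oplus m} \quad\text{in } \ca{C}_A .
\]

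The heart of the proof is to show that $K_A$ is locally free of rank $m-n$ in $\ca{C}_A$; the lemma then follows because local freeness (and having a dual) descends along the faithfully flat base change $(-)_A \colon \ca{C} \to \ca{C}_A$. Write $S \defl \End_{\ca{C}_A}(\U_{\ca{C}_A})$, a commutative ring, and apply the additive functor $\ca{C}_A(\U_{\ca{C}_A},-)$ to the displayed isomorphism: the $S$-module $P \defl \ca{C}_A(\U_{\ca{C}_A},K_A)$ is finitely generated projective with $P \oplus S^{\oplus n} \cong S^{\oplus m}$, hence of constant rank $m-n$. Classically such a $P$ is Zariski-locally free of rank $m-n$, so its universal $(m-n)$-framing algebra $A_P$ of Lemma~\ref{lemma:universal_framing} is faithfully flat over $S$ (with $P \otimes_S A_P \cong A_P^{\oplus m-n}$ tautologically); being faithfully flat it is a flat $S$-module, hence a filtered colimit of finitely generated free $S$-modules by Lazard's theorem, so $A_P$ is an Adams algebra in $\Mod_S$ by Proposition~\ref{prop:faithfully_flat_implies_adams_criterion}. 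Now transport along the canonical symmetric strong monoidal cocontinuous functor $T \colon \Mod_S \to \ca{C}_A$ with $T(S) = \U_{\ca{C}_A}$ (cf.\ Lemma~\ref{lemma:calg_as_monoidal}), which is $S$-linear and a tensor functor: it carries finitely generated projectives to retracts of finite coproducts of the unit, so the counit $T(P) \to K_A$ is an isomorphism; it preserves universal $d$-framings (Lemma~\ref{lemma:framings_preserved}) and Adams algebras (Proposition~\ref{prop:adams_strongly_stable}), so $T(A_P)$ is a faithfully flat algebra in $\ca{C}_A$ which is the universal $(m-n)$-framing of $K_A$, whence $K_A$ is locally free of rank $m-n$ by Proposition~\ref{prop:locally_free_objects}.

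The main obstacle is exactly this middle step: a retract of a free object in a general (possibly non-abelian) tensor category need not be locally free, so one cannot simply argue with ranks. The point is that $K_A$ lands in the full subcategory of retracts of finite coproducts of $\U_{\ca{C}_A}$, which is equivalent to the category of finitely generated projective $S$-modules, where the classical structure theory applies; the genuine work is carrying a trivializing faithfully flat cover back from $\Mod_S$ to $\ca{C}_A$, and since an arbitrary tensor functor need not preserve faithful flatness, this forces one to check that the frame algebra $A_P$ is not merely faithfully flat but Adams — which is precisely where Lazard's theorem enters.
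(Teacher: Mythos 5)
Your proof is correct, and its skeleton is the one the paper uses: pass to a single faithfully flat algebra $A$ trivializing $M$, $N$ and splitting $p$, identify the resulting split situation with one in $\Mod_S$ for $S=\End_{\ca{C}_A}(\U_{\ca{C}_A})$ via the canonical tensor functor $T\colon\Mod_S\to\ca{C}_A$, and transport back. The only genuine divergence is in the last step. The paper observes that $K_A$ is the image under $T$ of a stably free projective $S$-module of constant rank $m-n$ and then invokes, as a black box, the fact that tensor functors between lfp tensor categories preserve locally free objects of constant finite rank (a consequence of Theorem~\ref{thm:universal_property_rep_gld}, i.e.\ of the classification of $\mathrm{GL}_d$-torsors). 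You instead re-derive exactly this instance by hand: you show the universal framing algebra $A_P$ is faithfully flat over $S$, upgrade it to an Adams algebra via Lazard's theorem and Proposition~\ref{prop:faithfully_flat_implies_adams_criterion}, and push it forward using Lemma~\ref{lemma:framings_preserved} and Proposition~\ref{prop:adams_strongly_stable} before applying Proposition~\ref{prop:locally_free_objects}. This buys you independence from the torsor classification at the cost of repeating, in the special case of $\Mod_S$, essentially the argument that proves the general preservation statement; your explicit identification of where Adams algebras (rather than mere faithful flatness) are needed is exactly the right diagnosis. One shared point of terseness: both your argument and the paper's conclude only that $K_A$ is locally free of rank $m-n$ in $\ca{C}_A$ and leave implicit the final descent along $(-)_A$ — in particular that $K$ itself is dualizable in $\ca{C}$, which Definition~\ref{dfn:locally_free_objects} presupposes — so you are not missing anything the paper supplies.
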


\begin{proof}
 By definition of locally free objects and locally split epimorphism we can find a faithfully flat algebra $A \in \ca{C}$ such that $M_A \cong A^n$, $N_A \cong A^n$, and $p_A$ is a split epimorphism in the category $\ca{C}_A$ of $A$-modules. 

 Let $B=\ca{C}_A(A,A)$ be the $R$-algebra of endomorphisms of the $A$-module $A$. There exists a unique tensor functor $F \colon \Mod_B \rightarrow \ca{C}_A$ which sends $B$ to $A$ and induces the identity
\[
 B \cong \End_{\Mod_B}(B) \rightarrow \ca{C}_{A}(A,A)
\]
(this follows for example from Theorem~\ref{thm:day_convolution_universal}, but in this case it can also be seen much more directly). Since $p_A$ and its splitting are given by matrices with coefficients in $B=\ca{C}_A(A,A)$, it follows that $p_A$ is the image of a split epimorphism $q \colon B^m \rightarrow B^n$ in $\Mod_B$. Let $K^{\prime}$ denote its kernel. The fact that the sequence
\[
 \xymatrix{0 \ar[r] & K^{\prime} \ar[r] & B^m \ar[r]^{q} \ar[r] & B^n \ar[r] & 0}
\]
 is split implies that it is preserved by $F \colon \Mod_B \rightarrow \ca{C_A}$. Thus we have $FK^{\prime} \cong K_A$, which shows that $K \rightarrow M$ is a locally split monomorphism.

 It remains to show that $FK^{\prime}$ is locally free of rank $m-n$. But the $B$-module $K^{\prime}$ is locally free of rank $m-n$ in $\Mod_B$, and all tensor functors between lfp tensor categories preserve locally free objects of a constant finite rank (this follows from Theorem~\ref{thm:universal_property_rep_gld}).
\end{proof}

\begin{lemma}\label{lemma:pullback_of_locally_split}
 Let $\ca{A} \subseteq \ca{C}$ be as in Theorem~\ref{thm:universal_geometric_category} and let $p \colon M \rightarrow N$ be a locally split epimorphism in $\ca{C}$. Let
\[
 \xymatrix{M^{\prime} \ar[r]^{p^{\prime}} \ar[d]_{f^{\prime}} & N^{\prime} \ar[d]^{f} \\ M \ar[r]_{p} & N}
\]
 be a pullback diagram in $\ca{C}$ where $f$ is an arbitrary morphism. If the objects $M$, $N$, and $N^{\prime}$ lie in $\ca{A}$, then $M^{\prime}$ lies in $\ca{A}$ as well. Moreover, $p^{\prime}$ is a locally split epimorphism in $\ca{C}$.
\end{lemma}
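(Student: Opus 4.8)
The plan is to exhibit $M^{\prime}$ as the kernel of a locally split epimorphism whose source lies in $\ca{A}$, apply Lemma~\ref{lemma:locally_free_kernel} to conclude that $M^{\prime}$ is locally free of constant rank and that the kernel inclusion $M^{\prime} \rightarrow M \oplus N^{\prime}$ is a locally split monomorphism, and then dualise that monomorphism to bring the situation into the exact shape demanded by hypothesis~(ii) of Theorem~\ref{thm:universal_geometric_category}.

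First I would dispose of the claim that $p^{\prime}$ is a locally split epimorphism. Choose a faithfully flat $B \in \ca{C}$ with a section $s$ of $p_B$. Since $B$ is faithfully flat, $(-)_B$ preserves the given pullback square, so $p^{\prime}_B$ is the pullback of the split epimorphism $p_B$ along $f_B$; a pullback of a split epimorphism along any morphism is split (with section $n^{\prime} \mapsto (s f_B(n^{\prime}),n^{\prime})$), so $p^{\prime}_B$ is split and hence $p^{\prime}$ is locally split.

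Next, set $q \defl (p,-f) \colon M \oplus N^{\prime} \rightarrow N$. Because pullbacks in an additive category are kernels of differences, $M^{\prime} \cong \ker q$, with kernel inclusion $j \colon M^{\prime} \rightarrow M \oplus N^{\prime}$ corresponding to $(f^{\prime},p^{\prime})$. The same base-change computation as above (now using that the section of $p_B$ post-composed with the inclusion $M_B \hookrightarrow M_B \oplus N^{\prime}_B$ splits $q_B$) shows that $q$ is a locally split epimorphism. Since $\ca{A}$ is closed under finite direct sums, $M \oplus N^{\prime} \in \ca{A}$ is locally free of constant rank, and so is $N$; thus Lemma~\ref{lemma:locally_free_kernel} applies to $q$ and gives that $M^{\prime} = \ker q$ is locally free of constant rank and that $j$ is a locally split monomorphism.

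Finally I would pass to duals. As $(-)_B$ is symmetric strong monoidal it commutes with dualisation, so $j^{\vee} \colon (M \oplus N^{\prime})^{\vee} \cong M^{\vee} \oplus N^{\prime\vee} \rightarrow (M^{\prime})^{\vee}$ is a locally split epimorphism (a split monomorphism over $B$ dualises to a split epimorphism over $B$), its source lies in $\ca{A}$ since $\ca{A}$ is closed under duals and finite direct sums, and $(M^{\prime})^{\vee}$ is locally free of constant rank because $M^{\prime}$ is. Hypothesis~(ii) of Theorem~\ref{thm:universal_geometric_category} then yields $(M^{\prime})^{\vee} \in \ca{A}$, whence $M^{\prime} \cong (M^{\prime})^{\vee\vee} \in \ca{A}$, using closure under duals once more. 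The one genuinely non-routine point — and the crux of the argument — is exactly this last move: hypothesis~(ii) speaks about locally split \emph{quotients} of objects of $\ca{A}$, while the pullback naturally presents $M^{\prime}$ only as a (locally split) \emph{subobject} of $M \oplus N^{\prime}$, and it is dualising the monomorphism $j$ that converts the available information into a form to which the hypothesis can be applied.
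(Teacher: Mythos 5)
Your proof is correct and follows essentially the same route as the paper: both realise $M^{\prime}$ as the kernel of the locally split epimorphism $q=(p,-f)\colon M\oplus N^{\prime}\rightarrow N$, invoke Lemma~\ref{lemma:locally_free_kernel} to get local freeness of $M^{\prime}$ together with the locally split monomorphism into $M\oplus N^{\prime}$, and then dualise that monomorphism so that hypothesis~(ii) of Theorem~\ref{thm:universal_geometric_category} can be applied, finishing with $M^{\prime}\cong (M^{\prime})^{\vee\vee}$. The only cosmetic difference is that the paper phrases the middle step as a reduction to the case $N^{\prime}=0$, whereas you work with $q$ directly.
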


\begin{proof}
 First note that pullbacks of locally split epimorphisms are locally split. It suffices to check that pullbacks of split epimorphisms are split, and this is true in \emph{any} category.

 We can reduce the problem to the case where $N^{\prime}=0$ (and thus where $M^{\prime}$ is the kernel $K$ of $p$). Indeed, the morphism $q \colon M \oplus N^{\prime} \rightarrow N$ which restricts to $p$ on $M$ and to $-f$ on $N^{\prime}$ is a locally split epimorphism (since $p$ is), and its kernel is the pullback $M^{\prime}$.

 The kernel $K$ of $p$ is locally free of constant finite rank and the morphism $K \rightarrow M$ is a locally split monomorphism by Lemma~\ref{lemma:locally_free_kernel}. Thus its dual $M^{\vee} \rightarrow K^{\vee}$ is a locally split \emph{epimorphism} whose target is locally free of constant finite rank. The object $M^{\vee}$ lies in $\ca{A}$ since $\ca{A}$ is closed under duals. From Condition~(ii) of Theorem~\ref{thm:universal_geometric_category} it follows that $K^{\vee}$ lies in $\ca{A}$. Using closure under duals again we find that $K \cong K^{\vee \vee}$ lies in $\ca{A}$, as claimed.
\end{proof}

 Let $\Sigma$ be a class of cokernel diagrams in an $R$-linear category $\ca{A}$. Recall that we write $R(\Sigma)$ for the class of all morphisms $q \colon N \rightarrow L$ in $\ca{A}$ for which there exists a morphism $p \colon M \rightarrow N$ in $\ca{A}$ such that $q$ is the cokernel of $p$ and the cokernel diagram
\[
 \xymatrix{M \ar[r]^{p} & N \ar[r]^{q} & L}
\]
lies in $\Sigma$.

\begin{lemma}\label{lemma:locally_split_rex_ind_class}
 Let $\ca{A} \subseteq \ca{C}$ be as in Theorem~\ref{thm:universal_geometric_category}. Let $\Sigma$ be the class of cokernel diagrams in $\ca{A}$ which are locally split right exact sequences in $\ca{C}$. Then $\Sigma$ is an ind-class (see \cite[Definition~2.3]{SCHAEPPI_INDABELIAN} and Definition~\ref{dfn:ind_class}) and the tensor category $G(\ca{A})$ of Definition~\ref{dfn:associated_geometric_cat} is abelian. The class $R(\Sigma)$ consists of all locally split epimorphisms in $\ca{C}$ whose domain and codomain both lie in $\ca{A}$.
\end{lemma}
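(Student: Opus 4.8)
The plan is to prove the three assertions in the order: first the description of $R(\Sigma)$, then the two axioms of an ind-class from Definition~\ref{dfn:ind_class}, and finally the abelianness of $G(\ca{A})$, which will be a formal consequence of Proposition~\ref{prop:ind_class_implies_abelian}. The only substantive inputs are Lemmas~\ref{lemma:locally_free_kernel} and~\ref{lemma:pullback_of_locally_split}, the closure of $\ca{A}$ under duals together with Condition~(ii) of Theorem~\ref{thm:universal_geometric_category}, and the standard facts that a faithfully flat base change functor $(-)_A \colon \ca{C} \to \ca{C}_A$ is exact, cocontinuous and conservative, hence preserves and reflects kernels and cokernels.

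I would begin with the description of $R(\Sigma)$. The inclusion $R(\Sigma) \subseteq \{\,\text{locally split epimorphisms in } \ca{C} \text{ with domain and codomain in } \ca{A}\,\}$ is immediate: a sequence $M \xrightarrow{p} N \xrightarrow{q} L \to 0$ in $\Sigma$ has $N, L \in \ca{A}$, and $q_A$ is a split epimorphism in $\ca{C}_A$ for the faithfully flat $A$ witnessing local split right exactness. For the reverse inclusion, given a locally split epimorphism $q \colon N \to L$ with $N, L \in \ca{A}$, let $\iota \colon K \to N$ be the kernel of $q$ in $\ca{C}$. By Lemma~\ref{lemma:locally_free_kernel}, $K$ is locally free of constant rank and $\iota$ is a locally split monomorphism; dualising, $\iota^{\vee} \colon N^{\vee} \to K^{\vee}$ is a locally split epimorphism onto a locally free object of constant rank, and since $N^{\vee} \in \ca{A}$, Condition~(ii) of Theorem~\ref{thm:universal_geometric_category} plus closure of $\ca{A}$ under duals give $K^{\vee} \in \ca{A}$, hence $K \cong K^{\vee\vee} \in \ca{A}$. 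A faithfully flat descent argument (splitting $q$, where $q_A$ becomes the cokernel of $\iota_A = \ker(q_A)$) then shows that $q$ is the cokernel of $\iota$ in $\ca{C}$, hence also in the full subcategory $\ca{A}$ since $L \in \ca{A}$; and the sequence $K \xrightarrow{\iota} N \xrightarrow{q} L \to 0$ is locally split right exact because after base change along $A$ the induced map from $K_A$ to $\ker(q_A) = K_A$ is the identity. Thus this sequence lies in $\Sigma$ and $q \in R(\Sigma)$.

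With this description, ind-class axiom~(i) is essentially immediate: for $q \in R(\Sigma)$ the morphism $q$ is an epimorphism with domain and codomain in $\ca{A}$, so $B \xrightarrow{q} C \to 0 \to 0$ is a cokernel diagram in $\ca{A}$ (the cokernel of $q$ being $0$), and it is locally split right exact because after splitting $q$ the induced map to $\ker(C_A \to 0) = C_A$ is again $q_A$. For axiom~(ii), given a sequence $M \xrightarrow{p} N \xrightarrow{q} L \to 0$ in $\Sigma$ and a morphism $f \colon D \to N$ in $\ca{A}$ with $qf = 0$, I would factor $p = \iota r$ through $\iota = \ker_{\ca{C}}(q)$, noting that $r \colon M \to K$ is a locally split epimorphism (this is the ``locally split right exact'' hypothesis) and that $K \in \ca{A}$ by the argument above, and factor $f = \iota g$ through $K$. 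Then I would take $E \defl M \times_K D$, the pullback of $r$ along $g$ in $\ca{C}$, with projections $f' \colon E \to M$ and $p' \colon E \to D$; the required square commutes since $p f' = \iota r f' = \iota g p' = f p'$. Since $M, K, D \in \ca{A}$, Lemma~\ref{lemma:pullback_of_locally_split} gives $E \in \ca{A}$ and that $p'$ is a locally split epimorphism, hence $p' \in R(\Sigma)$ by the description already established, while $f' \colon E \to M$ is a morphism in $\ca{A}$. This shows $\Sigma$ is an ind-class; then Proposition~\ref{prop:ind_class_implies_abelian} (applicable since $\ca{A}$ has finite direct sums by Condition~(i) of Theorem~\ref{thm:universal_geometric_category}) shows that $G(\ca{A}) = \Lex_{\Sigma}[\ca{A}^{\op},\Mod_R]$ is lfp abelian, hence an abelian tensor category by Remark~\ref{rmk:universal_tensor_functor}.

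I expect the main obstacle to be axiom~(ii) of the ind-class, and within it the two points that make it work: knowing that the kernel $K$ of a locally split epimorphism between objects of $\ca{A}$ again lies in $\ca{A}$ — which is precisely why the description of $R(\Sigma)$ is established first, and is where Lemma~\ref{lemma:locally_free_kernel}, closure of $\ca{A}$ under duals, and Condition~(ii) of Theorem~\ref{thm:universal_geometric_category} all enter — and choosing the correct map to pull back, namely the induced epimorphism $r \colon M \to K$ onto the kernel rather than $p \colon M \to N$, so that the resulting pullback square has the shape demanded by Definition~\ref{dfn:ind_class} and so that all three relevant objects lie in $\ca{A}$ for Lemma~\ref{lemma:pullback_of_locally_split} to apply.
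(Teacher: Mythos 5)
Your proposal is correct and follows essentially the same route as the paper: establish the description of $R(\Sigma)$ first (with the kernel of a locally split epimorphism landing in $\ca{A}$ via Lemma~\ref{lemma:locally_free_kernel}, duals, and Condition~(ii) of Theorem~\ref{thm:universal_geometric_category} — which is exactly the content of Lemma~\ref{lemma:pullback_of_locally_split} that the paper cites directly), then verify the two ind-class axioms by factoring through the kernel and pulling back the induced locally split epimorphism, and finally invoke Proposition~\ref{prop:ind_class_implies_abelian}. The only difference is cosmetic: you re-derive the special case of Lemma~\ref{lemma:pullback_of_locally_split} for kernels and are slightly more explicit about why the resulting sequence is a cokernel diagram in $\ca{A}$.
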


\begin{proof}
 That the category $G(\ca{A})=\Lex_{\Sigma}[\ca{A}^{\op},\Mod_R]$ is abelian follows from the first claim and Proposition~\ref{prop:ind_class_implies_abelian}.

 We first prove the claim about $R(\Sigma)$. By definition of $\Sigma$ it is clear that all morphisms in $R(\Sigma)$ are locally split epimorphisms in $\ca{C}$. Conversely, if $p \colon M \rightarrow N$ is a locally split epimorphism in $\ca{C}$ with $M$, $N \in \ca{A}$, then its kernel $K$ lies in $\ca{A}$ by Lemma~\ref{lemma:pullback_of_locally_split}. Since the right exact sequence
\[
 \xymatrix{K \ar[r] & M \ar[r]^-{p} & N \ar[r] & 0}
\]
 is locally split, it lies in $\Sigma$. This shows that $p$ lies in $R(\Sigma)$, as claimed.

 To see that $\Sigma$ is an ind-class, note that
\[
 \xymatrix{N \ar[r]^{q} & L \ar[r] & 0 \ar[r] & 0}
\]
 is a locally split right exact sequence in $\ca{C}$ if $q$ is a locally split epimorphism in $\ca{C}$. This shows that $\Sigma$ satisfies Condition~(i) of Definition~\ref{dfn:ind_class}. To see that it also satisfies Condition~(ii), let
\[
 \xymatrix{M \ar[r]^p & N \ar[r]^{q} & L \ar[r] & 0}
\]
 be in $\Sigma$, and let $f \colon D \rightarrow N$ be a morphism in $\ca{A}$ with $qf=0$. Let $k \colon K \rightarrow N$ be the kernel of $q$ and let $p^{\prime} \colon M \rightarrow K$ be the factorization of $p$ through $k$. Then $p^{\prime}$ is a locally split epimorphism in $\ca{C}$ by definition of locally split right exact sequences (see Definition~\ref{dfn:split_rex_sequence}). The object $K$ lies in $\ca{A}$ by Lemma~\ref{lemma:pullback_of_locally_split}. Since $qf=0$ there exists a morphism $f^{\prime} \colon D \rightarrow K$ such that $kf^{\prime}=f$. Let
\[
 \xymatrix{E \ar[r]^{p^{\prime \prime}} \ar[d]_{f^{\prime\prime}} & D \ar[d]^{f^{\prime}} \\ M \ar[r]_{p^{\prime}} & K }
\]
 be a pullback diagram in $\ca{C}$. Since $\ca{A}$ is closed under pullbacks of locally split epimorphisms along arbitrary morphisms (see Lemma~\ref{lemma:pullback_of_locally_split}), the object $E$ lies in $\ca{A}$ and $p^{\prime\prime}$ is locally split. As we saw above, this implies that $p^{\prime \prime}$ lies in  $R(\Sigma)$. Thus $\Sigma$ does indeed satisfy Condition~(ii) of the definition of an ind-class.
\end{proof}

 At this point we have shown that $G(\ca{A})$ is generated by a set of locally free objects of constant finite rank (see Lemma~\ref{lemma:generated_by_locally_free}) and that $G(\ca{A})$ is abelian (see Lemma~\ref{lemma:locally_split_rex_ind_class} above). If $R$ is a $\mathbb{Q}$-algebra, then we already know that $G(\ca{A})$ is a geometric tensor category from Theorem~\ref{thm:qalg_geometric_characterization}. To see this for general commutative base rings $R$, we still need to check the condition about epimorphisms in Theorem~\ref{thm:description}. To do this, we will use the fact that $G(\ca{A})$ can also be described as a category of (enriched) sheaves for a Grothendieck topology on $\ca{A}$ (see \cite[Proposition~2.6]{SCHAEPPI_INDABELIAN}). In the unenriched case, there is a convenient characterization of epimorphisms in the category of sheaves. The same remains true in the enriched case, though the proofs have to be modified a bit. Therefore we defer the proof of the following proposition to Appendix~\ref{appendix:sheaves}.

\begin{prop}\label{prop:epi_characterization}
 Let $\ca{A} \subseteq \ca{C}$ be as in Theorem~\ref{thm:universal_geometric_category}. Let
\[
\ca{A}(-,f) \colon \ca{A}(-,A) \rightarrow \ca{A}(-,B) 
\]
 be a morphism between representable presheaves in $G(\ca{A})$. Then $\ca{A}(-,f)$ is an epimorphism in $G(\ca{A})$ if and only if $f \colon A \rightarrow B$ is a locally split epimorphism in $\ca{C}$.
\end{prop}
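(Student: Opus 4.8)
The plan is to prove the proposition by establishing, for a morphism $f \colon A \to B$ in $\ca{A}$, the chain of equivalences: $\ca{A}(-,f)$ is an epimorphism in $G(\ca{A})$ $\iff$ there exist a morphism $q \colon B' \to B$ in $R(\Sigma)$ and a morphism $h \colon B' \to A$ with $q = f h$ $\iff$ $f$ is a locally split epimorphism in $\ca{C}$. The second equivalence is elementary and uses only results already at hand. For its ``if'' direction, Lemma~\ref{lemma:locally_split_rex_ind_class} identifies $R(\Sigma)$ with the class of locally split epimorphisms in $\ca{C}$ whose domain and codomain lie in $\ca{A}$, so one may take $q = f$ and $h = \id_A$. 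For the ``only if'' direction, a morphism $q \in R(\Sigma)$ becomes a split epimorphism after base change along some faithfully flat algebra $D \in \ca{C}$; then $q_D = f_D h_D$ forces $f_D$ to be a split epimorphism in $\ca{C}_D$, so $f$ is a locally split epimorphism.

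One half of the first equivalence is also elementary. If $q = f h$ with $q \in R(\Sigma)$, then by definition of $R(\Sigma)$ there is a sequence $K \to B' \to B \to 0$ in $\Sigma$ with $q$ a cokernel of $K \to B'$. Since a presheaf lies in $G(\ca{A}) = \Lex_\Sigma[\ca{A}^{\op},\Mod_R]$ precisely when it sends such sequences to left exact sequences of $R$-modules, the Yoneda lemma shows that $\ca{A}(-,B)$ is the cokernel of $\ca{A}(-,K) \to \ca{A}(-,B')$ in $G(\ca{A})$, so $\ca{A}(-,q)$ is an epimorphism. As $\ca{A}(-,q) = \ca{A}(-,f) \circ \ca{A}(-,h)$, it follows that $\ca{A}(-,f)$ is an epimorphism as well.

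The remaining implication — that $\ca{A}(-,f)$ being an epimorphism forces the existence of such $q$ and $h$ — is where I would invoke the description of $G(\ca{A})$ as a category of enriched sheaves. By \cite[Proposition~2.6]{SCHAEPPI_INDABELIAN}, $G(\ca{A})$ is the category of $R$-linear sheaves for the Grothendieck topology $J_\Sigma$ on $\ca{A}$ associated to the ind-class $\Sigma$, whose covering sieves are those containing a morphism of $R(\Sigma)$, and the reflection $R \colon \Prs{A} \to \Lex_\Sigma$ is the associated-sheaf functor. Now $\ca{A}(-,f)$ is an epimorphism in $G(\ca{A})$ if and only if its cokernel there vanishes, that is, if and only if $R$ annihilates the presheaf cokernel $P$ of $\ca{A}(-,f)$, which at an object $C$ is $\ca{A}(C,B)/f_{\ast}\ca{A}(C,A)$. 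The crux is the characterization: $R(P) = 0$ if and only if $P$ is \emph{locally zero}, meaning that for every $C$ and every $s \in P(C)$ the sieve $\{g \colon C' \to C \mid P(g)(s) = 0\}$ is $J_\Sigma$-covering. Applying this with $C = B$ and $s$ the class of $\id_B$ shows that the sieve generated by $f$ is $J_\Sigma$-covering, hence contains a morphism $q \in R(\Sigma)$; but a morphism lying in this sieve factors through $f$, so $q = f h$ for some $h$, as required.

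The main obstacle is the characterization of when $R$ kills a presheaf (equivalently, the fact that epimorphisms of enriched sheaves are local epimorphisms). In the unenriched case this is classical, resting on the sheafification functor being right exact, on presheaf epimorphisms being objectwise surjective, and on subobjects of sheaves being detected locally; in the $\Mod_R$-enriched setting these arguments have to be redone with some care about how the enriched associated-sheaf functor interacts with cokernels and image factorizations. This is precisely the content deferred to Appendix~\ref{appendix:sheaves}. A secondary technical point is the explicit identification of the covering sieves of $J_\Sigma$ with those containing a morphism of $R(\Sigma)$, but this is part of the construction of a Grothendieck topology from an ind-class in \cite{SCHAEPPI_INDABELIAN} and is governed by condition~(ii) of Definition~\ref{dfn:ind_class}.
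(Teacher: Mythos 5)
Your overall strategy is the same as the paper's: identify $G(\ca{A})$ with the category of $R$-linear sheaves for the coverage $R(\Sigma)$, identify $R(\Sigma)$ with the locally split epimorphisms between objects of $\ca{A}$ (Lemma~\ref{lemma:locally_split_rex_ind_class}), and reduce everything to a local characterization of epimorphisms between representable sheaves. Your two elementary reductions are correct: the equivalence between ``$f$ factors a covering'' and ``$f$ is locally split'' is exactly the closure property (ii) of Definition~\ref{dfn:closure_properties} for this coverage, and the easy half of the sheaf-theoretic equivalence (a covering induces an epimorphism of representables) is the argument via $\Lex_{\Sigma}$ sending the defining cokernel diagram to a kernel diagram, which is also how the paper proves it in Proposition~\ref{prop:epimorphisms_of_representable_sheaves}.

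The one substantive divergence is in the hard direction, and here your proposal leans on a criterion that the appendix does not actually establish. You reduce to: the reflection $R$ annihilates the presheaf cokernel $P$ of $\ca{A}(-,f)$ if and only if $P$ is locally zero. The direction you need ($RP=0$ implies $P$ locally zero) is the nontrivial one; it requires an analysis of how the Borceux--Quinteiro reflection is built (an enriched plus-construction), and nothing in the paper supplies this. The paper instead avoids touching $R$ altogether: it forms the \emph{image} subpresheaf $S_f \subseteq \ca{A}(-,B)$ and its local closure $S_f^{\prime}$ (sections that become liftable after precomposition with a covering), proves directly from the closure properties of the coverage that $S_f^{\prime}$ is itself a sheaf (Lemma~\ref{lemma:generated_subsheaf}), and then observes that the inclusion $S_f^{\prime} \rightarrow \ca{A}(-,B)$ is both a monomorphism and an epimorphism in the abelian category $\Sh_{R(\Sigma)}(\ca{A})$, hence an isomorphism; evaluating at $\id_B$ produces the covering $q=fh$. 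Since $S_f^{\prime}=\ca{A}(-,B)$ is equivalent to your ``$P$ is locally zero,'' the two formulations are interchangeable, and your plan closes by substituting this subsheaf argument for the unproven sheafification criterion. As written, though, that criterion is a genuine gap: it is standard for $\Set$-valued sheaves but would itself need a proof in the $\Mod_R$-enriched setting, which is precisely the kind of care the appendix is there to exercise.
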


 With this in hand, we can finish the proof of Theorem~\ref{thm:universal_geometric_category}.

\begin{proof}[Proof of Theorem~\ref{thm:universal_geometric_category}]
 To show that $G(\ca{A})$ is geometric, we will check that the conditions of Theorem~\ref{thm:description} are satisfied. The lfp tensor category $G(\ca{A})$ is abelian by Lemma~\ref{lemma:locally_split_rex_ind_class} and it is generated by locally free objects of constant finite rank by Lemma~\ref{lemma:generated_by_locally_free}. The condition about epimorphisms follows if we can show the following claim: given an epimorphism $\ca{A}(-,f)\colon \ca{A}(-,A) \rightarrow \ca{A}(-,B)$ in $G(\ca{A})$, the induced morphism on fixed points
\[
\bigl( \ca{A}(-,f)^{\otimes i} \bigr)^{\Sigma_i} \colon \bigl( \ca{A}(-,A)^{\otimes i} \bigr)^{\Sigma_i} \rightarrow \bigl( \ca{A}(-,B)^{\otimes i} \bigr)^{\Sigma_i}
\]
 of the symmetric group action on the $i$-fold tensor product of $\ca{A}(-,f)$ is again an epimorphism.

 From Propostion~\ref{prop:epi_characterization} we know that $f \colon A \rightarrow B$ is a locally split epimorphism in $\ca{C}$. From this it follows that
\[
 ( f^{\otimes i})^{\Sigma_i} \colon (A^{\otimes i})^{\Sigma_i} \rightarrow (B^{\otimes i})^{\Sigma_i}
\]
 is a locally split epimorphism in $\ca{C}$ as well (it is sent to a split epimorphism by any base change functor which sends $f$ to a split epimorphism). Moreover, the objects $(A^{\otimes i})^{\Sigma_i}$ and $(B^{\otimes i})^{\Sigma_i}$ lie in $\ca{A}$. To see this it suffices to prove that their duals $\Sym^i(A^{\vee})$ and $\Sym^i(B^{\vee})$ lie in $\ca{A}$. But $\Sym^i(A^{\vee})$ is locally free of constant finite rank and $(A^{\vee})^{\otimes i} \rightarrow \Sym^i(A^{\vee})$ is a locally split epimorphism (this follows for example by Theorem~\ref{thm:universal_property_rep_gld} since the statement is clearly true for the standard representation of $\mathrm{GL}_d$). Thus $\Sym^i(A^{\vee})$ and similarly $\Sym^i(B^{\vee})$ lie in $\ca{A}$ by Condition~(ii) of Theorem~\ref{thm:universal_geometric_category}.

 By Proposition~\ref{prop:epi_characterization}, the locally split epimorphism $(f^{\otimes i})^{\Sigma_i}$ is sent to an epimorphism by the Yoneda embedding $\ca{A} \rightarrow G(\ca{A})$. The claim now follows from the two facts that the Yoneda embedding is symmetric strong monoidal and that it preserves any limits that exist in $\ca{A}$, in particular the fixed points of the symmetric group actions on $A^{\otimes i}$ and $B^{\otimes i}$ respectively.
\end{proof}

\subsection{The universal property}\label{section:basic_properties_universal_geometric}
 Having established that $G(\ca{A})$ is a geometric category, we can deduce a few more facts about its structure. For example, we can give a precise description of its subcategory of vector bundles. Recall that a \emph{vector bundle} in a geometric category is a locally free object of finite rank (not necessarily constant, though). This description will be crucial for establishing the desired universal property of $G(\ca{A})$.

\begin{prop}\label{prop:vector_bundles_in_GA}
 Let $\ca{A} \subseteq \ca{C}$ be as in Theorem~\ref{thm:universal_geometric_category}. Then the Yoneda embedding $\ca{A} \rightarrow G(\ca{A})$ gives an equivalence between $\ca{A}$ and the category of vector bundles in $G(\ca{A})$ of constant finite rank. The category of \emph{all} vector bundles in $G(\ca{A})$ is equivalent to the Karoubi envelope $\overline{\ca{A}}$ of $\ca{A}$.
\end{prop}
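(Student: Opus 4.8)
The plan is to prove the two assertions in turn, the first being the substantial one. For the first I would show that the Yoneda embedding $Y\colon\ca{A}\rightarrow G(\ca{A})$ is fully faithful (by the Yoneda lemma) and takes values in the full subcategory $\VB^{c}\bigl(G(\ca{A})\bigr)$ of vector bundles of constant finite rank — indeed, by Lemma~\ref{lemma:generated_by_locally_free} the representable $\ca{A}(-,A)$ is locally free of rank $d$ whenever $A\in\ca{A}$ has rank $d$ — and then that $Y$ is essentially surjective onto $\VB^{c}\bigl(G(\ca{A})\bigr)$.

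For the essential surjectivity, let $F$ be a vector bundle of constant rank $d$ in $G(\ca{A})$. Since $F$ has a dual and the unit of $G(\ca{A})$ is finitely presentable, $F$ is finitely presentable, so by Proposition~\ref{prop:ind_class_implies_abelian} there is a morphism $f\colon A_{0}\rightarrow A_{1}$ in $\ca{A}$ and a right exact sequence $\ca{A}(-,A_{0})\xrightarrow{\ca{A}(-,f)}\ca{A}(-,A_{1})\xrightarrow{\;q\;}F\rightarrow 0$ in $G(\ca{A})$. Because $G(\ca{A})$ is geometric (Theorem~\ref{thm:universal_geometric_category}) and $F$ has a dual, Example~\ref{example:locally_split_in_geom_cat} shows that $q$ is split by an Adams algebra of $G(\ca{A})$; likewise the induced epimorphism $\ca{A}(-,A_{0})\twoheadrightarrow K\defl\ker q$ is split by an Adams algebra, its target $K$ being again a constant-rank vector bundle by Lemma~\ref{lemma:locally_free_kernel}. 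Now I would apply the tensor functor $E\colon G(\ca{A})\rightarrow\ca{C}$ of Remark~\ref{rmk:universal_tensor_functor}: it preserves cokernels (being a left adjoint), sends Adams algebras to faithfully flat algebras by Proposition~\ref{prop:adams_strongly_stable} (as $\ca{C}$ is lfp, hence has exact filtered colimits), preserves locally free objects of constant rank by Theorem~\ref{thm:universal_property_rep_gld}, and $E\circ Y$ is isomorphic to the inclusion $\ca{A}\hookrightarrow\ca{C}$. Hence $E$ carries the two locally split short exact sequences above to locally split short exact sequences in $\ca{C}$, producing a cokernel diagram
\[
 A_{0}\xrightarrow{\;f\;} A_{1}\xrightarrow{E(q)} E(F)\rightarrow 0
\]
in $\ca{C}$ in which $A_{1}\twoheadrightarrow E(F)$ is a locally split epimorphism onto a locally free object of constant rank, with kernel $E(K)$, and with the induced map $A_{0}\twoheadrightarrow E(K)$ locally split. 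By Condition~(ii) of Theorem~\ref{thm:universal_geometric_category} it follows that $E(F)\in\ca{A}$ (and similarly $E(K)\in\ca{A}$), so the above is a cokernel diagram in $\ca{A}$ which is locally split right exact in $\ca{C}$ in the sense of Definition~\ref{dfn:split_rex_sequence}, i.e.\ an element of the set $\Sigma$ of Definition~\ref{dfn:associated_geometric_cat}. As noted in the proof of Lemma~\ref{lemma:factorization_through_F}, the Yoneda embedding sends cokernel diagrams in $\Sigma$ to cokernel diagrams, so $\ca{A}(-,A_{0})\xrightarrow{\ca{A}(-,f)}\ca{A}(-,A_{1})\rightarrow\ca{A}\bigl(-,E(F)\bigr)\rightarrow 0$ is a cokernel diagram in $G(\ca{A})$. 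Comparing this with the defining right exact sequence of $F$, which has the \emph{same} first map $\ca{A}(-,f)$, yields $F\cong\ca{A}\bigl(-,E(F)\bigr)$; thus $F$ is representable and essential surjectivity follows.

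For the second assertion, $G(\ca{A})\simeq\QCoh(X)$ for an Adams stack $X$ (Theorem~\ref{thm:universal_geometric_category}), and Proposition~\ref{prop:vb_c_generator} shows that every vector bundle in $G(\ca{A})$ is a direct summand of one of constant rank. Conversely a direct summand of a constant-rank vector bundle is again a vector bundle, and idempotents split in the abelian category $G(\ca{A})$; hence the full subcategory of \emph{all} vector bundles of $G(\ca{A})$ is the closure of $\VB^{c}\bigl(G(\ca{A})\bigr)$ under splitting of idempotents, that is, its Karoubi envelope. Combining this with the equivalence $\ca{A}\simeq\VB^{c}\bigl(G(\ca{A})\bigr)$ from the first part identifies the vector bundles in $G(\ca{A})$ with $\overline{\ca{A}}$.

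The hard part is the essential surjectivity argument, and within it the point that the epimorphisms onto $F$ and onto $K$ are split not merely by faithfully flat algebras but by \emph{Adams} algebras: this is exactly what makes them survive the tensor functor $E$, which is neither exact nor conservative, and Condition~(ii) of Theorem~\ref{thm:universal_geometric_category} is the ingredient that then forces $E(F)$ and $E(K)$ back into $\ca{A}$ so that the resulting cokernel diagram lies in $\Sigma$.
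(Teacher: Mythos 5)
Your proposal is correct and follows essentially the same route as the paper's own proof: Yoneda lands in the constant-rank vector bundles by Lemma~\ref{lemma:generated_by_locally_free}, a presentation from Proposition~\ref{prop:ind_class_implies_abelian} is locally split by an Adams algebra, hence survives the (non-exact, non-conservative) tensor functor $E$, Condition~(ii) of Theorem~\ref{thm:universal_geometric_category} places $E(F)$ in $\ca{A}$, and comparing cokernels of $\ca{A}(-,f)$ gives $F\cong\ca{A}(-,EF)$. Your treatment is in fact slightly more explicit than the paper's in tracking the kernel $K$ and the splitting of $\ca{A}(-,A_0)\twoheadrightarrow K$, which the paper compresses into a single citation of Example~\ref{example:locally_split_in_geom_cat}.
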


\begin{proof}
 The second statement follows from the first and the fact that any vector bundle in a geometric category is a direct summand of a vector bundle of constant rank (see Proposition~\ref{prop:vb_c_generator}).

 We know from Lemma~\ref{lemma:generated_by_locally_free} that all objects in the image of the Yoneda embedding are vector bundles of constant rank. Since the Yoneda embedding is fully faithful it only remains to show the converse: any vector bundle $M \in G(\ca{A})$ of constant rank lies in the image of the Yoneda embedding. 
 
 The object $M$ is finitely presentable since it has a dual (here we are using the fact that the unit of $G(\ca{A})$ is finitely presentable: it lies in the image of the Yoneda embedding). By Proposition~\ref{prop:ind_class_implies_abelian}, there exists a morphism $f \colon A \rightarrow A^{\prime}$ in $\ca{A}$ and a right exact sequence
\begin{equation}\label{eqn:presentation_of_vector_bundle}
  \xymatrix{\ca{A}(-,A) \ar[r]^-{\ca{A}(-,f)} & \ca{A}(-,A^{\prime}) \ar[r] & M \ar[r] & 0 } 
\end{equation}
 in $G(\ca{A})$. This sequence is locally split by an Adams algebra (see Example~\ref{example:locally_split_in_geom_cat}). It follows that it is sent to a locally split right exact sequence by \emph{any} tensor functor whose codomain has exact filtered colimits. Applying this to the tensor functor $E \colon G(\ca{A}) \rightarrow \ca{C}$ from Remark~\ref{rmk:universal_tensor_functor} we find that there exists a locally split right exact sequence
\begin{equation}\label{eqn:image_of_presentation}
 \xymatrix{A \ar[r]^-{f} & A^{\prime} \ar[r] & EM \ar[r] & 0}
\end{equation}
 in $\ca{C}$. The object $EM \in \ca{C}$ is locally free of constant rank by Theorem~\ref{thm:universal_property_rep_gld} (it is the image of the standard representation for some tensor functor from $\Rep(\mathrm{GL}_d)$ to $\ca{C}$). We have just argued that the epimorphism $A^{\prime} \rightarrow EM$ is locally split, so from Condition~(ii) of Theorem~\ref{thm:universal_geometric_category} it follows that $EM \in \ca{A}$.

 Moreover, since the right exact sequence of Diagram~\eqref{eqn:image_of_presentation} is locally split right exact, it lies in the class $\Sigma$ of Definition~\ref{dfn:associated_geometric_cat}. Thus it is preserved by the Yoneda embedding $\ca{A} \rightarrow \Lex_{\Sigma}[\ca{A}^{\op},\Mod_R]=G(\ca{A})$. Combining this with the right exact sequence in Diagram~\eqref{eqn:presentation_of_vector_bundle} we find that $M \cong \ca{A}(-,EM)$, as claimed.
\end{proof}

 As a consequence of this, we find that $G(\ca{A})$ has the following universal property among geometric tensor categories.

\begin{prop}\label{prop:universal_property_of_associated_geometric_category}
 Let $\ca{A} \subseteq \ca{C}$ be as in Theorem~\ref{thm:universal_geometric_category} and let $\ca{G}$ be a geometric tensor category over $R$ with generating set $\ca{G}_0$, closed under finite direct sums and tensor products, and consisting of vector bundles of constant rank. Then composition with the tensor functor $E \colon G(\ca{A}) \rightarrow \ca{C}$ of Remark~\ref{rmk:universal_tensor_functor} induces a fully faithful functor
\[
 \Fun_{c,\otimes }\bigr(\ca{G},G(\ca{A})\bigl) \rightarrow \Fun_{c,\otimes}(\ca{G},\ca{C})
\]
 whose essential image consists of all tensor functors $F \colon \ca{G} \rightarrow \ca{C}$ with $F(\ca{G}_0) \subseteq \ca{A}$.
\end{prop}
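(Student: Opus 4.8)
The plan is to prove the two assertions --- full faithfulness, and the description of the essential image --- separately, in both cases reducing to statements about the generating set $\ca{G}_0$ via Theorem~\ref{thm:adams_vb_universal}. Since $\ca{G}$ is a geometric tensor category it is equivalent, as a tensor category, to $\QCoh(X)$ for an Adams stack $X$ (Theorem~\ref{thm:equivalence_adams_geometric}), and under this equivalence $\ca{G}_0$ is a generator of $\QCoh(X)$ contained in $\VB(X)$ and closed under finite direct sums and tensor products; moreover $G(\ca{A})$ is an lfp tensor category by Remark~\ref{rmk:universal_tensor_functor}. Hence Theorem~\ref{thm:adams_vb_universal} applies with $\ca{D}=G(\ca{A})$ and with $\ca{D}=\ca{C}$, yielding equivalences
\[
 \Fun_{c,\otimes}(\ca{G},\ca{D}) \xrightarrow{\ \sim\ } \Fun_{\Sigma_0,\otimes}(\ca{G}_0,\ca{D})
\]
given by restriction along $\ca{G}_0 \hookrightarrow \ca{G}$, where $\Sigma_0$ denotes the set of right exact sequences in $\ca{G}$ whose entries lie in $\ca{G}_0$. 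These restrictions commute strictly with postcomposition by $E$, so it suffices to prove that
\[
 E\circ(-) \colon \Fun_{\Sigma_0,\otimes}(\ca{G}_0,G(\ca{A})) \longrightarrow \Fun_{\Sigma_0,\otimes}(\ca{G}_0,\ca{C})
\]
is fully faithful with essential image the functors $\ca{G}_0 \to \ca{C}$ that factor through the inclusion $\ca{A}\hookrightarrow\ca{C}$.

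The key observation is that every $H \in \Fun_{\Sigma_0,\otimes}(\ca{G}_0,G(\ca{A}))$ takes values in the full subcategory $\VB^c\bigl(G(\ca{A})\bigr)$ of vector bundles of constant rank. Indeed, $H$ is isomorphic to the restriction of a cocontinuous tensor functor $\widehat{H}\colon \ca{G}\to G(\ca{A})$ (by the equivalence just recalled), and tensor functors between lfp tensor categories preserve vector bundles of constant rank by Theorem~\ref{thm:universal_property_rep_gld} (as in the proof of Lemma~\ref{lemma:locally_free_kernel}); hence $HM\cong \widehat{H}M$ is locally free of constant rank for all $M\in\ca{G}_0$. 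Now, by Proposition~\ref{prop:vector_bundles_in_GA} the Yoneda embedding $Y\colon\ca{A}\to G(\ca{A})$ is a symmetric strong monoidal equivalence onto $\VB^c\bigl(G(\ca{A})\bigr)$, which is closed under tensor products, and $E\circ Y$ is isomorphic, as a symmetric strong monoidal functor, to the inclusion $K\colon\ca{A}\to\ca{C}$ (Remark~\ref{rmk:universal_tensor_functor}), which is fully faithful. Thus the restriction of $E$ to $\VB^c\bigl(G(\ca{A})\bigr)$ is a fully faithful symmetric strong monoidal functor, and postcomposition with it induces (by a routine diagram chase using that $E$ is faithful and strong monoidal on this subcategory) a bijection on symmetric monoidal natural transformations between symmetric strong monoidal functors $\ca{G}_0 \to \VB^c\bigl(G(\ca{A})\bigr)$. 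This gives full faithfulness. It also shows that $E\circ H$ takes values in $E\bigl(\VB^c(G(\ca{A}))\bigr)\subseteq\ca{A}$, so the essential image is contained in the functors factoring through $\ca{A}$.

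For the reverse inclusion, let $F\colon\ca{G}\to\ca{C}$ be a cocontinuous tensor functor with $F(\ca{G}_0)\subseteq\ca{A}$. Lemma~\ref{lemma:factorization_through_F} produces a tensor functor $F'\colon\ca{G}\to G(\ca{A})$ --- which is cocontinuous, since a tensor functor is by definition a symmetric strong monoidal left adjoint --- together with a symmetric monoidal isomorphism $EF'\cong F$. Hence $F$ lies in the essential image, completing the proof. The reductions above are essentially bookkeeping; the substance is supplied by Theorem~\ref{thm:adams_vb_universal}, Proposition~\ref{prop:vector_bundles_in_GA}, and Lemma~\ref{lemma:factorization_through_F}. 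The one point demanding a little care is the compatibility with symmetric monoidal structures throughout --- chiefly the assertion that $E$ restricted to $\VB^c\bigl(G(\ca{A})\bigr)$ is symmetric strong monoidal (which follows since $\VB^c(G(\ca{A}))$ is closed under tensor products and $Y$, $K$ are symmetric strong monoidal) and that the isomorphism $EF'\cong F$ of Lemma~\ref{lemma:factorization_through_F} respects the monoidal structures.
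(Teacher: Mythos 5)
Your proposal is correct and follows essentially the same route as the paper's proof: reduce along the restriction equivalence to $\ca{G}_0$ (the paper cites Corollary~\ref{cor:C_universal_tensor} where you cite Theorem~\ref{thm:adams_vb_universal}, which is the same statement specialized to Adams stacks), observe that functors into $G(\ca{A})$ land in $\VB^c\bigl(G(\ca{A})\bigr)$ and hence factor through the Yoneda embedding by Proposition~\ref{prop:vector_bundles_in_GA}, conclude full faithfulness from $EY\cong K$ being fully faithful, and obtain the essential image from Lemma~\ref{lemma:factorization_through_F}. No gaps.
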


\begin{proof}
 We will first identify the essential image. That any such tensor functor $F$ lies in the essential image is a consequence of Lemma~\ref{lemma:factorization_through_F}. Conversely, if $F=EF^{\prime}$ for some tensor functor $F^{\prime} \colon \ca{G} \rightarrow G(\ca{A})$, we know that $F^{\prime}$ preserves locally free objects of constant rank (since there are Adams algebras exhibiting each such object as locally free). Thus $F^{\prime} \vert_{\ca{G}_0}$ factors through the Yoneda embedding $Y \colon \ca{A} \rightarrow G(\ca{A})$ by Proposition~\ref{prop:vector_bundles_in_GA}. It follows that $F \vert_{\ca{G}_0}$ factors through $EY$, which by Definition of $E$ is isomorphic to the inclusion $\ca{A} \rightarrow \ca{C}$ (see Remark~\ref{rmk:universal_tensor_functor}).

 It remains to check that composition with $E$ gives a fully faithful functor. Let $\Sigma_0$ be the set of cokernel diagrams in $\ca{G}_0$ which are right exact sequences in $\ca{G}$. Then restriction along the inclusion $\ca{G}_0 \rightarrow \ca{G}$ induces an equivalence
\[
 \Fun_{c,\otimes}(\ca{G},\ca{D}) \rightarrow \Fun_{\Sigma_0,\otimes}(\ca{G}_0,\ca{D})
\]
 for any tensor category $\ca{D}$ (see Corollary~\ref{cor:C_universal_tensor}). As we have just seen, any tensor functor in $\Fun_{\Sigma_0,\otimes}\bigr(\ca{G}_0,G(\ca{A})\bigl)$ factors through the Yoneda embedding. The claim therefore follows from the fact that $EY$ is fully faithful (it is isomorphic to the inclusion $\ca{A} \rightarrow \ca{C}$).
\end{proof}

 Specializing this to the case where $\ca{A}$ is the category of \emph{all} locally free objects of finite constant rank, we obtain the following theorem. It will be used in the next section to construct limits of weakly Tannakian categories and colimits of Adams stacks.

\begin{thm}\label{thm:geometric_categories_coreflective}
 Let $\ca{C}$ be an lfp tensor category over $R$ (not necessarily abelian) and let $\LF^c_{\ca{C}} \subseteq \ca{C}$ be the full subcategory of locally free objects of constant rank. Then for any geometric tensor category $\ca{G}$ over $R$, composition with the tensor functor $E \colon G(\LF^c_{\ca{C}}) \rightarrow \ca{C}$ of Remark~\ref{rmk:universal_tensor_functor} induces an equivalence
\[
 \Fun_{c,\otimes}\bigr(\ca{G},G(\LF^c_{\ca{C}})\bigl) \rightarrow \Fun_{c,\otimes}(\ca{G},\ca{C})
\]
 of categories. Thus the inclusion of geometric tensor categories in the 2-category of lfp tensor categories and tensor functors between them has a right biadjoint given by $G(\LF^c_{(-)})$.
\end{thm}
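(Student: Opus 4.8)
The plan is to obtain this as the special case $\ca{A}=\LF^c_{\ca{C}}$ of Proposition~\ref{prop:universal_property_of_associated_geometric_category}, the point being that for this choice of $\ca{A}$ the condition cutting out the essential image becomes automatic. First I would verify that $\LF^c_{\ca{C}}$ satisfies the hypotheses of Theorem~\ref{thm:universal_geometric_category}. Condition~(ii) holds trivially, since $\LF^c_{\ca{C}}$ contains \emph{all} locally free objects of constant rank in $\ca{C}$ by definition. For condition~(i) one checks closure under finite direct sums, finite tensor products, and duals: if $M$ and $N$ are locally free of ranks $m$ and $n$ via faithfully flat algebras $B_1$ and $B_2$, then $B\defl B_1\otimes B_2$ is faithfully flat, $B\otimes M\cong B^{\oplus m}$ and $B\otimes N\cong B^{\oplus n}$ by base change along $B_1\to B$ and $B_2\to B$, and hence (using that $(-)_B$ is symmetric strong monoidal) $B\otimes(M\oplus N)\cong B^{\oplus(m+n)}$, $B\otimes(M\otimes N)\cong B^{\oplus mn}$, and $B\otimes M^{\vee}\cong(B\otimes M)^{\vee}\cong B^{\oplus m}$. (Objects locally free of constant finite rank are finitely presentable, so $\LF^c_{\ca{C}}$ is essentially small.) Theorem~\ref{thm:universal_geometric_category} then shows that $G(\LF^c_{\ca{C}})$ is a geometric tensor category, and Remark~\ref{rmk:universal_tensor_functor} supplies the tensor functor $E\colon G(\LF^c_{\ca{C}})\to\ca{C}$.

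Now fix a geometric tensor category $\ca{G}$ over $R$ and let $\ca{G}_0\subseteq\ca{G}$ be the full subcategory of vector bundles of constant rank. It consists of vector bundles of constant rank, it is closed under finite direct sums and finite tensor products by the same computation as above, and it is a generating set of $\ca{G}$: by Theorem~\ref{thm:equivalence_adams_geometric} we may write $\ca{G}\simeq\QCoh(X)$ for an Adams stack $X$, and $\VB^c(X)$ generates $\QCoh(X)$ by Proposition~\ref{prop:vb_c_generator} (cf.\ the proof of Corollary~\ref{cor:vb_density_presentation}). Applying Proposition~\ref{prop:universal_property_of_associated_geometric_category} with $\ca{A}=\LF^c_{\ca{C}}$ and this choice of $\ca{G}_0$ gives a fully faithful functor
\[
 \Fun_{c,\otimes}\bigl(\ca{G},G(\LF^c_{\ca{C}})\bigr)\longrightarrow\Fun_{c,\otimes}(\ca{G},\ca{C}),\qquad F'\longmapsto EF',
\]
whose essential image consists of the tensor functors $F\colon\ca{G}\to\ca{C}$ with $F(\ca{G}_0)\subseteq\LF^c_{\ca{C}}$. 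The specialization pays off precisely here: \emph{every} tensor functor $F\colon\ca{G}\to\ca{C}$ has this property. Indeed, if $V\in\ca{G}$ is locally free of constant rank $d$, then by Theorem~\ref{thm:universal_property_rep_gld} there is a tensor functor $F'\colon\Comod(H_d)\to\ca{G}$ with $F'\ubar{R}^d\cong V$, so that $FV\cong(F\circ F')\ubar{R}^d$ is locally free of constant rank $d$ in the lfp (hence filtered-exact) tensor category $\ca{C}$ by Lemma~\ref{lemma:universal_locally_free_object}, i.e.\ $FV\in\LF^c_{\ca{C}}$. Thus the essential image is all of $\Fun_{c,\otimes}(\ca{G},\ca{C})$, and composition with $E$ is an equivalence, which is the displayed statement of the theorem.

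For the final sentence I would invoke the bicategorical principle that a right biadjoint is determined by its pointwise universal property: we have exhibited, for each lfp tensor category $\ca{C}$, a geometric tensor category $G(\LF^c_{\ca{C}})$ and a tensor functor $E_{\ca{C}}\colon G(\LF^c_{\ca{C}})\to\ca{C}$ such that precomposition with $E_{\ca{C}}$ is an equivalence $\Fun_{c,\otimes}\bigl(\ca{G},G(\LF^c_{\ca{C}})\bigr)\simeq\Fun_{c,\otimes}(\ca{G},\ca{C})$ pseudonaturally in the geometric tensor category $\ca{G}$ (both sides being computed by precomposition along tensor functors $\ca{G}'\to\ca{G}$). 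This data assembles $\ca{C}\mapsto G(\LF^c_{\ca{C}})$ into a pseudofunctor $G(\LF^c_{(-)})$ and the $E_{\ca{C}}$ into the counit of a right biadjunction to the inclusion of geometric tensor categories. I do not expect a serious obstacle anywhere: the remaining verifications are immediate from the results already established, and the only point requiring genuine care is this last assembly into an honest biadjunction, which is a standard but somewhat bookkeeping-heavy piece of $2$-category theory.
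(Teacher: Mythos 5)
Your proposal is correct and follows essentially the same route as the paper: verify the hypotheses of Theorem~\ref{thm:universal_geometric_category} for $\LF^c_{\ca{C}}$ (with condition~(ii) vacuous), apply Proposition~\ref{prop:universal_property_of_associated_geometric_category} with $\ca{G}_0=\LF^c_{\ca{G}}$, and observe that the essential-image condition is automatic because every tensor functor preserves locally free objects of constant finite rank via Theorem~\ref{thm:universal_property_rep_gld}. The paper's proof is the same argument, phrased slightly more tersely.
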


\begin{proof}
 First note that $\LF^c_{\ca{C}}$ satisfies the conditions of Theorem~\ref{thm:universal_geometric_category}. The various closure properties of Condition~(i) follow from the fact that for any finite set of objects in $\LF^c_{\ca{C}}$ we can find a single faithfully flat algebra which sends all of them to finite direct sums of the unit. Condition~(ii) is vacuously true for $\LF^c_{\ca{C}}$.
 
 The fact that $E$ induces an equivalence precisely means that the geometric tensor category $G(\LF^c_{\ca{C}})$ is a bicategorical coreflection of the lfp tensor category $\ca{C}$, that is, that the inclusion described in the statement of the theorem has the desired right biadjoint.

 We will apply Proposition~\ref{prop:universal_property_of_associated_geometric_category} to show that composition with $E$ gives an equivalence. Let $\ca{G}_0 \subseteq \ca{G}$ be the full subcategory of vector bundles of constant rank (that is, $\ca{G}_0=\LF^c_{\ca{G}}$). Since $\ca{G}$ is a geometric tensor category, the subcategory $\ca{G}_0$ forms a generator of $\ca{G}$, and it clearly satisfies the conditions of Proposition~\ref{prop:universal_property_of_associated_geometric_category}. To conclude the proof it only remains to show that for any tensor functor $F \colon \ca{G} \rightarrow \ca{C}$ we have $F(\ca{G}_0) \subseteq \LF^c_{\ca{C}}$, that is, that any such tensor functor preserves locally free objects of constant finite rank. This follows for example from Theorem~\ref{thm:universal_property_rep_gld}, or from the fact that in a geometric category we can more directly find an Adams algebra (not just a faithfully flat algebra) which sends a given locally free object of rank $d$ to the $d$-fold direct sum of the unit object. 
\end{proof}

 Another special case of the construction $G(\ca{A})$ is the case where the category $\ca{C}$ is already geometric, but $\ca{A}$ is a proper subcategory of the category of vector bundles of constant rank (for example, generated in a suitable sense by a single vector bundle).

\begin{dfn}\label{dfn:stack_generated_by_vector_bundles}
 Let $X$ be an Adams stack, and let $S$ be a set of vector bundles of constant rank on $X$. Let $\langle S \rangle$ be the smallest subcategory of $\QCoh(X)$ which is closed under finite direct sums, finite tensor products, duals, and quotients which are vector bundles of constant rank. Let $X^{S}$ be the (essentially) unique Adams stack with $\QCoh(X^{S}) \simeq G(\langle S \rangle)$.
\end{dfn}

\begin{rmk}
 The category $\langle S \rangle$ is precisely the smallest subcategory which satisfies the conditions of Theorem~\ref{thm:universal_geometric_category}. This follows since any epimorphism between vector bundles is locally split (see Example~\ref{example:duals_locally_split}). Thus $G(\langle S \rangle)$ is indeed geometric, so the above definition makes sense.
\end{rmk}

 Recall that the pseudofunctor which sends an Adams stack $X$ to the geometric tensor category $\QCoh(X)$ gives a contravariant equivalence between the 2-category of Adams stacks and the 2-category of geometric tensor categories and tensor functors (see Theorem~\ref{thm:equivalence_adams_geometric}). In particular, in the situation of Definition~\ref{dfn:stack_generated_by_vector_bundles}, there exists  a morphism $e \colon X \rightarrow X^{S}$ such that $e^{\ast} \cong E \colon G(\langle S \rangle) \rightarrow \QCoh(X)$. The universal property of Proposition~\ref{prop:universal_property_of_associated_geometric_category} implies that $X^S$ has a universal property in the 2-category of Adams stacks.

\begin{prop}\label{prop:stack_generated_by_vector_bundles}
 Let $X$ be an Adams stack and let $S$ be a set of vector bundles on $X$, each of constant finite rank, and let $\langle S \rangle \subseteq \QCoh(X)$ and $X^S$ be as in Definition~\ref{dfn:stack_generated_by_vector_bundles}. Then the functor $e^{\ast} \colon \QCoh(X^S) \rightarrow \QCoh(X)$ restricts to an equivalence of the category $\VB^c(X^S)$ of vector bundles of constant rank on $X^S$ and the full subcategory $\langle S \rangle$ of $\QCoh(X)$.

 Moreover, for any Adams stack $Y$, composition with $e \colon X \rightarrow X^S$ induces an equivalence between the category of morphisms $X^S \rightarrow Y$ and the full subcategory of morphisms $f \colon X \rightarrow Y$ for which $f^{\ast}\bigl(\VB^c(Y)\bigr) \subseteq \langle S \rangle$. 
\end{prop}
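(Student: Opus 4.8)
The plan is to deduce both assertions from the structural results about $G(\ca{A})$ proved above, specialised to $\ca{A}=\langle S\rangle\subseteq\ca{C}=\QCoh(X)$. As noted in the remark preceding the proposition, $\langle S\rangle$ is the smallest full subcategory of $\QCoh(X)$ satisfying the hypotheses of Theorem~\ref{thm:universal_geometric_category}, so Propositions~\ref{prop:vector_bundles_in_GA} and \ref{prop:universal_property_of_associated_geometric_category} both apply; and by the discussion preceding the proposition, $e\colon X\rightarrow X^S$ is the morphism of Adams stacks corresponding under Theorem~\ref{thm:equivalence_adams_geometric} to the tensor functor $E\colon G(\langle S\rangle)\rightarrow\QCoh(X)$ of Remark~\ref{rmk:universal_tensor_functor}, so that $e^{\ast}\cong E$ under the equivalence $\QCoh(X^S)\simeq G(\langle S\rangle)$.

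For the first assertion, recall from Proposition~\ref{prop:vector_bundles_in_GA} that the Yoneda embedding $Y\colon\langle S\rangle\rightarrow G(\langle S\rangle)$ restricts to an equivalence onto the full subcategory of vector bundles of constant rank, which is precisely $\VB^c(X^S)$ after transporting along $\QCoh(X^S)\simeq G(\langle S\rangle)$. By Remark~\ref{rmk:universal_tensor_functor} the composite $E\circ Y$ is isomorphic to the inclusion $\langle S\rangle\hookrightarrow\QCoh(X)$. Since $e^{\ast}\cong E$, the restriction of $e^{\ast}$ to $\VB^c(X^S)$ agrees, up to these equivalences, with the inclusion $\langle S\rangle\hookrightarrow\QCoh(X)$, which is fully faithful with essential image $\langle S\rangle$; hence $e^{\ast}$ does restrict to an equivalence $\VB^c(X^S)\simeq\langle S\rangle$.

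For the second assertion, fix an Adams stack $Y$. Under the contravariant biequivalence of Theorem~\ref{thm:equivalence_adams_geometric} the hom-category $\ca{AS}(X^S,Y)$ is equivalent to $\Fun_{c,\otimes}\bigl(\QCoh(Y),G(\langle S\rangle)\bigr)$ and $\ca{AS}(X,Y)$ to $\Fun_{c,\otimes}\bigl(\QCoh(Y),\QCoh(X)\bigr)$, and ``composition with $e$'' on the left corresponds to ``post-composition with $E$'' on the right, since $(g\circ e)^{\ast}=e^{\ast}\circ g^{\ast}\cong E\circ g^{\ast}$ (again using $e^{\ast}\cong E$). Now $\QCoh(Y)$ is a geometric tensor category and, by Proposition~\ref{prop:vb_c_generator}, $\VB^c(Y)$ is a generating set; it is moreover essentially small, closed under finite direct sums and finite tensor products, and consists of vector bundles of constant rank. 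Hence Proposition~\ref{prop:universal_property_of_associated_geometric_category} applies with $\ca{G}=\QCoh(Y)$ and $\ca{G}_0=\VB^c(Y)$, and tells us that post-composition with $E$ is fully faithful with essential image the tensor functors $F$ satisfying $F\bigl(\VB^c(Y)\bigr)\subseteq\langle S\rangle$. Translating back through Theorem~\ref{thm:equivalence_adams_geometric} yields the claim: composition with $e$ is a fully faithful functor $\ca{AS}(X^S,Y)\rightarrow\ca{AS}(X,Y)$ whose essential image consists of the morphisms $f\colon X\rightarrow Y$ with $f^{\ast}\bigl(\VB^c(Y)\bigr)\subseteq\langle S\rangle$.

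There is no essential difficulty here: both statements are obtained by assembling previously established results. The only points needing care are bookkeeping ones — verifying that $\VB^c(Y)$ genuinely satisfies all the closure and smallness hypotheses of Proposition~\ref{prop:universal_property_of_associated_geometric_category}, and checking that the (pseudofunctorial) contravariant biequivalence of Theorem~\ref{thm:equivalence_adams_geometric} identifies ``composition with $e$'' with ``post-composition with $E$'' compatibly with $2$-cells, so that one really obtains an equivalence of hom-categories rather than merely a bijection on isomorphism classes of morphisms.
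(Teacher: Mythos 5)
Your proof is correct and follows essentially the same route as the paper's own (much terser) argument: the first claim is Proposition~\ref{prop:vector_bundles_in_GA} combined with the fact that $E\circ Y$ is the inclusion $\langle S\rangle\hookrightarrow\QCoh(X)$, and the second is Proposition~\ref{prop:universal_property_of_associated_geometric_category} applied with $\ca{G}_0=\VB^c(Y)$, translated through the biequivalence of Theorem~\ref{thm:equivalence_adams_geometric}. Your extra care in checking that $\VB^c(Y)$ satisfies the hypotheses of that proposition is exactly the bookkeeping the paper leaves implicit.
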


\begin{proof}
 Recall that $\QCoh(X^S)\simeq G(\langle S \rangle)$. The claim about vector bundles of constant rank on $X^S$ therefore follows from Proposition~\ref{prop:vector_bundles_in_GA}.

 The second part follows from the universal property of $G(\langle S \rangle)$ (see Proposition~\ref{prop:universal_property_of_associated_geometric_category}) and the contravariant equivalence between the 2-category of Adams stacks and the 2-category of geometric tensor categories (see Theorem~\ref{thm:equivalence_adams_geometric}).
\end{proof}

 There is a similar construction in the theory of Tannakian categories. Given a Tannakian category $\ca{T}$ and an object $V \in \ca{T}$, the category $\langle V \rangle$ is the full subcategory of subquotients of finite tensor products of copies of $V$ and its dual. Despite the slightly different description, this category coincides with the category of the same name in Definition~\ref{dfn:stack_generated_by_vector_bundles} (where $\QCoh(X)=\Ind(\ca{T})$ and $S$ is the set $\{ V \}$). This follows since all epimorphisms and monomorphisms between objects in $\ca{T}$ are locally split in $\Ind(\ca{T})$, and all objects of $\ca{T}$ are locally free of constant finite rank as objects of $\Ind(\ca{T})$. Moreover, in this case there is a simpler description of the geometric category $G(\langle S \rangle)$.

\begin{prop}
 Let $\ca{T}$ be a Tannakian category over a field $k$, and let $S$ be a set of objects of $\ca{T}$. Then the category $\langle S \rangle \subseteq \Ind(\ca{T})$ of Definition~\ref{dfn:stack_generated_by_vector_bundles} is Tannakian, the geometric category $G(\langle S \rangle)$ is equivalent to $\Ind(\langle S \rangle)$, and the tensor functor $E \colon \Ind(\langle S \rangle) \rightarrow \Ind(\ca{T})$ of Remark~\ref{rmk:universal_tensor_functor} is exact and fully faithful.
\end{prop}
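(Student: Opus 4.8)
The plan is to treat the three assertions separately; the first and the third are short, and the real content is the identification $G(\langle S \rangle) \simeq \Ind(\langle S \rangle)$.

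For the first assertion I would use the description of $\langle S \rangle$ recalled just before the proposition: it is the full subcategory of $\ca{T}$ whose objects are the subquotients of finite direct sums of finite tensor products of copies of objects in $S$ and their duals. In particular $\langle S \rangle$ is a full $k$-linear tensor subcategory of $\ca{T}$ containing $\U$, closed under duals and subquotients, hence a rigid abelian subcategory with $\End_{\langle S \rangle}(\U) = \End_{\ca{T}}(\U) = k$, and it is essentially small. Restricting a fiber functor $\omega \colon \ca{T} \to \Mod_B$ of $\ca{T}$ to $\langle S \rangle$ then gives a faithful, flat, right exact symmetric strong monoidal $k$-linear functor, so $\langle S \rangle$ is Tannakian. (Alternatively, $\langle S \rangle$ is weakly Tannakian by Proposition~\ref{prop:geometric_weakly_tannakian}, since $G(\langle S \rangle)$ is geometric by the remark following Definition~\ref{dfn:stack_generated_by_vector_bundles}.)

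For the second assertion I would first observe that the set $\Sigma$ of Definition~\ref{dfn:associated_geometric_cat} is, in this situation, exactly the set of all cokernel diagrams of the abelian category $\langle S \rangle$. Indeed, given a cokernel diagram $A_0 \to A_1 \to A_2 \to 0$ in $\langle S \rangle$, the map $A_1 \to A_2$ is an epimorphism in $\ca{C} = \Ind(\ca{T}) = \QCoh(X)$ onto the vector bundle $A_2$, hence locally split by Example~\ref{example:locally_split_in_geom_cat}, and the induced map $A_0 \to \ker(A_1 \to A_2)$ is an epimorphism onto the vector bundle $\ker(A_1 \to A_2)$, hence also locally split; so the diagram is a locally split right exact sequence in $\ca{C}$ and lies in $\Sigma$. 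Since $\langle S \rangle$ is an abelian subcategory of $\Ind(\langle S \rangle)$ closed under subquotients, $\Sigma$ is also precisely the set of right exact sequences of $\Ind(\langle S \rangle)$ with entries in $\langle S \rangle$. Applying Corollary~\ref{cor:C_universal_tensor} to the lfp abelian tensor category $\Ind(\langle S \rangle)$ with generator $\langle S \rangle$, and Theorem~\ref{thm:lex_sigma_universal} to $G(\langle S \rangle) = \Lex_\Sigma[\langle S \rangle^{\op}, \Mod_R]$, both categories are seen to have the same universal property — restriction along the embedding of $\langle S \rangle$ induces, for every tensor category $\ca{D}$, an equivalence onto the category of symmetric strong monoidal functors $\langle S \rangle \to \ca{D}$ preserving the sequences in $\Sigma$ — and are therefore equivalent as symmetric monoidal, indeed geometric, tensor categories, compatibly with the embeddings of $\langle S \rangle$.

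For the third assertion, under this identification the functor $E$ of Remark~\ref{rmk:universal_tensor_functor} is the unique cocontinuous extension to ind-objects of the inclusion $j \colon \langle S \rangle \hookrightarrow \ca{T} \subseteq \Ind(\ca{T})$. The functor $j$ is exact, because $\langle S \rangle$ is closed under kernels and cokernels in $\ca{T}$, so $E$ is right exact (being cocontinuous) and left exact: every morphism of $\Ind(\langle S \rangle)$ is a filtered colimit of morphisms between objects of $\langle S \rangle$, its kernel is the corresponding filtered colimit of kernels, and $E$ preserves both the colimit and these kernels. For full faithfulness I would use that $\langle S \rangle$, being closed under subobjects in $\ca{T}$, is idempotent complete, so that it coincides with the category of finitely presentable objects of $\Ind(\langle S \rangle)$; since $j$ is fully faithful with image in the finitely presentable objects of $\Ind(\ca{T})$, the computation of hom-sets in categories of ind-objects as filtered colimits of limits yields $\Ind(\langle S \rangle)(X,Y) \cong \Ind(\ca{T})(EX, EY)$ for all $X, Y$, i.e.\ $E$ is fully faithful (the standard fact that $\Ind$ preserves fully faithful functors). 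I expect the only genuine point to be the observation opening the third paragraph above — that $\Sigma$ is the set of all cokernel diagrams, so that the localization defining $G$ collapses to the category of ind-objects; after that everything follows formally from the results of Section~\ref{section:universal} and the general theory of ind-objects, and there is no serious obstacle, the proposition being essentially an illustration of the construction $G(-)$.
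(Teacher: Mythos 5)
Your proposal is correct and follows essentially the same route as the paper: identify $\langle S \rangle$ with the subquotient-closed rigid abelian subcategory generated by $S$, show that every cokernel diagram of $\langle S \rangle$ is locally split right exact in $\Ind(\ca{T})$ so that $\Sigma$ collapses to all right exact sequences and $\Lex_\Sigma$ to $\Ind(\langle S\rangle)$, and deduce exactness and full faithfulness of $E$ from the corresponding properties of the inclusion $\langle S\rangle \hookrightarrow \Ind(\ca{T})$. The only cosmetic differences are that the paper splits the sequences by a single Adams algebra coming from a fiber functor $\ca{T}\to\Vect_K$ (rather than citing Example~\ref{example:locally_split_in_geom_cat} twice) and identifies $G(\langle S\rangle)$ with $\Ind(\langle S\rangle)$ directly rather than by matching universal properties.
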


\begin{proof}
 As in the case $S=\{ V \}$ outlined above we find that the category $\langle S \rangle$ consists precisely of the subquotients of finite tensor products of objects in $S$ and their duals. Thus $\langle S \rangle$ is an abelian subcategory of $\Ind(\ca{T})$, and the inclusion $\langle S \rangle \rightarrow \Ind(\ca{T})$ is exact.

 Let $w \colon \ca{T} \rightarrow \Vect_K$ be a fiber functor for some field extension $K \supseteq k$. The induced tensor functor $\Ind(\ca{T}) \rightarrow \Vect_K$ sends all right exact sequences whose objects lie in $\ca{T}$ to split exact sequences. Up to equivalence, this tensor functor is of the form $(-)_B$ for some Adams algebra $B \in \Ind(\ca{T})$ (see \cite[Proposition~3.9]{SCHAEPPI_GEOMETRIC} and Proposition~\ref{prop:adams_affine_algebra}). Thus the class $\Sigma$ of cokernel diagrams in $\langle S \rangle$ which are locally split right exact sequences in $\Ind(\ca{T})$ coincides with the class of \emph{all} right exact sequences in the abelian category $\langle S \rangle$. It follows that
\[
 G(\langle S \rangle)=\Lex_{\Sigma}[\langle S \rangle^{\op},\Vect_k]
\]
 is the category of ind-objects of $\langle S \rangle$. The induced tensor functor
\[
 E \colon \Ind(\langle S \rangle ) \rightarrow \Ind(\ca{T})
\]
 of Remark~\ref{rmk:universal_tensor_functor} is exact and fully faithful since the inclusion $\langle S \rangle \rightarrow \Ind(\ca{T})$ has the same properties.
\end{proof}

\subsection{Colimits of Adams stacks}\label{section:colimits_of_adams_stacks}

 In order to prove our result on the cocompleteness of the 2-category of Adams stacks, it is convenient to shift our perspective slightly. From Theorem~\ref{thm:geometric_categories_coreflective} we know that the 2-category of geometric tensor categories is a (bicategorically) coreflective sub-2-category of the 2-category of lfp tensor categories and tensor functors between them. If the latter had all bicategorical limits, we could deduce that the 2-category of geometric tensor categories has all bicategorical limits as well. However, there are some subtleties to take into account: while the limit of locally finitely presentable categories is again locally presentable, it need not be locally \emph{finitely} presentable in general. Moreover, it is unclear if $\Fun_{c,\otimes}(\ca{C},\ca{D})$ is essentially small for general lfp tensor categories $\ca{C}$, $\ca{D}$.

 For these reasons it makes sense to consider the 2-category of weakly Tannakian categories (see Definition~\ref{dfn:weakly_tannakian}) instead of the 2-category of geometric tensor categories.

 As we saw in Proposition~\ref{prop:geometric_weakly_tannakian}, these 2-categories are equivalent: if $\ca{C}$ is geometric, then the full subcategory $\ca{C}_{\fp}$ of finitely presentable objects is weakly Tannakian, and if $\ca{A}$ is weakly Tannakian, then $\Ind(\ca{A})$ is geometric. The 2-category of weakly Tannakian categories has the advantage that it is naturally a sub-2-category of the 2-category $\ca{RM}$, which does not have the above mentioned drawbacks.

 The hom-categories of $\ca{RM}$ are clearly essentially small since the objects of $\ca{RM}$ are essentially small categories. Moreover, $\ca{RM}$ has all bicategorical limits and colimits. This follows for example from the fact that $\ca{RM}$ is the category of ``commutative monoids'' (more precisely, symmetric pseudomonoids) in the bicategorically complete and cocomplete 2-category $\Rex$ of essentially small finitely cocomplete $R$-linear categories and right exact functors between them (see \cite[\S 5]{SCHAEPPI_INDABELIAN}). The usual arguments for existence of limits and colimits in the category of commutative algebras in an lfp tensor category can be ``categorified.''

 To any right exact symmetric monoidal category we can associate a weakly Tannakian category using the construction introduced in \S \ref{section:universal_geometric_cats}. In order to simplify the statement, we extend the definition of locally free objects and locally split epimorphism to right exact symmetric monoidal categories as follows.

\begin{dfn}\label{dfn:locally_free_in_RM}
 An object $A$ of a right exact symmetric monoidal category $\ca{A}$ is called \emph{locally free of rank $d \in \mathbb{N}$} (respectively locally free of constant finite rank) if $A$, considered as an object of the lfp tensor category $\Ind(\ca{A})$, is locally free of rank $d$ in the sense of Definition~\ref{dfn:locally_free_objects} (respectively if it is locally free of rank $d$ for some $d \in \mathbb{N}$). We write $\LF^c_{\ca{A}}$ for the full subcategory of $\ca{A}$ consisting of locally free objects of constant finite rank.

 A morphism in $\ca{A}$ is called a \emph{locally split epimorphism} if it is a locally split epimorphism in $\Ind(\ca{A})$ in the sense of Definition~\ref{dfn:locally_split_epi}.
\end{dfn}

\begin{rmk}
 Let $\ca{A}$ be a right exact symmetric monoidal category. Then the category $\LF^c_{\ca{A}}$ of locally free objects of constant finite rank in $\ca{A}$ is equivalent to the category $\LF^{c}_{\Ind(\ca{A})}$ of locally free objects of constant finite rank in the lfp tensor category $\Ind(\ca{A})$. It is immediate from the definition that the former category is a full subcategory of the latter. To see that this inclusion is an equivalence, note that objects of $\LF^c_{\Ind(\ca{A})}$ have duals, so they are finitely presentable in $\Ind(\ca{A})$ since the unit of $\Ind(\ca{A})$ is finitely presentable. The claim follows from the fact that the finitely presentable objects of $\Ind(\ca{A})$ are precisely the objects isomorphic to objects in the image of the embedding of $\ca{A}$ in $\Ind(\ca{A})$.
\end{rmk}

\begin{dfn}\label{dfn:universal_weakly_tannakian}
 For $\ca{A} \in \ca{RM}$, let 
\[
 T(\ca{A}) \defl G\bigl(\LF^c_{\ca{A}},\Ind(\ca{A})\bigr)_{\fp}
\]
 be the right exact symmetric monoidal category of finitely presentable objects in the geometric tensor category $G\bigl(\LF^c_{\ca{A}},\Ind(\ca{A})\bigr)$ associated to the full subcategory $\LF^c_{\ca{A}}$ of $\Ind(\ca{A})$ consisting of locally free objects of constant finite rank (see Theorem~\ref{thm:universal_geometric_category}). To keep the notation simpler, we will write $G(\LF^c_{\ca{A}})$ for $G\bigl(\LF^c_{\ca{A}},\Ind(\ca{A})\bigr)$ in the remainder of this section.
\end{dfn}

\begin{thm}\label{thm:weakly_tannakian_coreflective}
 The assignment which sends $\ca{A} \in \ca{RM}$ to $T(\ca{A})$ gives a right biadjoint to the inclusion of the full sub-2-category of weakly Tannakian categories in $\ca{RM}$. In particular, the 2-category of weakly Tannakian categories has all bicategorical limits, and it is closed under all bicategorical colimits. The limit of a diagram of weakly Tannakian categories $\ca{A}_i$ is given by $T( \lim \ca{A}_i)$, where $\lim \ca{A}_i$ denotes the limit in $\ca{RM}$. 
\end{thm}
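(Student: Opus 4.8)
The plan is to verify that the counit-to-be $\varepsilon_{\ca{A}}\colon T(\ca{A})\rightarrow\ca{A}$ is a universal arrow, and then invoke the standard universal-arrow characterization of biadjunctions to conclude that $T$ assembles into a right biadjoint of the inclusion $\iota$ of weakly Tannakian categories into $\ca{RM}$. First I would pin down $\varepsilon_{\ca{A}}$: as observed in the proof of Theorem~\ref{thm:geometric_categories_coreflective}, the subcategory $\LF^c_{\ca{A}}\subseteq\Ind(\ca{A})$ (which coincides with $\LF^c_{\Ind(\ca{A})}$ by the remark preceding Definition~\ref{dfn:universal_weakly_tannakian}) satisfies the hypotheses of Theorem~\ref{thm:universal_geometric_category}, so $G(\LF^c_{\ca{A}})$ is a geometric tensor category and hence $T(\ca{A})=G(\LF^c_{\ca{A}})_{\fp}$ is weakly Tannakian by Proposition~\ref{prop:geometric_weakly_tannakian}. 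The tensor functor $E\colon G(\LF^c_{\ca{A}})\rightarrow\Ind(\ca{A})$ of Remark~\ref{rmk:universal_tensor_functor} preserves objects with duals and finite colimits; since in a geometric tensor category every finitely presentable object is a cokernel of a morphism between objects with duals (using condition~(ii) of Definition~\ref{dfn:weakly_tannakian} twice, together with the fact that kernels of epimorphisms between finitely presentable objects are finitely generated), it follows that $E$ restricts to a right exact symmetric strong monoidal functor $\varepsilon_{\ca{A}}\colon T(\ca{A})\rightarrow\Ind(\ca{A})_{\fp}=\ca{A}$; here we use that $\ca{A}$, having cokernels, is idempotent complete, so $\Ind(\ca{A})_{\fp}\simeq\ca{A}$.

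The core step is then to show that for every weakly Tannakian category $\ca{B}$, precomposition with $\varepsilon_{\ca{A}}$ is an equivalence $\ca{RM}(\ca{B},T(\ca{A}))\rightarrow\ca{RM}(\ca{B},\ca{A})$, which I would obtain by composing three equivalences. The first is $\ca{RM}(\ca{B},\ca{A})\simeq\Fun_{c,\otimes}\bigl(\Ind(\ca{B}),\Ind(\ca{A})\bigr)$, valid for \emph{arbitrary} $\ca{A}\in\ca{RM}$ because $\ca{B}$ is weakly Tannakian: $\Ind(\ca{B})$ is geometric, so every finitely presentable object of $\Ind(\ca{B})$ is a cokernel of a morphism between objects with duals, whence any tensor functor out of $\Ind(\ca{B})$ preserves finitely presentable objects and restricts to a morphism in $\ca{RM}$, with $\Ind(-)$ as inverse. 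The second, applied to the lfp tensor category $G(\LF^c_{\ca{A}})$, gives $\ca{RM}(\ca{B},T(\ca{A}))=\ca{RM}\bigl(\ca{B},G(\LF^c_{\ca{A}})_{\fp}\bigr)\simeq\Fun_{c,\otimes}\bigl(\Ind(\ca{B}),G(\LF^c_{\ca{A}})\bigr)$, using $\Ind\bigl(G(\LF^c_{\ca{A}})_{\fp}\bigr)\simeq G(\LF^c_{\ca{A}})$. The third is exactly Theorem~\ref{thm:geometric_categories_coreflective} with $\ca{C}=\Ind(\ca{A})$ and the geometric tensor category $\ca{G}=\Ind(\ca{B})$: composition with $E$ is an equivalence $\Fun_{c,\otimes}\bigl(\Ind(\ca{B}),G(\LF^c_{\ca{A}})\bigr)\rightarrow\Fun_{c,\otimes}\bigl(\Ind(\ca{B}),\Ind(\ca{A})\bigr)$. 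Unwinding, the composite $\ca{RM}(\ca{B},T(\ca{A}))\rightarrow\ca{RM}(\ca{B},\ca{A})$ sends $F$ to $(E\circ\Ind F)_{\fp}\cong E_{\fp}\circ F=\varepsilon_{\ca{A}}\circ F$, so it is precomposition with $\varepsilon_{\ca{A}}$; all three equivalences are pseudonatural in $\ca{B}$ and compatible with composition, so $\varepsilon_{\ca{A}}$ is a universal arrow and $T$ is a right biadjoint of $\iota$.

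The remaining assertions are then formal. Since $\ca{RM}$ is bicategorically complete and cocomplete (being the $2$-category of symmetric pseudomonoids in $\Rex$), for a diagram $(\ca{A}_i)$ of weakly Tannakian categories with limit $\lim_i\ca{A}_i$ in $\ca{RM}$ we have, for every weakly Tannakian $\ca{B}$, $\ca{RM}\bigl(\ca{B},T(\lim_i\ca{A}_i)\bigr)\simeq\ca{RM}(\ca{B},\lim_i\ca{A}_i)\simeq\lim_i\ca{RM}(\ca{B},\ca{A}_i)\simeq\lim_i\ca{RM}(\ca{B},\ca{A}_i)$, where the last identification uses that $\iota$ is fully faithful; hence $T(\lim_i\ca{A}_i)$ is the bicategorical limit in the $2$-category of weakly Tannakian categories, and so this $2$-category has all bicategorical limits. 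Dually, $\iota$ is a fully faithful left biadjoint, so the full sub-$2$-category of weakly Tannakian categories is coreflective in $\ca{RM}$ and therefore closed under every bicategorical colimit that exists in $\ca{RM}$; this is the formal dual (obtained by passing to opposite $2$-categories) of the standard fact that a reflective sub-$2$-category is closed under bicategorical limits.

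I expect the main obstacle to be the bookkeeping around the first of the three equivalences in the second paragraph: one must check carefully that $\ca{RM}(\ca{B},\ca{A})\simeq\Fun_{c,\otimes}(\Ind(\ca{B}),\Ind(\ca{A}))$ holds with $\ca{B}$ weakly Tannakian and $\ca{A}\in\ca{RM}$ \emph{completely arbitrary}, since this is exactly the point at which the result cannot be reduced to a formal composite of the biadjunctions of Proposition~\ref{prop:geometric_weakly_tannakian} and Theorem~\ref{thm:geometric_categories_coreflective} (a general tensor functor between lfp tensor categories need not preserve finitely presentable objects; it is the pre-geometricity of the source $\Ind(\ca{B})$, not of the target, that saves the argument). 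Once the pointwise universal property of $\varepsilon_{\ca{A}}$ and its pseudonaturality are established, promoting $T$ to a pseudofunctor and $\varepsilon$ to a pseudonatural transformation is routine.
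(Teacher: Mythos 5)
Your proposal is correct and follows essentially the same route as the paper: the paper's proof is exactly the chain of equivalences $\ca{RM}(\ca{B},\ca{A}) \simeq \Fun_{c,\otimes}(\Ind(\ca{B}),\Ind(\ca{A})) \simeq \Fun_{c,\otimes}(\Ind(\ca{B}),G(\LF^c_{\ca{A}})) \simeq \ca{RM}(\ca{B},T(\ca{A}))$, using the same observation that tensor functors with geometric domain preserve finitely presentable objects, followed by the same formal argument for limits and closure under colimits (the paper phrases the latter via essential retracts rather than citing the dual of reflectivity, but the content is identical). The point you flag as the main obstacle is indeed the one the paper addresses with the remark that pre-geometricity of the \emph{source} is what makes the first equivalence work for arbitrary targets.
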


\begin{proof}
 We use the fact that for any weakly Tannakian category $\ca{T}$ and any lfp tensor category $\ca{C}$, there is an equivalence
\begin{equation}\label{eqn:extend_to_ind_objects}
 \ca{RM}(\ca{T},\ca{C}_{\fp}) \simeq \Fun_{c,\otimes}\bigl(\Ind(\ca{T}), \ca{C}\bigr) 
\end{equation}
 of categories which is natural in $\ca{C}$ (this follows from Theorem~\ref{thm:day_convolution_universal} and the fact that any tensor functor whose domain is geometric preserves finitely presentable objects; see also \cite[Lemma~2.6]{SCHAEPPI_GEOMETRIC}).

 Thus for $\ca{A} \in \ca{RM}$ and $\ca{C}=\Ind(\ca{A})$ we have an equivalence
\[
 \ca{RM}(\ca{T},\ca{A}) \simeq \Fun_{c,\otimes}\bigl(\Ind(\ca{T}),\Ind(\ca{A})\bigr)
\]
 of categories which is natural in $\ca{A}$ and $\ca{T}$. The right hand side is naturally equivalent to the category
\[
 \Fun_{c,\otimes}\bigl(\Ind(\ca{T}),G(\LF^c_{\ca{A}})\bigr)
\]
 by Theorem~\ref{thm:geometric_categories_coreflective}. Applying the equivalence \eqref{eqn:extend_to_ind_objects} again we find that there is an equivalence
\[
 \ca{RM}(\ca{T},\ca{A}) \simeq \ca{RM}\bigr(\ca{T},T(\ca{A})\bigl)
\]
 which is natural in $\ca{T}$. This proves the first claim.

 The second claim is a general fact about a full sub-2-category $\ca{S}$ of a 2-category $\ca{K}$ whose inclusion $I \colon \ca{S} \rightarrow \ca{K}$ has a right biadjoint $T \colon \ca{K} \rightarrow \ca{S}$. The bicategorical limit of a diagram in $\ca{S}$ is given by applying $T$ to the limit in $\ca{K}$. To see that $\ca{S}$ is closed under bicategorical colimits, note that from the universal property of colimits we can construct a 1-cell
\[
 \colim S_i \rightarrow IT \colim S_i
\]
 whose composite with the counit $\varepsilon \colon IT \colim S_i \rightarrow \colim S_i$ is isomorphic to the identity. The claim that $\colim S_i \in \ca{S}$ follows from the fact that $\ca{S}$ is closed under such ``essential'' retracts (at least up to equivalence, which is all we need).
\end{proof}

 This theorem implies that the 2-category of Adams stacks has all bicategorical limits and colimits.

\begin{proof}[Proof of Theorem~\ref{thm:complete_cocomplete}]
 The 2-category $\ca{AS}$ of Adams stacks is contravariantly equivalent to the 2-category of weakly Tannakian categories by Theorem~\ref{thm:equivalence_adams_weakly_tannakian}. Thus $\ca{AS}$ is bicategorically complete and cocomplete by Theorem~\ref{thm:weakly_tannakian_coreflective}.
\end{proof}

 As an immediate consequence, we find that for reasonable stacks $X$, there exists a universal approximation $X \rightarrow X^{\prime}$ of $X$ by an Adams stack $X^{\prime}$.

\begin{thm}\label{thm:universal_adams_stack}
 Let $X$ be a \emph{small} stack on the $\fpqc$-site $\Aff_R$, that is, a stack that can be written as a bicategorical colimit of affine schemes for some diagram with small indexing category. Then there exists an Adams stack $X^{\prime}$ and a morphism of stacks $X \rightarrow X^{\prime}$ which induces an equivalence
\[
 \Hom(X^{prime},Y) \rightarrow \Hom(X,Y)
\]
 for all Adams stacks $Y$.
\end{thm}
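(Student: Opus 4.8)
The plan is to bypass the machinery of $T(-)$ entirely: the statement will follow directly from the cocompleteness of $\ca{AS}$ (Theorem~\ref{thm:complete_cocomplete}) together with the fact that $\ca{AS}$ is a \emph{full} sub-2-category of the 2-category $\Sh(\Aff_R)$ of stacks on the $\fpqc$-site. This is why it is an ``immediate consequence'' of Theorem~\ref{thm:complete_cocomplete}.

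First I would use the hypothesis to fix a small diagram $D \colon \ca{I} \to \Sh(\Aff_R)$ with objects $D(i) = \Spec(A_i)$ affine schemes and bicategorical colimit $X$, with colimit cocone $\kappa_i \colon \Spec(A_i) \to X$. Every affine scheme is an Adams stack: it is quasi-compact, has affine diagonal, and every finitely presentable quasi-coherent sheaf on $\Spec(A)$ is a finitely presentable $A$-module, hence a quotient of a finite free module, which is a vector bundle. Since $\ca{AS}$ is full in $\Sh(\Aff_R)$, the diagram $D$ factors through a diagram $D' \colon \ca{I} \to \ca{AS}$. By Theorem~\ref{thm:complete_cocomplete} the bicategorical colimit $X' \defl \colim_{\ca{I}} D'$ exists in $\ca{AS}$, with colimit cocone $\lambda_i \colon \Spec(A_i) \to X'$. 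Viewing the $\lambda_i$ as a cocone in $\Sh(\Aff_R)$ and invoking the universal property of $X = \colim_{\ca{I}} D$ produces a morphism $u \colon X \to X'$ of stacks, unique up to coherent isomorphism, with $u \kappa_i \simeq \lambda_i$ for all $i$.

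Next I would compute both sides. For any Adams stack $Y$, the defining universal property of the bicategorical colimit in $\ca{AS}$ gives an equivalence $\Hom_{\ca{AS}}(X',Y) \simeq \lim_{i \in \ca{I}^{\op}} \Hom_{\ca{AS}}(\Spec(A_i),Y)$, and fullness of $\ca{AS}$ in $\Sh(\Aff_R)$ lets me replace every $\Hom_{\ca{AS}}$ by $\Hom$. Dually, since $X$ is the bicategorical colimit of $D$ in $\Sh(\Aff_R)$, mapping contravariantly into $Y$ turns it into a limit: $\Hom(X,Y) \simeq \lim_{i \in \ca{I}^{\op}} \Hom(\Spec(A_i),Y)$. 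Both equivalences send a morphism to the family of its restrictions along the respective colimit cocones, so the relation $u\kappa_i \simeq \lambda_i$ makes the triangle relating $(-)\circ u \colon \Hom(X',Y) \to \Hom(X,Y)$ to these two equivalences commute up to coherent isomorphism. Hence $(-)\circ u$ is an equivalence, which is precisely the assertion, with $X \to X'$ given by $u$.

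The routine parts are the verification that affine schemes are Adams stacks and the $2$-categorical bookkeeping identifying $(-)\circ u$ with the identity of $\lim_{i \in \ca{I}^{\op}}\Hom(\Spec(A_i),Y)$; these only require unwinding universal properties. The point that deserves care — and is the main thing to get right — is the interplay between colimits computed in $\ca{AS}$ and colimits computed in the ambient 2-category $\Sh(\Aff_R)$: one never claims $X' \simeq X$, only that the canonical comparison $u$ becomes invertible after mapping into an Adams stack, which is forced precisely because both colimits are built from the \emph{same} diagram of affine schemes and $\ca{AS}$ is full in $\Sh(\Aff_R)$.
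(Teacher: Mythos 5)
Your proposal is correct and follows essentially the same route as the paper: form the colimit of the same diagram of affine schemes in $\ca{AS}$, use that mapping out of either colimit into an Adams stack $Y$ gives the same limit of hom-categories (via fullness of $\ca{AS}$ in the 2-category of stacks), and extract the comparison morphism $X \rightarrow X^{\prime}$. The only cosmetic difference is that you build $u$ explicitly from the colimit cocones whereas the paper extracts it from the natural equivalence via the bicategorical Yoneda lemma; these amount to the same thing.
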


\begin{proof}
 Write $X=\colim \Spec(A_i)$ in the 2-category of stacks on the $\fpqc$-site $\Aff_R$, and let $X^{\prime}$ be the colimit of the same diagram in the 2-category $\ca{AS}$ of Adams stacks. Then for any Adams-stack $Y$ we have equivalences
\begin{align*}
 \ca{AS}(X^{\prime},Y) &\simeq \lim \ca{AS}\bigl(\Spec(A_i),Y\bigr)\\
&= \lim \Hom \bigl(\Spec(A_i),Y\bigr) \\
&\simeq \Hom(X,Y)
\end{align*}
 which are natural in $Y$. By the Yoneda lemma, this equivalence is completely determined by the image of $\id_{X^{\prime}} \in \ca{AS}(X^{\prime},X^{\prime})$, that is, it is given by composition with a morphism $ X \rightarrow X^{\prime}$, as claimed.
\end{proof}

\appendix
\section{Enriched Grothendieck topologies and sheaves}\label{appendix:sheaves}

\subsection{Basic definitions}
 In this appendix we will establish some basic properties of categories of sheaves enriched in $R$-modules. Their counterparts for \emph{unenriched} sheaves are well-known. However, the proofs often differ slightly. The main reason for this is that certain convenient properties that hold for unenriched presheaves of sets do not hold for $R$-linear presheaves. For example, the category of $\Set$-valued presheaves has universal colimits and disjoint coproducts, neither of which is true for the category of $R$-linear presheaves. We therefore give a detailed account of this theory and develop some of the generalizations necessary to prove the results of \S \ref{section:limits}.

\begin{dfn}\label{dfn:coverage}
 Let $\ca{A}$ be an $R$-linear category. A class $\Theta$ of morphisms in $\ca{A}$ is called a \emph{singleton coverage} on $\ca{A}$ if it satisfies the following closure property: given $p \colon A \rightarrow B$ in $\Theta$ and an arbitrary morphism $f \colon C \rightarrow B$, there exists an object $D \in \ca{A}$ together with a morphism $p^{\prime} \colon D \rightarrow B$ in $\Theta$ and a morphism $f^{\prime} \colon D \rightarrow A$ such that the diagram
\[
 \xymatrix{D \ar[r]^-{p^{\prime}} \ar[d]_{f^{\prime}} & C \ar[d]^{f} \\ A \ar[r]_-{p} & B }
\]
 is commutative. The morphisms in $\Theta$ are called \emph{coverings}.
\end{dfn}

\begin{example}\label{example:ind_class_gives_covering}
 Let $\Sigma$ be an ind-class on $\ca{A}$. Then $R(\Sigma)$ (that is, the morphisms that appear as cokernels in some cokernel diagram in $\Sigma$) is a singleton coverage on $\ca{A}$. This follows if we apply Condition~(ii) to the right exact sequence in Condition~(i) of the definition of an ind-class (see  Definition~\ref{dfn:ind_class} and \cite[Definition~2.3]{SCHAEPPI_INDABELIAN}).
\end{example}

 As in the unenriched context, we can use a singleton coverage on $\ca{A}$ to define a category of sheaves on $\ca{A}$. Given an $R$-linear presheaf $F \colon \ca{A}^{\op} \rightarrow \Mod_R$, a morphism $f \colon A \rightarrow B$ in $\ca{A}$, and an element $b \in FB$, we write $b \cdot f$ for the element $Ff(b)$ of $FA$ (if the presheaf $F$ is understood from the context). This notation is often used for unenriched presheaves since the functor axioms correspond to the right action axioms: we have $(b \cdot f) \cdot g=b \cdot (f \circ g)$ and $b \cdot \id_B=b$ for all composable morphisms $f,g$ in $\ca{A}$.

\begin{dfn}\label{dfn:sheaf_for_coverage}
 Let $\Theta$ be a singleton coverage on the $R$-linear category $\ca{A}$. An $R$-linear presheaf $F \colon \ca{A}^{\op} \rightarrow \Mod_R$ on $\ca{A}$ is called an \emph{$R$-linear sheaf} for $\Theta$ if the following conditions hold for all coverings $p \colon A \rightarrow B$ in $\Theta$:
\begin{enumerate}
 \item[(i)] The morphism $Fp \colon FB \rightarrow FA$ is a monomorphism;
 \item[(ii)] An element $a \in FA$ is of the form $b \cdot p$ for some $b \in FB$ if and only if $a \cdot f=0$ for all morphisms $f \colon A^{\prime} \rightarrow A$ in $\ca{A}$ with $pf=0$.
\end{enumerate}
 The full subcategory of $\Prs{A}$ consisting of sheaves for $\Theta$ is denoted by $\Sh_{\Theta}(\ca{A})$.
\end{dfn}
 
 The main technical result of \cite[\S 2]{SCHAEPPI_INDABELIAN} is the following proposition.

\begin{prop}\label{prop:lex_sigma_equals_sheaves}
 Let $\Sigma$ be an ind-class on the $R$-linear category $\ca{A}$. Then an $R$-linear presheaf on $\ca{A}$ is a sheaf for the coverage $R(\Sigma)$ of Example~\ref{example:ind_class_gives_covering} if and only if it sends cokernel diagrams in $\Sigma$ to kernel diagrams. In other words, we have $\Lex_{\Sigma}[\ca{A}^{\op},\Mod_R]=\Sh_{R(\Sigma)}(\ca{A})$.
\end{prop}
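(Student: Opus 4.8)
The plan is to establish the two inclusions $\Lex_{\Sigma}[\ca{A}^{\op},\Mod_R] \subseteq \Sh_{R(\Sigma)}(\ca{A})$ and $\Sh_{R(\Sigma)}(\ca{A}) \subseteq \Lex_{\Sigma}[\ca{A}^{\op},\Mod_R]$ separately, working throughout with the right-action notation $b \cdot f = Ff(b)$ and keeping in mind that, by Example~\ref{example:ind_class_gives_covering}, $R(\Sigma)$ is a singleton coverage so that $\Sh_{R(\Sigma)}(\ca{A})$ is defined. Both the sheaf axioms of Definition~\ref{dfn:sheaf_for_coverage} for a covering $q$ and the left-exactness condition defining $\Lex_{\Sigma}$ depend on $q$ only up to isomorphism; so in each direction I would begin with the routine observation that it suffices to treat a covering $q$ which is literally the cokernel morphism of a fixed cokernel diagram $A \xrightarrow{p} B \xrightarrow{q} C \to 0$ lying in $\Sigma$.

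For the first inclusion, assume $F$ sends every cokernel diagram in $\Sigma$ to a kernel diagram, so that $0 \to FC \xrightarrow{Fq} FB \xrightarrow{Fp} FA$ is exact. Sheaf axiom (i) for $q$ is exactly the statement that $Fq$ is a monomorphism. For axiom (ii), the ``only if'' direction is formal: if $b = Fq(c)$ and $qf = 0$ then $b \cdot f = F(qf)(c) = 0$. For the ``if'' direction I would apply the hypothesis to the single morphism $f = p$, which lies in the relevant test family because $qp = 0$; this gives $Fp(b) = b \cdot p = 0$, hence $b \in \ker(Fp) = \operatorname{im}(Fq)$ by exactness, so $b = Fq(c)$ for some $c$. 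This shows $F \in \Sh_{R(\Sigma)}(\ca{A})$.

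For the converse, assume $F$ is a sheaf for $R(\Sigma)$ and fix a cokernel diagram $A \xrightarrow{p} B \xrightarrow{q} C \to 0$ in $\Sigma$; the task is to show $0 \to FC \xrightarrow{Fq} FB \xrightarrow{Fp} FA$ is exact. That $Fq$ is a monomorphism is sheaf axiom (i) for the covering $q$, and $Fp \circ Fq = F(qp) = 0$ is immediate, so the only point is the inclusion $\ker(Fp) \subseteq \operatorname{im}(Fq)$. Given $b \in FB$ with $b \cdot p = 0$, I would prove that $b \cdot f = 0$ for \emph{every} $f \colon B' \to B$ with $qf = 0$; then sheaf axiom (ii) for $q$ forces $b \in \operatorname{im}(Fq)$ and we are done. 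To get $b \cdot f = 0$, apply condition (ii) of Definition~\ref{dfn:ind_class} (the ind-class axiom) to the given sequence in $\Sigma$ and the morphism $f$: it produces $f' \colon E \to A$ and $p' \colon E \to B'$ with $p' \in R(\Sigma)$ and $p f' = f p'$. Contravariance of $F$ then yields
\[
 Fp'(b \cdot f) = b \cdot (fp') = b \cdot (pf') = (b \cdot p) \cdot f' = 0,
\]
and since $p' \in R(\Sigma)$ is a covering, sheaf axiom (i) applied to $p'$ says $Fp'$ is a monomorphism, whence $b \cdot f = 0$, as needed.

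The genuinely non-formal step — and the one I expect to carry the weight of the proposition — is this last argument in the converse direction: it is the ind-class axiom (ii) that lets one propagate the single vanishing $b \cdot p = 0$ to the vanishing $b \cdot f = 0$ against an arbitrary test morphism, which is precisely what reconciles the ``covering-theoretic'' sheaf condition with the ``homological'' left-exactness condition. Everything else — the isomorphism-invariance reduction and the bookkeeping with the actions $b \cdot (-)$ — is routine. This proposition is the $\Mod_R$-enriched analogue of the classical fact that, for the coverage generated by a class of regular epimorphisms, sheaves are exactly the presheaves turning the associated cokernel diagrams into kernel diagrams; compare \cite[\S 2]{SCHAEPPI_INDABELIAN}.
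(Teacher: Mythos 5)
Your proof is correct. The paper itself offers no argument for this proposition --- it simply cites \cite[Proposition~2.6]{SCHAEPPI_INDABELIAN} --- so there is nothing in-text to compare against, but your two inclusions are exactly the natural ones: the forward direction is formal, and the converse correctly isolates the one non-trivial step, namely using ind-class axiom~(ii) to produce $p'\in R(\Sigma)$ with $fp'=pf'$ and then sheaf axiom~(i) for $p'$ (i.e.\ $Fp'$ mono) to upgrade $b\cdot p=0$ to $b\cdot f=0$ for every test morphism $f$ with $qf=0$. The isomorphism-invariance reduction and the verification that $qp=0$ puts $p$ among the test morphisms are indeed routine, as you say.
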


\begin{proof}
 This is proved in \cite[Proposition~2.6]{SCHAEPPI_INDABELIAN}.
\end{proof}

 Categories of $R$-linear sheaves can also be described via enriched Grothendieck topologies in the sense of Borceux and Quinteiro \cite{BORCEUX_QUINTEIRO}. We use the fact that epimorphisms in such categories can be described very explicitly in order to check the condition about epimorphisms in Theorem~\ref{thm:description} for the tensor categories constructed in \S \ref{section:universal_geometric_cats}. This description is standard in the unenriched case. Since this is a key ingredient in the main result of this paper we will give a detailed proof that it holds for categories of $R$-linear sheaves as well. We start by recalling some terminology and results from \cite{BORCEUX_QUINTEIRO}.

\begin{dfn}\label{dfn:sieve}
 An \emph{$R$-linear sieve} on an object $A \in \ca{A}$ is an $R$-linear presheaf $S \colon \ca{A}^{\op} \rightarrow \Mod_R$ on $\ca{A}$ which is a subobject of the representable presheaf $\ca{A}(-,A)$.
\end{dfn}

 Note that $R$-linear sieves differ from ordinary (unenriched) sieves: for all objects $B \in \ca{A}$, $SB$ is a \emph{submodule} of $\ca{A}(B,A)$ instead of a mere subset. Thus we cannot generate an $R$-linear sieve from a collection of morphisms with target $A$ in the same way as in the unenriched setting: in order to get a submodule we would have to take finite linear combinations. This problem does however not arise in the case of singleton coverings, which is the only case we will need.

\begin{example}\label{example:S_p}
 Let $q \colon A \rightarrow B$ be a morphism in $\ca{A}$. We write $S_q$ for the image of
\[
 \ca{A}(-,q) \colon \ca{A}(-,A) \rightarrow \ca{A}(-,B)
\]
 in the category $\Prs{A}$ of $R$-linear presheaves on $\ca{A}$. This is an $R$-linear sieve on $A$, and $S_q(C)$ consists of all morphisms $C \rightarrow B$ which factor through $q$.
\end{example}

 Note that the sieve $S_q$ is constructed as the corresponding sieve in the unenriched case. For this reason, we encounter no serious difficulties when generalizing classical results to the $R$-linear case. If we wanted to allow coverages whose coverings are not singletons, we would have to find a way to deal with the fact that coproducts in $\Prs{A}$ are neither disjoint nor universal. The following definition gives a special case of an enriched Grothendieck topology in the sense of Borceux and Quinteiro (see \cite[\S 1]{BORCEUX_QUINTEIRO}).

\begin{dfn}\label{dfn:grothendieck_topology}
 An \emph{$R$-linear Grothendieck topology} $\tau$ on $\ca{A}$ consists of a collection $\tau_A$ of $R$-linear sieves on $A$ for all objects $A \in \ca{A}$, subject to the following closure conditions:
\begin{enumerate}
 \item[(i)] For all $A \in \ca{A}$, $\id_{\ca{A}(-,A)}$ lies in $\tau_A$;
\item[(ii)] If $f \colon A \rightarrow B$ is a morphism in $\ca{A}$, $S \in \tau_B$, and
\[
 \xymatrix{S^{\prime} \ar[r] \ar[d] & S \ar[d] \\ \ca{A}(-,A) \ar[r]_-{\ca{A}(-,f)} & \ca{A}(-,B)}
\]
 is a pullback diagram in $\Prs{A}$, then $S^{\prime} \in \tau_A$;
\item[(iii)] If $S$ is any sieve on $B$, $T \in \tau_B$, and for all pullback diagrams
\[
 \xymatrix{f^{-1} S \ar[r] \ar[d] & S \ar[d] \\ \ca{A}(-,A) \ar[r]_-{\ca{A}(-,f)} & \ca{A}(-,B)} 
\]
 with $f \in TA$, the $R$-linear sieve $f^{-1} S$ lies in $\tau_A$, then $S$ lies in $\tau_B$.
\end{enumerate}
\end{dfn}

\begin{dfn}\label{dfn:sheaf_for_grothendieck_topology}
 Let $\ca{A}$ be an $R$-linear category and let $S$ be an $R$-linear sieve on $A \in \ca{A}$. Let $i \colon S \rightarrow \ca{A}(-,A)$ denote the inclusion of $S$ in the representable presheaf. We say that an $R$-linear presheaf $F \colon \ca{A}^{\op} \rightarrow \Mod_R$ is a \emph{sheaf for $S$} if
\[
 \Prs{A}(i,F) \colon \Prs{A}\bigl(\ca{A}(-,A),F\bigr) \rightarrow \Prs{A}(S,F)
\]
 is an bijection. Equivalently, if for all solid arrow diagrams
\[
 \vcenter{
 \xymatrix{ S \ar[d]_i \ar[r] & F \\ \ca{A}(-,A) \ar@{-->}[ru]}
 }
\]
 in $\Prs{A}$ there exists a unique morphism $\ca{A}(-,A) \rightarrow F$ making the above diagram commutative.

 If $\tau$ is an $R$-linear Grothendieck topology on $\ca{A}$, then we say that $F$ is a \emph{sheaf for $\tau$} if $F$ is a sheaf for all $R$-linear sieves $S \in \tau_A$ and all $A \in \ca{A}$. We write $\Sh_{\tau}(\ca{A})$ for the full subcategory of $\Prs{A}$ consisting of sheaves for $\tau$.
\end{dfn}

 These correspond with the definitions of enriched Grothendieck topologies and enriched sheaves given in \cite{BORCEUX_QUINTEIRO} (by \cite[Lemma~1.6]{BORCEUX_QUINTEIRO}, applied to the strong generator $\{R\}$ of $\Mod_R$). 

\subsection{The topology induced by a coverage}
 We will see below that any singleton coverage $\Theta$ induces an $R$-linear Grothendieck topology $\tau(\Theta)$ in such a way that $F$ is a sheaf for $\Theta$ in the sense of Definition~\ref{dfn:sheaf_for_coverage} if and only if it is a sheaf for $\tau(\Theta)$ in the sense of Definition~\ref{dfn:sheaf_for_grothendieck_topology}. The proof mostly follows the proof of the corresponding fact in the unenriched case, and the result was implicitly used in \cite[\S 2]{SCHAEPPI_INDABELIAN}. Since it is a key ingredient in the proof of the main result of this article, we provide full details below.

\begin{lemma}\label{lemma:quasi_pullback}
 Let $\Theta$ be a singleton coverage on the $R$-linear category $\ca{A}$, and let $q \colon B^{\prime} \rightarrow B$ be a composite of n morphisms in $\Theta$ for some $n \in \mathbb{N}$. Then for any morphism $f \colon A \rightarrow B$ in $\ca{A}$ there exists a composite $q^{\prime} \colon A^{\prime} \rightarrow A$ of $n$ arrows in $\Theta$ and a morphism $S_{q^{\prime}} \rightarrow S_q$ in $\Prs{A}$ such that the diagram
\begin{equation}\label{eqn:quasi_pullback}
 \vcenter{
 \xymatrix{ S_{q^{\prime}} \ar[r] \ar[d] & S_q \ar[d] \\ \ca{A}(-,A) \ar[r]_-{\ca{A}(-,f)} & \ca{A}(-,B)
}}
\end{equation}
 is commutative.
\end{lemma}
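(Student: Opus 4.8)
The plan is to prove the lemma by induction on $n$, the number of arrows in the chosen factorization of $q$. When $n=0$ the morphism $q$ is the identity of $B$, so $S_q=\ca{A}(-,B)$; one then takes $q'=\id_A$ and lets the comparison map $S_{q'}=\ca{A}(-,A)\to S_q=\ca{A}(-,B)$ be $\ca{A}(-,f)$ itself, whereupon Diagram~\eqref{eqn:quasi_pullback} commutes on the nose. All the content is in the inductive step.

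For $n\geq 1$ I would peel off the last arrow of the chain, writing $q=p\circ q_0$ with $p\colon C\to B$ in $\Theta$ and $q_0\colon B'\to C$ a composite of $n-1$ arrows in $\Theta$. Applying the defining closure property of the singleton coverage $\Theta$ to $p$ and the given $f\colon A\to B$ produces an object $D$, a morphism $p'\colon D\to A$ in $\Theta$, and a morphism $g\colon D\to C$ with $p\circ g=f\circ p'$. The inductive hypothesis, applied now to the length-$(n-1)$ composite $q_0$ and the morphism $g$, yields a composite $q_0'\colon A'\to D$ of $n-1$ arrows in $\Theta$ together with a morphism $S_{q_0'}\to S_{q_0}$ sitting over $\ca{A}(-,g)$ (with the two sides of the square being the inclusions $S_{q_0'}\subseteq\ca{A}(-,D)$ and $S_{q_0}\subseteq\ca{A}(-,C)$). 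I would then set $q'\defl p'\circ q_0'\colon A'\to A$, a composite of $n$ arrows in $\Theta$; this is the morphism asserted by the statement.

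It remains to produce the comparison $S_{q'}\to S_q$ and verify Diagram~\eqref{eqn:quasi_pullback}. The natural candidate is the corestriction of $\ca{A}(-,f)$, so the task is to check that $\ca{A}(-,f)$ carries $S_{q'}\subseteq\ca{A}(-,A)$ into $S_q\subseteq\ca{A}(-,B)$. Evaluating the comparison square from the inductive hypothesis at the element $q_0'\in S_{q_0'}(A')$ shows that $g\circ q_0'$ lies in $S_{q_0}(A')$, i.e.\ factors as $q_0\circ\ell$ for some $\ell\colon A'\to B'$. Then for any object $T$ and any $h\in S_{q'}(T)$, writing $h=q'\circ k=p'\circ q_0'\circ k$, one computes $f\circ h=p\circ g\circ q_0'\circ k=p\circ q_0\circ\ell\circ k=q\circ(\ell\circ k)$, so $f\circ h\in S_q(T)$; hence $\ca{A}(-,f)$ restricts to $S_{q'}\to S_q$ and the square commutes by construction. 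I do not expect a genuine obstacle here: the only point requiring care — and the reason the argument runs exactly as in the unenriched case — is that images and subpresheaves are computed pointwise in $\Mod_R$, so that $S_r(T)$ is literally the set of morphisms factoring through $r$, which is automatically a submodule because composition in $\ca{A}$ is $R$-bilinear.
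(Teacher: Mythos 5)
Your proof is correct and is essentially the paper's own argument: the paper applies the defining property of the singleton coverage $n$ times to produce a commutative square $q\circ f'=f\circ q'$ in $\ca{A}$ with $q'$ a composite of $n$ morphisms of $\Theta$, and then passes to images of the vertical arrows in $\Prs{A}$ — which is exactly what your induction and your final pointwise factorization computation ($f\circ h=q\circ(\ell\circ k)$) carry out in detail.
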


\begin{proof}
 Applying the defining property of singleton coverages $n$ times we find that there exists a commutative diagram
\[
 \xymatrix{A^{\prime} \ar[r]^-{f^{\prime}} \ar[d]_{q^{\prime}} &  B^{\prime} \ar[d]^{q} \\ A \ar[r]_-{f} & B }
\]
 such that $q^{\prime}$ is a composite of $n$ morphisms in $\Theta$. The claim follows by taking images of the vertical morphisms of the induced diagram in $\Prs{A}$.
\end{proof}

\begin{dfn}\label{dfn:topology_from_coverage}
 Let $\Theta$ be a singleton coverage on the $R$-linear category $\ca{A}$. For $A\in \ca{A}$, let $\tau(\Theta)_A$ be the set of $R$-linear sieves on $A$ given by the union of $\{ \id \colon \ca{A}(-,A) \rightarrow \ca{A}(-,A)\}$ and the set of all $R$-linear sieves $S$ on $A$ for which there exists a finite composite $q$ of morphisms in $\Theta$ (with target $A$) such that $S_q \subseteq S$.
\end{dfn}

\begin{prop}\label{prop:topology_form_coverage}
 Let $\Theta$ be a singleton coverage on the $R$-linear category $\ca{A}$. Then $\tau(\Theta)$ is an $R$-linear Grothendieck coverage on $\ca{A}$.
\end{prop}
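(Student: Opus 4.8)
The plan is to verify the three closure conditions of Definition~\ref{dfn:grothendieck_topology} directly from the description of $\tau(\Theta)$ in Definition~\ref{dfn:topology_from_coverage}. Condition~(i) is immediate, since $\id_{\ca{A}(-,A)}$ is by fiat an element of $\tau(\Theta)_A$. For the other two conditions I would work throughout with the ``generated'' sieves $S_q$ of Example~\ref{example:S_p} and rely on two elementary facts about the presheaf category $\Prs{A}$: pullbacks exist and are computed pointwise, so a pullback of a monomorphism is a monomorphism and a pullback of a sieve is again a sieve; and whenever a morphism $\ca{A}(-,C) \to \ca{A}(-,B)$ factors through a sieve $S$ on $B$, its image — which is precisely the sieve generated by the corresponding morphism $C \to B$ — is contained in $S$ as a subobject of $\ca{A}(-,B)$. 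The point is that even though $R$-linear sieves are submodules rather than mere sets of morphisms, we only ever need sieves generated by a \emph{single} morphism, and these are built exactly as in the unenriched setting, so the classical arguments go through.

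For condition~(ii) I would take $f \colon A \to B$, a sieve $S \in \tau(\Theta)_B$, and the pullback $S^{\prime}$ of $S$ along $\ca{A}(-,f)$. If $S = \ca{A}(-,B)$ then $S^{\prime} = \ca{A}(-,A) \in \tau(\Theta)_A$ and we are done; otherwise there is a finite composite $q \colon B^{\prime} \to B$ of morphisms in $\Theta$ with $S_q \subseteq S$. Applying Lemma~\ref{lemma:quasi_pullback} to $q$ and $f$ yields a finite composite $q^{\prime} \colon A^{\prime} \to A$ of morphisms in $\Theta$ together with a morphism $S_{q^{\prime}} \to S_q$ making the square~\eqref{eqn:quasi_pullback} commute. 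Composing $S_{q^{\prime}} \to S_q \hookrightarrow S$ and using the universal property of the pullback $S^{\prime}$ produces a factorization $S_{q^{\prime}} \to S^{\prime}$ over $\ca{A}(-,A)$, which exhibits $S_{q^{\prime}} \subseteq S^{\prime}$; hence $S^{\prime} \in \tau(\Theta)_A$.

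Condition~(iii) is the step I expect to be the main obstacle, since it is the one genuinely requiring composition of coverings. I would be given an arbitrary sieve $S$ on $B$, a sieve $T \in \tau(\Theta)_B$, and the hypothesis that $f^{-1}S \in \tau(\Theta)_A$ for every object $A$ and every $f \in T(A)$, and I would have to produce either the equality $S = \ca{A}(-,B)$ or a finite composite $q^{\prime\prime}$ of morphisms in $\Theta$ with $S_{q^{\prime\prime}} \subseteq S$. The argument splits into cases. If $T = \ca{A}(-,B)$ then $\id_B \in T(B)$ and $\id_B^{-1}S = S$, so $S \in \tau(\Theta)_B$ by hypothesis. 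Otherwise $S_q \subseteq T$ for some finite composite $q \colon B^{\prime} \to B$ of morphisms in $\Theta$; since $q \in S_q(B^{\prime}) \subseteq T(B^{\prime})$, the hypothesis gives $q^{-1}S \in \tau(\Theta)_{B^{\prime}}$. Now either $q^{-1}S = \ca{A}(-,B^{\prime})$, which is exactly the statement that $\ca{A}(-,q)$ factors through $S$, whence $S_q \subseteq S$; or $S_r \subseteq q^{-1}S$ for some finite composite $r \colon B^{\prime\prime} \to B^{\prime}$ of morphisms in $\Theta$, and then I would form the composite $qr \colon B^{\prime\prime} \to B$ and observe that $\ca{A}(-,r)$ factors through $S_r \subseteq q^{-1}S$, so composing with the pullback projection $q^{-1}S \to S$ shows $\ca{A}(-,qr) = \ca{A}(-,q)\circ\ca{A}(-,r)$ factors through $S$, i.e.\ $S_{qr} \subseteq S$. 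In every case $S \in \tau(\Theta)_B$. The only delicate bookkeeping is making sure that each ``factors through $S$'' assertion is turned correctly into an inclusion of subobjects of the relevant representable presheaf, using that the poset of monomorphisms into a representable is where all these containments live; once that is set up, the proof is essentially the unenriched one.
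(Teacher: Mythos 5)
Your proof is correct and follows essentially the same route as the paper: condition (i) by definition, condition (ii) via Lemma~\ref{lemma:quasi_pullback}, and condition (iii) by the same case split on whether the relevant sieves are identities or contain some $S_q$. The only divergence is the subcase of (iii) where $q^{-1}S=\ca{A}(-,B^{\prime})$: you observe directly that $\id_{B^{\prime}}\in q^{-1}S(B^{\prime})$ forces $q\in S(B^{\prime})$ and hence $S_q\subseteq S$, whereas the paper deduces $S_q\subseteq S$ from a pasting of pullback squares showing $S\cap S_q\rightarrow S_q$ is an epimorphism; both arguments are valid and yours is if anything more elementary.
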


\begin{proof}
 Condition~(i) of Definition~\ref{dfn:grothendieck_topology} is satisfied by definition, and Condition~(ii) follows from the existence of Diagram~\eqref{eqn:quasi_pullback} in Lemma~\ref{lemma:quasi_pullback}.

 To see that Condition~(iii) holds, let $S$ be an $R$-linear sieve on $B$, let $T \in \tau(\Theta)_B$, and assume that the pullback $f^{-1} S$ of $S$ along $\ca{A}(-,f) \colon \ca{A}(-,A) \rightarrow \ca{A}(-,B)$ lies in $\tau(\Theta)_A$ for all $f \in TA$ (and all $A \in \ca{A}$). If $\id_B \in TB$, it follows immediately that $S \in \tau(\Theta)_B$. 

 It remains to check that the same is true if we only know that $T$ contains the $R$-linear sieve $S_q$ for some finite composite $q \colon A \rightarrow B$ of morphisms in $\Theta$. In this case we have $q \in TA$, so by assumption, the pullback $q^{-1} S$ lies in $\tau(\Theta)_A$. There are now two cases: either $q^{-1}S$ is equal to the identity on $\ca{A}(-,A)$, or there exists a finite composite $q^{\prime}$ of morphisms in $\Theta$ with target $A$ such that $S_{q^{\prime}} \subseteq q^{-1} S$. In the latter case, $S$ contains $S_{qq^{\prime}}$ (any composite $qq^{\prime}f$ is the image of an element of $S_{q^{\prime}}$ under the morphism $\ca{A}(-,q) \colon \ca{A}(-,A) \rightarrow \ca{A}(-,B)$), hence $S$ lies in $\tau(\Theta)_B$.

 In the former case, that is, if $q^{-1} S=\ca{A}(-,A)$, the pair of pullback diagrams
\[
 \xymatrix{\ca{A}(-,A) \ar@{=}[d] \ar[r] & S \cap S_q \ar[d] \ar[r] & S \ar[d] \\ \ca{A}(-,A) \ar@{->>}[r] & S_q \ar[r] & \ca{A}(-,B)}
\]
 shows that the inclusion $S \cap S_q \rightarrow S_q$ is an epimorphism, hence that $S_q \subseteq S$. This concludes the proof that $\tau(\Theta)$ is an $R$-linear Grothendieck topology.
\end{proof}

\begin{lemma}\label{lemma:sheaves_for_coverage}
 Let $\Theta$ be a singleton coverage on the essentially small $R$-linear category $\ca{A}$ and let $F \colon \ca{A}^{\op} \rightarrow \Mod_R$ be an $R$-linear presheaf. Then $F$ is a sheaf for $\Theta$ in the sense of Definition~\ref{dfn:sheaf_for_coverage} if and only if $F$ is a sheaf for all sieves $S_q$ with $q \in \Theta$.
\end{lemma}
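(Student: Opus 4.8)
The plan is to isolate a single covering $q\colon A\rightarrow B$ in $\Theta$ and show that conditions (i) and (ii) of Definition~\ref{dfn:sheaf_for_coverage} for $q$ together say exactly that $F$ is a sheaf for the $R$-linear sieve $S_q$ of Example~\ref{example:S_p}; the lemma then follows by quantifying over all $q\in\Theta$. First I would factor the map of representables induced by $q$ in $\Prs{A}$ as
\[
\ca{A}(-,q)=\bigl(\ca{A}(-,A)\xrightarrow{e}S_q\xrightarrow{i}\ca{A}(-,B)\bigr),
\]
where $e$ is the (pointwise) epimorphism onto the image $S_q$ and $i$ is the inclusion. Computing pointwise, $e_C\colon\ca{A}(C,A)\rightarrow S_q(C)$ sends $g$ to $qg$, so $\ker(e)$ is, at each $C$, the submodule $\{g\colon C\rightarrow A:qg=0\}$ of $\ca{A}(C,A)$. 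I would record here the elementary fact that a natural transformation $\ca{A}(-,A)\rightarrow F$ factors through the epimorphism $e$ if and only if, at every object $C$, it annihilates $\ker(e)(C)$ (the factorization being automatically natural because $e$ is epi).

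Next I would apply $\Prs{A}(-,F)$ to the above factorization and use the Yoneda lemma to identify $\Prs{A}(\ca{A}(-,A),F)\cong FA$ and $\Prs{A}(\ca{A}(-,B),F)\cong FB$, the map induced by $\ca{A}(-,q)$ becoming $Fq\colon FB\rightarrow FA$, that is, $b\mapsto b\cdot q$. Since $e$ is epi, $\Prs{A}(e,F)$ is injective, and by the previous paragraph (together with the description of $\ker e$ and the formula $\alpha_C(g)=a\cdot g$ for the transformation $\alpha$ corresponding to $a\in FA$) its image is the submodule
\[
Z_q=\{\,a\in FA:a\cdot f=0\text{ for every }f\colon A'\rightarrow A\text{ with }qf=0\,\}.
\]
Thus $\Prs{A}(S_q,F)\cong Z_q$, and under all these identifications the restriction map $\Prs{A}(i,F)$ becomes the corestriction of $Fq$ to $Z_q$; note that $\mathrm{im}(Fq)\subseteq Z_q$ automatically, since $(b\cdot q)\cdot f=b\cdot(qf)=0$ whenever $qf=0$.

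The conclusion is then mechanical: by Definition~\ref{dfn:sheaf_for_grothendieck_topology}, $F$ is a sheaf for the sieve $S_q$ exactly when $\Prs{A}(i,F)$ is bijective, that is, when $Fq\colon FB\rightarrow Z_q$ is bijective; injectivity of $Fq$ is precisely condition (i) of Definition~\ref{dfn:sheaf_for_coverage} for the covering $q$, and surjectivity onto $Z_q$ is precisely condition (ii). Hence $F$ satisfies (i) and (ii) for $q$ if and only if $F$ is a sheaf for $S_q$, and the lemma follows. There is no real obstacle here; the only points requiring care are the bookkeeping identifying $\Prs{A}(S_q,F)$ with the submodule $Z_q\subseteq FA$ compatibly with the restriction from $FB$, and the observation that $\mathrm{im}(Fq)\subseteq Z_q$ always holds, so that the two halves of ``bijective'' match conditions (i) and (ii) on the nose. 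Essential smallness of $\ca{A}$ enters only to guarantee that $\Prs{A}$ is a legitimate (locally small) category.
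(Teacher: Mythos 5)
Your argument is correct and is essentially the paper's proof: both reduce the lemma to the identification $\Prs{A}(S_q,F)\cong\{a\in FA : a\cdot f=0 \text{ whenever } qf=0\}$ (the paper obtains this by presenting $S_q$ as a cokernel of a map out of a direct sum of representables and using left-exactness of $\Prs{A}(-,F)$, you by computing the pointwise kernel of the quotient map $\ca{A}(-,A)\twoheadrightarrow S_q$), after which bijectivity of the corestricted $Fq$ matches conditions (i) and (ii) of Definition~\ref{dfn:sheaf_for_coverage} exactly as you say.
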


\begin{proof}
 For all morphisms $q \colon A \rightarrow B$ in $\ca{A}$ we have a right exact sequence
\[
 \xymatrix{\bigoplus_{\{f \colon A^{\prime}\rightarrow A \vert qf=0 \}} \ca{A}(-,A^{\prime}) \ar[r] & \ca{A}(-,A) \ar@{->>}[r]^-{\ca{A}(-,q)} & S_q \ar[r] & 0}
\]
 in $\Prs{A}$ (since the representable presheaves generate $\Prs{A}$). Using the fact that $\Prs{A}(-,F)$ sends right exact sequences to left exact sequences we find that $F$ is a sheaf for $\Theta$ if and only if
\[
 \Prs{A}\bigl(\ca{A}(-,B),F\bigr) \rightarrow \Prs{A}(S_q,F)
\]
 is an isomorphism for all $q \in \Theta$.
\end{proof}

\begin{prop}\label{prop:sheaves_coincide}
 Let $\Theta$ be a singleton coverage on the $R$-linear category $\ca{A}$ and let $F \colon \ca{A}^{\op} \rightarrow \Mod_R$ be an $R$-linear presheaf. Then $F$ is a sheaf for the singleton coverage $\Theta$ in the sense of Definition~\ref{dfn:sheaf_for_coverage} if and only if $F$ is a sheaf for the $R$-linear Grothendieck topology $\tau(\Theta)$ in the sense of Definition~\ref{dfn:sheaf_for_grothendieck_topology}. 
\end{prop}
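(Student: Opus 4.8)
The plan is to exploit Lemma~\ref{lemma:sheaves_for_coverage}, which already reduces ``$F$ is a sheaf for $\Theta$'' to ``$F$ is a sheaf, in the sense of Definition~\ref{dfn:sheaf_for_grothendieck_topology}, for the sieve $S_q$ for every $q \in \Theta$.'' Since $S_q \in \tau(\Theta)_B$ for every such $q$, the implication ``$\tau(\Theta)$-sheaf $\Rightarrow$ $\Theta$-sheaf'' is then immediate. The content lies in the converse: being a sheaf for the $S_q$ with $q \in \Theta$ must force $F$ to be a sheaf for \emph{every} sieve in $\tau(\Theta)$. This is the classical passage from a singleton coverage to the Grothendieck topology it generates, transported to the $R$-linear setting; the only nontrivial external inputs are Lemma~\ref{lemma:quasi_pullback} (pullbacks of composite-generated sieves can be refined to composite-generated sieves of the same length) and the Yoneda description of $\Prs{A}$.

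First I would isolate an auxiliary transitivity statement, valid for an arbitrary $R$-linear presheaf $F$: if $S \subseteq T$ are sieves on $B$, $F$ is a sheaf for $T$, and $F$ is a sheaf for the pullback sieve $f^{*}S$ for every $f \colon C \to B$ in $T$, then $F$ is a sheaf for $S$. The proof is the usual amalgamation argument: a matching family on $S$ restricts, for each $f \in T$, to a matching family on $f^{*}S$ whose unique amalgamation $b_f \in FC$ assembles (using that $T$ is a sieve, so $F$ is separated for each relevant $(fk)^{*}S$) into a matching family on $T$, and the amalgamation over $T$ is the desired amalgamation over $S$; separation for $S$ comes out of separation for the $f^{*}S$ the same way. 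Everything here is formal and survives verbatim in the enriched case, since every object in sight is a \emph{subobject} of a representable, so no disjointness or universality of coproducts in $\Prs{A}$ is invoked.

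Next I would run an induction on $n$ with the hypothesis: for every sieve $S$ containing $S_q$ for some composite $q$ of at most $n$ morphisms of $\Theta$, $F$ is a sheaf for $S$. The case $n=0$ is the maximal sieve. For the step one first treats $S = S_q$ with $q = q_n \circ \widetilde q$, where $q_n \in \Theta$ and $\widetilde q$ is a composite of $n-1$ morphisms: apply the transitivity statement with $T = S_{q_n}$ (for which $F$ is a sheaf, as $q_n \in \Theta$) and $S = S_q \subseteq S_{q_n}$; for $f \colon C \to B$ in $S_{q_n}$ factor $f = q_n e$, check $e^{*}S_{\widetilde q} \subseteq f^{*}S_q$, and use Lemma~\ref{lemma:quasi_pullback} to produce a composite $\widetilde q'$ of $n-1$ morphisms of $\Theta$ with $S_{\widetilde q'} \subseteq e^{*}S_{\widetilde q}$, so that $F$ is a sheaf for $f^{*}S_q$ by the inductive hypothesis. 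Then, for a general $S$ containing such an $S_q$, restrict a matching family to $S_q$, amalgamate it (now that $F$ is a sheaf for $S_q$), and verify the amalgamation on all of $S$ by comparing, for each $f \in S$, the two candidate restrictions over $f^{*}S_q \supseteq S_{q'}$ (Lemma~\ref{lemma:quasi_pullback} again) and invoking separation of $F$ for $S_{q'}$; uniqueness is the same separation argument over $S_q$ itself. Tracing through Lemma~\ref{lemma:sheaves_for_coverage} one more time then closes the equivalence.

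The main obstacle is the transitivity/bootstrapping of the last paragraph: unlike in a category with pullbacks, a morphism factoring through a composite $q_n \circ \widetilde q$ need not factor compatibly, so one cannot simply ``restrict'' a matching family along the intermediate object, and Condition~(iii) of Definition~\ref{dfn:grothendieck_topology} is only accessible through the quasi-pullbacks of Lemma~\ref{lemma:quasi_pullback}. Getting the induction to close off — choosing at each stage the right sieve to play the role of $T$ in the transitivity lemma, and confirming that its pullbacks are still refinable to honest $\Theta$-composite sieves of strictly smaller length — is where the care is required.
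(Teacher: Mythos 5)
Your overall strategy---reduce via Lemma~\ref{lemma:sheaves_for_coverage}, induct on the length of $\Theta$-composites, and control pullbacks of the sieves $S_q$ through Lemma~\ref{lemma:quasi_pullback}---matches the paper's, and the second half of your induction (extending from $S_q$ to a larger sieve $S$ by comparing two already-constructed maps on representable generators and using separatedness for a single $S_{q'}$) is essentially the paper's argument. The gap is in your transitivity lemma, specifically in the claim that its proof ``survives verbatim'' in the enriched case because no disjointness or universality of coproducts is invoked. The obstruction is not about coproducts: a morphism $T \rightarrow F$ in $\Prs{A}$ must be $R$-linear, so the family of local amalgamations $b_f$ has to respect \emph{every} $R$-linear relation $\sum_i r_i f_i h_i = 0$ among elements of the sieve $T$, not merely the equalities $fh = f'h'$ that generate the relations in the $\Set$-valued case. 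Verifying $\sum_i r_i\,(b_{f_i} \cdot h_i) = 0$ forces you to evaluate along the \emph{finite intersection} $\bigcap_i (f_i h_i)^{*}S$ and then invoke separatedness of $F$ for that intersection; this is not formal, since being a sheaf for each $(f_i h_i)^{*}S$ individually does not give separatedness for their intersection for an arbitrary collection of sieves. The same issue already appears in showing that $f \mapsto b_f$ is a module map, i.e.\ $b_{rf + r'f'} = r b_f + r' b_{f'}$. It is repairable here---iterating Lemma~\ref{lemma:quasi_pullback} shows such an intersection contains $S_t$ for a finite $\Theta$-composite $t$, and $Ft$ is injective as a composite of injections, so $F$ is separated for any sieve containing such an $S_t$---but this argument must actually be made, and it is exactly the kind of enriched-versus-unenriched discrepancy this appendix exists to control.

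For comparison, the paper's proof is organized to avoid hand-built amalgamations altogether. For a sieve $S \supseteq S_q$ it extends $\lambda$ restricted to $S_q$ all the way to $\ca{A}(-,B)$ in one step (possible because $F$ is a sheaf for $S_q$) and then only verifies that two \emph{given} linear maps out of $S$ agree, which is tested on representable generators. For the passage from $S_q$ to $S_{qp}$ it replaces your transitivity step by an abelian-category observation: the pullback of the epimorphism $\ca{A}(-,A) \rightarrow S_q$ along the monomorphism $S_{qp} \rightarrow S_q$ is also a pushout in $\Prs{A}$, so $\Prs{A}(-,F)$ carries it to a pullback and $\Prs{A}(S_q,F) \rightarrow \Prs{A}(S_{qp},F)$ is an isomorphism by stability of isomorphisms under pullback. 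To keep your route you must either prove the linearity and relation-compatibility of $f \mapsto b_f$ along the lines above, or substitute one of these two devices.
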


\begin{proof}
 It follows from Lemma~\ref{lemma:sheaves_for_coverage} that sheaves for $\tau(\Theta)$ are sheaves for $\Theta$, so it remains to check the converse.

 The sheaf condition for the identity sieve is vacuous, so we only need to check that
\[
 \Prs{A}\bigl(\ca{A}(-,B),F \bigr) \rightarrow \Prs{A}(S,F)
\]
 is an isomorphism if $S$ contains $S_q$ for some finite composite $q \colon A \rightarrow B$ of morphisms in $\Theta$. We prove this by induction on the length of the composite $q$.

 From Lemma~\ref{lemma:sheaves_for_coverage} we know that $F$ is a sheaf for the $R$-linear sieve $S_q$ on $A$ if $q$ is a morphism in $\Theta$ (that is, the composite of a single morphism in $\Theta$). Assume by induction that the same is true for all sieves $S_q$ on $A$ where $q$ is a composite of at most $n$ morphisms in $\Theta$. We will first show that $F$ is a sheaf for all $R$-linear sieves $S$ containing such a sieve $S_q$, and we then use this fact to show that $F$ is a sheaf for all $S_q$ where $q$ is a composite of at most $n+1$ morphisms in $\Theta$. By induction on $n$, these two facts imply that $F$ is a sheaf for $\tau(\Theta)$ (see Definition~\ref{dfn:topology_from_coverage}).

 Thus let $q$ be a composite of at most $n$ morphisms in $\Theta$ and let $S \subseteq \ca{A}(-,B)$ be an $R$-linear sieve containing $S_q$. Since $F$ is a sheaf for $S_q$, for any $\lambda \colon S \rightarrow F$ there exists a unique morphism $\mu \colon \ca{A}(-,B) \rightarrow F$ such that the diagram
\begin{equation}\label{eqn:intermediate_sieve}
 \vcenter{
\xymatrix{ S_q \ar[rd]^{\lambda i} \ar[d]_{i} \\ S \ar[d]_{j} & F \\ \ca{A}(-,B) \ar@{-->}[ru]_{\mu}
}}
\end{equation}
 is commutative. It only remains to check that $\mu j=\lambda$. Since the representable presheaves generate $\Prs{A}$, it suffices to check that $\mu j k=\lambda k$ for all $k \colon \ca{A}(-,A) \rightarrow S$. The composite $jk$ is of the form $\ca{A}(-,f)$ for some $f \colon A \rightarrow B$ by the Yoneda lemma. By Lemma~\ref{lemma:quasi_pullback}, there exists a composite $q^{\prime}$ of at most $n$ morphisms in $\Theta$ and a morphism $S_{q^{\prime}} \rightarrow S_q$ such that the diagram
\[
 \xymatrix{S_{q^{\prime}} \ar[d]_{i^{\prime}}\ar[r] & S_q \ar[d]^{ji} \\ \ca{A}(-,A) \ar[r]_-{\ca{A}(-,f)} & \ca{A}(-,B) }
\]
 is commutative. Since $j$ is a monomorphism, the square
\[
 \xymatrix{S_{q^{\prime}} \ar[d]_{i^{\prime}}\ar[r] & S_q \ar[d]^{i} \\ \ca{A}(-,A) \ar[r]_-{k} & S}
\]
 is commutative as well. The assumption that $F$ is a sheaf for $S_{q^{\prime}}$ implies that it suffices to check that the equation $\mu j k i^{\prime}=\lambda k i^{\prime}$ holds, which follows from commutativity of the above square and of Diagram~\eqref{eqn:intermediate_sieve}. This concludes the proof that $F$ is a sheaf for all $R$-linear sieves that contain an $R$-linear sieve $S_q$ where $q \colon A \rightarrow B$ is a composite of at most $n$ morphisms in $\Theta$.

 It remains to show that for any such morphism $q$ and any morphism $p \colon A^{\prime} \rightarrow A$ in $\Theta$, $F$ is a sheaf for $S_{qp}$ (as mentioned above, the claim then follows by induction on $n$). Let
\begin{equation}\label{eqn:composite_pullback}
\vcenter{
 \xymatrix{S \ar[r] \ar@{->>}[d] & \ca{A}(-,A) \ar@{->>}[d] \\ S_{qp} \ar[r] & S_q}
}
\end{equation}
 be the pullback of the epimorphism $\ca{A}(-,A) \rightarrow S_q$ along the inclusion $S_{qp} \rightarrow S_q$. From the commutative diagram
\[
 \xymatrix{S_p \ar[r] \ar[d] & \ca{A}(-,A) \ar[d] \\ S_{qp} \ar[r] & S_q}
\]
 (with vertical morphisms given by composition with $q$ and horizontal morphisms given by inclusions) we deduce that $S_p \subseteq S$. From the fact we proved above it follows that $F$ is a sheaf for the $R$-linear sieve $S$ on $A$. Diagram~\eqref{eqn:composite_pullback} is a pullback of an epimorphism along a monomorphism in the abelian category $\Prs{A}$, so it is also a pushout diagram, and therefore sent to a pullback diagram by the contravariant hom-functor $\Prs{A}(-,F)$. Since isomorphisms are stable under pullback it follows that
\[
 \Prs{A}(S_q, F) \rightarrow \Prs{A}(S_{qp},F)
\]
 is an isomorphism. But $F$ is a sheaf for $S_q$ by assumption, hence the composite
\[
 \Prs{A}\bigl(\ca{A}(-,B), F\bigr) \rightarrow \Prs{A}(S_q, F) \rightarrow \Prs{A}(S_{qp},F)
\]
 is an isomorphism as well. This concludes the proof that $F$ is a sheaf for $S_{qp}$ and---by induction on $n$---that $F$ is a sheaf for $\tau(\Theta)$.
\end{proof}

 One of the main results of \cite{BORCEUX_QUINTEIRO} specializes to the following theorem about $R$-linear sheaves for $R$-linear Grothendieck topologies.

\begin{thm}\label{thm:borceux_quinteiro}
 Let $\tau$ be an $R$-linear Grothendieck topology on the $R$-linear category $\ca{A}$. Then the inclusion of $\Sh_\tau(\ca{A})$ in $\Prs{A}$ has an exact $R$-linear left adjoint. In particular, $\Sh_\tau(\ca{A})$ is an abelian category.
\end{thm}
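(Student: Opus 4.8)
The plan is to obtain this as a direct specialisation of the general enriched-sheaf theory of Borceux and Quinteiro \cite{BORCEUX_QUINTEIRO}. Their results apply to categories enriched over any sufficiently well-behaved base, and the base $\ca{V}=\Mod_R$ qualifies: it is a cosmos (complete and cocomplete symmetric monoidal closed), it is locally finitely presentable, and it has the one-object strong generator $\{R\}$. So the first step is simply to record that $\Mod_R$ meets the standing hypotheses of \cite{BORCEUX_QUINTEIRO} (and that, in the applications of this appendix, $\ca{A}$ is essentially small, so that $\Prs{A}$ is locally presentable).

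Next I would line up the definitions. Definition~\ref{dfn:grothendieck_topology} is visibly the special case of a $\ca{V}$-Grothendieck topology in the sense of \cite[\S 1]{BORCEUX_QUINTEIRO}, and Definition~\ref{dfn:sheaf_for_grothendieck_topology} agrees with their notion of $\ca{V}$-sheaf by \cite[Lemma~1.6]{BORCEUX_QUINTEIRO} applied to the strong generator $\{R\}$ (this identification was already invoked right after Definition~\ref{dfn:sheaf_for_grothendieck_topology}). With this dictionary, the statement that $\Sh_\tau(\ca{A})$ is a reflective $R$-linear subcategory of $\Prs{A}$ is precisely the associated-sheaf theorem of \cite{BORCEUX_QUINTEIRO}; the left adjoint $a \colon \Prs{A} \to \Sh_\tau(\ca{A})$ is $R$-linear because it is a $\ca{V}$-functor.

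It then remains to see that $a$ is exact. Being a left adjoint it preserves all colimits, hence is right exact, so the only issue is left exactness, and this is built into the Borceux--Quinteiro construction: $a$ is obtained by iterating a plus-construction, which is a filtered colimit of functors preserving finite limits, and filtered colimits are exact in the Grothendieck abelian category $\Prs{A}$. Finally, $\Sh_\tau(\ca{A})$ is abelian for the usual reasons: the inclusion $\Sh_\tau(\ca{A}) \to \Prs{A}$ is a right adjoint, so limits in $\Sh_\tau(\ca{A})$ are computed as in $\Prs{A}$; colimits are computed by applying $a$ to the colimit in $\Prs{A}$; and since $a$ is exact one checks directly that kernels, cokernels, images and coimages are compatible, so $\Sh_\tau(\ca{A})$ inherits an abelian structure. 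I expect no genuine obstacle here — the only points requiring care are the verification that the two notions of enriched sheaf really coincide (via \cite[Lemma~1.6]{BORCEUX_QUINTEIRO}) and that $\Mod_R$ satisfies whatever finiteness hypotheses \cite{BORCEUX_QUINTEIRO} imposes on the base; there is no new mathematical content.
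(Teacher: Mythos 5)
Your proposal matches the paper's proof, which simply cites \cite[Theorem~4.4]{BORCEUX_QUINTEIRO} for general enriched Grothendieck topologies; you take the same route, merely spelling out the specialization to $\ca{V}=\Mod_R$ and the standard exactness and abelianness arguments that the paper leaves to that reference. No issues.
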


\begin{proof}
 This is proved in \cite[Theorem~4.4]{BORCEUX_QUINTEIRO} for general enriched Grothendieck topologies.
\end{proof}

\subsection{Epimorphisms in the category of sheaves}
 As in the unenriched case, coverages may satisfy various closure properties that give better control over categorical concepts---such as epimorphisms---in the category of $R$-linear sheaves.

\begin{dfn}\label{dfn:closure_properties}
 Let $\Theta$ be a singleton coverage on the $R$-linear category $\ca{A}$. A \emph{closure property} for $\Theta$ is a property of the following list:
\begin{enumerate}
 \item[(i)] If $p$ and $q$ are composable morphisms in $\Theta$, then $pq \in \Theta$;
 \item[(ii)] If $p$ and $f$ are composable morphisms and $pf \in \Theta$, then $p \in \Theta$;
 \item[(iii)] If $p \colon A \rightarrow B$ and $p^{\prime} \colon A^{\prime} \rightarrow B$ are morphisms in $\Theta$, there exist morphisms $f \colon A^{\prime\prime} \rightarrow A$ and $f^{\prime} \colon A^{\prime \prime} \rightarrow A^{\prime}$ in $\ca{A}$ such that the diagram
\[
 \xymatrix{A^{\prime \prime} \ar[r]^-{f^{\prime}} \ar[d]_f & A^{\prime} \ar[d]^{p^{\prime}} \\ A \ar[r]_-{p} & B }
\]
 is commutative and the composite $A^{\prime \prime} \rightarrow B$ lies in $\Theta$;
 \item[(iv)] The category $\ca{A}$ has pullbacks of morphisms in $\Theta$ along arbitrary morphisms in $\ca{A}$ and $\Theta$ is stable under pullbacks, that is, the pullback of any morphism in $\Theta$ lies in $\Theta$.
\end{enumerate}
\end{dfn}

 Note that these closure properties are not independent, for example, Property~(i) and (iv) together imply Property~(iii) (and also that $\Theta$ is a singleton coverage).

\begin{example}\label{example:coverage_of_interest}
 Let $\ca{A}$ be a full subcategory of the lfp tensor category $\ca{C}$ over $R$ which satisfies the conditions of Theorem~\ref{thm:universal_geometric_category}, that is, it consists of locally free objects of constant finite rank, and the following conditions hold:
\begin{enumerate}
 \item[(i)] The subcategory $\ca{A}$ is closed under finite direct sums, finite tensor products, and duals;
 \item[(ii)] If $C \in \ca{C}$ is locally free of constant finite rank and $p \colon A \rightarrow C$ is a locally split epimorphism in $\ca{C}$ with $A \in \ca{A}$, then $C \in \ca{A}$. 
\end{enumerate}
 Let $\Theta$ be the class of all morphisms in $\ca{A}$ which are locally split epimorphisms in $\ca{C}$. Then $\Theta$ is a coverage which satisfies all the closure properties of Definition~\ref{dfn:closure_properties}. Indeed, Conditions~(i) and (ii) follow directly from the fact that split epimorphisms satisfy the corresponding properties, and Condition~(iv) follows from Lemma~\ref{lemma:pullback_of_locally_split}.
\end{example}

\begin{lemma}\label{lemma:generated_subsheaf}
 Let $\Theta$ be a singleton coverage on the $R$-linear category $\ca{A}$ which satisfies Conditions~(i) and (iii) of Definition~\ref{dfn:closure_properties}. Let $F$ be a sheaf for $\Theta$ and let $G \subseteq F$ be a subobject in $\Prs{A}$ (so $G$ need \emph{not} be a sheaf for $\Theta$). Let
\[
 HB \defl \{ x \in FB \mid \exists p \colon A \rightarrow B \in \Theta \enspace \text{such that} \enspace x \cdot p \in GA \} \smash{\rlap{.}}
\]
 Then $H$ is a subobject of $F$ in $\Sh_{\Theta}(\ca{A})$.
\end{lemma}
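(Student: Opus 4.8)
The statement has two parts: that $H$ is an $R$-linear subpresheaf of $F$, and that this subpresheaf is again a sheaf for $\Theta$. The plan is to verify these in turn, using the singleton-coverage axiom (Definition~\ref{dfn:coverage}) for the first part, and the sheaf property of $F$ together with the closure properties (i) and (iii) of Definition~\ref{dfn:closure_properties} for the second.

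First I would show that $HB$ is a submodule of $FB$ for each $B$ and that $H$ is closed under the contravariant action of morphisms. For closure under restriction, let $x \in HB$ be witnessed by $p \colon A \to B$ in $\Theta$ with $x \cdot p \in GA$, and let $g \colon B' \to B$ be arbitrary. Applying the singleton-coverage axiom to $p$ and $g$ produces $p' \colon D \to B'$ in $\Theta$ and $g' \colon D \to A$ with $g p' = p g'$; since $G$ is a subpresheaf, $(x \cdot g) \cdot p' = (x \cdot p) \cdot g' \in GD$, so $x \cdot g \in HB'$. Scalar multiples $r x$ with $r \in R$ lie in $HB$ with the same witness $p$, and $0 \in HB$ since $0 = 0 \cdot p$ for any covering $p$ of $B$ (every object admits a covering, e.g.\ its identity, in the cases where this lemma is applied). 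Closure under addition is the one place where closure property~(iii) is needed: given $x, y \in HB$ with witnesses $p \colon A \to B$ and $\tilde p \colon \tilde A \to B$ in $\Theta$, property~(iii) yields $f \colon A'' \to A$ and $\tilde f \colon A'' \to \tilde A$ with $q \defl p f = \tilde p \tilde f \in \Theta$; then $x \cdot q = (x \cdot p) \cdot f \in GA''$ and $y \cdot q = (y \cdot \tilde p) \cdot \tilde f \in GA''$, hence $(x + y) \cdot q \in GA''$ and $x + y \in HB$.

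It then remains to check that $H$ satisfies Conditions~(i) and~(ii) of Definition~\ref{dfn:sheaf_for_coverage}. Condition~(i) is immediate: for $p \colon A \to B$ in $\Theta$ the map $Hp$ is the restriction of the monomorphism $Fp$ to the submodule $HB$, hence a monomorphism. For Condition~(ii), the implication ``$a = b \cdot p$ with $b \in HB$ $\Rightarrow$ $a \cdot f = 0$ whenever $p f = 0$'' is formal, using only that the $R$-linear functor $F$ sends the zero morphism to $0$. For the converse, suppose $a \in HA$ satisfies $a \cdot f = 0$ for every $f \colon A' \to A$ with $p f = 0$; since $F$ is a sheaf there is a unique $b \in FB$ with $b \cdot p = a$, and I must show $b \in HB$. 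Picking a witness $q \colon A'' \to A$ in $\Theta$ with $a \cdot q \in GA''$, closure property~(i) gives $p q \in \Theta$, and $b \cdot (p q) = (b \cdot p) \cdot q = a \cdot q \in GA''$; thus $p q$ witnesses $b \in HB$, as required.

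The lemma is essentially bookkeeping and presents no serious obstacle; the only points that need attention are recognising that the ``common refinement'' of two witnesses for membership in $HB$ is supplied precisely by closure property~(iii), and that the section $b$ produced by the sheaf property of $F$ inherits membership in $H$ via closure under composition --- both handled above.
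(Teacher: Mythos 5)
Your proof is correct and follows essentially the same route as the paper's: closure property~(iii) supplies the common refinement needed to see that $HB$ is a submodule, the coverage axiom gives functoriality, and closure under composition (property~(i)) transports the witness $q$ along the unique section $b$ supplied by the sheaf property of $F$. The only cosmetic difference is that you verify the elementwise sheaf conditions of Definition~\ref{dfn:sheaf_for_coverage} directly, whereas the paper packages the same argument as separatedness plus extension along the sieve $S_p$ via Lemma~\ref{lemma:sheaves_for_coverage}.
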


\begin{proof}
 The existence of the commutative diagram in Condition~(iii) of Definition~\ref{dfn:closure_properties} implies that $HB$ is a submodule of $FB$ for all $B \in \ca{A}$. Given an arbitrary morphism $f \colon A \rightarrow B$ in $\ca{A}$, the defining property of a singleton coverage shows that $Ff(x) \in HA$ for all $x \in HB$. Thus $H$ is indeed a subfunctor of $F$. It follows in particular that $H$ is $R$-linear, that is, $H$ is a subobject of $F$ in $\Prs{A}$.

 It only remains to show that $H$ is an $R$-linear sheaf for $\Theta$. By Lemma~\ref{lemma:sheaves_for_coverage}, it suffices to check that
\[
 \Prs{A}\bigl(\ca{A}(-,A), H \bigr) \rightarrow \Prs{A}(S_p,H)
\]
 is an isomorphism for all $p \colon A \rightarrow B$ in $\Theta$. Since $F$ is a sheaf for $\Theta$ and the inclusion $H \rightarrow F$ is a monomorphism, $H$ is \emph{separated}, that is, an extension
\[
 \xymatrix{S_p \ar[r] \ar[d] & H \\ \ca{A}(-,B) \ar@{-->}[ru]}
\]
 is unique if it exists. Moreover, there exists a unique morphism $g$ such that the diagram
\[
 \xymatrix{S_p \ar[r] \ar[d] & H \ar[d] \\ \ca{A}(-,B) \ar[r]_-{g} & F }
\]
 is commutative. It only remains to check that $g$ factors through the inclusion $H \rightarrow F$. By Yoneda, $g$ corresponds to the unique element $x=g_B(\id_B) \in FB$, so it suffices to check that $x \in HB$. Precomposing the above square with the projection $\ca{A}(-,A) \rightarrow S_p$ we find that $g \circ \ca{A}(-,p)$ factors through $H$, that is, $x \cdot p \in HA$. By definition of $H$, this implies that there exists a morphism $q \colon A^{\prime} \rightarrow A$ in $\Theta$ such that $x \cdot p \cdot q \in GA^{\prime}$. Since $\Theta$ is closed under composition (Condition~(i) of Definition~\ref{dfn:closure_properties}) this shows that $x \in HB$.
\end{proof}

 Recall that a (singleton) coverage is called \emph{subcanonical} if all representable presheaves are sheaves. The following lemma gives a sufficent condition for a singleton coverage to be subcanonical.

\begin{lemma}\label{lemma:subcanonical}
 Let $\Theta$ be a singleton coverage on the $R$-linear category $\ca{A}$ such that all morphisms in $\Theta$ are cokernels in $\ca{A}$. Then $\Theta$ is subcanonical. 
\end{lemma}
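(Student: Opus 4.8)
The plan is to verify directly that every representable presheaf $\ca{A}(-,X)$ satisfies the two conditions of Definition~\ref{dfn:sheaf_for_coverage} for each covering $p \colon A \rightarrow B$ in $\Theta$. Write $F = \ca{A}(-,X)$, so that for a morphism $f \colon A^{\prime} \rightarrow A$ and an element $a \in FA = \ca{A}(A,X)$ the right action $a \cdot f$ is simply the composite $a f$, and $b \cdot p = b p$ for $b \in \ca{A}(B,X)$.

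First I would check Condition~(i), namely that $Fp = \ca{A}(p,X) \colon \ca{A}(B,X) \rightarrow \ca{A}(A,X)$ is a monomorphism of $R$-modules. Since $\ca{A}$ is additive, this is equivalent to $p$ being an epimorphism in $\ca{A}$, which holds because by hypothesis $p$ is a cokernel, hence a (regular) epimorphism. For Condition~(ii), given $a \in \ca{A}(A,X)$ I must show that $a$ factors through $p$ if and only if $a f = 0$ for every $f \colon A^{\prime} \rightarrow A$ with $p f = 0$. The forward implication is immediate, since $a = b p$ gives $a f = b p f = 0$. For the converse, write $p$ as a cokernel of some morphism $d \colon D \rightarrow A$ (possible precisely because $p \in \Theta$ is assumed to be a cokernel); then $p d = 0$, so applying the hypothesis to $f = d$ yields $a d = 0$, and the universal property of the cokernel $p = \operatorname{coker}(d)$ then produces a (necessarily unique) morphism $b \colon B \rightarrow X$ with $a = b p$. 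This shows $\ca{A}(-,X)$ is a sheaf for $\Theta$, i.e. $\Theta$ is subcanonical.

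I do not expect any real obstacle: the whole argument is a one-line application of the universal property of cokernels, and the only point to observe is that the hypothesis "$p$ is a cokernel in $\ca{A}$" is exactly what is needed to reduce the (a priori quantified) vanishing condition in Condition~(ii) to a single equation $a d = 0$. In particular none of the closure properties of Definition~\ref{dfn:closure_properties} are used here.
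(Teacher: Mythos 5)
Your proof is correct and follows essentially the same route as the paper: write the covering as the cokernel of some $d$, observe that $\ca{A}(-,X)$ sends cokernels to kernels (equivalently, that a cokernel is an epimorphism) for Condition~(i), and apply the hypothesis to $f=d$ together with the universal property of the cokernel for Condition~(ii). Your closing observation that none of the closure properties of Definition~\ref{dfn:closure_properties} are needed is also consistent with the paper.
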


\begin{proof}
 Fix an object $X \in \ca{A}$. We will check directly that $\ca{A}(-,X)$ satisfies the conditions of Definition~\ref{dfn:sheaf_for_coverage}. By assumption, for any morphism $q \colon B \rightarrow C$ in $\Theta$ there exists a morphism $p \colon A \rightarrow B$ in $\ca{A}$ such that $q$ is the cokernel of $p$. Since $\ca{A}(-,X)$ sends cokernels to kernels, $\ca{A}(q,X)$ is a monomorphism. Moreover, if $g \in \ca{A}(B,X)$ is a morphism such that $gf=0$ for all $f \colon A^{\prime} \rightarrow B$ in $\ca{A}$ with $qf=0$, then we find in particular that $gp=0$. Thus there exists a morphism $g^{\prime} \in \ca{A}(C,X)$ such that $g^{\prime}q=g$. These two facts imply that $\ca{A}(-,X)$ is a sheaf for $\Theta$.
\end{proof}

\begin{prop}\label{prop:epimorphisms_of_representable_sheaves}
 Let $\Theta$ be a subcanonical singleton coverage on the $R$-linear category $\ca{A}$ and let $f \colon A \rightarrow B$ be a morphism in $\ca{A}$. If $\Theta$ satisfies Conditions~(i), (ii), and (iii) of Definition~\ref{dfn:closure_properties}, then
\[
 \ca{A}(-,f) \colon \ca{A}(-,A) \rightarrow \ca{A}(-,B)
\]
 is an epimorphism in $\Sh_{\Theta}(\ca{A})$ if and only if $f$ lies in $\Theta$.
\end{prop}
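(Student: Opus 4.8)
The plan is to rephrase everything in terms of the $R$-linear sieve $S_f \subseteq \ca{A}(-,B)$ of Example~\ref{example:S_p} --- the image of $\ca{A}(-,f)$ in $\Prs{A}$ --- and then to extract $f \in \Theta$ from the closure properties of $\Theta$.

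For the direction ``$f \in \Theta$ $\Rightarrow$ $\ca{A}(-,f)$ is an epimorphism'', which is the easy half, I would argue as follows. By condition~(i) of Definition~\ref{dfn:sheaf_for_coverage}, for every $\Theta$-sheaf $F$ the map $Ff \colon FB \to FA$ is a monomorphism of $R$-modules. Since $\Theta$ is subcanonical the representables are sheaves, so the enriched Yoneda lemma identifies this map with $\Sh_{\Theta}(\ca{A})\bigl(\ca{A}(-,f),F\bigr) \colon \Sh_{\Theta}(\ca{A})\bigl(\ca{A}(-,B),F\bigr) \to \Sh_{\Theta}(\ca{A})\bigl(\ca{A}(-,A),F\bigr)$. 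As this is injective for every sheaf $F$, the morphism $\ca{A}(-,f)$ is an epimorphism in $\Sh_{\Theta}(\ca{A})$.

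For the converse I would start from the factorization $\ca{A}(-,A) \twoheadrightarrow S_f \hookrightarrow \ca{A}(-,B)$ in $\Prs{A}$. The inclusion $\Sh_{\Theta}(\ca{A}) \hookrightarrow \Prs{A}$ has an exact left adjoint $a$ (Theorem~\ref{thm:borceux_quinteiro}, via Propositions~\ref{prop:topology_form_coverage} and \ref{prop:sheaves_coincide}), and $\ca{A}(-,A)$, $\ca{A}(-,B)$ are already sheaves, so applying $a$ exhibits $a(S_f) \to \ca{A}(-,B)$ as a monomorphism in $\Sh_{\Theta}(\ca{A})$ through which the epimorphism $\ca{A}(-,f)$ factors; hence it is an isomorphism. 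Thus the problem reduces to showing: if $S_f \hookrightarrow \ca{A}(-,B)$ becomes an isomorphism after sheafification, then $f \in \Theta$. Here I would invoke Lemma~\ref{lemma:generated_subsheaf} with $F = \ca{A}(-,B)$ and the subpresheaf $S_f$, which --- using closure properties (i) and (iii) --- produces a subsheaf $H \subseteq \ca{A}(-,B)$ whose value $HC$ consists of the morphisms $g \colon C \to B$ admitting some $p \colon C' \to C$ in $\Theta$ with $gp$ factoring through $f$. The crucial step is to check that $S_f$ is ``$\Theta$-dense'' in $H$, i.e.\ that the inclusion $S_f \hookrightarrow H$ becomes an isomorphism after sheafification; granting this, the universal property of $a$ gives $a(S_f) \cong H$ inside $\ca{A}(-,B)$, and combined with $a(S_f) \cong \ca{A}(-,B)$ this forces $H = \ca{A}(-,B)$. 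In particular $\id_B \in HB$, so by definition of $H$ there is a morphism $p \colon C' \to B$ in $\Theta$ that factors through $f$, say $p = fh$; closure property (ii) of Definition~\ref{dfn:closure_properties} applied to $p = fh \in \Theta$ then yields $f \in \Theta$, as desired. (Equivalently, $a(S_f)\cong\ca{A}(-,B)$ says that $S_f$ is a covering sieve of $\tau(\Theta)$ in the sense of Definition~\ref{dfn:topology_from_coverage}; the point of the argument is that closure property~(i) lets one replace the composite of $\Theta$-morphisms it contains by a single one, and closure property~(ii) then descends covering-ness through $f$.)

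The hard part will be the density claim $a(S_f) \cong H$, i.e.\ showing that the closure of a sieve for the enriched Grothendieck topology $\tau(\Theta)$ is computed by the explicit formula of Lemma~\ref{lemma:generated_subsheaf}; this is exactly where the enriched case departs from the classical one, and where Lemma~\ref{lemma:quasi_pullback} (to build the needed compatible families over composites of $\Theta$-coverings) together with the closure properties of $\Theta$ do the work. A minor point to dispatch separately is the degenerate case $S_f = \ca{A}(-,B)$, in which $f$ is a split epimorphism of $\ca{A}$: then $\id_B \in HB$ holds as soon as split epimorphisms of $\ca{A}$ lie in $\Theta$, which is precisely the situation of Example~\ref{example:coverage_of_interest} in which this proposition will be applied (namely, in the proof of Proposition~\ref{prop:epi_characterization}).
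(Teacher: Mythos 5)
Your first direction and the overall skeleton of the converse match the paper: both use Lemma~\ref{lemma:generated_subsheaf} to manufacture the sheaf $H=S'_f$, both exploit abelianness of $\Sh_\Theta(\ca{A})$ via ``mono $+$ epi $=$ iso,'' and both finish by extracting a witness $q=fh\in\Theta$ from $\id_B\in HB$ and applying closure property~(ii). The difference is that the step you single out as ``the hard part'' --- identifying the sheafification $a(S_f)$ with $H$ --- does not occur in the paper and is in fact avoidable. Since $S_f\subseteq H$ in $\Prs{A}$ and $\ca{A}(-,A)$, $H$, $\ca{A}(-,B)$ are all sheaves, the given epimorphism $\ca{A}(-,f)$ already factors through the inclusion $H\hookrightarrow\ca{A}(-,B)$ \emph{inside} $\Sh_\Theta(\ca{A})$ ($\Sh_\Theta(\ca{A})$ being a full subcategory). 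That inclusion is therefore an epimorphism, and it is a monomorphism, hence an isomorphism; so $\id_B\in HB$ and the defining formula of $H$ hands you $q$ directly. No sheafification of $S_f$ and no density computation are needed. Your detour is completable --- the density claim $a(S_f)\cong H$ does hold, e.g.\ because any two maps $H\to F$ into a $\Theta$-sheaf agreeing on $S_f$ agree everywhere (use that $Fq$ is injective for $q\in\Theta$), whence $a(S_f\hookrightarrow H)$ is epi as well as mono --- but as written your plan defers its crux to an unproved step that the simpler argument renders moot.

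Two smaller remarks. First, your ``degenerate case'' $S_f=\ca{A}(-,B)$ is not a separate case and should not be handled by assuming split epimorphisms lie in $\Theta$ (a hypothesis absent from the proposition): once $H\hookrightarrow\ca{A}(-,B)$ is shown to be an isomorphism of presheaves, $\id_B\in HB$ and the existential in the definition of $HB$ supplies the required $q\in\Theta$ uniformly. Second, your reduction ``$a(S_f)\to\ca{A}(-,B)$ is mono and epi, hence iso'' is fine, but note it already uses the same abelian-category trick you later need again; applying it once, to $H$, is all that is required.
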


\begin{proof}
 If $f$ lies in $\Theta$, then $Ff$ is a monomorphism for any $R$-linear sheaf $F$ (see Definition~\ref{dfn:sheaf_for_coverage}). By Yoneda, this is equivalent to the fact that 
\[
\Prs{A}\bigl(\ca{A}(-,f),F \bigr) \colon \Prs{A}\bigl(\ca{A}(-,B),F \bigr) \rightarrow \Prs{A}\bigl(\ca{A}(-,A),F \bigr) 
\]
 is a monomorphism for all $F \in \Sh_{\Theta}(\ca{A})$, so $\ca{A}(-,f)$ is indeed an epimorphism.

 Conversely, suppose that $\ca{A}(-,f)$ is an epimorphism in $\Sh_{\Theta}(\ca{A})$. Let $S_f$ be the image of $\ca{A}(-,f)$ in $\ca{A}(-,B)$, computed in the category $\Prs{A}$. By Lemma~\ref{lemma:generated_subsheaf}, the $R$-linear presheaf $S^{\prime}_f$ given by
\[
 S^{\prime}_f C= \{ g \colon C \rightarrow B \mid \exists q \colon C^{\prime} \rightarrow C \in \Theta \enspace \text{such that} \enspace gq \in S_f(C^{\prime}) \}
\]
 is an $R$-linear sheaf for $\Theta$. By construction, the epimorphism $\ca{A}(-,f)$ factors through the inclusion $S^{\prime}_f \rightarrow \ca{A}(-,B)$, so this inclusion is both an epimorphism and a monomorphism in $\Sh_{\Theta}(\ca{A})$. Since $\Sh_{\Theta}(\ca{A})$ is abelian (see Theorem~\ref{thm:borceux_quinteiro} and Proposition~\ref{prop:sheaves_coincide}) this implies that the inclusion is an isomorphism in $\Sh_{\Theta}(\ca{A})$ (and thus in particular in $\Prs{A}$). Therefore $\id_B$ lies in $S^{\prime}_f (B)$, that is, there exists $q \colon A^{\prime} \rightarrow B $ in $\Theta$ such that $\id_B \circ q \in S_f$. This shows that $q=fh$ for some $h \colon A^{\prime} \rightarrow A$, and it follows from Condition~(ii) of Definition~\ref{dfn:closure_properties} that $f \in \Theta$.
\end{proof}

 Proposition~\ref{prop:epi_characterization} is a straightforward consequence of the above proposition.

\begin{proof}[Proof of Proposition~\ref{prop:epi_characterization}]
 Let $\ca{C}$ be an lfp tensor category over $R$ and let $\ca{A} \subseteq \ca{C}$ be a full subcategory satisfying the conditions of Theorem~\ref{thm:universal_geometric_category} (see also Example~\ref{example:coverage_of_interest}). We need to show that $\ca{A}(-,f) \colon \ca{A}(-,A) \rightarrow \ca{A}(-,B)$ is an epimorphism in $G(\ca{A})=\Lex_{\Sigma}[\ca{A}^{\op},\Mod_R]$ if and only if $f$ is a locally split epimorphism in $\ca{C}$, where $\Sigma$ is the class of all cokernel diagrams in $\ca{A}$ which are locally split right exact sequences in $\ca{C}$.

 From Proposition~\ref{prop:lex_sigma_equals_sheaves} we know that $\Lex_{\Sigma}[\ca{A}^{\op},\Mod_R]=\Sh_{R(\Sigma)}(\ca{A})$, where $R(\Sigma)$ is the singleton coverage associated to $\Sigma$ (see Example~\ref{example:ind_class_gives_covering}). Moreover, from Lemma~\ref{lemma:pullback_of_locally_split} we know that $R(\Sigma)$ is the singleton coverage of morphisms in $\ca{A}$ which are locally split epimorphisms in $\ca{C}$. This coverage satisfies all the closure properties of Definition~\ref{dfn:closure_properties} by Example~\ref{example:coverage_of_interest}. Thus Proposition~\ref{prop:epimorphisms_of_representable_sheaves} is applicable, and it follows that $\ca{A}(-,f)$ is an epimorphism in $\Sh_{R(\Sigma)}(\ca{A})=\Lex_{\Sigma}[\ca{A}^{\op},\Mod_R]$ if and only if $f$ lies in $R(\Sigma)$, as claimed. 
\end{proof}

\bibliographystyle{amsalpha}
\bibliography{colimits}

\end{document}